\documentclass[reqno,11pt]{amsart}
\usepackage{amsmath,amssymb,mathrsfs,color}
\usepackage{graphicx}
\usepackage{epstopdf}
\usepackage{hyperref}
\usepackage{enumitem}
\usepackage{soul}
\usepackage{cases}
\hypersetup{hypertex=true,
            colorlinks=true,
            linkcolor=blue,
            anchorcolor=blue,
            citecolor=red}
\topmargin=0pt \oddsidemargin=0pt \evensidemargin=0pt
\textwidth=17cm \textheight=23.0cm \raggedbottom

\let\cal=\mathcal

\def\R{{\mathbb R}}

\def\eps{\varepsilon}

\def\cA{{\mathcal A}}
\def\cB{{\mathcal B}}

\def\cD{{\mathcal D}}

\def\cF{{\mathcal F}}

\def\cK{{\mathcal K}}
\def\cL{{\mathcal L}}

\def\cN{{\mathcal N}}

\def\cP{{\mathcal P}}

\def\cT{{\mathcal T}}

\def\cW{{\mathcal W}}
\def\cX{{\mathcal X}}
\def\cY{{\mathcal Y}}

\def\mE{{\mathbb E}}

\def\mP{{\mathbb P}}

\def\mT{{\mathbb T}}

\def\b1{{\mathbbm 1}}

\def\sA{{\mathscr A}}

\def\sD{{\mathscr D}}

\def\sL{{\mathscr L}}

\def\sS{{\mathscr S}}

\newtheorem{thm}{Theorem}[section]
\newtheorem{cor}[thm]{Corollary}
\newtheorem{lem}[thm]{Lemma}
\newtheorem{prop}[thm]{Proposition}

\theoremstyle{definition}
\newtheorem{de}[thm]{Definition}
\theoremstyle{remark}
\newtheorem{rem}[thm]{Remark}
\newtheorem{exam}[thm]{Example}

\numberwithin{equation}{section}

\newcommand{\rmd}{{\rm d}}
\newcommand{\dist}{{\rm{dist}}}
\newcommand{\rme}{{\rm e}}
\newcommand{\p}{\partial}
\allowdisplaybreaks

\begin{document}

\title[Random Attractors for McKean-Vlasov SDEs]{Random Attractors for McKean-Vlasov SDEs}

\author{Mengyu Cheng}
\address{M. Cheng: School of Mathematical Sciences, Dalian University of Technology, Dalian 116024, P. R. China}
\email{mycheng@dlut.edu.cn; mengyu.cheng@hotmail.com}

\author{Xianjin Cheng}
\address{X. Cheng: School of Mathematical Sciences, Dalian University of Technology, Dalian 116024, P. R. China}
\email{xjcheng1119@hotmail.com}

\author{Zhenxin Liu}
\address{Z. Liu: School of Mathematical Sciences, Dalian University of Technology, Dalian 116024, P. R. China}
\email{zxliu@dlut.edu.cn}

\keywords{Random attractors; McKean-Vlasov equations; Random dynamical systems; Navier-Stokes equations}
\subjclass[2020]{37L55; 37L30; 60H15.}

\begin{abstract}
In this paper, we mainly focus on the existence of random attractors for 
McKean-Vlasov stochastic differential equations on a separable Hilbert space $H$.
A significant challenge arises from the distribution-dependence of the coefficients, 
thereby causing the lack of the stochastic flow property on $H$.
To address this issue, we first transform the original equation into a system 
on the product space $H \times \mathcal{P}(H)$ and
consider the existence of random attractors on this space.
We then analyze cocycles associated with two parametric dynamical systems. 
Within this framework, we define the corresponding pullback random attractor and
develop a general theory for the existence of random attractors for such cocycles.
Finally, we apply our theoretical results to McKean-Vlasov stochastic ordinary differential equations, McKean-Vlasov stochastic reaction-diffusion equations, and McKean-Vlasov stochastic 2D Navier-Stokes equations.
In the case where the attractor reduces to a singleton set $\cA(\omega):=(\xi(\omega),\mu_\infty)$, 
we show that $\xi$ corresponds to the stationary solution for the decoupled SPDE, 
satisfying $\mP\circ[\xi]^{-1}=\mu_\infty$.
\end{abstract}

\maketitle
\setcounter{tocdepth}{3}
\tableofcontents

\section{Introduction}

Consider the following McKean-Vlasov stochastic differential equation (SDE) on separable Hilbert space $H$:
\begin{align}\label{In:maineq1}
\rmd X_t=b(X_t,\sL_{X_t})\rmd t+\sigma(X_t)\rmd W_t,
\end{align}
where $\sL_{X_t}$ is the law or distribution of $X_t$ and $W_t$ is a $Q$-Wiener process.
McKean-Vlasov SDEs, also known as mean-field SDEs, were initially proposed by Kac \cite{Kac1,Kac2}
 in the study of the Boltzmann equation for the density of particles in a diluted monatomic gas and the Vlasov dynamics equation used to describe a stochastic toy model. 
 These equations are closely related to mean-field interacting particle systems. 
 The propagation of chaos \cite{Sznitman} indicates that such equations are the limiting equations for mean-field interacting particle systems as the number of particles tends to infinity. 
 They have a wide range of applications in mean-field game theory, stochastic control, financial mathematics, and other fields \cite{LL1,LL2,LL,C,LM}.
The well-posedness of solutions to McKean-Vlasov SDEs and 
the existence and uniqueness of invariant measures have been proven under various conditions, 
such as in \cite{LM,W1,F,MV,Ma,W2,HHL24}.

Note that the solutions of \eqref{In:maineq1} describe nonlinear diffusion \cite{McKean}, 
where the motion of particles is influenced not only by their own positions 
but also by the effect of all other particles in the system (i.e., the mean-field effect). 
This leads to many interesting phenomena in the motion of particles, 
such as the existence of multiple invariant measures \cite{Dawson} and 
the occurrence of phase transitions \cite{Tugaut} under central interactions. 
This also makes many classical frameworks of dynamical systems inapplicable, 
such as the inability to generate a random dynamical system (RDS)
for \eqref{In:maineq1} as in the classical case. 
Therefore, to study the dynamics of \eqref{In:maineq1}, 
we need to develop new methods and theories.

The attractor is an important concept in dynamical systems, 
which can be used to characterize the long time asymptotic behavior of systems. 
To study the asymptotic behavior of RDS, 
the concept of random attractors was introduced and studied in 
\cite{Crauel2, CF, FS96, Schmalfuss}. 
Since then, it has attracted the attention of many scholars, such as semilinear stochastic partial differential equations (SPDE)
with additive noise \cite{Caraballo,BLW,LG,WangBX,WGLN24}. 
As for the research on non-semilinear SPDEs, see e.g. \cite{BGLR, BZL, GLR11}.

To the best of our knowledge, there is no work on random attractors for McKean-Vlasov SDE.
Existing studies on pullback attractors for McKean-Vlasov SDE mainly focus on measure attractors; see e.g. \cite{SSLW24}.
In contrast to these results, the primary aim of this paper is to establish 
the existence of random attractors for system \eqref{In:maineq1} from a pathwise perspective. 

It is worth noting that when $b(x,\mu)\equiv b(x)$, equation \eqref{In:maineq1} reduces to a classical SDE.
Under standard assumptions, there exists a unique solution $X(s,t,x),t\geq s$ with 
initial condition $X(s,s,x)=x\in H$. 
Moreover, one can show that there exists a subset $\Omega_0\subset\Omega$ of full measure 
such that $X(s,t,x)$ forms a {\it stochastic flow}, that is,
for any $s<r<t$, $x\in H$ and $\omega\in\Omega_0$,
\begin{equation}\label{1111:01}
X(r,t,X(s,r,x)(\omega))(\omega)=X(s,t,x)(\omega),
\end{equation}
which implies that the Chapman--Kolmogorov equation is satisfied.
However, a major difficulty in the McKean-Vlasov SDE framework arises from the distribution-dependent nature of the coefficeints, 
which prevents \eqref{In:maineq1} from satisfying the stochastic flow property $\eqref{1111:01}$ on $H$. 
As a result, the McKean-Vlasov SDE does not generate an RDS on $H$. 
To address this issue, we adopt the method from \cite{BLPR} and \cite{RRW22}
to transform equation~\eqref{In:maineq1}  into
\begin{numcases}
{}
\rmd X_{t}=b(X_t,\sL_{Y_t})\,\rmd t+\sigma(X_t)\rmd W_t,\quad X_s=x\in H,\label{In:maineq2-1}\\
\rmd Y_{t}=b(Y_t,\sL_{Y_t})\,\rmd t+\sigma(Y_t)\rmd W_t,~\sL_{Y_s}=\mu\in \cP_2(H). \label{In:maineq2-2}
\end{numcases}
Specifically, once a solution to the McKean-Vlasov SDE \eqref{In:maineq2-2} is obtained, 
the time marginal law $\sL_{Y_t}$ is determined. 
Fixing this law in \eqref{In:maineq2-1}, it is straightforward that
\eqref{In:maineq2-2} becomes a time-inhomogeneous SDE. 
Define $P_{s,t}^*\mu:=\sL_{Y_t}$ with the initial distribution $\sL_{Y_s}=\mu$.
It can be shown that $P_{s,t}^*$ forms a semi-flow on $\cP_2(H)$, i.e.
for any $0\leq s<r<t$ and $\mu\in\cP_2(H)$, 
$$
P_{s,t}^*\mu=P_{r,t}^* P_{s,r}^*\mu.
$$
Note that \eqref{In:maineq2-1} is non-autonomous,
as its evolution over time depends on $P_{s,t}^*\mu$.
To emphasize the dependence of the solution on the initial distribution
$\mu$ of $\sL_{Y_t}$, we denote it by $X_{s,t}^{x,\mu}$.

To analyze the long-term behaviour of the coupled system
\eqref{In:maineq2-1}-\eqref{In:maineq2-2},
we employ the classical method of skew product,
a technique widely used in the study of non-autonomous dynamical systems.
Then we have the following stochastic flow property on 
the product space $H\times\cP_2(H)$:
\begin{align}\label{flow}
( X _{s,t}^{x,\mu},P_{s,t}^*\mu)=\Big( X _{r,t}^{ X _{s,r}^{x,\mu},P_{s,r}^*\mu},P_{r,t}^*P_{s,r}^*\mu\Big),~s\leq r\leq t.
\end{align}
This structure enables the system to generate an RDS
on $H\times\cP_2(H)$.

We now turn to the long-term behaviour of the system on $H$. Consider the cocycle
\begin{align*}
\varphi:\quad
&
 \R_+ \times \Omega \times \cP_2(H) \times H \rightarrow H,\\
&
 (t,\omega,\mu,x)\mapsto \varphi(t,\omega,\mu,x):=X_{0,t}^{x,\mu}(\omega).
\end{align*}
Since the evolution of $\varphi$ depends not only on $\omega$ but also on $\mu$,
we first develop a general framework for 
random attractors of cocycle defined over two parametric dynamical systems 
$(\Omega,\cF,\mathbb{P},\{\theta_{1,t}\}_{t\in\R})$ and 
$(\cP, \{\theta_{2,t}\}_{t\geq0})$,
where the later one is just a semi-dynamical system with time forward.
To formulate the pullback attractor, we introduce a time backward extension
$\{\Theta_{2,t}\}_{t\leq0}$ of $\theta_{2,\cdot}$ on the global attractor $\cP^*$ of $\{\theta_{2,t}\}_{t\geq0}$
(see e.g. \cite{Hale88, KR2011}).
Since $\{\Theta_{2,t}\}_{t\leq0}$ is a set-valued semi-dynamical system,
we give the definition of random attractors of 
$\varphi:\R_+\times\Omega\times\cP^*\times\cX\rightarrow\cX$ over 
the driving systems in the sense of Definition \ref{deRPA}.
Let
\begin{align*}
\Phi:\quad
&
\R_+\times\Omega\times\cP\times \cX\rightarrow\cP\times \cX\\
&
(t,\omega,(p,x))\mapsto\Phi(t,\omega)(p,x):=\left(\theta_{2,t}p,\varphi(t,\omega,p,x)\right)
\end{align*}
be a continuous RDS over $(\Omega,\cF,\mathbb{P},\{\theta_{1,t}\}_{t\in\R})$.
If $\varphi$ is asymptotically compact and have a closed attracting subset on $\cX$,
then we show that $\Phi$ admits a random attractor 
$\sA:=\left\{\sA(\omega):\omega\in\Omega\right\}$ (Proposition \ref{prop0117}).
Furthermore, we prove that $\varphi$ also admits a random attractor $\cA$, 
where 
$$
\cA(\omega,p):=\left\{x\in \cX: (x,p)\in \sA(\omega)\right\};
$$
see Theorem \ref{thm0330}.

Subsequently, we apply this theoretical result to several representative models:
McKean-Vlasov stochastic ordinary differential equations (SODE), 
McKean-Vlasov stochastic reaction-diffusion equations, 
and McKean-Vlasov stochastic 2D Navier-Stokes equations.
In the case where the attractor $\sA$ reduces to a singleton set, 
denoted as $\{(\xi(\omega),\mu_\infty):\omega\in\Omega\}$, 
it holds that $\sL_{\xi}=\mu_\infty$, where   
$\xi$ is the stationary solution for the decoupled SPDE \eqref{In:maineq2-1} and
$\mu_\infty$ is the unique invariant measure 
of \eqref{In:maineq2-2}; see Theorem \ref{thmRDeq} for more details.

This paper is organized as follows. 
In Section \ref{mainR}, we present our main results.
In Section \ref{secPGR}, we give the proof of the general framework for 
the existence of random attractors. 
In Section \ref{secPODE}, we show the existence of random attractors for
McKean-Vlasov SODEs with multiplicative noise by the general theorem stated in Section \ref{mainR}. 
In Section \ref{secPPDE}, we apply this general framework to McKean-Vlasov stochastic reaction-diffusion equation 
and McKean-Vlasov stochastic 2D Navier-Stokes equation separately.

{\bf Notation.}
Let $\mT^2$ be the 2-dimensional torus. For any constants $a,b>0$, 
set $a\wedge b:=\min\{a,b\}$. Define $\cP(H)$ be the space of all probability 
measures on Hilbert space $H$. And for any $p\geq1$, we set
$$
\cP_p(H):=\left\{\mu\in\cP(H): 
\mu(\|\cdot\|^p):=\int_{H}|x|_H^p\mu(\rmd x)<\infty\right\}
$$ 
and for any $\mu,\nu\in\cP_p(H)$,
$$
\cW_p(\mu,\nu):=\inf_{\pi\in C(\mu,\nu)}\left(\int_{H\times H}\|x-y\|_H^p
\pi(\rmd x,\rmd y)\right)^{\frac{1}{p}},
$$
where $C(\mu,\nu)$ is the set of all couplings for $\mu$ and $\nu$.
Let $\Xi$ be the space of continuous functions 
$f:\R^d\times\cP_2(\R^d)\rightarrow\R$ satisfying the following conditions:
\begin{itemize}
\item[(i)] 
For any $\mu \in \mathcal{P}_2(\mathbb{R}^d)$, the function $\mathbb{R}^d \ni x \mapsto f( x, \mu)$ is of class $C^{2}$, the functions $\partial_x f$ and $\partial^2_{xx} f$ being (jointly) continuous in $(x, \mu)$.

\item[(ii)]  
For any $x \in  \mathbb{R}^d$, the function $\mathcal{P}_2(\mathbb{R}^d) \ni \mu \mapsto f( x, \mu)$ is continuously L-differentiable and, for any $\mu \in \mathcal{P}_2(\mathbb{R}^d)$, we can find a version of the mapping $\mathcal{P}_2(\mathbb{R}^d) \times \mathbb{R}^d \ni (\mu, v) \mapsto \partial_\mu f( x, \mu)(v) \in \mathbb{R}^d$ which is locally bounded and is continuous at any $(x, \mu, v)$ such that $v \in \text{Supp}(\mu)$.

\item[(iii)]  
For the version of $\partial_\mu f$ mentioned above and for any $(x, \mu) \in  \mathbb{R}^d \times \mathcal{P}_2(\mathbb{R}^d)$, the mapping $\mathbb{R}^d \ni v \mapsto \partial_\mu f( x, \mu)(v) \in \mathbb{R}^d$ is continuously differentiable and its derivative, denoted by $\mathbb{R}^d \ni v \mapsto \partial_v \partial_\mu f(x, \mu)(v) \in \mathbb{R}^{d \times d}$, is locally bounded and is jointly continuous in $( x, \mu, v)$ at any point $( x, \mu, v)$ such that $v \in \text{Supp}(\mu)$.

\item[(iv)]
For any compact subset $\cK\subset \mathbb R^d \times \mathcal P_2(\mathbb R^d)$
\begin{align}\label{0331--1}
\sup_{(x,\mu)\in\cK}\int_{\R^d}|\partial_\mu f(x,\mu)(y)|^2\mu(\rmd y)+  \int_{\R^d}|\partial_y\partial_\mu f(x,\mu)(y)|^2\mu(\rmd y)<\infty.
\end{align}
\end{itemize}
Let $(\mathcal{X}_i, d_i)$ for $i = 1, 2$ be two metric spaces. 
In this paper, we consider the product space $\mathcal{X}_1 \times \mathcal{X}_2$ 
equipped with the metric $d = d_1 + d_2$.
Throughout this paper, let $C$ be a positive constant, which may depend on some 
parameters and change from line to line.
Meanwhile, we use $C_\alpha$ to emphasize that $C$ depends on the parameter $\alpha$.

\section{Main results}\label{mainR}
In this section, we present our main results. We begin by establishing a general framework for the existence of random attractors for the cocycle with
two parametric dynamical systems in Section \ref{subsec1}. 
We mention that the second driving system is a semi-dynamical system in our cases.
This framework is then applied to McKean-Vlasov SODEs under the distribution-dependent Lyapunov conditions in Section \ref{subsec2}. We subsequently consider McKean-Vlasov SPDEs, 
first for stochastic reaction-diffusion equations in Section \ref{subsec3} and then for the stochastic 2D Navier-Stokes equations in Section \ref{subsec4}.

\subsection{General results}\label{subsec1}
Let $(\Omega,\cF,\mathbb{P},\{\theta_{1,t}\}_{t\in\R})$ be a metric dynamical system; see Definition \ref{defMDS}.
And assume that  $\theta_2:\R_+\times\cP\rightarrow\cP$ is a {\it semi-dynamical system}, that is to say,
$\theta_{2,0}p:=\theta_{2}(0,p)=p$ and $\theta_{2,t+s}p=\theta_{2,t}\theta_{2,s}p$ for all $p\in\cP$ and $t,s\in\R_+$.  
Let $(\cX,d_{\cX})$ be a Polish space and $(\cP,d_{\cP})$ be a metric space.
Assume that the measurable map
\begin{align*}
\varphi:\R_+\times\Omega\times \cP\times\cX&\rightarrow\cX\\
(t,\omega,p,x)&\mapsto \varphi(t,\omega,p,x)=:\varphi(t,\omega,p)x
\end{align*}
satisfies the property of cocycle over $(\theta_{1,t},\theta_{2,t})_{t\geq0}$, i.e. 
for all $(\omega,p)\in\Omega\times\cP$,
\begin{align*}
 \varphi(0,\omega,p)=Id_{\cX},
\end{align*}
and for all $s,t\in\R_+$, $(\omega,p)\in\Omega\times\cP$,
\begin{align*}
\varphi(s+t,\omega,p)=\varphi(t,\theta_{1,s}\omega,\theta_{2,s}p)\circ\varphi(s,\omega,p),
\end{align*}
where $\circ$ means composition.
It is worth noting that $\varphi$ have two parametric dynamical systems $(\Omega,\cF,\mathbb{P},\{\theta_{1,t}\}_{t\in\R})$ and $(\cP, \{\theta_{2,t}\}_{t\geq0})$, 
and the second parametric dynamical system $\theta_{2}$ is a semi-dynamical system with time forward. 
The method presented in \cite{WangBX12} cannot be directly applied to consider its pullback attractor.

Set $\cY:=\cP\times\cX$. Define
\begin{align*}
\Phi:\quad
&
\R_+\times\Omega\times\cY\rightarrow\cY\\
&
(t,\omega,(p,x))\mapsto\Phi(t,\omega)(p,x)
:=\left(\theta_{2,t}p,\varphi(t,\omega,p)x\right).
\end{align*}
Then $\Phi$ is an RDS on $(\cY,d_{\cY})$ over $(\Omega,\cF,\mathbb{P},\{\theta_{1,t}\}_{t\in\R})$; we refer to Definition \ref{defRDS} for details. And we can consider the 
random attractor on the product space $\cY$, as defined in Definition \ref{DefRA}. 
In this paper, we let $\cD$ be a neighborhood closed collection of some families of
nonempty subsets of $\cX$,
and let $\sS$ be a inclusion closed collection of some nonempty subset of $\cP$; see Definitions \ref{de1103} and \ref{Duniverse} for details. 
These conditions are assumed hereafter without further mention.
Let $\sD$ be a collection of some families of nonempty subsets of $\cY$
which satisfies that if $D\in\sD$ then there exist $B_{\cP}\in\sS$ and $B\in\cD$ such that $D(\omega)\subset B_{\cP}\times B(\omega)$ for all $\omega\in\Omega$.

\begin{de}\label{dePAC}
We say that $\Phi$ is \emph{pullback $\sD$-asymptotically compact} 
if for any $\omega\in\Omega$, $B_{\cP}\times B\in\sD$, $t_n\to+\infty$ and $(p_n,x_n)\in B_{\cP}\times B(\theta_{1,-t_n}\omega)$, 
the sequence 
$$
\{\Phi(t_n,\theta_{1,-t_n}\omega,p_n,x_n)\}
=\{\big(\theta_{2,t_n}p_n,\varphi(t_n,\theta_{1,-t_n}\omega,p_n,x_n)\big)\}
$$ 
possesses a convergent subsequence.
\end{de}
\begin{prop}\label{prop0117}
Assume that $\{\theta_{2,t}\}_{t\geq0}$ is continuous over any $B_{\cP}\in\sS$,
and admits a compact absorbing set $B\in\sS$. 
Then $\{\theta_{2,t}\}_{t\geq0}$ admits a $\sS$-global attractor $\cP^*$.

Suppose further that $\cP$ is a Polish space and $\Phi:\R_+\times\Omega\times \cY\rightarrow\cY$ 
is a continuous RDS over $(\Omega,\cF,\mathbb{P},\{\theta_{1,t}\}_{t\in\R})$.
If $\Phi$ is $\sD$-asymptotically compact and there exists a 
closed $K\in\cD$ such that for any
$\omega\in\Omega$, $B_\cP\in\sS$ and $D\in\cD$, 
\begin{equation}\label{0916:02}
\lim_{t\rightarrow\infty}\sup_{p\in B_\cP}
\dist_{\cX}\left(\varphi(t,\theta_{1,-t}\omega,p)D(\theta_{1,-t}\omega),K(\omega)\right)=0.
\end{equation}
Then $\Phi$ admits a $\sD$-random attractor $\sA$, which is defined by
\begin{equation*}
\sA(\omega)
:=\overline{\bigcup_{D\in\sD}\Omega(D(\omega))}
=\Omega(\cP^*\times K(\omega)),
\end{equation*}
where 
\begin{equation*}
\Omega(D(\omega)):=\bigcap_{s\geq0}\overline{\bigcup_{t\geq s}\Phi(t,\theta_{1,-t}\omega)D(\theta_{1,-t}\omega)}.
\end{equation*}
\end{prop}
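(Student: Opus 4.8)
The plan is to dispatch the two assertions separately. The first is the textbook construction of the global attractor of a continuous semi-dynamical system out of a compact absorbing set, and the second reduces — once a closed $\sD$-attracting set for $\Phi$ has been extracted from \eqref{0916:02} — to the abstract existence theory for pullback random attractors of asymptotically compact random dynamical systems. For the global attractor I would set $\cP^*:=\bigcap_{s\ge0}\overline{\bigcup_{t\ge s}\theta_{2,t}B}$. Since $B$ is compact and absorbs every $B_\cP\in\sS$ (in particular itself), for all large $s$ one has $\bigcup_{t\ge s}\theta_{2,t}B\subset B$, hence $\cP^*\subset B$, and therefore $\cP^*\in\sS$ by inclusion-closedness of $\sS$. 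The classical omega-limit-set argument for continuous semigroups (see e.g.\ \cite{Hale88,KR2011}) then shows that $\cP^*$ is nonempty, compact, strictly invariant ($\theta_{2,t}\cP^*=\cP^*$ for every $t\ge0$), and attracts every $B_\cP\in\sS$ in the Hausdorff semidistance, while minimality among closed attracting sets gives uniqueness; thus $\cP^*$ is the $\sS$-global attractor of $\{\theta_{2,t}\}_{t\ge0}$.

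Next I would check that $K^{\cY}:=\cP^*\times K$ is a closed member of $\sD$ that $\sD$-attracts $\Phi$ in the pullback sense. It is closed because $\cP^*$ is compact and $K(\omega)$ is closed, and it lies in $\sD$ because $\cP^*\in\sS$ and $K\in\cD$. Given $\omega\in\Omega$ and $D\in\sD$, write $D(\omega)\subset B_\cP\times B_0(\omega)$ with $B_\cP\in\sS$, $B_0\in\cD$; since $\Phi(t,\theta_{1,-t}\omega)D(\theta_{1,-t}\omega)\subset\theta_{2,t}B_\cP\times\bigcup_{p\in B_\cP}\varphi(t,\theta_{1,-t}\omega,p)B_0(\theta_{1,-t}\omega)$ and the choice $d_\cY=d_\cP+d_\cX$ makes the Hausdorff semidistance additive over products,
\begin{align*}
&\dist_\cY\big(\Phi(t,\theta_{1,-t}\omega)D(\theta_{1,-t}\omega),\,\cP^*\times K(\omega)\big)\\
&\qquad\le\dist_\cP\big(\theta_{2,t}B_\cP,\cP^*\big)+\sup_{p\in B_\cP}\dist_\cX\big(\varphi(t,\theta_{1,-t}\omega,p)B_0(\theta_{1,-t}\omega),K(\omega)\big).
\end{align*}
The first term tends to $0$ by the global-attractor property of $\cP^*$ just established, and the second tends to $0$ by hypothesis \eqref{0916:02}; hence $K^{\cY}$ is a closed $\sD$-attracting set. (That $\theta_2$ is merely a semi-dynamical system is immaterial here: the $\cP$-coordinate of $K^{\cY}$ is the strictly invariant set $\cP^*$, so $\Phi(t,\omega)K^{\cY}(\omega)$ still projects onto $\cP^*$.)

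Since in addition $\Phi$ is a continuous RDS over $(\Omega,\cF,\mathbb P,\{\theta_{1,t}\}_{t\in\R})$ on the Polish space $\cY=\cP\times\cX$ and is $\sD$-asymptotically compact in the sense of Definition \ref{dePAC}, the abstract criterion for pullback random attractors (\cite{WangBX12}; see also Definition \ref{DefRA}) yields a unique $\sD$-random attractor for $\Phi$, realized as the omega-limit set of the closed attracting set $K^{\cY}$, namely $\sA(\omega)=\Omega(\cP^*\times K(\omega))$. The remaining identity $\sA(\omega)=\overline{\bigcup_{D\in\sD}\Omega(D(\omega))}$ is then routine: the inclusion ``$\subseteq$'' holds because $\cP^*\times K\in\sD$, and for ``$\supseteq$'' any $z\in\Omega(D(\omega))$ is a limit of a sequence $\Phi(t_k,\theta_{1,-t_k}\omega)z_k$ with $t_k\to\infty$ and $z_k\in D(\theta_{1,-t_k}\omega)$, so $\dist_\cY(\Phi(t_k,\theta_{1,-t_k}\omega)z_k,\sA(\omega))\to0$ by the attraction property of $\sA$ and $z\in\sA(\omega)$ since $\sA(\omega)$ is closed; taking closures concludes.

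The step that carries the real weight — if one prefers to reprove it rather than quote \cite{WangBX12} — is the abstract existence statement itself: one must show that $\sD$-asymptotic compactness together with the closed attracting set $K^{\cY}$ forces $\Omega(\cP^*\times K(\omega))$ to be nonempty and compact (extracting convergent subsequences via asymptotic compactness and confining their limits via $K^{\cY}$), strictly $\Phi$-invariant (using continuity of $\Phi$, the cocycle identity, and a diagonal argument), and measurable in $\omega$ (writing $\Omega(\cdot)$ through countable unions and intersections and invoking measurability of $\omega\mapsto\dist_\cY(\cdot,\cdot)$ together with separability of $\cY$). I expect the measurability and strict invariance to be the most delicate points; the rest is bookkeeping with semidistances.
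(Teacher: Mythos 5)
Your argument follows essentially the same route as the paper: first build $\cP^*=\Omega(B)$ by the classical omega-limit construction for a continuous semi-dynamical system with a compact absorbing set (the paper's Lemma \ref{EGA}), then manufacture a product-shaped controlling set for $\Phi$ on $\cY=\cP\times\cX$ and invoke the abstract existence theorem for $\sD$-pullback attractors of asymptotically compact RDS. The one substantive divergence is the controlling set: you use the \emph{attracting} set $\cP^*\times K$, whereas the paper uses the \emph{absorbing} set $B\times K_{\epsilon_0}$, where $K_{\epsilon_0}(\omega)$ is the closed $\epsilon_0$-neighborhood of $K(\omega)$, which belongs to $\cD$ precisely because $\cD$ is assumed neighborhood closed. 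This difference is not cosmetic: the abstract criterion the paper actually relies on (Lemma \ref{lemAR}, after \cite{Caraballo2006,WangBX12}) is stated for a closed $\cD$-pullback \emph{absorbing} set in $\cD$, and $\cP^*\times K(\omega)$ is closed but in general not compact, so the Crauel--Flandoli-type ``compact attracting set'' version is not available either. As written, your appeal to the abstract theorem therefore skips one step; the repair is exactly the paper's fattening argument, namely that \eqref{0916:02} together with the absorption of $B_\cP$ into $B$ gives $\Phi(t,\theta_{1,-t}\omega)\bigl(B_\cP\times D(\theta_{1,-t}\omega)\bigr)\subset B\times K_{\epsilon_0}(\omega)$ for all large $t$, so that $B\times K_{\epsilon_0}\in\sD$ is absorbing. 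Your product distance-splitting estimate is correct (and in substance equivalent to the paper's two absorption statements \eqref{1101:01}--\eqref{1101:02}), and your verification of the identity $\sA(\omega)=\Omega(\cP^*\times K(\omega))=\overline{\bigcup_{D\in\sD}\Omega(D(\omega))}$ actually spells out a point the paper leaves implicit, since Lemma \ref{lemAR} a priori returns $\Omega(B\times K_{\epsilon_0}(\omega))$ rather than $\Omega(\cP^*\times K(\omega))$.
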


\begin{rem}
In the case where $\cP$ is not a Polish space, we consider the restriction of $\theta_2$ to $\cP^*$. Under this restriction, we assume that the limit in \eqref{0916:02}
is uniformly with respect to $p\in\cP^*$. Then
the above theorem remains valid for the mapping $\Phi: \R_+ \times \Omega \times \cP^*\times \cX \to \cP^* \times \cX$.
\end{rem}

Furthermore, we are still concerned with the random attractor  of 
$$
\varphi:\R_+\times\Omega\times\cP\times\cX\rightarrow\cX.
$$
To this end, we define the pullback with respect to $\theta_{2}$. 
Suppose that $\theta_{2}$ has a $\sS$-global attractor $\cP^*$ in $\cP$.
Then by \cite[Theorem 9.19]{KR2011}, we obtain that the backward extension $\Theta_2$ of $\theta_2$ on $\cP^*$,
defined by
\begin{equation*}
\Theta_{2,t}p:=\left\{q\in\cP^*:\theta_{2,|t|}q=p\right\},\quad
\forall (t,p)\in(-\infty,0]\times\cP^*,
\end{equation*}
is a set-valued semi-dynamical system on $\cP^*$ with time backward.

Motivated by the definition of pullback attractor of nonautonomous
semi-dynamical system and RDS (see e.g. \cite{KR2011, Arnold}), 
we define the random attractor of 
$\varphi:\R_+\times\Omega\times\cP^*\times\cX\rightarrow\cX$ over 
$(\Omega,\cF,\mathbb{P},\{\theta_{1,t}\}_{t\in\R})$ 
and $(\cP^*, \{\Theta_{2,t}\}_{t\leq0})$.

\begin{de}\label{deRPA}
Assume that $\varphi(t,\omega,\cdot,\cdot):\cP^*\times\cX\rightarrow \cX$ is continuous for all $t\in\R_+$ and $\omega\in\Omega$.
We say that $\cA_{\cX}\in\cD$ is a {\it $\cD$-random attractor} of $\varphi$ if
\begin{itemize}
\item[(i)] $\cA_{\cX}=\{A(\omega,p):(\omega,p)\in\Omega\times\cP^*\}$ is a family of nonempty and compact subset of $\cX$. 

\item[(ii)] $\omega\mapsto A(\omega,p)$ is measurable for any $p\in\cP^*$.

\item[(iii)] $\cA_{\cX}$ is {\it invariant} in the sense that for all $t\geq0$ and $p\in\cP^*$,
  \begin{equation}\label{1106:02}
  \bigcup_{q\in\Theta_{2,-t}p}\varphi(t,\theta_{1,-t}\omega,q)A(\theta_{1,-t}\omega,q)=A(\omega,p).
  \end{equation}
  
\item[(iv)] $\cA_{\cX}$ is $\cD$-{\it pullback attracting}, i.e. for any $(\omega,p)\in\Omega\times\cP^*$ 
  and $D\in\cD$,
  \begin{equation}\label{0117:01}
  \lim_{t\rightarrow\infty}\sup_{q\in\Theta_{2,-t}(p)}\dist_{\cX}
  \left(\varphi(t,\theta_{1,-t}\omega,q)D(\theta_{1,-t}\omega),
  A(\omega,p)\right)=0.
  \end{equation}
\end{itemize}
\end{de}

\begin{rem}
\begin{itemize}
  \item By Definition \ref{deRPA}, if it exists, the $\cD$-random attractor of $\varphi$ is unique in $\cD$. 
  \item The measurability of $\omega \mapsto A(\omega, p)$ for $p \in \cP^*$ is given in
  the sense of Definition \ref{de0916}.
\end{itemize}
\end{rem}

\begin{de}
$\varphi$ is said to be \emph{pullback $\cD$-asymptotically compact} if for any $\omega\in\Omega$, $D\in\cD$, $t_n\to+\infty$, $p_n\in \cP^*$, $q_n\in \Theta_{2,-t_n}p_n$ and $x_n\in D(\theta_{1,-t_n}\omega)$, the sequence $\varphi(t_n,\theta_{1,-t_n}\omega,q_n,x_n)$ possesses a convergent subsequence.
\end{de}
\begin{rem}\label{1016--1}
 The pullback $\cD$-asymptotically compact property of $\varphi$ implies the pullback $\cD\times \cP^*$-asymptotically compact property of $\Phi$ since $p_n\in\Theta_{2,-t_n}\theta_{2,t_n}p_n$.
\end{rem}

\begin{thm}\label{thm0330}
Assume that $\varphi(t,\omega,\cdot,\cdot):\cP^*\times\cX\rightarrow \cX$ is continuous for all $t\in\R_+$ and $\omega\in\Omega$ and is pullback $\cD$-asymptotically compact.
Suppose further that there exists a closed $K\in\cD$ such that for any $\omega\in\Omega$
and $B\in\cD$,
\begin{equation*}
\lim_{t\rightarrow\infty}\sup_{p\in \cP^*}
\dist_{\cX}\left(\varphi(t,\theta_{1,-t}\omega,p)B(\theta_{1,-t}\omega),K(\omega)\right)=0.
\end{equation*}
Then $\varphi$ admits a $\cD$-random attractor 
$\cA_{\cX}=\{\cA(\omega,p):(\omega,p)\in\Omega\times\cP^*\}$, where 
$$\cA(\omega,p):=\left\{x\in \cX:~(p,x)\in \sA(\omega)\right\}$$
and $\sA$ is the $\sD$-random attractor of $\Phi$.
\end{thm}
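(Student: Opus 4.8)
The plan is to read off all four requirements of Definition~\ref{deRPA} for $\cA_\cX$ from the corresponding properties of the $\sD$-random attractor $\sA$ of the skew product $\Phi$. Existence of $\sA$ is granted by Proposition~\ref{prop0117} (restricted, if necessary, to $\Phi:\R_+\times\Omega\times\cP^*\times\cX\to\cP^*\times\cX$ as in the remark following it): $\Phi$ is a continuous RDS since $\varphi(t,\omega,\cdot,\cdot)$ is continuous and $\theta_2$ is continuous on $\cP^*$; it is pullback $\sD$-asymptotically compact by Remark~\ref{1016--1}; the hypothesis of the theorem supplies the closed attracting family $\cP^*\times K$ required in \eqref{0916:02}; and $\cP^*\times D\in\sD$ for every $D\in\cD$ because $\cP^*\in\sS$ (by inclusion-closedness, being contained in the compact absorbing set). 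The one fact I would record first is the localization $\sA(\omega)\subseteq\cP^*\times K(\omega)$ for all $\omega$: the $\cP$-component of $\Phi(t,\theta_{1,-t}\omega)\big(\cP^*\times K(\theta_{1,-t}\omega)\big)$ equals $\theta_{2,t}\cP^*=\cP^*$, while by the attracting hypothesis (with $B=K$) the $\cX$-component has Hausdorff semidistance to the closed set $K(\omega)$ tending to $0$; since $\sA(\omega)=\Omega\big(\cP^*\times K(\omega)\big)$ and $\cP^*\times K(\omega)$ is closed, passing to limits along the sequences defining $\Omega$ gives the inclusion. In particular $\sA(\omega)\subseteq\cP^*\times\cX$.

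\textbf{Invariance.}
The heart of the argument is Definition~\ref{deRPA}(iii). For fixed $t\ge0$ and $p\in\cP^*$ I would unwind the slices:
\[
\bigcup_{q\in\Theta_{2,-t}p}\big(\{q\}\times A(\theta_{1,-t}\omega,q)\big)
=\sA(\theta_{1,-t}\omega)\cap\big(\Theta_{2,-t}p\times\cX\big)
=\sA(\theta_{1,-t}\omega)\cap\Phi(t,\theta_{1,-t}\omega)^{-1}\big(\{p\}\times\cX\big),
\]
where the first equality is just the definition of the slices $A(\theta_{1,-t}\omega,q)$ and the second uses $\sA(\theta_{1,-t}\omega)\subseteq\cP^*\times\cX$ together with $\Theta_{2,-t}p=\{q\in\cP^*:\theta_{2,t}q=p\}$. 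Applying the elementary set identity $f(E\cap f^{-1}(F))=f(E)\cap F$ with $f=\Phi(t,\theta_{1,-t}\omega)$, $E=\sA(\theta_{1,-t}\omega)$, $F=\{p\}\times\cX$, and invoking the $\Phi$-invariance $\Phi(t,\theta_{1,-t}\omega)\sA(\theta_{1,-t}\omega)=\sA(\omega)$, the left-hand set is carried onto $\sA(\omega)\cap(\{p\}\times\cX)=\{p\}\times A(\omega,p)$; reading off the $\cX$-component yields \eqref{1106:02}.

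\textbf{Compactness, nonemptiness, measurability.}
Compactness of $A(\omega,p)$ is immediate: it is the preimage, under the closed embedding $x\mapsto(p,x)$, of the closed subset $\sA(\omega)\cap(\{p\}\times\cX)$ of the compact set $\sA(\omega)$. Nonemptiness I would get from asymptotic compactness: fix $(\omega,p)$ and $t_n\to\infty$; each $\Theta_{2,-t_n}p$ is nonempty because $\theta_{2,t_n}\cP^*=\cP^*$, so pick $q_n\in\Theta_{2,-t_n}p$ and $x_n\in K(\theta_{1,-t_n}\omega)$, giving $(q_n,x_n)\in\cP^*\times K(\theta_{1,-t_n}\omega)$ with $\cP^*\times K\in\sD$; pullback $\sD$-asymptotic compactness of $\Phi$ yields a subsequence with $\Phi(t_n,\theta_{1,-t_n}\omega)(q_n,x_n)\to(p,y)$ (its $\cP$-coordinate is $\theta_{2,t_n}q_n=p$ for all $n$), and by the standard $\omega$-limit characterization $(p,y)\in\Omega\big(\cP^*\times K(\omega)\big)=\sA(\omega)$, hence $y\in A(\omega,p)$. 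For Definition~\ref{deRPA}(ii): since $\omega\mapsto\sA(\omega)$ is a measurable family of closed subsets of the Polish space $\cY$, its graph lies in $\cF\otimes\cB(\cY)$; the graph of $\omega\mapsto A(\omega,p)$ is the section $\{(\omega,x):(p,x)\in\sA(\omega)\}\in\cF\otimes\cB(\cX)$, and a closed-valued multifunction with measurable graph is measurable in the sense of Definition~\ref{de0916} (equivalently, one intersects the compact-valued measurable multifunction $\sA$ with the fixed closed set $\{p\}\times\cX$ and pushes forward by the continuous projection).

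\textbf{Pullback attraction, membership, and the main obstacle.}
For Definition~\ref{deRPA}(iv) I would argue by contradiction: if it fails for some $(\omega,p)$ and $D\in\cD$, there are $\eps>0$, $t_n\to\infty$, $q_n\in\Theta_{2,-t_n}p$ and $x_n\in D(\theta_{1,-t_n}\omega)$ with $\dist_\cX\big(\varphi(t_n,\theta_{1,-t_n}\omega,q_n)x_n,A(\omega,p)\big)\ge\eps$; pullback $\cD$-asymptotic compactness of $\varphi$ gives a subsequence with $\varphi(t_n,\theta_{1,-t_n}\omega,q_n)x_n\to y$, so $\Phi(t_n,\theta_{1,-t_n}\omega)(q_n,x_n)\to(p,y)$ with $(q_n,x_n)\in\cP^*\times D(\theta_{1,-t_n}\omega)$ and $\cP^*\times D\in\sD$, whence $(p,y)\in\Omega\big(\cP^*\times D(\omega)\big)\subseteq\sA(\omega)$ and $y\in A(\omega,p)$, contradicting $\dist_\cX(y,A(\omega,p))\ge\eps$. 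That $\cA_\cX\in\cD$ follows from $A(\omega,p)\subseteq K(\omega)\in\cD$ (by the localization of $\sA$) and the neighborhood-closedness of $\cD$, and uniqueness within $\cD$ is the remark after Definition~\ref{deRPA}. The main obstacle is the invariance step: one must make the time-backward, set-valued flow $\Theta_2$ interlock with the ordinary forward invariance of $\sA$, and the section identities above are exact only after securing $\sA(\omega)\subseteq\cP^*\times\cX$, so that $\sA(\theta_{1,-t}\omega)\cap\Phi(t,\theta_{1,-t}\omega)^{-1}(\{p\}\times\cX)$ really coincides with $\sA(\theta_{1,-t}\omega)\cap(\Theta_{2,-t}p\times\cX)$; a secondary technical point is the measurability of the sections, which rests on standard facts about compact-valued measurable multifunctions.
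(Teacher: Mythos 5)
Your proposal is correct and follows essentially the same route as the paper: everything is read off from the skew-product attractor $\sA$ via the slice identity $\{p\}\times\cA(\omega,p)=\Pi_1^{-1}(p)\cap\sA(\omega)$, with asymptotic compactness supplying nonemptiness and pullback attraction, and the invariance of $\sA$ supplying \eqref{1106:02}. The only cosmetic differences are that you package the invariance step as the single identity $f(E\cap f^{-1}(F))=f(E)\cap F$ (after securing $\sA(\omega)\subseteq\cP^*\times\cX$) where the paper proves the two inclusions separately, and you run the attraction contradiction directly for a general $D\in\cD$ rather than first reducing to $K$; both variants are valid.
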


\begin{rem}
Let $\cX$ be a Hilbert space.
In Section \ref{subsec2}, we let $\cD$ be the tempered set, i.e. for any $D\in\cD$ and
$\varepsilon>0$,
$$
\lim_{t\rightarrow+\infty}{\rm e}^{-\varepsilon t}\|D(\theta_{-t}\omega)\|_{\cX}^2=0,
$$  
where $\|B\|_{\cX}:=\sup_{x\in B}\|x\|_{\cX}$ for any subset $B\subset \cX$.
And in Section \ref{RMVSPDE}, we will let $\cD_{\epsilon_0}$ be the weakly tempered set with a 
fixed rate $\varepsilon_0>0$, i.e. for any $D\in\cD_{\epsilon_0}$,
$$
\lim_{t\rightarrow+\infty}{\rm e}^{-\varepsilon_0 t}\|D(\theta_{-t}\omega)\|_{\cX}^2=0.
$$
\end{rem}

\subsection{Random attractors for McKean-Vlasov SODE}\label{subsec2}

Let $C_0(\R, \R^{m})$ denote the space of continuous functions $\phi: \R \to \R^{m}$ such that $\phi(0) = 0$, endowed with the topology of uniform convergence on compact subsets of $\R$. 
Denote by $\cB(C_0(\R, \R^{m}))$ the Borel $\sigma$-algebra on $C_0(\R, \R^{m})$, 
and by $\mathbb{P}_W$ the Wiener probability measure on $\cB(C_0(\R, \R^{m}))$.
Let
$$
(\Omega, \mathcal{F}, \mathbb{P},\{\theta_{t}\}_{t\in\R})
:=(C_0(\R, \R^{m}),\cB(C_0(\R, \R^{m})),\mathbb{P}_W,\{\theta_{t}\}_{t\in\R}),
$$
where for $t \in \R$,
\begin{align*}
\theta_{t}: \Omega \to \Omega,~
\omega \mapsto \theta_{t}\omega:= \omega(t + \cdot) - \omega(t).
\end{align*}
Consider the canonical Brownian motion $W_t:C(\R, \R^{m})\to \R^{m}, ~\omega\mapsto \omega(t)$.

In this section, 
we consider the following $d$-dimensional McKean-Vlasov SDE
\begin{align}\label{McKean-Vlasov-SDE}
&\rmd X_t=b(X_t,\sL_{X_t})\,\rmd t+\sigma(X_t)\circ\rmd W_t,
\end{align}
where $b:\R^d\times\cP(\R^d)\rightarrow\R^d$, 
$\sigma:\R^d\rightarrow\R^d\times\R^m$ and
$\circ$ refers to the Stratonovich symmetric integral.
We study the random attractor on the product space $\R^d\times\cP(\R^{d})$ by decomposing the above equation into the following system:
\begin{equation}\label{1111}
\left\{
\begin{aligned}
&\rmd X_{t}=b(X_t,\sL_{Y_t})\,\rmd t+\sigma(X_t)\circ\rmd W_t,~X_s=x\in\R^d,\\
&\rmd Y_{t}=b(Y_t,\sL_{Y_t})\,\rmd t+\sigma(Y_t)\circ\rmd W_t,~\sL_{Y_s}=\mu\in\cP_2(\R^d).
\end{aligned}
\right.
\end{equation}
Under the conditions of unique solvability and differentiability of $\sigma$, system \eqref{1111} can be transformed into the It\^o SDE
\begin{equation}\label{1111-2}
\left\{
\begin{aligned}
&\rmd X_{t}=\overline{b}(X_t,\sL_{Y_t})\,\rmd t+\sigma(X_t) \rmd W_t,~X_s=x,\\
&\rmd Y_{t}=\overline{b}(Y_t,\sL_{Y_t})\,\rmd t+\sigma(Y_t) \rmd W_t,~\sL_{Y_s}=\mu,
\end{aligned}
\right.
\end{equation}
where $\overline{b}(x,\mu):=b(x,\mu)+\frac{1}{2}\sum_{i=1}^m\partial_x\sigma_i(x)\sigma_i(x)$.
\cite{LM} suggests that the existence and uniqueness of solutions to equation \eqref{1111-2} are guaranteed under the following conditions: 
\begin{enumerate}[label=(\bf C\arabic*)]
  \item $\sigma$ belongs to $C^{\infty}(\R^d,\R^{d\times m})$, and its first derivative $\partial_x\sigma$ is bounded. 
  Moreover, $\sigma$ satisfies 
  $$
  [\sigma_i,\sigma_j]_k:=\sum_{l=1}^d\Big(\sigma_{li}(x)\frac{\partial\sigma_{kj}(x)}{\partial x_l}-\sigma_{lj}(x)\frac{\partial\sigma_{ki}(x)}{\partial x_l}\Big)=0
  $$
   for $1\leq i,j\leq m$ and $1\leq k\leq d$.
   
    \item 
    For any \(N\geq 1\), there exists a constant \(C_{N}\geq 0\) such that for any \(|x|,|y|\leq N\) and \(\operatorname{supp}\mu,\operatorname{supp}\nu\subset B(0,N)\),
    \[
     |\overline{b}(x,\mu)-\overline{b}(y,\nu)|\leq C_{N} \big{(}|x-y|+W_{2}(\mu,\nu)\big{)}.
    \]
    Here \(B(0,N)\) denotes the closed ball in \(\mathbb{R}^{d}\) centered at the origin with radius \(N\).

    \item For any bounded sequences \(\{x_{n},\mu_{n}\}\in\mathbb{R}^{d}\times\mathcal{P}_{2}(\mathbb{R}^{d})\) with \(x_{n}\to x\) and \(\mu_{n}\to\mu\) weakly in \(\mathcal{P}_{2}(\mathbb{R}^{d})\) as \(n\to\infty\), we have
    \[
    \lim_{n\to\infty}|b(x_{n},\mu_{n})-b(x,\mu)|=0.
    \]
    
    \item There exists a nonnegative function \(V\in \Xi\) satisfying that there exist $\alpha>0$ and $\mathcal{M}>0$ such that for all \((x,\mu)\in\mathbb{R}^{d}\times\mathcal{P}_{2}(\mathbb{R}^{d})\)
   \begin{align}\label{1009-4}
       &\cL V(x,\mu)  \leq  -\alpha V(x,\mu)+\mathcal{M}, 
   \end{align}
    where 
       \begin{align*}
       \cL V(x,\mu):=
       &
        \big\langle \partial_x V(x,\mu), \overline{b}(x,\mu) \big\rangle+\frac{1}{2} \sum_{i=1}^m\sigma_i(x)\partial^2_{xx}V(x,\mu)\sigma_i^T(x) \\
       &
        + \int_{\R^d}\big\langle \partial_{\mu} V(x,\mu)(y), \overline{b}(y,\mu) \big\rangle+\frac{1}{2} \sum_{i=1}^m\sigma_i(y)\partial_y\partial_{\mu}V(x,\mu)(y)\sigma_i^T(y)\,\mu(\rmd y).
       \end{align*}

    \item There exist $\ell>1$, $p\geq4\ell$ and $K>0$ such that for any $(x,\mu)\in \mathbb R^d \times \mathcal P_2(\mathbb R^d)$,
      \begin{align}\label{1126-1}
         |b(x,\mu)|^{2\ell}+|\sigma(x)|^{4\ell}+|x|^p \leq K(1+V(x,\mu)).
      \end{align}

    \item There exists \(L>0\) such that for any \(\mu,\nu\in\mathcal{P}_{2}(\mathbb{R}^{d}),\pi\in\mathcal{C}(\mu,\nu)\),
 \begin{align*}
    &\int_{\mathbb{R}^{d}\times\mathbb{R}^{d}}2\langle x-y,\overline{b}(x,\mu)-\overline{b}(y,\nu)\rangle\pi(\rmd x,\rmd y) \leq L\int_{\mathbb{R}^{d}\times\mathbb{R}^{d}}(1+\mu(|\cdot|^{2})+\nu(|\cdot|^{2}))|x-y|^{2}\pi(\rmd x,\rmd y).
\end{align*}
\end{enumerate}
The solution of \eqref{1111} is defined as $(X _{s,t}^{x,\mu}(\omega),P_{t-s}^*\mu)$,
where $\{P_t^*\}_{t\geq0}$ is the semigroup of transfer operators generated by 
the second equation in \eqref{1111}.

\begin{rem}\label{rem0929}
\begin{itemize}
  \item
  By adopting the method from \cite{IS}, \textbf{(C1)} enables us to establish the conjugacy $\overline{\cT}(\omega)=Id_{\cP_V(\R^d)}\times \cT(\omega)$ between system \eqref{1111} and a certain random differential equation. 
  See Proposition \ref{thm1} for more details.
  Consequently, we obtain the following RDS
\begin{align*}
\Phi:\R_+\times\Omega\times\cP_V(\R^d)\times \R^d&\to\cP_V(\R^d)\times \R^d\\
(t,\omega,\mu,x)&\mapsto (P_t^*\mu,X^{x,\mu}_{0,t}(\omega))=:(P_{t}^*\mu, X _{t}^{x,\mu}(\omega)).
\end{align*}
and the cocycle 
\begin{align*}
\varphi:\R_+\times\Omega\times  \cP_V(\R^d)\times \R^d&\to\R^d\\
(t,\omega,\mu,x)&\mapsto X _{t}^{x,\mu}(\omega)
\end{align*}
over $(\Omega,\cF,\mathbb{P},\{\theta_{1,t}\}_{t\in\R})$ and 
$(\cP_V(\R^d), \{P_t^*\}_{t\geq0})$.
Here 
$$
\cP_V(\R^d):=\{\mu\in\cP_2(\R^d):\int_{\R^d}V(x,\mu)\mu(\rmd x)<\infty\}.
$$

\item
The diffusion term $\sigma$ in our considered equation does not depend on the distribution variable. 
Otherwise, the conjugate mapping would become distribution-dependent, rendering the construction of the conjugate equation impossible.
\end{itemize}
\end{rem}

We need to introduce the following condition for the random differential equation \eqref{RDE1}. 
 For $(x,\mu,z)\in\R^d\times\cal{P}_2(\R^d)\times\R^m$, let
  \begin{align*}
  \widetilde{\cal{L}}^\eta V(x,\mu)(z):=
  &
  \big\langle \partial_x V(x,\mu), h^{\eta}(z,x,\mu) \big\rangle+ \int_{\R^d}\big\langle \partial_{\mu} V(x,\mu)(y), \overline{b}(y,\mu)\big\rangle\mu(\rmd y)\\
  & 
  +\frac{1}{2}\sum_{i=1}^m\int_{\R^d}\sigma_i(y)\partial_y\partial_{\mu}V(x,\mu)(y)\sigma_i^T(y)\,\mu(\rmd y),
  \end{align*}  
  where
  $$
  h^{\eta}(z,x,\mu):=(\partial_xu)^{-1}(z,x)\Big(b(u(z,x),\mu)+\eta\sum_{i=1}^{m}\sigma_i(u(z,x))z^i\Big).
  $$
  Here
  $u:\R^m\times\R^d\rightarrow\R^d$ is the unique solution to the following equation under \textbf{(C1)}
  \begin{equation*}
  \begin{cases}
   \frac{\p u}{\p z_i}(z,x)=\sigma_i(u(z,x)), \quad 1\leq i\leq m\\
   u(0,x)=x.
  \end{cases}
  \end{equation*}

 Let us revisit the concept of subexponential growth. A function $K:\R^m\to\R$ is said to be \emph{subexponentially growing} if there exists $c>0$ such that $z\mapsto K(z)e^{-c|z|}$ is bounded on $\R^m$.
\begin{itemize}
\item[\textbf{(C7)}] There exist $\eta>0$, two subexponentially growing functions $K:\R^{m}\rightarrow\R$ and $L:\R^{m}\rightarrow\R_+$ such that for any $(x,\mu,z)\in\R^d\times\cal{P}_2(\R^d)\times\R^m$,
  \begin{align}\label{0102-1}
  \widetilde{\cal{L}}^\eta V(x,\mu)(z)\leq K(z)V(x,\mu)+L(z)
  \end{align}
  and
  \begin{align}\label{0902-2}
  \int_{\R^m}K(z)\sL_{Z^I_0}(\rmd z)<0,
  \end{align}
  where 
  $
  Z_t^I=\int_{-\infty}^t\rme^{-\eta(t-s)}\rmd W_s.
  $
\end{itemize}

\begin{rem}
\begin{itemize}
\item 
If the coefficient $\sigma$ is linear, then $\widetilde{\mathcal{L}}^\eta V$ only depends on $\sigma$ rather than $u$, as detailed in \cite{IS}.
\item 
Although no restrictions on $\eta$ are required when establishing the conjugate relationship, a sufficiently large $\eta$ is necessary to guarantee the dissipative condition \eqref{0902-2}; see Lemma \ref{0106-3} for details. 
\end{itemize}
\end{rem}

We say that $B_{\cP}\subset\cP_V(\R^d)$ is {\it bounded} if there exists $R>0$ such that 
$\sup_{\mu\in B_{\cP}}\int_{\R^d}V(x,\mu)\mu(\rmd x)\leq R$.
Define
$$
\sS:=\left\{B_{\cP}\subset \cP_V(\R^d) \text{ is bounded}: B_{\cP} 
\text{ is closed under } \cW_2\right\}.
$$
Now we give our results about McKean-Vlasov SODEs.  

\begin{thm}\label{thmSODE}
Assume that  \textbf{(C4)} and \textbf{(C5)} hold. Then the system $P_t^*$ admits a $\sS$-global attractor $\cP^*$ in $(\cP_V(\R^d),\cW_2)$.

Suppose further that \textbf{(C1)} and \textbf{(C7)} hold. 
Then we have the following conclusions.
   \begin{itemize}
   \item[(i)] 
   $\Phi:\R_+\times\Omega\times\cP^*\times\R^d\to\cP^*\times\R^d$ admits a $\overline{\cT}\sD$-random attractor $\overline{\cT}\sA$, 
   where $\overline{\cT}\sD:=\{\{\overline{\cT}(\omega)D(\omega)\}_{\omega\in\Omega}:D\in\sD\}$ and $\sA$ is the random attractor of the conjugate system of $\Phi$.
   
   \item[(ii)]
   $\varphi:\R_+\times\Omega\times \cP^*\times \R^d\to\R^d$ admits a $\cT\cD$-random attractor $\cA:=\{\cA(\omega,\mu):(\omega,\mu)\in\Omega\times\cP^*\}$,
   where
   $\cT\cD:=\{\{\cT(\omega)D(\omega)\}_{\omega\in\Omega}:D\in\cD\}$ and $\cA(\omega,\mu):=\{x\in\R^d:(\mu,x)\in\overline{\cT}(\omega)\sA(\omega)\}$.
  \end{itemize}
\end{thm}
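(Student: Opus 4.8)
The plan is to establish the theorem in two stages, corresponding to the two parts of the statement. For the first assertion --- the existence of a $\sS$-global attractor $\cP^*$ for $\{P_t^*\}_{t\geq 0}$ in $(\cP_V(\R^d),\cW_2)$ --- I would first verify that $P_t^*$ is a (semi-)dynamical system on $\cP_V(\R^d)$ and that $\mu\mapsto P_t^*\mu$ is continuous with respect to $\cW_2$ on each bounded $B_\cP\in\sS$; here condition \textbf{(C5)} provides the a priori moment bound $\E|Y_t|^p\leq C(1+\E V(Y_0,\sL_{Y_0}))$ that controls the $\cW_2$-topology, and \textbf{(C3)} (continuity of $b$ in both variables) together with a standard Gronwall estimate gives the continuous dependence on the initial law. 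Next, applying the Lyapunov estimate \textbf{(C4)}, namely $\cL V(x,\mu)\leq -\alpha V(x,\mu)+\cM$, and integrating/taking expectations, I would obtain $\int_{\R^d} V(x,P_t^*\mu)\,(P_t^*\mu)(\rmd x)\leq \rme^{-\alpha t}\int V\,\rmd\mu+\tfrac{\cM}{\alpha}$, which shows that the set $B^*:=\{\nu\in\cP_V(\R^d):\int V\,\rmd\nu\leq \tfrac{\cM}{\alpha}+1\}$ is absorbing in $\sS$. Compactness of $B^*$ in $(\cP_V(\R^d),\cW_2)$ follows from \textbf{(C5)}, which forces uniform integrability of the $|x|^2$-tails of all measures with bounded $V$-energy (since $|x|^p\leq K(1+V)$ with $p\geq 4\ell>2$), hence relative compactness in $\cW_2$; closedness of $B^*$ uses lower semicontinuity of $\mu\mapsto\int V\,\rmd\mu$. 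Then the existence of $\cP^*$ is the classical global-attractor theorem for semi-dynamical systems with a compact absorbing set (cf. \cite{Hale88, KR2011}); this is exactly the first hypothesis of Proposition \ref{prop0117}.

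For the second and harder assertion, I would first invoke \textbf{(C1)} and Proposition \ref{thm1} (via the method of \cite{IS}) to obtain the conjugacy $\overline{\cT}(\omega)=Id_{\cP_V}\times\cT(\omega)$ transforming system \eqref{1111} into the random differential equation \eqref{RDE1}, so that it suffices to produce a random attractor for the conjugate RDS $\widetilde\Phi$ and transport it back. The conjugate cocycle $\widetilde\varphi$ is now generated by a genuine random ODE (pathwise, no stochastic integral), so continuity of $(\mu,x)\mapsto\widetilde\varphi(t,\omega,\mu,x)$ on $\cP^*\times\R^d$ is classical ODE dependence-on-data. The core estimate is the absorption bound: apply the conjugate Lyapunov inequality \textbf{(C7)}, $\widetilde{\cL}^\eta V(x,\mu)(z)\leq K(z)V(x,\mu)+L(z)$, along the Ornstein--Uhlenbeck-type driving path $Z^I_t$, and use the ergodic/dissipativity condition \eqref{0902-2}, $\int K(z)\,\sL_{Z^I_0}(\rmd z)<0$, together with the subexponential growth of $K,L$, to run a pullback Gronwall argument: this yields a tempered (random) closed bounded set $\widetilde K(\omega)\subset\R^d$ such that, uniformly in $p\in\cP^*$, $\widetilde\varphi(t,\theta_{-t}\omega,p)\widetilde D(\theta_{-t}\omega)\subset \widetilde K(\omega)$ for $t$ large --- verifying hypothesis \eqref{0916:02} of Proposition \ref{prop0117}. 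Since $\R^d$ is finite-dimensional, a closed bounded random set is compact, so pullback $\cD$-asymptotic compactness of $\widetilde\varphi$ is immediate; by Remark \ref{1016--1} the same holds for $\widetilde\Phi$. Proposition \ref{prop0117} then gives the $\sD$-random attractor $\sA$ for $\widetilde\Phi$ on $\cP^*\times\R^d$, and Theorem \ref{thm0330} extracts the fibrewise $\cD$-random attractor $\cA(\omega,\mu)=\{x:(\mu,x)\in\sA(\omega)\}$ for $\widetilde\varphi$. Finally, pulling back through $\overline{\cT}$ --- using that $\overline{\cT}(\omega)$ is a homeomorphism and that it intertwines $\widetilde\Phi$ with $\Phi$ and maps $\sD$ to $\overline{\cT}\sD$ (resp. $\cD$ to $\cT\cD$) --- transfers all four defining properties of a random attractor, giving the $\overline{\cT}\sD$-random attractor $\overline{\cT}\sA$ of $\Phi$ and the $\cT\cD$-random attractor $\cA$ of $\varphi$ with $\cA(\omega,\mu)=\{x\in\R^d:(\mu,x)\in\overline{\cT}(\omega)\sA(\omega)\}$.

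I expect the main obstacle to be the uniform-in-$p$ pullback absorption estimate \eqref{0916:02} for the conjugate cocycle: one must show that the random bounded set produced by the Lyapunov/Gronwall argument can be chosen \emph{independently of} $p\in\cP^*$. The difficulty is that the drift in the $X$-equation of \eqref{1111-2} depends on $\sL_{Y_t}=P^*_{t}\mu$, which varies with the initial law $\mu$; however, since $\cP^*$ is compact (hence bounded in $V$-energy) and is invariant under the flow $\{P^*_t\}$, the quantities $\int V(y,P^*_t\mu)\,(P^*_t\mu)(\rmd y)$ are uniformly bounded over $p=\mu\in\cP^*$ and all $t$, which is precisely what is needed to make the $K(z)$- and $L(z)$-driven Gronwall bound uniform in $p$. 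A secondary technical point is checking measurability of $\omega\mapsto\cA(\omega,\mu)$ in the sense of Definition \ref{de0916} and the continuity/measurability of the conjugacy $\cT$; these follow from Proposition \ref{thm1} and standard measurable-selection arguments, and I would treat them briefly rather than in detail.
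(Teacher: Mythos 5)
Your overall architecture coincides with the paper's: (C4) plus the It\^o formula for measure functionals gives the exponential decay of $\int V\,\rmd(P_t^*\mu)$, hence a compact (via (C5), since $|x|^p\leq K(1+V)$ with $p>2$) $\cW_2$-closed absorbing set and the $\sS$-global attractor by Lemma \ref{EGA}; then (C1) yields the conjugate random ODE, (C7) plus the Birkhoff ergodic theorem gives a tempered pullback-absorbing set $\widetilde K(\omega)$ uniform in $\mu$, finite-dimensionality gives asymptotic compactness, and Proposition \ref{prop0117}, Theorem \ref{thm0330} and the conjugacy (Lemma \ref{lem0928}) finish the proof. Two remarks on where your sketch is lighter than the actual argument. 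First, your claim that continuity of $\mu\mapsto P_t^*\mu$ on bounded sets follows from \textbf{(C3)} ``together with a standard Gronwall estimate'' does not go through under the stated hypotheses: the first part of the theorem assumes only \textbf{(C4)} and \textbf{(C5)}, and mere joint continuity of $b$ gives no Gronwall-type differential inequality for $\cW_2(P_t^*\mu_n,P_t^*\mu)^2$ (a Gronwall route would require the coupling condition \textbf{(C6)} or a Lipschitz bound). The paper instead proves Lemma \ref{lem1022} by a compactness argument: Aldous's tightness criterion from the moment bounds of \textbf{(C5)}, Skorokhod representation, identification of the limit as a weak solution, and Vitali convergence. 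Second, the uniformity in $p\in\cP^*$ of the absorption estimate, which you single out as the main obstacle and propose to resolve via compactness and invariance of $\cP^*$, is in fact immediate from \textbf{(C7)}: the bound $\widetilde{\cL}^\eta V(x,\mu)(z)\leq K(z)V(x,\mu)+L(z)$ holds for all $\mu$ with $K,L$ independent of $\mu$, so the Gronwall bound \eqref{0102-3} and hence the set $\widetilde K(\omega)$ obtained through \eqref{1126-1} are automatically uniform over $\cP^*$; no extra compactness argument is needed there. Neither point changes the strategy, but the first one is a genuine missing step as written.
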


We apply the above results to the following example.
\begin{exam}\label{ex01}
Consider the following system with nonlinear multiplicative noise:
\begin{numcases}
{}
\rmd X_t=b(X_t,\sL_{Y_t})\,\rmd t+\sigma(X_t)\circ\rmd W_t,
\quad X_s=x\in\R,\label{0929:01}\\
\rmd Y_t=b(Y_t,\sL_{Y_t})\,\rmd t+\sigma(Y_t)\circ\rmd W_t,
\quad \sL_{Y_s}=\mu\in \cP_2(\R). \label{0929:02}
\end{numcases}
Here $W_t$ is a 1-dimensional Brownian motion, 
   $$b(x,\mu):=-x^3\Big(\int_{\R}y^2\mu(\rmd y)\Big)^5-10x,$$
   and
\begin{equation}
\sigma(x):=
\left\{
\begin{aligned}
\nonumber
&\frac{1}{2}x-\frac{1}{2}M,~&&x\in(-\infty,-M-\varepsilon),\\
&\overline{\sigma}_1(x),~&&x\in[- M-\varepsilon, -M+\varepsilon],\\
&x,~&&x\in(- M+\varepsilon, M-\varepsilon),\\
&\overline{\sigma}_2(x),~&&x\in[M-\varepsilon, M+\varepsilon],\\
&\frac{3}{2}x-\frac{1}{2}M,~&&x\in(M+\varepsilon,+\infty),\\
\end{aligned}
\right.
\end{equation}
where $0<\varepsilon<M$, $\overline{\sigma}_1$ and $\overline{\sigma}_2$ are two functions which make $\sigma:\R\to\R$ smooth and strictly increasing. 
By the proof in Section \ref{secex01}, we obtain that 
$\Phi:\R_+\times\Omega\times\cP^*\times\R\rightarrow\cP^*\times\R$
generated by \eqref{0929:01} and \eqref{0929:02} admits a random attractor, 
where $\cP^*$ is the $\sS$-global attractor of $(P_t^*,\cP_8(\R))$.
\end{exam}

\subsection{Random attractors for McKean-Vlasov SPDE}\label{RMVSPDE}

In this section, we consider the random attractor for McKean-Vlasov SPDE, including 
McKean-Vlasov stochastic reaction-diffusion equations and McKean-Vlasov stochastic 2D Navier-Stokes equations. 
In this paper, we address the well-posedness of McKean-Vlasov SPDE within a monotone framework. 
To this end, we introduce the following assumptions.

Let $(H,\langle\cdot,\cdot\rangle)$ be a separable Hilbert space with norm $\|\cdot\|$
and $(V,\|\cdot\|_V)$ be a reflexive Banach space such that $V\subset H$ compactly 
and densely. In particular, there exists a constant $\gamma>0$ such that
$\gamma\|v\|\leq \|v\|_V$ for all $v\in V$.
Denote by $V^*$ the dual space of $V$.
In this section, we always assume that there exist constants $\lambda_1,\lambda_2,C>0$, 
$\kappa\geq2$ and $\beta\geq0$ such that 
$F:H\times \cP(H)\rightarrow H$ satisfies
\begin{itemize}
\item[{\bf(H1)}]
For any $w\in V$, the map
$$
V\times\cP_2(H)\ni(u,\mu)\mapsto ~_{V^*}\langle F(u,\mu),w\rangle_V
$$
is continuous.
\item[{\bf(H2)}]
For any $u,v\in V$ and $\mu\in\cP_2(H)$, $\nu\in\cP_\kappa(H)$,
\begin{align*}
2_{V^*}\langle F(u,\mu)-F(v,\nu),u-v\rangle_V
\leq 
&
C(1+\rho(v)+\nu(\|\cdot\|^\kappa))\|u-v\|^2\\
&
+C(1+\nu(\|\cdot\|^\kappa))\cW_2(\mu,\nu)^2,
\end{align*}
where $\rho:V\rightarrow[0,\infty)$ is locally bounded measurable with
\begin{equation}\label{0416:01}
\rho(v)\leq C(1+\|v\|_V^2)(1+\|v\|_H^\beta),\quad \forall v\in V.
\end{equation} 
\item[{\bf(H3)}]
For all $u\in H$ and $\mu\in\cP_2(H)$,
$$
2\langle F(u,\mu),u\rangle
\leq \lambda_1\|u\|^2+\lambda_2\mu(\|\cdot\|^2)+C.
$$
\item[{\bf(H4)}]
For any $u\in V$, $\mu\in\cP_\kappa(H)$,
$$
\|F(u,\mu)\|_{V^*}^{2}
\leq C\left(\|u\|_V^2+\mu(\|\cdot\|^\kappa)+1\right)\left(1+\|u\|^\beta+\mu(\|\cdot\|^\kappa)\right).
$$
\end{itemize}

Conditions {\bf(H1)}-{\bf(H4)} ensure the existence of solutions to 
a class of McKean-Vlasov SPDEs.

\subsubsection{McKean-Vlasov stochastic reaction-diffusion equation}\label{subsec3}
Let $\Lambda\subset \R^d$ be open and bounded. 
Denote by $L^2:=L^2(\Lambda)$ and $H_0^1:=H_0^1(\Lambda)$ the classical 2-integral and Sobolev spaces 
respectively.
Let $\lambda_*$  be the first eigenvalue of $\Delta$  with the Dirichlet boundary condition.
Now we consider the following system for any $t\geq s$,
\begin{numcases}
{}
\rmd X_t=\left(\Delta X_t+F(X_t,\sL_{Y_t})\right)\rmd t+\rmd W_t, 
\quad X_s=x\in L^2,\label{0327:01-1}\\
\rmd Y_t=\left(\Delta Y_t+F(Y_t,\sL_{Y_t})\right)\rmd t+\rmd W_t,
\quad \sL_{Y_s}=\mu\in \cP_2(L^2), \label{0327:01-2}
\end{numcases}
where $W_t$ is $H_0^1$-valued Wiener process.
Set $V:=H_0^1$ and $H:=L^2$. Hence, $\gamma=\sqrt{\lambda_*+1}$.
To obtain the singleton set attractor for \eqref{0327:01-1} and \eqref{0327:01-2},
we introduce the following assumptions.
\begin{enumerate}
\item[{\bf(H2$'$)}] 
There exist constants $C_1,C_2\geq0$
such that for any $u,v\in V$ and $\mu,\nu\in\cP_2(H)$,
\begin{equation}\label{0327:02}
2_{V^*}\langle F(u,\mu)-F(v,\nu),u-v\rangle_V\leq C_1\cW_2(\mu,\nu)^2+C_2\|u-v\|^2.
\end{equation} 
\item[{\bf(H4$'$)}] 
There exists constant $C\geq0$
such that for any $u\in H$ and $\mu\in\cP_2(H)$,
\begin{equation}\label{0327:04}
\|F(u,\mu)\|^2\leq C\left(1+\|u\|_V^2+\mu(\|\cdot\|^2)\right).
\end{equation} 
\end{enumerate}

\begin{rem}
Consider the following Stokes drag force
\begin{equation}\label{0929:03}
F(u,\mu):=c_0\int_{H}\left(u-v\right)\mu(\rmd v),
\end{equation}
where $c_0>0$ is a positive constant.
It is straightforward that $F$ satisfies {\bf(H1)}, {\bf(H2$'$)}, {\bf(H3)} and {\bf(H4$'$)}.
\end{rem}

Suppose that {\bf(H1)}, {\bf(H2$'$)}, {\bf(H3)} and {\bf(H4$'$)}  hold. 
Then by adopting method from \cite[Theorem 3.1]{HHL24} or \cite[Theorem 1]{AD95},
one sees that for any $Y_s\in L^2(\Omega,\mP;H)$, there exists a unique solution 
$Y_t,t\geq s$ to \eqref{0327:01-2}.
Define $P_{s,t}^*\mu:=\sL_{Y_t}$ with $\sL_{Y_s}=\mu$ and $P_{t}^*\mu:=P_{0,t}^*\mu$.
We rewrite \eqref{0327:01-1} into the following form:
\begin{align*}
X_{s,t}^{x,\mu}=x+\int_s^tA(X_{s,r}^{x,\mu},P_{s,r}^*\mu)\rmd r 
+W_t-W_s,\quad t\geq s,
\end{align*}
where $A(u,\mu):=\Delta u+F(u,\mu)$.
Let
$$
Z_{s,t}^{x,\mu}(\omega):=X_{s,t}^{x,\mu}(\omega)-W_t(\omega),
$$
which satisfies that 
\begin{align}\label{0916:01}
Z_{s,t}^{x,\mu}(\omega)=x-W_s(\omega)+\int_s^tA(Z_{s,r}^{x,\mu}(\omega)
+W_r(\omega),P_{s,r}^*\mu)\rmd r.
\end{align}
Define 
\begin{align*}
\varphi:\R_+\times \Omega\times \cP_2(H)\times H
& 
\longrightarrow H\\
(t,\omega,\mu,x)
& 
\longmapsto 
Z_{0,t}^{x,\mu}(\omega)+W_t(\omega)=:\varphi(t,\omega,\mu)x
\end{align*}
and
\begin{align*}
\Phi:\R_+\times \Omega\times \cP_2(H)\times H
&
\longrightarrow \cP_2(H)\times H\\
(t,\omega,(\mu,x))
&
\longmapsto \left(P_t^*\mu,\varphi(t,\omega,\mu)x\right)=:
\Phi(t,\omega)(\mu,x).
\end{align*}
In view of Lemma \ref{lem0916} and the weak uniqueness of solutions to \eqref{0327:01-2}, we have for any $\omega\in\Omega$, $x\in H$ and $t,s>0$, 
$$
\varphi(t+s,\omega,\mu)x=
\varphi(t,\theta_s\omega,P_s^*\mu)\varphi(s,\omega,\mu)x.
$$

Let $\sS$ be the collection of bounded and closed subset of $\cP_2(H)$
and $\cD_{\lambda_0}$ be the weakly tempered set.
\begin{thm}\label{thmRDeq}
Assume that {\bf(H1)}, {\bf(H2$'$)}, {\bf(H3)} and {\bf(H4$'$)}  hold.
Suppose further that 
$C_1+C_2<2\lambda_*$. Then we have the following conclusions.
\begin{itemize}
\item[(i)] 
$\left(\cP_2(H),\{P_t^*\}_{t\geq0}\right)$ is a continuous semi-dynamical system, 
which admits a $\sS$-global attractor
$$\cP^*:=\{\mu_\infty\}.$$
\item[(ii)] 
$\Phi:\R_+\times \Omega\times \cP_2(H)\times H
\rightarrow \cP_2(H)\times H$ is a continuous RDS.
And there exists $\lambda_0<2\lambda_*-C_1-C_2$ such that $\Phi$
admits a $\sD_{\lambda_0}$-random attractor 
$
\sA(\omega):=(\mu_\infty,\xi(\omega)),
$
where $\xi(\omega):=\xi_0(\omega)$. Here $\xi_t$ satisfies that 
for any $x\in H$ and $\mu\in\cP_2(H)$,
$$
\lim_{s\rightarrow-\infty}X_{s,t}^{x,\mu}=\xi_t,\quad \mP-a.s.
$$
\item[(iii)] 
$\varphi:\R_+\times\Omega\times \{\mu_\infty\}\times H\rightarrow H$ 
admits a $\cD_{\lambda_0}$-random attractor $\cA(\omega):=\{\xi(\omega)\}$.
In particular, 
$$
\mP\circ[\xi]^{-1}=\mu_\infty.
$$
\end{itemize} 
\end{thm}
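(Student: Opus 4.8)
\emph{Overview of the plan.} The three parts will be obtained in order: (i) is a contraction/fixed-point argument for $P_t^*$; (ii) applies Proposition~\ref{prop0117} to get the attractor $\sA$ and then uses a pathwise contraction to collapse it to a single point; (iii) reads off $\cA$ from Theorem~\ref{thm0330} and identifies $\mu_\infty$ with the unique invariant measure of the decoupled equation. The hypothesis $C_1+C_2<2\lambda_*$ is what drives every contraction.

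\emph{Part (i).} First I would establish the $\cW_2$-contraction of $P_t^*$. For $\mu,\nu\in\cP_2(H)$, run two copies $Y^\mu,Y^\nu$ of \eqref{0327:01-2} driven by the same $W$, started from an optimal $\cW_2$-coupling of $(\mu,\nu)$; the additive noises cancel, and using $2\,{}_{V^*}\langle\Delta u,u\rangle_V=-2\|\nabla u\|^2\le-2\lambda_*\|u\|^2$, hypothesis \textbf{(H2$'$)}, and $\cW_2(\sL_{Y_t^\mu},\sL_{Y_t^\nu})^2\le\E\|Y_t^\mu-Y_t^\nu\|^2$, one gets
\[
\frac{\rmd}{\rmd t}\,\E\|Y_t^\mu-Y_t^\nu\|^2\le-(2\lambda_*-C_1-C_2)\,\E\|Y_t^\mu-Y_t^\nu\|^2,
\]
hence $\cW_2(P_t^*\mu,P_t^*\nu)^2\le\rme^{-(2\lambda_*-C_1-C_2)t}\cW_2(\mu,\nu)^2$. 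Since $(\cP_2(H),\cW_2)$ is complete, $P_1^*$ is a strict contraction, so it has a unique fixed point $\mu_\infty$, fixed by every $P_t^*$ by the semi-flow property; continuity in $t$ comes from the $L^2$-bound for \eqref{0327:01-2} and joint continuity from the contraction, so $(\cP_2(H),\{P_t^*\})$ is a continuous semi-dynamical system. The contraction also shows $\{\mu_\infty\}$ is a compact invariant set absorbing every $B_{\cP}\in\sS$, so by Proposition~\ref{prop0117} it is the $\sS$-global attractor $\cP^*=\{\mu_\infty\}$; consequently $\Theta_{2,-t}\mu_\infty=\{\mu_\infty\}$ and the backward extension is trivial.

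\emph{Part (ii), existence of $\sA$.} I would check the hypotheses of Proposition~\ref{prop0117}. That $\Phi$ is a continuous RDS over $(\Omega,\cF,\mP,\{\theta_t\})$ follows from the cocycle identity recorded just before the theorem, measurability of $(t,\omega,\mu,x)\mapsto X_{0,t}^{x,\mu}$, Lipschitz dependence on $x$, and continuity in $\mu$ (from the $\cW_2$-Lipschitz map $r\mapsto P_{0,r}^*\mu$ and Gronwall using the $C_1\cW_2(\cdot,\cdot)^2$ term of \textbf{(H2$'$)}). Working with $Z_{s,t}^{x,\mu}=X_{s,t}^{x,\mu}-W_t$ and the variational identity \eqref{0916:01}, standard dissipative energy estimates based on $2\,{}_{V^*}\langle\Delta(Z+W),Z\rangle_V\le-\|\nabla Z\|^2+\|W\|_V^2$, \textbf{(H3)}, \textbf{(H4$'$)}, the uniform bound $\sup_{\mu\in B_{\cP}}\sup_{r\ge s}(P_{s,r}^*\mu)(\|\cdot\|^2)<\infty$ from Part~(i), and the temperedness of the $H_0^1$-valued Wiener path, produce a weakly $\lambda_0$-tempered radius $r(\cdot)$ such that $K(\omega):=\{x\in H:\|x\|\le r(\omega)\}$ satisfies \eqref{0916:02} uniformly over $\mu\in\cP^*$ and $D\in\cD_{\lambda_0}$, where $\lambda_0\in(0,2\lambda_*-C_1-C_2)$ is to be fixed. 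For pullback $\sD_{\lambda_0}$-asymptotic compactness of $\Phi$ I would combine these bounds with a uniform-in-$s$ estimate on $\int_{t-1}^{t}\|Z_{s,r}^{x,\mu}\|_V^2\,\rmd r$ and the compact embedding $V\hookrightarrow H$ (via the energy/weak-continuity method, using $\p_t Z\in L^2_{\mathrm{loc}}(V^*)$). Proposition~\ref{prop0117} then yields a $\sD_{\lambda_0}$-random attractor $\sA$, and since $\theta_{2,t}\mu_\infty=\mu_\infty$ one has $\sA(\omega)=\{\mu_\infty\}\times\cA(\omega,\mu_\infty)$, where $\cA(\omega,\mu_\infty)=\bigcap_{s\ge0}\overline{\bigcup_{t\ge s}\varphi(t,\theta_{-t}\omega,\mu_\infty)K(\theta_{-t}\omega)}$.

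\emph{Part (ii), collapse to a point, and Part (iii).} For $x,y\in H$ with a fixed law, \textbf{(H2$'$)} with $\mu=\nu$ (the $C_1$-term vanishes) gives pathwise $\|X_{s,t}^{x,\mu}-X_{s,t}^{y,\mu}\|^2\le\rme^{-(2\lambda_*-C_2)(t-s)}\|x-y\|^2$; combined with the invariance \eqref{1106:02} at $p=\mu_\infty$ (so $\Theta_{2,-t}\mu_\infty=\{\mu_\infty\}$), any $a,b\in\cA(\omega,\mu_\infty)$ obey $\|a-b\|^2\le4\,\rme^{-(2\lambda_*-C_2)t}\|K(\theta_{-t}\omega)\|_H^2\to0$ because $\lambda_0<2\lambda_*-C_2$; hence $\cA(\omega,\mu_\infty)=\{\xi(\omega)\}$ and $\sA(\omega)=(\mu_\infty,\xi(\omega))$. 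Pullback attraction to this singleton gives $\xi_t=\lim_{s\to-\infty}X_{s,t}^{x,\mu_\infty}$ for every $x$, and for general $\mu$, Gronwall together with $\cW_2(P_{s,r}^*\mu,\mu_\infty)^2\le\rme^{-(2\lambda_*-C_1-C_2)(r-s)}\cW_2(\mu,\mu_\infty)^2$ yields $\|X_{s,t}^{x,\mu}-X_{s,t}^{x,\mu_\infty}\|^2\le C_1\cW_2(\mu,\mu_\infty)^2\int_s^t\rme^{-(2\lambda_*-C_2)(t-r)}\rme^{-(2\lambda_*-C_1-C_2)(r-s)}\,\rmd r\to0$ as $s\to-\infty$, so $\lim_{s\to-\infty}X_{s,t}^{x,\mu}=\xi_t$ for all $\mu\in\cP_2(H)$. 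Since $\cP^*=\{\mu_\infty\}$ and $\varphi(t,\omega,\cdot,\cdot)$ is continuous, Theorem~\ref{thm0330} then applies and gives the $\cD_{\lambda_0}$-random attractor with $\cA(\omega,\mu_\infty)=\{x\in H:(\mu_\infty,x)\in\sA(\omega)\}=\{\xi(\omega)\}$. Finally, $(\xi_t)$ is a stationary solution of the decoupled SPDE $\rmd X_t=(\Delta X_t+F(X_t,\mu_\infty))\,\rmd t+\rmd W_t$, so $t\mapsto\sL_{\xi_t}$ is constant and is invariant for its Markov semigroup; when $\sL_{X_s}=\mu_\infty$ this decoupled SPDE coincides with \eqref{0327:01-2}, for which $\mu_\infty$ is invariant, hence $\mu_\infty$ is invariant for the decoupled SPDE; and the pathwise contraction above with $\mu=\nu=\mu_\infty$ (rate $2\lambda_*-C_2>0$) forces that invariant measure to be unique. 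Therefore $\sL_{\xi_t}=\mu_\infty$, in particular $\mP\circ[\xi]^{-1}=\mu_\infty$.

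\emph{Main obstacle.} The crux is the pullback $\sD_{\lambda_0}$-asymptotic compactness of $\Phi$ (equivalently, the pullback $\cD_{\lambda_0}$-asymptotic compactness of $\varphi$): one must upgrade the dissipative energy inequality into bounds that are uniform in the initial time $s\to-\infty$, control the additive $H_0^1$-valued noise (so that $\Delta W$ only lives in $V^*$ and no $H^2$-regularity of $W$ is available), and reconcile the exponential rate obtained there with the temperedness exponent $\lambda_0$ and the contraction rate $2\lambda_*-C_2$. The delicate point (already in the absorbing-set construction) is that \textbf{(H4$'$)} bounds $\|F(u,\mu)\|$ only by the full $V$-norm of $u$, so the gradient term coming from $\Delta$ has to be carefully apportioned between controlling $F$ and supplying the dissipation.
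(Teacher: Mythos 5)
Your proposal is correct and, in its essential mechanism, matches the paper's proof: part (i) is the synchronous-coupling $\cW_2$-contraction (the paper's Lemma~\ref{lem03:25}, which constructs $\mu_\infty$ as $\lim_{t\to\infty}P_t^*\delta_0$ rather than by Banach's fixed-point theorem --- an immaterial difference; note only that $\{\mu_\infty\}$ is \emph{attracting} rather than absorbing, which is all that the definition of a $\sS$-global attractor requires), and your collapse of the attractor to $\{\xi(\omega)\}$ and the passage to part (iii) use exactly the pathwise contraction at rate $2\lambda_*-C_2$ together with $\lambda_0<2\lambda_*-C_1-C_2\le 2\lambda_*-C_2$, as in the paper's \eqref{0603:02} and \eqref{1106:01}. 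The one place you genuinely diverge is the step you single out as the main obstacle: the paper never proves pullback asymptotic compactness via $\int\|Z\|_V^2$ bounds, Aubin--Lions and the compact embedding $V\hookrightarrow H$ for this equation. Instead, the two-initial-time estimate \eqref{0329:04} (your pathwise contraction applied with $s_1<s_2$, combined with the decaying a priori bound on $\|Z^{x,\mu}_{s_1,s_2}\|$) shows that $s\mapsto X_{s,t}^{x,\mu}(\omega)$ is Cauchy as $s\to-\infty$, uniformly over bounded sets of $(x,\mu)$; this produces the limit $\xi_t(\omega)$ directly and exhibits the closed $\epsilon_0$-ball around $\xi(\omega)$ as a closed attracting set in $\cD_{\lambda_0}$, whence asymptotic compactness of $\Phi$ is automatic because every pullback orbit converges to the single point $(\mu_\infty,\xi(\omega))$. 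Your compactness detour would work (the $V^*$-bound on $\partial_tZ$ does follow from {\bf(H4$'$)}), but it is machinery the contraction already renders unnecessary; it is genuinely needed only in the Navier--Stokes case, where no such pathwise contraction is available. Finally, for $\mP\circ[\xi]^{-1}=\mu_\infty$ the paper simply computes $\mu_\infty=\lim_{s\to-\infty}P_{s,0}^*\mu=\int_H\lim_{s\to-\infty}\mP\circ[X_{s,0}^{x,\mu}]^{-1}\mu(\rmd x)=\mP\circ[\xi]^{-1}$, whereas you pass through uniqueness of the invariant measure of the decoupled Markovian SPDE; both arguments are valid, yours requiring one extra (easy) uniqueness step.
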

\begin{rem}
Under the conditions of Theorem \ref{thmRDeq},  
$\left(\cP_2(H),\{P_t^*\}_{t\geq0}\right)$ admits a $\sS$-global attractor consisting of a single point $\mu_\infty$.
Define 
$$\varphi_{\mu_\infty}:\R_+\times\Omega\times H\rightarrow H,\quad(t,\omega,x)\mapsto \varphi_{\mu_\infty}(t,\omega)x:=\varphi(t,\omega,\mu_\infty)x.$$
It is straightforward that $\varphi_{\mu_\infty}$ is an RDS over $(\Omega,\cF,\mP,\{\theta_t\}_{t\in\R})$ with the random attractor $\{\xi_0\}$, 
which is generated by
$$
\rmd X_t=\left(\Delta X_t+F(X_t,\mu_\infty)\right)\rmd t+\rmd W_t.
$$
\end{rem}

\subsubsection{McKean-Vlasov stochastic 2D Navier-Stokes equation}\label{subsec4}

Let
$$
H:=\left\{v\in L^2(\mT^2;\R^2):\nabla \cdot v=0~{\text{in}}~\mT^2,~
\int_{\mT^2}v(x)\rmd x=0\right\}
$$
endowed with the $L^2$-norm $\|\cdot\|$ and the standard scalar product.
Define 
$$
V:=H^1(\mT^2;\R^2)\cap H
$$
with the norm $\|v\|_V^2:=\int_{\mT^2} |\nabla v|^2\rmd x$. 
Define the Helmholz-Leray projection $P_H$ as follows:
$$
P_H:L^2(\mT^2;\R^2)\rightarrow H.
$$
Then set
$$
A: H^2(\mT^2;\R^2)\cap V\rightarrow H,\quad u\mapsto Au:=\nu_c P_H\Delta u
$$
and 
$$
B:V\times V\rightarrow V^*,\quad (u,v)\mapsto B(u,v):=-P_H\left[(u\cdot\nabla)v\right],
$$
where $\nu_c$ is the viscosity constant.

Now we consider the following system for $t\geq s$:
\begin{numcases}
{}
\rmd X_t^{x,\mu}=\left(AX_t^{x,\mu}+B(X_t^{x,\mu},X_t^{x,\mu})+F(X_t^{x,\mu},\sL_{Y_t})\right)\rmd t
+\sum_{i=1}^{d}\phi_i\rmd W_i(t),\quad X_{s}^x=x\in H,\label{0331:03}\\
\rmd Y_t=\left(AY_t+B(Y_t,Y_t)+F(Y_t,\sL_{Y_t})\right)\rmd t
+\sum_{i=1}^{d}\phi_i\rmd W_i(t),\quad \sL_{Y_s}=\mu\in\cP_{p}(H),
\label{0331:01}
\end{numcases}
where $p>4\vee\kappa$, $\phi_1,...,\phi_d\in \cD(A)$, $W_i,1\leq i\leq d$ are independent two-sided Brownian motion
and $F$ satisfies {\bf(H1)}--{\bf(H4)} with $\beta=2$.
Here $\kappa$ is as in {\bf(H2)}.

In order to obtain the uniqueness, we introduce the following condition {\bf(H2$''$)}.
Let $C([0,T];H)$ be the space of all continuous function from $[0,T]$ to $H$
equipped with the uniform norm
$$
\|\varphi\|_T:=\sup_{t\in[0,T]}\|\varphi(t)\|.
$$
Set
$$
\cP_{p,T}:=\left\{\mu\in\cP(C([0,T];H)):\int_{C([0,T];H)}\|\varphi\|_T^p
\mu(\rmd \varphi)<\infty\right\}
$$
and for any $R>0$,
$$
\tau_R^u:=\inf\left\{t\in[0,T]:\|u(t)\|+\int_0^t\|u(s)\|_V^2\rmd s\geq R\right\}.
$$

Define $\xi^t:[0,T]\to H$ by 
$$\xi^t(s)=\xi(t\wedge s),~s\in[0,T],$$
and $\pi_t(\xi):=\xi^t$. Then the marginal distribution before $t$ of a probability measure $\mu\in\cP(C[0,T],H)$ is defined by 
$$\mu^t:=\mu\circ\pi_t^{-1}.$$
For any $\mu,\nu\in\cP_{p,T}$, we recall the ``local" Wasserstein metric 
$$
\cW_{2,T,R,H}(\mu,\nu)=\inf_{\pi\in C(\mu,\nu)}\left(\int_{C([0,T],H)\times C([0,T],H)}\|\xi_{T\wedge\tau^{\xi}_R\wedge\tau^{\eta}_R}-\eta_{T\wedge\tau^{\xi}_R\wedge\tau^{\eta}_R}\|^2_T\pi(\rmd \xi,\rmd \eta)\right)^{1/2}.
$$

Suppose further that the following condition {\bf(H2$''$)} holds:
\begin{enumerate}
  \item[{\bf(H2$''$)}]
There exists $C>0$ such that for any $R>0$, $u,v\in C([0,T];H)\cap L^2([0,T];V)$,
$\mu,\nu\in\cP_{\kappa,T}(H)$ and $t\in[0,T\wedge\tau_R^u\wedge\tau_R^v]$,
\begin{align*}
&2_{V^*}\langle F(u(t),\mu(t))-F(v(t),\nu(t)),u(t)-v(t)\rangle_V\\
&\leq \phi(t)\left(\|u(t)-v(t)\|^2+\cW_{2,T,R,H}(\mu^t,\nu^t)^2\right),
\end{align*}
where
$$
\phi(t):=C+\rho_1(u(t))+\rho_2(v(t))+C\mu(t)(\|\cdot\|^\kappa)
+C\nu(t)(\|\cdot\|^\kappa),
$$
$\rho_1,\rho_2$ are measurable functions and locally bounded in $V$, which satisfy \eqref{0416:01}.
\end{enumerate}
Then for any $t\geq s$, $Y_s\in L^p(\Omega,\mP;H)$ with $p>4\vee\kappa$,
there exists a unique solution $Y_t,t\geq s$ to \eqref{0331:01}; see e.g. \cite[Theorem 3.1]{HHL24}.
And it follows from Lemma \ref{lem0418} that there exists 
a unique solution $X_{s,t}^{x,\mu}$ of \eqref{0331:03}.

\begin{enumerate}
\item[{\bf(H3$'$)}]
Assume that
$F:H\times\cP_2(H)\rightarrow H$ and
there exist $\tilde{\lambda}_1< 1$, 
$\tilde{\lambda}_2,\tilde{\lambda}_3>0$ and $C>0$ 
such that for all $u\in\cD(A)$ and $\nu\in\cP_2(H)$,
\begin{equation}\label{0411:02}
\langle -Au,F(u,\nu)\rangle \leq \tilde{\lambda}_1\|Au\|^2+\tilde{\lambda}_2\|u\|^2+\tilde{\lambda}_3\nu(\|\cdot\|^2)+C.
\end{equation}  
\end{enumerate}

\begin{rem}
\begin{itemize}
\item
Assume that $h:\R\rightarrow\R$ is bounded and Lipschitz continuous and $f$ is defined by
\begin{equation*}
f(v):=
\begin{cases}
\|v\|, & \mbox{if } \|v\|\leq N \\
N, & \mbox{if } \|v\|>N,
\end{cases}
\end{equation*}
where $N>0$ is a given constant.
If 
\begin{equation}\label{0604:01}
F(u,\mu):=h\left(\int_H f(v)\mu(\rmd v)\right)u,
\end{equation}
then {\bf(H2$''$)} holds; see e.g. Theorem 5.12 and Remark 6.2 in \cite{HHL24}.
Note that $F$ given in \eqref{0604:01} also satisfies {\bf(H3$'$)}. 
\item 
Note that $F$ given by \eqref{0929:03} also satisfies {\bf(H2$''$)}
and {\bf(H3$'$)}.
\end{itemize}
\end{rem}

Let $\eta>0$ be fixed. For each $i=1,...,d$, 
consider the following one-dimensional SDE
\begin{equation*}
\rmd z_i=-\eta z_i\rmd t+\rmd W_i(t).
\end{equation*}
Then there exists a unique stationary solution
\begin{equation*}
z_i^I(t)=\int_{-\infty}^t{\rm e}^{-\eta(t-s)}\rmd W_i(s).
\end{equation*}
Defining $z_t^I=\sum_{i=1}^{d}\phi_iz_i(t)$, then we have
\begin{equation*}
\rmd z_t^I=-\eta z_t^I\rmd t+\sum_{i=1}^{d}\phi_i\rmd W_i(t).
\end{equation*}

Let $Z_{s,t}^{x,\mu}=X_{s,t}^{x,\mu}-z_t^I$. Then 
\begin{align}\label{0409:01}
Z_{s,t}^{x,\mu}(\omega)=x-z_s^I(\omega)+\int_s^t\Big(
&
AZ_{s,r}^{x,\mu}(\omega)
+B(Z_{s,r}^{x,\mu}(\omega)+z_r^I(\omega),Z_{s,r}^{x,\mu}(\omega)+z_r^I(\omega))\nonumber\\
&
+F(Z_{s,r}^{x,\mu}(\omega)+z_r^I(\omega),\sL_{Y_r})
+\eta z_r^I(\omega)+Az_r(\omega)\Big)\rmd r,
\end{align}
where $Y_r,r\geq s$ is the solution to \eqref{0331:01} with initial distribution $\sL_{Y_s}=\mu$. 
Define 
$$
\varphi(t,\omega,\mu)x:=Z_{0,t}^{x,\mu}(\omega)+z_t^I(\omega)
$$
and
$$
\Phi(t,\omega)(\mu,x):=\left(P_{t}^*\mu,\varphi(t,\omega,\mu)x\right).
$$

Let $\sS$ be the collection of $B_{\cP}\subset\cP_p(H)$ satisfying that 
$B_{\cP}$ is closed under $\cW_2$ and there exists $R>0$ such that
$$
\sup_{\mu\in B_{\cP}}\int_{H}\|x\|^p\mu(\rmd x)\leq R.
$$
We now present the last main theorem in this paper.
\begin{thm}\label{thmNSeq2}
Suppose that {\bf(H1)}, {\bf(H2)}, {\bf(H2$''$)}, {\bf(H3)}, {\bf(H3$'$)} and {\bf(H4)}  hold with $\beta=2$.
Then we have the following conclusions.
\begin{itemize}
\item[(i)]
$(\cP_p(H),\{P_t^*\}_{t\geq0})$ is continuous over any $B_{\cP}\in\sS$ and
admits a $\sS$-global attractor $\cP^*$.
\item[(ii)]
$\Phi:\R_+\times \Omega\times \cP^*\times H
\rightarrow \cP^*\times H$ is a continuous RDS and
there exists $\lambda_0>0$ such that $\Phi$ admits a $\sD_{\lambda_0}$-random attractor $\sA\in\sD_{\lambda_0}$. 
\item[(iii)] 
$\varphi:\R_+\times\Omega\times \cP^*\times H\rightarrow H$ 
admits a $\cD_{\lambda_0}$-random attractor $\cA\in\cD_{\lambda_0}$, where
$$
\cA(\omega,\mu):=\left\{x\in H:~(\mu,x)\in\sA(\omega)\right\}.
$$
\end{itemize}
\end{thm}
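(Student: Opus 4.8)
The plan is to specialize the abstract machinery of Proposition~\ref{prop0117} and Theorem~\ref{thm0330} to the system \eqref{0331:03}--\eqref{0331:01}, taking $\cX=H$, the second parameter space $\cP=\cP_p(H)$ with the metric $\cW_2$, the first driving system $\theta_1=\theta$ (the Wiener shift on the canonical path space of the $W_i$), and $\theta_{2,t}=P^*_t$. Because $\cP_p(H)$ need not be complete under $\cW_2$, after part~(i) below we work on the restriction of $\theta_2$ to its $\sS$-global attractor $\cP^*$, as in the Remark following Proposition~\ref{prop0117}; the time-backward extension $\{\Theta_{2,t}\}_{t\le0}$ on $\cP^*$ then supplies the pullback structure. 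Throughout, the transformed variable $Z^{x,\mu}_{s,t}=X^{x,\mu}_{s,t}-z^I_t$ from \eqref{0409:01} removes the non-stationary forcing, with $z^I$ the stationary Ornstein--Uhlenbeck process whose paths are subexponentially growing, so that all the $z^I$-dependent bounds will be $\cD_{\lambda_0}$-tempered.

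For part~(i): weak well-posedness of \eqref{0331:01} under {\bf(H1)}, {\bf(H2)}, {\bf(H2$''$)}, {\bf(H3)}, {\bf(H4)} (as in \cite[Theorem 3.1]{HHL24}) makes $(\cP_p(H),\{P^*_t\})$ a semi-dynamical system, and the attendant $\cW_2$-stability estimates give its continuity on each $B_\cP\in\sS$. Testing \eqref{0331:01} with $Y_t$ and using {\bf(H3)}, the orthogonality of $B(Y,Y)$ to $Y$, and the viscous dissipation yields uniform-in-time control of $\E\|Y_t\|^2$; iterating to the $p$-th moment ({\bf(H4)}, $\beta=2$) and to the enstrophy $\E\|Y_t\|_V^2$ produces a closed bounded set in $\sS$ that is absorbing and, by the compact embedding $V\hookrightarrow H$ together with the uniform moment bounds, relatively $\cW_2$-compact. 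The first assertion of Proposition~\ref{prop0117} then delivers the $\sS$-global attractor $\cP^*$; since $\cP^*$ is invariant, $\Phi$ indeed maps $\R_+\times\Omega\times\cP^*\times H$ into $\cP^*\times H$.

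For part~(ii): the cocycle identity $\varphi(t+s,\omega,\mu)=\varphi(t,\theta_s\omega,P^*_s\mu)\circ\varphi(s,\omega,\mu)$ and the continuity of $(\mu,x)\mapsto\varphi(t,\omega,\mu)x$ — the latter obtained from \eqref{0409:01} by a Gronwall estimate on the difference of two solutions, using {\bf(H2)}/{\bf(H2$''$)} for the $F$-terms, the 2D trilinear estimate for $B$, and Lemma~\ref{lem0418} for uniqueness — show that $\Phi$ is a continuous RDS. The core consists of two a priori estimates on \eqref{0409:01}, uniform over $\mu$ ranging in $\cP^*$ (so that $P^*_r\mu(\|\cdot\|^2)$ is uniformly bounded along pullback trajectories). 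First, testing with $Z_t$, using ${}_{V^*}\langle B(Z+z^I,Z+z^I),Z\rangle_V={}_{V^*}\langle B(Z+z^I,z^I),Z\rangle_V$, the 2D Ladyzhenskaya inequality to absorb the trilinear term into $\nu_c\|Z_t\|_V^2$, {\bf(H3)}--{\bf(H4)} for the $F$- and cross-terms, and the subexponential growth of $z^I$, gives $\tfrac{d}{dt}\|Z_t\|^2\le-\lambda_0\|Z_t\|^2+g(\theta_t\omega)$ for a suitable $\lambda_0>0$ and a $\cD_{\lambda_0}$-tempered $g$; a pullback Gronwall argument then furnishes a closed $\cD_{\lambda_0}$-absorbing ball $K(\omega)$ for which \eqref{0916:02} holds uniformly in $p\in\cP^*$, together with the uniform bound $\int_t^{t+1}\|Z_s\|_V^2\,ds\le C(\omega)$ on pullback trajectories. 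Second, testing with $-AZ_t$, bounding $|{}_{V^*}\langle B(Z+z^I,Z+z^I),AZ\rangle_V|$ by the 2D Agmon inequality (by at most $\tfrac14\|AZ_t\|^2$ plus lower-order terms) and invoking {\bf(H3$'$)} — whose $\tilde{\lambda}_1<1$ retains a positive multiple of $\|AZ_t\|^2$ on the good side — yields a differential inequality to which the uniform Gronwall lemma applies, bounding $\|Z_t\|_V$ on bounded sets; the compact embedding $V\hookrightarrow H$ then gives pullback $\sD_{\lambda_0}$-asymptotic compactness of $\Phi$. Proposition~\ref{prop0117}, in the form restricted to $\cP^*$, now yields the $\sD_{\lambda_0}$-random attractor $\sA\in\sD_{\lambda_0}$.

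For part~(iii): since the estimates above are uniform over $\mu\in\cP^*$, hence over all $q_n\in\Theta_{2,-t_n}p_n\subset\cP^*$, the cocycle $\varphi$ is pullback $\cD_{\lambda_0}$-asymptotically compact (which, via Remark~\ref{1016--1}, re-gives the asymptotic compactness of $\Phi$ used above) and admits the closed attracting set $K$; Theorem~\ref{thm0330} then produces the $\cD_{\lambda_0}$-random attractor $\cA=\{\cA(\omega,\mu):(\omega,\mu)\in\Omega\times\cP^*\}$ with $\cA(\omega,\mu)=\{x\in H:(\mu,x)\in\sA(\omega)\}$. The main obstacle is the enstrophy estimate in part~(ii): one must simultaneously tame the quadratic nonlinearity $B$ (through sharp 2D interpolation), the distribution-dependent force $F$ (absorbed into the viscous term precisely because $\tilde{\lambda}_1<1$ in {\bf(H3$'$)}), and the Ornstein--Uhlenbeck perturbation $z^I$, all while keeping every bound uniform over the infinite-dimensional attractor $\cP^*$ and $\cD_{\lambda_0}$-tempered in pullback time; producing one positive rate $\lambda_0$ compatible with the spectral gap of $A$ and with $\tilde{\lambda}_2,\tilde{\lambda}_3$, so that both \eqref{0916:02} and the asymptotic-compactness step run within the single universe $\sD_{\lambda_0}$, is the delicate part of the bookkeeping.
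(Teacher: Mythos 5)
Your overall architecture (transform by the stationary Ornstein--Uhlenbeck process, build the $\sS$-attractor $\cP^*$ for $P_t^*$, restrict to $\cP^*$, then invoke Proposition~\ref{prop0117} and Theorem~\ref{thm0330}) coincides with the paper's, and your treatment of part~(i) and of part~(iii) matches the actual proof. The substantive divergence is in how you obtain pullback asymptotic compactness in part~(ii). You propose the classical 2D Navier--Stokes route: test the transformed equation with $-AZ_t$, control the trilinear term by Agmon/Ladyzhenskaya, absorb $F$ into the viscous term via {\bf(H3$'$)} (using $\tilde{\lambda}_1<1$), and run a uniform Gronwall argument to bound $\|Z_t\|_V$ pathwise, concluding by the compact embedding $V\hookrightarrow H$. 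The paper instead never derives a pathwise enstrophy bound: it bounds $\int_s^t\|Z_{s,r}\|_V^2\,\rmd r$ and $\int_s^t\|\tfrac{\rmd}{\rmd r}Z_{s,r}\|_{V^*}^2\,\rmd r$ (estimates \eqref{0920:03} and \eqref{0920:01}), applies the Aubin--Lions lemma to get compactness in $L^2([s,t];H)$, extracts an a.e.\ convergent subsequence $Z_{s,r}^{x_{n_k},\mu_{n_k}}\to Z_0(r)$, and then transfers convergence to the endpoint $t$ via the two-parameter flow property $Z_{s,t}=Z_{r,t}^{Z_{s,r}+z_r^I,\,P_{s,r}^*\mu}$ together with the continuity of $(x,\mu)\mapsto Z_{r,t}^{x,\mu}(\omega)$. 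Correspondingly, the paper uses {\bf(H3$'$)} only to bound $\mE\|X_{s,t}^{x,\mu}\|_V^2$ (Lemma \ref{lem0418-2}), which feeds the tightness argument for the compact absorbing set of $\{P_t^*\}$ in part~(i) --- not the pathwise compactness of $\varphi$.

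The one point you should not gloss over if you pursue your route: testing with $-AZ_t$ presupposes $Z_{s,\cdot}^{x,\mu}(\omega)\in L^2_{loc}(\cD(A))$, whereas the variational existence theorem (Lemma \ref{EUsolution}, via {\bf(A1)}--{\bf(A4)}) only delivers $Z\in L^2_{loc}(V)\cap C(H)$ with $Z'\in L^2_{loc}(V^*)$. Establishing this extra regularity pathwise (e.g.\ by Galerkin approximation, as the paper does in expectation for $X$ by citing \cite[Lemma 2.10]{Kuk06}) is a genuine additional step, and {\bf(H3$'$)} pairs $F(u,\nu)$ with $-Au$ for $u=Z+z^I$, so cross terms with $Az^I$ must also be tracked. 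The paper's Aubin--Lions argument buys exactly the avoidance of this regularity issue, at the cost of needing the continuity lemma and the flow identity; your argument, once the regularity is justified, is more self-contained and closer to the deterministic theory. Everything else --- the tempered absorbing set from the first energy estimate, the uniformity over $q_n\in\Theta_{2,-t_n}p_n\subset\cP^*$, and the passage to $\cA(\omega,\mu)$ as a section of $\sA(\omega)$ --- is as in the paper.
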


\section{Proof of Proposition \ref{prop0117} and Theorem \ref{thm0330}}\label{secPGR}

In this section, we give the detailed proofs of Proposition \ref{prop0117}
and Theorem \ref{thm0330}.
\begin{proof}[Proof of Proposition \ref{prop0117}]
It follows from Lemma \ref{EGA} that there exists a $\sS$-global attractor $\cP^*$.
Since $\cD$ is neighborhood closed, there exists a $\epsilon_0>0$ such that 
$K_{\epsilon_0}\in\cD$, where
$$
K_{\epsilon_0}(\omega):=\left\{x\in\cX:\dist(x,K(\omega))\leq\epsilon_0\right\}.
$$
Since $B$ is absorbing,
for any $B_{\cP}\in\sS$, there exists $T_1:=T_1(B_{\cP})>0$ such that
\begin{equation}\label{1101:01}
\theta_{1,t}B_{\cP}\subset B,\quad \forall t\geq T_1.
\end{equation}
By \eqref{0916:02}, one sees that for any $B_{\cP}\in\sS$ and $D\in\cD$
there exists $T_2:=T_2(\omega,B_{\cP},D)>0$ such that for any $t\geq T_2$,
\begin{equation*}
\sup_{p\in B_{\cP}}\dist_{\cX}\left(\varphi(t,\theta_{1,-t}\omega,p)D(\theta_{1,-t}\omega),
K(\omega)\right)<\epsilon_0,
\end{equation*}
which implies that
\begin{equation}\label{1101:02}
\varphi(t,\theta_{1,-t}\omega,B_{\cP})D(\theta_{1,-t}\omega)\subset K_{\epsilon_0}(\omega),
\quad t\geq T_2.
\end{equation}
Let $T:=\max\{T_1,T_2\}$. Combining \eqref{1101:01} and \eqref{1101:02}, we obtain that
for any $t\geq T$,
$$
\Phi(t,\theta_{1,-t}\omega,B_{\cP}\times D(\theta_{1,-t}\omega))\subset
B\times K_{\epsilon_0}(\omega).
$$
Hence, Lemma \ref{lemAR} 
implies that $\Phi$ admits a $\sD$-random attractor, which is of the form as mentioned above.
\end{proof}

\begin{proof}[Proof of Theorem \ref{thm0330}]
Remark \ref{1016--1} implies that $\Phi$ is pullback $\cD\times \cP^*$-asymptotically compact. 
Hence, Proposition \ref{prop0117} is valid for $\Phi: \R_+ \times \Omega \times \cP^*\times \cX \to \cP^* \times \cX$.
Now we divide the proof into three steps. 
In step 1, we prove that $\cA(\omega,p)$ is nonempty, compact and measurable.
In step 2, we show that $\cA(\omega,p)$ is invariant in the sense of \eqref{1106:02}.
Finally, we prove that $\cA(\omega,p)$ is pullback attracting in step 3.

{\bf Step 1.}
For any $p\in\cP^*$ and $\omega\in\Omega$, define 
$$
\widetilde{\cA}(\omega,p):=\bigcap_{\tau\geq0}\overline{\bigcup_{t\geq\tau}\bigcup_{q\in\Theta_{2,-t}p}
\varphi(t,\theta_{1,-t}\omega,q,K(\theta_{1,-t}\omega))}.
$$
Note that $x\in\widetilde{\cA}(\omega,p)$ if and only if
there exist $t_n\rightarrow\infty$, $q_n\in\Theta_{2,-t_n}p$ and $x_n\in K(\theta_{1,-t_n}\omega)$ such that
$$
\varphi(t_n,\theta_{1,-t_n}\omega,q_n)x_n\rightarrow x.
$$
For any $t_n\rightarrow\infty$, $q_n\in\Theta_{2,-t_n}p$ and $x_n\in K(\theta_{1,-t_n}\omega)$, by the pullback $\cD$-asymptotically compact property of  $\varphi$,
there exists a subsequence such that
$$
\lim_{k\rightarrow\infty}\varphi(t_{n_k},\theta_{1,-t_{n_k}}\omega,q_{n_k})x_{n_k}=\tilde{x},
$$
which implies that $\widetilde{\cA}(\omega,p)$ is nonempty.
Meanwhile,
\begin{align*}
\Phi(t_{n_k},\theta_{1,-t_{n_k}}\omega,(q_{n_k},x_{n_k}))
&
=\left(\theta_{2,t_{n_k}}q_{n_k},\varphi(t_{n_k},\theta_{1,-t_{n_k}}\omega,q_{n_k},x_{n_k})\right)\\
&
=\left(p,\varphi(t_{n_k},\theta_{1,-t_{n_k}}\omega,q_{n_k},x_{n_k})\right)\rightarrow (p,\tilde{x}),
\end{align*}
which implies that $(p,\tilde{x})\in\sA(\omega)$ with $\tilde{x}\in\cA(\omega,p)$. 
Hence $\widetilde{\cA}(\omega,p)\subset \cA(\omega,p)$.

Let 
$$
\Pi_1:\cP^*\times\cX\rightarrow \cP^*,\quad
(p,x)\mapsto p
$$
be the projection mapping.
Then
\begin{equation}\label{0827:01}
\{p\}\times\cA(\omega,p)=\Pi_1^{-1}(p)\cap\sA(\omega),
\end{equation} 
which implies that $\cA(\omega,p)\subset \cX$ is compact.
Fix $p\in\cP^*$.
For any closed subset $C\subset \cX$, by \eqref{0827:01} one sees that
\begin{align*}
\{\omega:\cA(\omega,p)\cap C\neq\emptyset\}
&
=\{\omega:\{p\}\times\cA(\omega,p)\cap \{p\}\times C\neq\emptyset\}\\
&
=\{\omega:\Pi_1^{-1}(p)\cap\sA(\omega)\cap \{p\}\times C\neq\emptyset\}.
\end{align*} 
Since $\Pi_1^{-1}(p),\{p\}\times C\subset\cY$ are closed, by Lemma \ref{lem0827}
we obtain that 
$$
\{\omega:\cA(\omega,p)\cap C\neq\emptyset\}
=\{\omega:\Pi_1^{-1}(p)\cap\sA(\omega)\cap \{p\}\times C\neq\emptyset\}
$$ is measurable.
Hence, by the arbitrariness of $C$ and Lemma \ref{lem0827} again, one sees that
$\omega\mapsto \cA(\omega,p)$ is measurable.

{\bf Step 2.}
Note that for any $\omega\in\Omega$, $p\in\cP^*$ and $t\geq0$,
\begin{align*}
\left(\theta_{2,t}p,\varphi(t,\omega,p,\cA(\omega,p))\right)
=\Phi(t,\omega,\{p\}\times\cA(\omega,p))
\subset\sA(\theta_{1,t}\omega),
\end{align*}
which yields that
\begin{equation*}
\varphi(t,\omega,p,\cA(\omega,p))\subset\cA(\theta_{1,t}\omega,
\theta_{2,t}p).
\end{equation*}
Hence, for any $t\geq0$ and $p\in\cP^*$,
\begin{equation}\label{0827:02}
\bigcup_{q\in\Theta_{2,-t}p}\varphi(t,\theta_{1,-t}\omega,q,\cA(\theta_{1,-t}\omega,q))\subset \cA(\omega,p).
\end{equation}

On the other hand, the invariance of $\sA(\omega)$ implies that, for any $p\in\cP^*$ and $x\in\cA(\omega,p)$, i.e.
$(p,x)\in\sA(\omega)$,
there exists $(\widetilde{p},\widetilde{x})\in\sA(\theta_{1,-t}\omega)$ 
with $\widetilde{x}\in\cA(\theta_{1,-t}\omega,\widetilde{p})$ such that
\begin{align*}
(p,x)
=\Phi(t,\theta_{1,-t}\omega,(\widetilde{p},\widetilde{x}))
=\left(\theta_{2,t}\widetilde{p},\varphi(t,\theta_{1,-t}\omega,\widetilde{p},\widetilde{x})\right),
\end{align*}
which implies that $\widetilde{p}\in\Theta_{2,-t}p$ and 
$$
x=\varphi(t,\theta_{1,-t}\omega,\widetilde{p},\widetilde{x})
\in\varphi(t,\theta_{1,-t}\omega,\widetilde{p},\cA(\theta_{1,-t}\omega,\widetilde{p})).
$$
Hence, for any $t\geq0$ and $p\in\cP^*$,
$$
\cA(\omega,p)\subset\bigcup_{q\in\Theta_{2,-t}p}
\varphi(t,\theta_{1,-t}\omega,q,\cA(\theta_{1,-t}\omega,q)),
$$ 
which along with \eqref{0827:02} implies that $\cA_{\cX}$ is invariant in the sense of \eqref{1106:02}.

{\bf Step 3.}
It suffices to show that $\cA(\omega,p)$ is pullback attracting $K(\omega)$ in the sense of \eqref{0117:01}.
Assume that there exist $\eps_0>0$ and sequence $t_n\rightarrow\infty$, $q_n\in\Theta_{2,-t_n}p$ and $x_n\in K(\theta_{1,-t_n}\omega)$
such that
\begin{equation*}
\dist_{\cX} \left(\varphi(t_n,\theta_{1,-t_n}\omega,q_n,x_n),\cA(\omega,p)\right)\geq\eps_0.
\end{equation*}
The pullback $\cD$-asymptotically compact property of  $\varphi$ implies that 
$\varphi(t_n,\theta_{1,-t_n}\omega,q_n,x_n)$ possesses a convergent subsequence. The limit point is denoted by $a\in \widetilde{\cA}(\omega,p)$ satisfying
\begin{equation*}
\dist_{\cX} \left(a,\cA(\omega,p)\right)\geq\eps_0.
\end{equation*}
This contradicts $\widetilde{\cA}(\omega,p)\subset \cA(\omega,p)$.
Therefore, $\cA(\omega,p)$ is pullback attracting $K(\omega)$ and the proof is complete.
\end{proof}

\section{Proof of main results for McKean-Vlasov SODE}\label{secPODE}

This section provides the proofs for the main results given in Section \ref{subsec2}.

\subsection{Proof of Theorem \ref{thmSODE}}

As \textbf{(C4)} holds, it follows from Proposition \ref{20250526-1} and \eqref{1009-4} that 
  \begin{align*} 
   \mE V(X _{t}^{\xi,\mu},P_{t}^*\mu)
   &
   =\mE V(\xi,\mu)+ \mE \int_0^t\cL V(X _{r}^{\xi,\mu},P_{r}^*\mu)\rmd r\\
   &
   \leq \mE V(\xi,\mu)+\cal{M}t+\int_{0}^t -\alpha  \mE V(X _{r}^{\xi,\mu},P_{r}^*\mu)\,\rmd r,
  \end{align*}
  where $\mu=\sL_{\xi}\in\cal{P}_V(\R^d)$.
  It should be noted that $\sL_{X _{t}^{\xi,\mu}}=P_{t}^*\mu$.
  The Gronwall inequality implies that 
  \begin{align}\label{1010-1}
   \mE V(X _{t}^{\xi,\mu},P_{t}^*\mu )
   &
   \leq e^{-\alpha t}\mE V(\xi,\mu)+\cal{M}\int_0^te^{-\alpha(t-r)}\rmd r\nonumber\\
   &
   =e^{-\alpha t}\mE V(\xi,\mu)+\frac{\cal{M}}{\alpha}\Big(1-e^{-\alpha t}\Big).
  \end{align} 
  This yields that $P_t^*\mu\in\cal{P}_V(\R^d)$ as $\mu\in\cal{P}_V(\R^d)$.

For any $t\in\R$, almost all $\omega\in\Omega$,
define
\begin{equation}\label{1107:01}
\cT_t(\omega):=u(Z_t^I(\omega),\cdot),
\end{equation}
which is a diffeomorphism of $\R^d$ and is stationary; i.e. $\cT_t=\cT_0\circ\theta_t$. 
See \cite[Section 1]{IS} for more details. 
Define $\cT(\omega):=\cT_0(\omega)$.
 Let $\eta>0$ and $\mu\in\cal{P}_V(\R^d)$. 
 Consider the following random differential equation
  \begin{align}\label{RDE1}
   \rmd Y_t=g(\theta_t\cdot,Y_t,P^*_t\mu)\,\rmd t,~Y_0=x\in\R^d,
  \end{align}
  where for almost all $\omega\in\Omega$,
  \begin{align}\label{g}
  g(\omega,\cdot,\mu):\R^d&\rightarrow\R^d\\
   x&\mapsto\big(\partial_x\cT\big)^{-1}(\omega,x)\Big(b(\cT(\omega)x,\mu)+\eta\sigma(\cT(\omega)x)Z^I_0(\omega)\Big).\nonumber
  \end{align}
If the existence and uniqueness of the solution to the above equation can be ensured, 
denoted as $\chi_t^\mu(x)$, 
then for almost all $\omega\in\Omega$ and $s, t\geq0$, it holds that
 \begin{align}\label{RDEflow}
 \chi_{s+t}^\mu(\omega,x)=\chi_t^{P^*_s\mu}(\theta_s\omega,\chi_s^\mu(\omega,x)),~(\mu, x)\in\cal{P}_V(\R^d)\times\R^d. 
 \end{align}
 This makes us obtain an RDS
  \begin{align*}
  \overline{\chi}:\R_+\times\Omega \times\cal{P}_V(\R^d)\times\R^d&\rightarrow\cal{P}_V(\R^d)\times\R^d\\
  (t,\omega,\mu,x)&\mapsto(P_{t}^*\mu,\chi_t^\mu(\omega,x)).
  \end{align*}

Now we prove that there is a conjugate relationship between $\overline{\chi}$ and $\Phi$.
\begin{prop}\label{thm1}
 Assume that the strong solution to the first equation of \eqref{1111-2} 
 and the weak solution to the second equation of \eqref{1111-2} are uniquely existing, 
 and let $\chi_t^\mu(x)$ be the solution to equation \eqref{RDE1}.
  Then for any $t\geq0$, almost all $\omega$, it holds that
  $$
  \overline{\cT}(\theta_t\omega)\bigr(\overline{\chi}_t\big(\omega,\overline{\cT}^{-1}(\omega)(\mu, x)\big)\bigr)
  =\Phi(t,\omega,\mu, x),~(\mu, x)\in\cal{P}_V(\R^d)\times\R^d,
  $$
  where $\overline{\chi}_t(\omega,\mu,x)=(P_{t}^*\mu,\chi_t^\mu(\omega,x))$ 
  and $\overline{\cT}(\omega)= Id_{\cal{P}_V(\R^d)}\times \cT(\omega)$.
\end{prop}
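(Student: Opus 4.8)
The plan is to establish the conjugacy by differentiating the change of variables $X_t = \cT_t(\omega)Y_t = u(Z_t^I(\omega), Y_t)$ and showing that if $Y_t = \chi_t^\mu(\omega, x)$ solves the random ODE \eqref{RDE1}, then $X_t$ solves the (decoupled, law-fixed) Stratonovich SDE in \eqref{1111}, and conversely. Since the second components on both sides are literally equal (both equal $P_t^*\mu$, and $\overline{\cT}$ acts as the identity on $\cP_V(\R^d)$), the entire content is about the $\R^d$-component. First I would fix $\mu \in \cP_V(\R^d)$, so that the time-marginal law $P_t^*\mu$ is a determined (deterministic) curve in $\cP_V(\R^d)$, reducing both \eqref{1111} (first equation) and \eqref{RDE1} to a time-inhomogeneous SDE and a time-inhomogeneous random ODE respectively, with the distribution argument entering only as a known time-dependent inhomogeneity in the drift.

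Next I would recall the key properties of $u$ and $Z_t^I$: under \textbf{(C1)}, $u(z,\cdot)$ is a diffeomorphism of $\R^d$ for each $z$ with $\partial_z u(z,x) = \sigma(u(z,x))$ (the defining PDE system), and $Z_t^I = \int_{-\infty}^t \rme^{-\eta(t-s)}\rmd W_s$ is the stationary Ornstein-Uhlenbeck process satisfying $\rmd Z_t^I = -\eta Z_t^I \rmd t + \rmd W_t$, with $Z_t^I(\omega) = Z_0^I(\theta_t\omega)$ so that $\cT_t = \cT_0 \circ \theta_t$ is stationary. The core computation is then an application of the Stratonovich chain rule (which obeys the ordinary calculus rules): writing $X_t = u(Z_t^I, Y_t)$ with $Y_t$ of bounded variation (it solves an ODE pathwise) and $Z_t^I$ a semimartingale, one gets
\begin{align*}
\rmd X_t &= \partial_z u(Z_t^I, Y_t)\circ \rmd Z_t^I + \partial_x u(Z_t^I, Y_t)\, \rmd Y_t\\
&= \sigma(u(Z_t^I,Y_t))\circ\big(-\eta Z_t^I\,\rmd t + \rmd W_t\big) + \partial_x u(Z_t^I,Y_t)\,g(\theta_t\omega, Y_t, P_t^*\mu)\,\rmd t.
\end{align*}
Substituting the definition of $g$ from \eqref{g}, namely $g(\theta_t\omega, Y_t, P_t^*\mu) = (\partial_x \cT_t)^{-1}(\omega, Y_t)\big(b(\cT_t(\omega)Y_t, P_t^*\mu) + \eta\sigma(\cT_t(\omega)Y_t)Z_t^I(\omega)\big)$, and using $\cT_t(\omega)Y_t = u(Z_t^I,Y_t) = X_t$ together with $(\partial_x \cT_t)(\partial_x\cT_t)^{-1} = Id$, the $\partial_x u \cdot g$ term collapses to $b(X_t, P_t^*\mu) + \eta\sigma(X_t)Z_t^I$, and the $+\eta\sigma(X_t)Z_t^I$ exactly cancels the $-\eta\sigma(X_t)Z_t^I\rmd t$ coming from the Stratonovich term. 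What remains is $\rmd X_t = b(X_t, P_t^*\mu)\rmd t + \sigma(X_t)\circ\rmd W_t$, which is precisely the first equation of \eqref{1111} with the law of the second component frozen to $P_t^*\mu = \sL_{Y_t}$. Checking initial conditions: $X_0 = u(Z_0^I(\omega), x) = \cT(\omega)x$, so if we start $\overline{\chi}$ from $\overline{\cT}^{-1}(\omega)(\mu,x) = (\mu, \cT^{-1}(\omega)x)$ the resulting $X_0 = x$, matching $\Phi(0,\omega,\mu,x) = (\mu,x)$; by the assumed strong uniqueness for the first equation of \eqref{1111-2} (equivalently \eqref{1111} under \textbf{(C1)}), this identifies $X_t$ with $X_t^{\cT^{-1}(\omega)x,\mu}$-transported-correctly, i.e.\ $\cT_t(\omega)\chi_t^\mu(\omega,\cT^{-1}(\omega)x) = X_t^{x,\mu}$, which is the claimed identity on the $\R^d$-component. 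Finally I would note the exceptional $\mP$-null set is the one outside which $Z_t^I$ is a well-defined continuous semimartingale and the Stratonovich calculus applies, and that the weak-uniqueness hypothesis for the second equation guarantees $P_t^*\mu$ is well-defined so that everything above is consistent.

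The main obstacle I expect is making the Stratonovich chain rule rigorous in this mixed setting — one component ($Z_t^I$) a genuine Itô/Stratonovich semimartingale, the other ($Y_t$) an absolutely continuous process whose own dynamics depend on $\omega$ through $\theta_t$ — and in particular justifying that the Stratonovich correction terms are exactly what the formal computation gives, with no extra cross-variation contributions (there are none, since $Y$ has finite variation). A secondary technical point is the regularity needed to differentiate $u$ and invert $\partial_x\cT_t$: this is exactly where \textbf{(C1)} ($\sigma \in C^\infty$ with bounded $\partial_x\sigma$, and the commutator/Frobenius condition $[\sigma_i,\sigma_j]_k = 0$ ensuring the overdetermined system for $u$ is solvable) is used, and I would cite \cite[Section 1]{IS} for the construction and smoothness of $u$ and the stationarity $\cT_t = \cT_0\circ\theta_t$ rather than reproving it. Once the pathwise identity $\cT_t(\omega)\chi_t^\mu(\omega,y) = X_t^{\cT_t(\omega)y,\mu}$ is in hand for a.e.\ $\omega$ and all $t\ge 0$, $(\mu,y)$, rearranging and reinstating the (identity) action on the measure component yields the statement.
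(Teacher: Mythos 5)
Your proposal is correct and follows essentially the same route as the paper: apply the Stratonovich chain rule to $u(Z_t^I,\chi_t^\mu(\omega,x))$, use $\partial_z u=\sigma(u)$ and the definition of $g$ so that the $\eta\sigma(\cdot)Z_t^I$ terms cancel against the drift of the Ornstein--Uhlenbeck process, and conclude by the assumed uniqueness of solutions to \eqref{1111-2}. Your additional remarks on matching the initial condition through $\overline{\cT}^{-1}(\omega)$ and on why no extra cross-variation appears (since $\chi_t^\mu$ has finite variation) are consistent with, and slightly more explicit than, the paper's computation.
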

\begin{proof}
The It\^o formula implies that
\begin{align*}
&
  u(Z^I_t,\chi_t^{\mu}(x))-u(Z^I_0,x)\\
  &=
  \int_0^t\frac{\partial u}{\partial z}(Z^I_r,\chi_r^{\mu}(x))\circ\rmd Z^I_r
  +\int_0^t\frac{\partial u}{\partial x}(Z^I_r,\chi_r^{\mu}(x))\,\rmd \chi_r^{\mu}(x)\\
  &=
  \int_0^t\sigma\big(u(Z^I_r,\chi_r^{\mu}(x))\big)\circ\rmd Z^I_r
  +\int_0^t\partial_x \cT (\theta_r\cdot,\chi_r^{\mu}(x))\,\rmd \chi_r^{\mu}(x)\\
  &=
  \int_0^t\sigma\big( \cT (\theta_r\cdot)\chi_r^{\mu}(x)\big)\circ\rmd Z^I_r
  +\int_0^tb( \cT (\theta_r\cdot)\chi_r^{\mu}(x),P_{r}^*\mu)+\eta\sigma( \cT (\theta_r\cdot)\chi_r^{\mu}(x))Z^I_r\,\rmd r\\
  &=
  \int_0^t\sigma\big( \cT (\theta_r\cdot)\chi_r^{\mu}(x)\big)\circ\rmd W_r
  +\int_0^tb( \cT (\theta_r\cdot)\chi_r^{\mu}(x),P_{r}^*\mu)\,\rmd r.
\end{align*}

The uniqueness of the solution to \eqref{1111-2} and the definition \eqref{1107:01} of $\cT$ indicate that for almost all $\omega\in\Omega$ and any $t\geq0$,
  $$
  \cT(\theta_t\omega)\chi_t^{\mu}(\omega,x)=\varphi(t,\omega,\mu,\cT(\omega)x).
  $$
  Then
  \begin{align}\label{1004--1}
 \Big(P_{t}^*\mu,\cT(\theta_t\omega)\chi_t^{\mu}\left(\omega,\cT^{-1}(\omega)x\right)\Big)
 =\Big(P_{t}^*\mu, \varphi(t,\omega,\mu,x) \Big).
  \end{align}
The proof is complete.
\end{proof}
\begin{rem}
  This theorem and \eqref{RDEflow} yield that $\Phi:\R_+\times\Omega\times\cal{P}_V(\R^d)\times\R^d\to \cal{P}_V(\R^d)\times\R^d$ is an RDS and satisfies that
 for any $s,t\geq0$, almost all $\omega$,
  $$
  (P_{s+t}^*\mu, \varphi(s+t,\omega,\mu,x))=\Big(P_{t}^*P_{s}^*\mu, 
  \varphi(t,\theta_s\omega,P_{s}^*\mu)\varphi(t,\omega,\mu,x)\Big).
  $$
\end{rem}

We will now focus on the continuity of the system. 
\begin{lem}\label{lem1022}
Under conditions \textbf{(C4)} and \textbf{(C5)}, 
for any bounded subset $S$ of $\cal{P}_V(\R^d)$, 
the mapping $P_t^*$ is continuous from $(S,\cW_2)$ to $(\cal{P}_V(\R^d),\cW_2)$.
\end{lem}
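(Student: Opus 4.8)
The plan is to establish continuity of $P_t^*$ on bounded subsets of $\cP_V(\R^d)$ by combining a uniform second-moment (indeed $\cP_V$-) bound coming from \textbf{(C4)} with a coupling estimate in the Wasserstein distance $\cW_2$ coming from \textbf{(C6)}. First I would fix a bounded set $S\subset\cP_V(\R^d)$, say $\sup_{\mu\in S}\int_{\R^d}V(x,\mu)\,\mu(\rmd x)\le R$, and take a sequence $\mu_n\to\mu$ in $(S,\cW_2)$. Using \textbf{(C5)} (which controls $|x|^p$ by $V$) together with the a priori bound \eqref{1010-1}, I get $\sup_n\sup_{t\in[0,T]}\int_{\R^d}|x|^p\,P_t^*\mu_n(\rmd x)<\infty$ for any horizon $T$; this uniform-integrability-type bound is what will let me pass from a local Lipschitz estimate to a global one, and it also shows the limit stays in $\cP_V(\R^d)$ (already noted in the excerpt).

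Next I would couple: for each $n$ choose an optimal coupling $\pi_0^n\in C(\mu_n,\mu)$ realizing $\cW_2(\mu_n,\mu)$, build the corresponding solutions $Y_t^n,Y_t$ of the second equation in \eqref{1111-2} driven by the same Brownian motion with $(Y_0^n,Y_0)\sim\pi_0^n$, and apply Itô's formula to $|Y_t^n-Y_t|^2$. Taking expectations and using \textbf{(C6)} with $\pi$ the law of $(Y_t^n,Y_t)$ gives
\begin{align*}
\frac{\rmd}{\rmd t}\mE|Y_t^n-Y_t|^2
\le L\,\mE\Big[\big(1+P_t^*\mu_n(|\cdot|^2)+P_t^*\mu(|\cdot|^2)\big)|Y_t^n-Y_t|^2\Big]
\le L\,C_{R,T}\,\mE|Y_t^n-Y_t|^2,
\end{align*}
where $C_{R,T}$ is the uniform moment bound from the previous step (the factor $1+\mu(|\cdot|^2)+\nu(|\cdot|^2)$ is deterministic once we take expectations, being a function of time only, so it comes out of the $|x-y|^2$ integral). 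Grönwall then yields $\mE|Y_t^n-Y_t|^2\le \rme^{LC_{R,T}t}\,\mE|Y_0^n-Y_0|^2=\rme^{LC_{R,T}t}\,\cW_2(\mu_n,\mu)^2$, and since $\cW_2(P_t^*\mu_n,P_t^*\mu)^2\le\mE|Y_t^n-Y_t|^2$ by definition of the Wasserstein distance (the pair $(Y_t^n,Y_t)$ is a coupling of $P_t^*\mu_n$ and $P_t^*\mu$), we conclude $\cW_2(P_t^*\mu_n,P_t^*\mu)\to0$.

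One subtlety to address carefully is that the Itô-formula application to $|Y_t^n-Y_t|^2$ for equation \eqref{1111-2} with only locally Lipschitz coefficients (\textbf{(C2)}) must be justified by a localization/stopping-time argument; the uniform $p$-th moment bound with $p\ge4\ell$ from \textbf{(C5)} is exactly what lets the stopped estimates pass to the limit, so I would introduce stopping times $\tau_N=\inf\{t:|Y_t^n|\vee|Y_t|\ge N\}$, run Grönwall up to $t\wedge\tau_N$, and then send $N\to\infty$ using the moment bound. I expect the main obstacle to be precisely this interplay between the merely local Lipschitz/monotonicity hypotheses and the need for a global-in-time, uniform-over-$S$ estimate; once the uniform moment bound is in hand the rest is a standard synchronous-coupling Grönwall argument. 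A minor additional point: one should also remark that $t\mapsto\cW_2(P_t^*\mu_n,P_t^*\mu)$ being dominated by $\rme^{LC_{R,T}t}\cW_2(\mu_n,\mu)$ gives continuity in $\mu$ locally uniformly in $t\in[0,T]$, which is the form needed later for the semi-flow continuity statements.
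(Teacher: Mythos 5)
Your argument is essentially correct as a piece of mathematics, but it is a genuinely different proof from the paper's, and it rests on hypotheses that the lemma does not advertise. The paper proves continuity by a compactness argument: it uses \textbf{(C4)}--\textbf{(C5)} together with \eqref{1010-1} to verify the Aldous tightness criterion for the laws $\sL_{X_\cdot^n}$ on $C([0,T])$, extracts a weakly convergent subsequence, identifies the limit as the (unique) weak solution started from $\mu$ via the Skorokhod representation and the method of \cite[Theorem 3.1]{LM2}, and finally upgrades weak convergence to $\cW_2$-convergence by the Vitali theorem using the uniform $2\ell$-moment bound. Your synchronous-coupling-plus-Gr\"onwall argument instead invokes \textbf{(C6)} for the drift and (implicitly) the boundedness of $\partial_x\sigma$ from \textbf{(C1)} for the quadratic-variation term $|\sigma(Y_t^n)-\sigma(Y_t)|^2$, which you omit from the displayed It\^o computation but which must be absorbed into the Gr\"onwall constant. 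Neither \textbf{(C1)} nor \textbf{(C6)} appears in the lemma's hypotheses; since they are standing assumptions of the section (needed for well-posedness of \eqref{1111-2}), this is defensible, but it means you are proving a stronger, quantitative statement ($\cW_2(P_t^*\mu_n,P_t^*\mu)\le \rme^{Ct}\cW_2(\mu_n,\mu)$, locally uniformly in $t$) under stronger hypotheses, whereas the paper's softer argument yields only sequential continuity but genuinely uses nothing beyond \textbf{(C3)}--\textbf{(C5)} and weak uniqueness.

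One technical point in your writeup deserves care: \textbf{(C6)} is an \emph{integrated} inequality over couplings $\pi\in C(\mu,\nu)$ of the full marginals, not a pointwise monotonicity bound, so it does not directly apply to the stopped process, whose law on $\{s<\tau_N\}$ is only a sub-probability measure whose marginals are not $P_s^*\mu_n$ and $P_s^*\mu$. The cleaner fix is to drop the localization entirely: by \textbf{(C5)} with $\ell>1$ and the Lyapunov bound \eqref{1010-1}, one has $\mE\int_0^T|\overline{b}(Y_s,\sL_{Y_s})|^2\,\rmd s<\infty$ and $\mE\int_0^T|\sigma(Y_s)|^4\,\rmd s<\infty$, so the stochastic integral in the It\^o expansion of $|Y_t^n-Y_t|^2$ is a true martingale and you may take expectations directly, after which \textbf{(C6)} applies to the law of $(Y_s^n,Y_s)$, which \emph{is} a coupling of $P_s^*\mu_n$ and $P_s^*\mu$. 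With that adjustment your route goes through.
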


\begin{proof}
Let $\{\mu_n\}$ be a sequence in $S$, and suppose $\mu_n$ converges to $\mu$ in the $\cW_2$ metric.
Due to the boundedness of $S$, there exists $R>0$ such that 
\begin{align}\label{0102-11}
\sup_{n}\int_{\R^d}V(x,\mu_n)\mu_n(\rmd x)\leq R.
\end{align}

For any $N>0$, we denote by $\cF_N$ the set of all $\cal{F}$-stopping times that are bounded by $N$. 
Then for $0<\theta<1$, $\tau,\tau'\in \cF_N$ with $0\leq \tau-\tau'\leq \theta$, 
\begin{align*}
\mE|X_{\tau}^{n}-X_{\tau'}^n|^{2\ell}
&=
\mE\Big|\int_{\tau'}^{\tau}b(X_r^n,\sL_{X_r^n})\rmd r
+\int_{0}^{N}1_{[\tau',\tau]}(r)\sigma(X_r^n)\rmd W_r\Big|^{2\ell}\\
&
\leq C_{\ell} \mE\Big[|\tau-\tau'|^{2\ell-1}\int_{0}^{N}|b(X_r^n,\sL_{X_r^n})|^{2\ell}\rmd r\Big]
+C'_{\ell}\mE\Big[\int_{\tau'}^{\tau}|\sigma(X_r^n)|^2\rmd r\Big]^\ell\\
&
\leq C_{\ell}\mE\Big[\int_{0}^{N}|b(X_r^n,\sL_{X_r^n})|^{2\ell}\rmd r\Big]\theta^{2\ell-1}
+C''_{\ell}\mE\Big[\int_{0}^{N}|\sigma(X_r^n)|^{2\ell}\rmd r\Big]\theta^{\ell-1}\\
&
\leq \widetilde{C}_{\ell}\mE\Big[\int_{0}^{N}1+V(X_r^n,\sL_{X_r^n})\rmd r\Big]\theta^{\ell-1}\\
&
\leq \widetilde{C}_{\ell,N,\alpha,\cal{M}}\Big(1+\mE V(X_0^n,\sL_{X_0^n})\Big)\theta^{\ell-1}\\
&
\leq C(\ell,N,\alpha,\cal{M},R)\cdot\theta^{\ell-1},
\end{align*}
where the penultimate inequality holds by \eqref{1010-1},
the last inequality holds by \eqref{0102-11} with $\sL_{X_0^n}=\mu_n$. 
This with the Chebyshev inequality implies that for any $N>0$, $\varepsilon>0$,
\begin{align}\label{20250506-1}
  \lim_{\theta\to0}\limsup_{n\to\infty}\sup_{\tau,\tau'\in\cF_N:\tau'\leq\tau\leq\tau'+\theta}
  \mathbb{P}\bigg(|X_{\tau}^{n}-X_{\tau'}^n|\geq\varepsilon\bigg)=0.
\end{align}
Moreover, for any $N>0$, 
\begin{align}\label{20250508-1}
\mE\Big[\sup_{t\leq N}|X_t^n|^{2\ell}\Big]
&
\leq \mE|X_0^n|^{2\ell}+\mE\sup_{t\leq N}\Big|\int_{0}^{t}b(X_r^n,\sL_{X_r^n})\rmd r
+\int_{0}^{t}\sigma(X_r^n)\rmd W_r\Big|^{2\ell}\nonumber\\
&
\leq  \mE|X_0^n|^{2\ell}+C_{\ell,N} \mE\Big[\int_{0}^{N}|b(X_r^n,\sL_{X_r^n})|^{2\ell}\rmd r\Big]
+C'_{\ell,N}\mE\Big[\int_{0}^{N}|\sigma(X_r^n)|^{2\ell}\rmd r\Big]\nonumber\\
&
\leq \mE|X_0^n|^{2\ell}+ \widetilde{C}_{\ell,N}\mE\Big[\int_{0}^{N}1+V(X_r^n,\sL_{X_r^n})\rmd r\Big] \nonumber\\
&
\leq \mE|X_0^n|^{2\ell}+ \widetilde{C}_{\ell,N,\alpha,\cal{M}}\Big(1+\mE V(X_0^n,\sL_{X_0^n})\Big)\nonumber\\ 
&
\leq C'(\ell,N,\alpha,\cal{M},R).
\end{align}
The Chebyshev inequality implies that 
\begin{align*}
  \mathbb{P}\bigg(\sup_{t\leq N}|X_{t}^{n}|> K\bigg)
  &
  \leq \frac{1}{K^{2\ell}}\mE\Big[\sup_{t\leq N}|X_t^n|^{2\ell}\Big]\\
  &
  \leq \frac{C'(\ell,N,\alpha,\cal{M},R)}{K^{2\ell}}
  \to0,\quad \text{as}~K\to\infty.
\end{align*}
By the Aldous criterion (see \cite[Theorem 16.10]{Billingsley}), this with \eqref{20250506-1} suggests that $\sL_{X_{\cdot}^{n}}$ is tight. 
Hence, there exists a subsequence $\{n_k\}$ such that $\sL_{X_{\cdot}^{n_k}}$ weakly converges to some measure $\mu_{\cdot}\in\cP(C[0,T])$. 
By the Skorokhod representation theorem, there are $C[0, T]$-valued
random variables $\bar{X}_{n_k}$ and $\bar{X}$ on some probability space $(\bar{\Omega},\bar{\cF},\bar{\mathbb{P}})$ such that $\sL_{\bar{X}_{n_k}}=\sL_{X_{\cdot}^{n_k}}$, $\sL_{\bar{X}}=\mu_{\cdot}$ and 
$$
\bar{X}_{n_k}\to\bar{X},~a.s.
$$
Using the method of  \cite[Theorem 3.1]{LM2}, we can prove that $\bar{X}$ is a weak solution to the equation.

Similar to \eqref{20250508-1}, it holds that
$$
\bar{\mE} \sup_{t\in[0,T]}|\bar{X}_{n_k,t}|^{2\ell}\leq C,
$$
where $C>0$ is independent of $n_k$.
The Vitali convergence theorem implies that
$$
\lim_{k\to\infty}\bar{\mE} \sup_{t\in[0,T]}|\bar{X}_{n_k,t}-\bar{X}_{t}|^{2}=0.
$$
Hence, it holds that for any $t\geq0$, $P_t^*\mu_{n_k}\to_{\cW_2} P_t^*\mu$ as $n_k\to\infty$.
The proof is complete.
\end{proof}

\begin{lem}\label{lem0102-2}
Under conditions \textbf{(C1)}, \textbf{(C4)}, \textbf{(C5)} and \textbf{(C7)},   
for all $t\in\R_+$ and $\omega\in\Omega$, the mapping 
$$
\chi_t^{\cdot}(\omega,\cdot):S\times\R^d\rightarrow\R^d,\quad
(\mu,x)\mapsto\chi^{\mu}_t(\omega,x)
$$ is continuous, 
where $S\subset\cal{P}_V(\R^d)$ is as in Lemma \ref{lem1022}.
\end{lem}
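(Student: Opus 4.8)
The plan is to prove the joint continuity of $(\mu,x)\mapsto\chi_t^\mu(\omega,x)$ by combining two ingredients: the continuity of the transfer semigroup $P_t^*$ on bounded subsets of $\cP_V(\R^d)$ (already established in Lemma \ref{lem1022}), and a Gronwall-type stability estimate for the random ODE \eqref{RDE1}. Since \eqref{RDE1} is a (random) ordinary differential equation with the right-hand side $g(\theta_r\cdot,\cdot,P_r^*\mu)$ depending on $\mu$ only through the measure flow $\{P_r^*\mu\}_{r\geq0}$, the continuity in $\mu$ will be reduced to continuity in the curve of laws, and the continuity in $x$ is the standard continuous dependence on initial data for ODEs.

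First I would fix $\omega\in\Omega$ (outside a null set) and $t\in\R_+$, and fix the bounded set $S\subset\cP_V(\R^d)$. I would take sequences $\mu_n\to\mu$ in $(S,\cW_2)$ and $x_n\to x$ in $\R^d$ and aim to show $\chi_t^{\mu_n}(\omega,x_n)\to\chi_t^\mu(\omega,x)$. Writing $Y_r^n:=\chi_r^{\mu_n}(\omega,x_n)$ and $Y_r:=\chi_r^\mu(\omega,x)$, from the integral form of \eqref{RDE1} we get
\begin{align*}
Y_t^n-Y_t
&=x_n-x+\int_0^t\big(g(\theta_r\omega,Y_r^n,P_r^*\mu_n)-g(\theta_r\omega,Y_r,P_r^*\mu)\big)\rmd r.
\end{align*}
Splitting the integrand as $g(\theta_r\omega,Y_r^n,P_r^*\mu_n)-g(\theta_r\omega,Y_r,P_r^*\mu_n)$ plus $g(\theta_r\omega,Y_r,P_r^*\mu_n)-g(\theta_r\omega,Y_r,P_r^*\mu)$, the first difference is controlled by a local Lipschitz bound on $g(\omega,\cdot,\nu)$ in the state variable (which follows from \textbf{(C1)}: $\cT(\omega)=u(Z_0^I(\omega),\cdot)$ is a $C^\infty$ diffeomorphism with locally bounded derivatives, and \textbf{(C2)} gives the local Lipschitz property of $\overline b$, hence of $b$, in $x$), uniformly over $\nu$ ranging in the bounded-in-$\cP_V$ family $\{P_r^*\mu_n:r\in[0,t],n\geq1\}$ — here I use \eqref{1010-1} to see this family is bounded in $\cP_V$, and I need an a priori bound on the trajectories $Y_r^n$ to localize, which comes from the Lyapunov/growth conditions \textbf{(C4)}--\textbf{(C5)} applied through the conjugacy (Proposition \ref{thm1}) together with \eqref{1010-1}. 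The second difference is handled by the continuity of $\nu\mapsto g(\theta_r\omega,y,\nu)$, which follows from \textbf{(C3)} (continuity of $b$ in $(x,\mu)$), evaluated at $\nu=P_r^*\mu_n$ and using Lemma \ref{lem1022} to get $P_r^*\mu_n\to_{\cW_2}P_r^*\mu$ for each $r$; a dominated-convergence argument (with the $\cP_V$-uniform bounds providing the domination) then sends $\int_0^t\|g(\theta_r\omega,Y_r,P_r^*\mu_n)-g(\theta_r\omega,Y_r,P_r^*\mu)\|\rmd r\to0$. Applying Gronwall's inequality to $\|Y_t^n-Y_t\|$ closes the argument.

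The main obstacle I anticipate is the uniform-in-$n$ a priori control of the trajectories $Y_r^n=\chi_r^{\mu_n}(\omega,x_n)$ on $[0,t]$, needed to reduce everything to a \emph{local} Lipschitz regime and to justify the domination in the convergence $g(\cdot,P_r^*\mu_n)\to g(\cdot,P_r^*\mu)$. The subtlety is that \eqref{1010-1} is an estimate in expectation for the original process $X_t^{\xi,\mu}$, whereas here we need a pathwise bound for the conjugate ODE solution for \emph{fixed} $\omega$; this requires using the explicit conjugacy $\cT(\theta_r\omega)Y_r^n=\varphi(r,\omega,\mu_n,\cT(\omega)x_n)$ from Proposition \ref{thm1} together with the subexponential-growth structure built into \textbf{(C7)} (the functions $K,L$ and the Ornstein--Uhlenbeck path $Z^I$) to bound $V(\varphi(r,\omega,\mu_n,\cdot),P_r^*\mu_n)$ pathwise, and then controlling $\|Y_r^n\|$ via \eqref{1126-1} and the local boundedness of $\cT^{-1}(\theta_r\omega)$. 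Once this uniform bound is in place, the remaining steps are routine: local Lipschitz continuity of $g$ in the state variable, pointwise-in-$r$ continuity of $g$ in the measure argument via \textbf{(C3)} and Lemma \ref{lem1022}, dominated convergence, and Gronwall.
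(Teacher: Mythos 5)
Your overall architecture is sound on the hard part: you correctly identify that the crux is a \emph{pathwise} (fixed $\omega$) a priori bound on the trajectories $\chi_r^{\mu_n}(\omega,x_n)$, and that this must come from the conjugacy of Proposition \ref{thm1} together with the Lyapunov structure of \textbf{(C7)} (the functions $K,L$ and the Ornstein--Uhlenbeck path $Z^I$) rather than from the in-expectation estimate \eqref{1010-1}. This is exactly what the paper does, via the flow-of-measures It\^o formula and the chain rule to get $V(\chi_t^\mu(\omega,x),P_t^*\mu)\leq V(x,\mu)e^{\int_0^tK(Z_0^I(\theta_r\omega))\rmd r}+\int_0^te^{\int_0^rK(Z_0^I(\theta_u\omega))\rmd u}L(Z_0^I(\theta_r\omega))\rmd r$, and then \eqref{1126-1} to convert this into a uniform bound on $|\chi_r^{\mu_n}(\omega,x_n)|$.

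The gap is in your closing step. Your Gronwall argument requires a local Lipschitz modulus for $y\mapsto g(\theta_r\omega,y,\nu)$, uniform over $\nu$ ranging in the family $\{P_r^*\mu_n: r\in[0,t],\ n\geq1\}$, and you propose to extract it from \textbf{(C2)}. But \textbf{(C2)} is not among the hypotheses of the lemma (only \textbf{(C1)}, \textbf{(C4)}, \textbf{(C5)}, \textbf{(C7)} are assumed), and even where it is in force it gives a Lipschitz bound only for measures with $\operatorname{supp}\mu\subset B(0,N)$; the laws $P_r^*\mu_n$ are in general not compactly supported, so no uniform Lipschitz constant over this family is available. The same objection applies to your use of \textbf{(C3)} for the measure-continuity of $g$. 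Under the actually assumed conditions, the only quantitative control on $b$ and $\sigma$ is the growth bound \eqref{1126-1}, which bounds $|b|^{2\ell}+|\sigma|^{4\ell}$ by $1+V$ but gives no modulus of continuity in $x$. The paper avoids this entirely: it uses \eqref{1126-1} plus the pathwise Lyapunov bound to show that $\{\chi^{\mu_n}(\omega,x_n)\}_n$ is equicontinuous and uniformly bounded in $C[0,T]$, extracts a convergent subsequence by Arzel\`a--Ascoli, and identifies the limit as $\chi^\mu(\omega,x)$ by passing to the limit in the integral equation (which needs only continuity of $b$ and dominated convergence) together with uniqueness. To repair your argument you would either have to adopt this compactness-plus-uniqueness route, or add a genuine local Lipschitz hypothesis on $\overline{b}(\cdot,\nu)$ uniform over bounded-in-$\cP_V$ families of measures, which the lemma does not assume.
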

\begin{proof}
 Let $\{\mu_n\}$ be a sequence in $S$, and suppose $\mu_n$ converges to $\mu$ in the $\cW_2$ metric. 
 And let $x_n$ converge to $x$ in $\R^d$. 
  It follows from \eqref{1126-1} of \textbf{(C5)} that
  \begin{align}\label{0902-3}
   & 
   |\chi_t^{\mu_n}(\omega,x_n)-\chi_s^{\mu_n}(\omega,x_n)|^{2\ell}\nonumber \\
    &
     \leq |t-s|^{2\ell-1}\int_{s}^t\Big|\big(\partial_x \cT \big)^{-1}(\theta_r\omega,\chi_r^{\mu_n}(\omega,x_n))\Big|^{2\ell}
    \Big|b( \cT (\theta_r\omega)\chi_r^{\mu_n}(\omega,x_n),P_r^*\mu_n)\nonumber\\
    &
    \qquad\qquad\qquad\qquad\qquad\qquad\qquad+\eta\sigma( \cT (\theta_r\omega)\chi_r^{\mu_n}(\omega,x_n))Z^I_0(\theta_r\omega)\Big|^{2\ell} \rmd r\nonumber\\
    &
     \leq C(\omega,T,L,\ell,\eta)|t-s|^{2\ell-1}\int_{s}^t\Big|b( \cT (\theta_r\omega)\chi_r^{\mu_n}(\omega,x_n),P_r^*\mu_n)\Big|^{2\ell}+\Big|\sigma( \cT (\theta_r\omega)\chi_r^{\mu_n}(\omega,x_n))\Big|^{2\ell} \rmd r\nonumber\\
    &
    \leq C(\omega,T,L,\ell,\eta,K)|t-s|^{2\ell-1}\int_{s}^t\Big(1+V\big( \cT (\theta_r\omega)\chi_r^{\mu_n}(\omega,x_n),P_r^*\mu_n\big)\Big)\rmd r,
  \end{align}
  where the second inequality holds since the boundedness of the first derivative of $\sigma$ ensures that 
  $$
  \big(\partial_x \cT \big)^{-1}(\theta_r\omega,\chi_r^{\mu_n}(\omega,x_n))
  $$ 
  is controlled by a constant independent of $n$.

  Set
  \begin{align*}
  b_t:=&\bar{b}(\widetilde{X}_t^{\widetilde{\xi}},P_{-t,0}^*\mu),\quad \sigma_t:=\sigma(\widetilde{X}_t^{\widetilde{\xi}}),
  \end{align*}
  where $\widetilde{X}_t^{\widetilde{\xi}}$ is defined on some probability space $(\widetilde{\Omega},\widetilde{\cF},\widetilde{\mathbb{P}})$ and its law is $P_{-t,0}^*\mu$.
    Then it follows from \eqref{1126-1} of \textbf{(C5)} and \eqref{1010-1} that 
  \begin{align*}
  \widetilde{\mE}\Big(\int_0^T|b_t|^2+|\sigma_t|^4\rmd t\Big)<\infty.
  \end{align*}   
   Hence, apply Proposition \ref{20250526-1} to $\mu\mapsto V(x,\mu)$ and obtain that
  \begin{align*}
  V(x,P_{-t,0}^*\mu) = V(x,\mu) 
  &
  +\int_0^t \widetilde{\mE}[\tilde{b}(\widetilde{X}_r^{\widetilde{\xi}},P_{-r,0}^*\mu)\partial_{\mu}V(x,P_{-r,0}^*\mu,\widetilde{X}_r^{\widetilde{\xi}})]\,\rmd r \\
  &
  +\frac{1}{2}
  \sum_{i=1}^m\int_0^t \widetilde{\mE}[\sigma_i(\widetilde{X}_r^{\widetilde{\xi}})
  \partial_y\partial_{\mu}V(x,P_{-r,0}^*\mu,\widetilde{X}_r^{\widetilde{\xi}})
  \sigma^T(\widetilde{X}_r^{\widetilde{\xi}})]\,\rmd r,
  \end{align*}
 where $P_{-r,0}^*\mu$ is the law of $\widetilde{X}_r^{\widetilde{\xi}}$. Then we apply the chain rule of differentiation to $V(\cdot,P_{-t,0}^*\mu)$ and \eqref{0102-1} of  \textbf{(C7)}, thereby obtaining that
  \begin{align*}
  V(\chi_t^\mu(\omega,x),P_{-t,0}^*\mu)
  & 
    =V(x,\mu)+\int_0^t \big\langle\partial_x V(\chi_r^\mu(\omega,x),P_{-r,0}^*\mu), h^{\eta}(Z_0(\theta_r\omega),\chi_r^\mu(\omega,x),P_{-r,0}^*\mu)\big\rangle\,\rmd r \\
   &
   \qquad\qquad+\int_0^t \widetilde{\mE}\big[\big\langle
   \partial_{\mu}V(\chi_r^\mu(\omega,x),P_{-r,0}^*\mu,\widetilde{X}_r^{\widetilde{\xi}}), \overline{b}(\widetilde{X}_r^{\widetilde{\xi}},P_{-r,0}^*\mu)\big\rangle\big]\,\rmd r  \\
  &
  \qquad\qquad+\frac{1}{2}
  \sum_{i=1}^m\int_0^t \widetilde{\mE}[\sigma_i(\widetilde{X}_r^{\widetilde{\xi}})
  \partial_y\partial_{\mu}V(x,P_{-r,0}^*\mu,\widetilde{X}_r^{\widetilde{\xi}})
  \sigma^T(\widetilde{X}_r^{\widetilde{\xi}})]\,\rmd r\\
  &
  \leq V(x,\mu)+\int_0^t K(Z^I_0(\theta_r\omega))V(\chi_r^\mu(\omega,x),P_{-r,0}^*\mu)+L(Z^I_0(\theta_r\omega))\,\rmd r.
  \end{align*}
The Gronwall inequality implies that
  \begin{align}\label{0102-3}
  V(\chi_t^\mu(\omega,x),P_{-t,0}^*\mu)
  \leq V(x,\mu)e^{\int_{0}^tK(Z^I_0(\theta_{r}\omega))\,\rmd r}+\int_{0}^t e^{\int_{0}^rK(Z^I_0(\theta_{u}\omega))\,\rmd u}\, L(Z^I_0(\theta_{r}\omega))\,\rmd r.
  \end{align}
Hence, there exist $C_1(\omega,T)>0$ and $C_2(\omega,T)>0$ such that
\begin{align*}
V(\chi_t^{\mu_n}(\omega,x_n),P_{t}^*\mu_n)
&
\leq C_1(\omega,T)V(x_n,\mu_n)+C_2(\omega,T).
\end{align*}
Since $\mu_n$ belongs to the bounded set $S$ and $x_n$ converges to $x$, 
there exists a constant $R>0$ such that $\cW_p(\mu_n,\delta_0)\vee |x_n|\leq R$. 
This implies that there exists $C(\omega,T,R)>0$ such that for any $n$,
\begin{align*}
V(\chi_t^{\mu_n}(\omega,x_n),P_{t}^*\mu_n)
&
\leq C(\omega,T,R).
\end{align*}
Since $ \cT (\theta_t\omega,\cdot)$ is a diffeomorphism, 
continuing the computation from \eqref{0902-3} yields that
  \begin{align*}
   & 
   |\chi_t^{\mu_n}(\omega,x_n)-\chi_s^{\mu_n}(\omega,x_n)|^{2\ell} \\
   &
    \leq C(\omega,T,L,\ell,\eta,K)\big(1+ C(\omega,T,R)\big)|t-s|^{2\ell}.
  \end{align*}  
   Hence, $\{\chi^{\mu_n}(\omega,x_n)\}_n\subset C[0,T]$ is equicontinuous. 
   Additionally, the above formula implies that 
  \begin{align*}
   &\sup_{t\in[0,T]} |\chi_t^{\mu_n}(\omega,x_n)|^{2\ell} \leq |x|^{2\ell} +C'(\omega,T,L,\ell,\eta,K).
  \end{align*}
  Hence, $\{\chi^{\mu_n}(\omega,x_n)\}_n\subset C[0,T]$ is uniformly bounded. 
  The Arzel\'a-Ascoli theorem implies that there exist a subsequence $n_k$ and $\chi(\omega)\in C[0,T]$ 
  such that 
  $$\lim_{n_k\to\infty}\sup_{t\in[0,T]}\big|\chi_t^{\mu_{n_k}}(\omega,x_{n_k})-\chi_t(\omega)\big|=0.$$
  Note that 
  \begin{align*}
  \chi_t^{\mu_n}(\omega,x_n)=x_n+\int_{0}^t\big(\partial_x \cT \big)^{-1}(\theta_r\omega,\chi_r^{\mu_n}(\omega,x_n))
    \Big(&b(\cT (\theta_r\omega)\chi_r^{\mu_n}(\omega,x_n),P_r^*\mu_n)\\ 
    &+\eta\sigma( \cT (\theta_r\omega)\chi_r^{\mu_n}(\omega,x_n))Z_0(\theta_r\omega)\Big) \rmd r.
  \end{align*}
  Similar to the approach for proving equicontinuity, 
  we can obtain that the integrand can be controlled by some constant independent of $n$. 
  The Lebesgue dominated convergence theorem implies that $\chi_t(\omega)$ is the solution of equation \eqref{RDE1} with $(x,\mu)$. 
  Hence, $\chi_t(\omega)=\chi_t^{\mu}(\omega,x)$. 
  The proof is completed.
\end{proof}

Now we consider the existence of random attractors for system $\overline{\chi}$. 
In preparation for this, we shall first examine the existence of $\sS$-global attractor of $P_t^*$.

\begin{lem}\label{thmSODE1}
Under conditions \textbf{(C4)} and \textbf{(C5)}, $P_t^*$ admits a $\sS$-global attractor in $(\cP_V(\R^d),\cW_2)$. 
\end{lem}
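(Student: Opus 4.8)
The plan is to establish the existence of an $\sS$-global attractor for the semi-dynamical system $\{P_t^*\}_{t\geq0}$ on $(\cP_V(\R^d),\cW_2)$ by verifying the two standard ingredients for the existence of a global attractor of a semi-dynamical system over the universe $\sS$ (see Lemma \ref{EGA}): (a) $\{P_t^*\}_{t\geq0}$ possesses a compact absorbing set belonging to $\sS$, and (b) the semi-flow is asymptotically compact (or, since we will produce a \emph{compact} absorbing set, it suffices to have continuity of $P_t^*$ on bounded subsets of $\cP_V(\R^d)$, which is exactly Lemma \ref{lem1022}). The main work is therefore to exhibit the compact absorbing set.

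First I would produce the absorbing set. The Lyapunov estimate \eqref{1010-1} derived above from \textbf{(C4)} gives, for $\mu\in\cP_V(\R^d)$ with $\mu=\sL_\xi$,
\begin{align*}
\int_{\R^d}V(x,P_t^*\mu)\,(P_t^*\mu)(\rmd x)=\mE V(X_t^{\xi,\mu},P_t^*\mu)\leq e^{-\alpha t}\mE V(\xi,\mu)+\frac{\cal{M}}{\alpha}.
\end{align*}
Fix any $R_0>\cal{M}/\alpha$ and set $\cB_{R_0}:=\{\nu\in\cP_V(\R^d):\int V(x,\nu)\nu(\rmd x)\leq R_0,\ \nu\text{ closed under }\cW_2\}$. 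Given any bounded $B_\cP\in\sS$ with $\sup_{\mu\in B_\cP}\int V\,\rmd\mu\leq R$, the estimate shows $P_t^*\mu\in\cB_{R_0}$ for all $t\geq T(R):=\frac{1}{\alpha}\log\frac{\alpha R}{\alpha R_0-\cal{M}}$, so $\cB_{R_0}$ is $\sS$-absorbing. It also lies in $\sS$ by construction. The remaining point is that $\cB_{R_0}$ is \emph{relatively compact} in $(\cP_V(\R^d),\cW_2)$: here I would use \textbf{(C5)}, which via \eqref{1126-1} gives $|x|^p\leq K(1+V(x,\mu))$ with $p\geq 4\ell>1$, hence $\sup_{\nu\in\cB_{R_0}}\int_{\R^d}|x|^p\nu(\rmd x)\leq K(1+R_0)<\infty$; since $p>2$, this uniform $p$-th moment bound yields tightness of $\cB_{R_0}$ and uniform integrability of $|x|^2$, so $\overline{\cB_{R_0}}$ (closure in $\cW_2$) is $\cW_2$-compact, and one checks the limit of any convergent sequence still satisfies the $V$-bound (using lower semicontinuity of $\nu\mapsto\int V\,\rmd\nu$, or by enlarging $R_0$ slightly and working with $\overline{\cB_{R_0}}$ — which is why $R_0$ was taken strictly larger than $\cal{M}/\alpha$). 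Thus $\overline{\cB_{R_0}}$ is a compact absorbing set in $\sS$.

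Next, with a compact absorbing set in hand, asymptotic compactness is automatic, and the continuity hypothesis needed for the abstract existence theorem is supplied by Lemma \ref{lem1022}: $P_t^*$ is continuous from $(S,\cW_2)$ to $(\cP_V(\R^d),\cW_2)$ for every bounded $S\subset\cP_V(\R^d)$, in particular on the compact set $\overline{\cB_{R_0}}\in\sS$. Invoking Lemma \ref{EGA} (the general criterion quoted in the proof of Proposition \ref{prop0117}, which is precisely Proposition \ref{prop0117}'s first paragraph), we conclude that $\{P_t^*\}_{t\geq0}$ admits an $\sS$-global attractor $\cP^*=\Omega(\overline{\cB_{R_0}})=\bigcap_{s\geq0}\overline{\bigcup_{t\geq s}P_t^*\overline{\cB_{R_0}}}\subset\cP_V(\R^d)$, which is compact, invariant ($P_t^*\cP^*=\cP^*$), and $\sS$-attracting.

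I expect the main obstacle to be the compactness of $\overline{\cB_{R_0}}$ in $(\cP_V(\R^d),\cW_2)$ rather than in $(\cP_2(\R^d),\cW_2)$: one must be careful that the closure does not leave $\cP_V(\R^d)$, and that the $V$-functional $\nu\mapsto\int V(x,\nu)\nu(\rmd x)$ behaves well under $\cW_2$-limits. The strict inequality $R_0>\cal{M}/\alpha$ gives the needed slack; alternatively one can show directly, using property (iv) in the definition of $\Xi$ and the joint continuity of $\partial_x V,\partial^2_{xx}V$ together with the $\cW_2$-convergence, that $\nu\mapsto\int V\,\rmd\nu$ is sequentially lower semicontinuous along sequences with uniformly bounded $p$-th moments, so that the $V$-bound $R_0$ is preserved in the closure. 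Everything else — the absorption time, tightness from the uniform $p$-th moment with $p>2$, and the continuity input — is routine given \textbf{(C4)}, \textbf{(C5)} and Lemma \ref{lem1022}.
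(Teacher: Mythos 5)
Your proposal is correct and follows essentially the same route as the paper: the Lyapunov estimate \eqref{1010-1} from \textbf{(C4)} yields a $V$-sublevel absorbing set, the moment bound \eqref{1126-1} from \textbf{(C5)} with $p>2$ gives $\cW_2$-compactness, lower semicontinuity of $\nu\mapsto\int V\,\rmd\nu$ keeps the set in $\sS$, and Lemma \ref{lem1022} plus Lemma \ref{EGA} conclude. Your choice of $R_0>\mathcal{M}/\alpha$ with an explicit absorption time is in fact slightly more careful than the paper's threshold $\gamma=\mathcal{M}/\alpha$, which is only reached asymptotically.
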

\begin{proof}
By \eqref{1010-1}, it holds that 
 \begin{align}\label{0605--1} 
  \mE V(X _{t}^{\xi,\mu},P_{t}^*\mu )
   &
   \leq e^{-\alpha t}\mE V(\xi,\mu)+\frac{\cal{M}}{\alpha}\Big(1-e^{-\alpha t}\Big)\nonumber\\
   &
    \rightarrow \frac{\cal{M}}{\alpha}=:\gamma, 
   \quad \text{as}~t\rightarrow+\infty.
  \end{align}
 Condition \textbf{(C2)} implies that 
  \begin{align*} 
  \cB:=\Big\{\mu\in\cal{P}_V(\R^d):\int_{\R^d}V(x,\mu)\mu(\rmd x)\leq\gamma\Big\}
  \subset\Big\{\mu\in\cal{P}_V(\R^d):|\mu|^{p}_{p}\leq K(1+\gamma)\Big\}=:\cB_{\cP}.
  \end{align*} 
  By the continuity of $V$, \cite[lemma 12.8]{ABS} implies that for any $\cW_2$-convergent sequence $\{\mu_k\}\subset\cB$ with the limit $\mu$, it holds that
  $$
  \int_{\R^d}V(x,\mu)\mu(\rmd x)\leq \liminf_{k\to+\infty}\int_{\R^d}V(x,\mu_k)\mu_k(\rmd x)\leq\gamma.
  $$
  This implies that $\cB$ is closed under the metric $\cW_2$. Hence, $\cB\in\sS$.
  Due to $p>2$ and the compactness of $\cB_{\cP}$ under the metric $\cW_2$, $\cB$ is compact under the metric $\cW_2$.
  Moreover, \eqref{0605--1} suggests that $\cB$ absorbs all element in $\sS$.
  By Lemma \ref{lem1022} and Lemma \ref{EGA}, the system admits a $\sS$-global attractor, denoted as $\cP^*$.
\end{proof}

Now, we restrict the RDSs generated by system \eqref{1111} or \eqref{1111-2} to $\cP^*\times\R^d$ for analysis.
Lemma \ref{lem1022}, Lemma \ref{lem0102-2} and Theorem \ref{thm1} imply the continuity of the RDSs.
\begin{cor}\label{0106-4}
Under conditions \textbf{(C1)}, \textbf{(C4)}, \textbf{(C5)} and \textbf{(C7)},
systems $\overline{\chi}:\R_+\times\Omega\times\cP^*\times\R^d\to\cP^*\times\R^d$ and $\Phi:\R_+\times\Omega\times\cP^*\times\R^d\to\cP^*\times\R^d$ are continuous RDSs.
\end{cor}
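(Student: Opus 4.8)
\textbf{Proof plan for Corollary \ref{0106-4}.}
The plan is to verify the three defining properties of a continuous RDS (measurability, cocycle identity, joint continuity of the time-$t$ map in the phase-space variables) for each of $\overline{\chi}$ and $\Phi$ on the restricted phase space $\cP^*\times\R^d$. Most of the structural work is already in place: Proposition \ref{thm1} establishes the conjugacy $\overline{\cT}(\theta_t\omega)\circ\overline{\chi}_t(\omega,\cdot)\circ\overline{\cT}^{-1}(\omega)=\Phi(t,\omega,\cdot)$, the remark following it records the cocycle identity, and Lemma \ref{thmSODE1} shows that $\cP^*$ is a (compact) $\sS$-global attractor, hence in particular $P_t^*$-invariant, so that restricting both systems to $\cP^*\times\R^d$ is legitimate and the restricted maps still take values in $\cP^*\times\R^d$. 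So the only genuinely new point to check is joint continuity of $(\mu,x)\mapsto\overline{\chi}_t(\omega,\mu,x)=(P_t^*\mu,\chi_t^\mu(\omega,x))$ and of $(\mu,x)\mapsto\Phi(t,\omega,\mu,x)=(P_t^*\mu,\varphi(t,\omega,\mu,x))$, for each fixed $t\in\R_+$ and $\omega\in\Omega$.

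First I would treat $\overline{\chi}$. Continuity in the $\cP^*$-component reduces to continuity of $P_t^*:(\cP^*,\cW_2)\to(\cP^*,\cW_2)$, which is exactly Lemma \ref{lem1022} applied with $S=\cP^*$ (legitimate since $\cP^*$, being a compact subset of $(\cP_V(\R^d),\cW_2)$, is bounded in the sense required there). Continuity in the $\R^d$-component, i.e. joint continuity of $(\mu,x)\mapsto\chi_t^\mu(\omega,x)$ on $\cP^*\times\R^d$, is precisely Lemma \ref{lem0102-2}, again with $S=\cP^*$. Since the metric on the product is $d=d_1+d_2$, joint continuity of the pair follows immediately from continuity of the two coordinate maps. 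This gives that $\overline{\chi}:\R_+\times\Omega\times\cP^*\times\R^d\to\cP^*\times\R^d$ is a continuous RDS, the remaining RDS axioms (measurability of the full map, the identity at $t=0$, and the cocycle property \eqref{RDEflow}) having been recorded already.

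For $\Phi$ I would invoke the conjugacy. From Proposition \ref{thm1}, $\Phi(t,\omega,\cdot)=\overline{\cT}(\theta_t\omega)\circ\overline{\chi}_t(\omega,\cdot)\circ\overline{\cT}(\omega)^{-1}$ on $\cP^*\times\R^d$, where $\overline{\cT}(\omega)=Id_{\cP_V(\R^d)}\times\cT(\omega)$ and, by \eqref{1107:01} and \cite[Section 1]{IS}, $\cT(\omega)=u(Z_0^I(\omega),\cdot)$ is a diffeomorphism of $\R^d$; hence both $\overline{\cT}(\omega)$ and $\overline{\cT}(\theta_t\omega)$ are homeomorphisms of $\cP^*\times\R^d$. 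Therefore $\Phi(t,\omega,\cdot)$, being a composition of the continuous map $\overline{\chi}_t(\omega,\cdot)$ with two homeomorphisms, is continuous on $\cP^*\times\R^d$. The cocycle property and measurability for $\Phi$ transfer from those of $\overline{\chi}$ through the conjugacy (the remark after Proposition \ref{thm1} already states the cocycle identity for $\Phi$), so $\Phi$ is likewise a continuous RDS on $\cP^*\times\R^d$.

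The main obstacle is not in this corollary itself — here everything is bookkeeping — but rather in the two lemmas it quotes, and in particular Lemma \ref{lem0102-2}: the continuity of $(\mu,x)\mapsto\chi_t^\mu(\omega,x)$ requires the uniform moment bound \eqref{0102-3} coming from condition \textbf{(C7)} to control $V(\chi_t^{\mu_n}(\omega,x_n),P_t^*\mu_n)$ along sequences, then an Arzelà–Ascoli compactness argument on $C[0,T]$, and finally identification of every limit point with $\chi_t^\mu(\omega,x)$ via uniqueness for \eqref{RDE1}. Within the corollary, the only subtlety worth flagging explicitly is that the domain restriction to $\cP^*$ is harmless: $\cP^*$ is bounded in $(\cP_V(\R^d),\cW_2)$, so Lemmas \ref{lem1022} and \ref{lem0102-2} apply with $S=\cP^*$ verbatim.
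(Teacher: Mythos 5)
Your proposal is correct and follows essentially the same route as the paper, whose entire argument for this corollary is the observation that Lemma \ref{lem1022}, Lemma \ref{lem0102-2} and Proposition \ref{thm1} together yield the continuity (you simply spell out the bookkeeping: coordinate-wise continuity on $\cP^*\times\R^d$ for $\overline{\chi}$, then transfer to $\Phi$ via the conjugacy by the diffeomorphism $\cT(\omega)$). The only remark worth adding is that the $V$-boundedness of $\cP^*$ needed to invoke those lemmas with $S=\cP^*$ follows most directly from $\cP^*\subset\cB$, the $V$-bounded compact absorbing set of Lemma \ref{thmSODE1}, rather than from $\cW_2$-compactness alone.
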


With continuity established, we now turn to the existence of a random pullback attractor.
\begin{prop}\label{lem1}
Under the hypotheses of Corollary \ref{0106-4}, $\overline{\chi}$ admits a $\sD$-random attractor $\sA$. 
\end{prop}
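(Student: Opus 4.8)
The plan is to verify the hypotheses of Proposition \ref{prop0117} applied to the RDS $\overline{\chi}$ over the driving systems $(\Omega,\cF,\mP,\{\theta_t\})$ and $(\cP_V(\R^d),\{P_t^*\})$, restricted to $\cP^*\times\R^d$. Corollary \ref{0106-4} already supplies the continuity of $\overline{\chi}$, and Lemma \ref{thmSODE1} gives the $\sS$-global attractor $\cP^*$ of $P_t^*$; so it remains to produce a closed absorbing set $K\in\cD$ for the $\R^d$-component in the pullback sense of \eqref{0916:02}, and to establish pullback $\sD$-asymptotic compactness. The key analytic input is the a priori bound \eqref{0102-3}: for $\mu\in\cP^*$, $x\in\R^d$ and $t\ge0$,
\[
V(\chi_t^\mu(\omega,x),P_{-t,0}^*\mu)\le V(x,\mu)\,e^{\int_0^t K(Z_0^I(\theta_r\omega))\,\rmd r}+\int_0^t e^{\int_0^r K(Z_0^I(\theta_u\omega))\,\rmd u}\,L(Z_0^I(\theta_r\omega))\,\rmd r.
\]

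First I would rewrite this bound in pullback form, replacing $\omega$ by $\theta_{-t}\omega$ and $\mu$ by $q\in\Theta_{2,-t}p$ (so that $P_{-t,0}^*q=p\in\cP^*$ is bounded), which gives
\[
V(\chi_t^{q}(\theta_{-t}\omega,x),p)\le V(x,q)\,e^{\int_{-t}^0 K(Z_0^I(\theta_r\omega))\,\rmd r}+\int_{-t}^0 e^{\int_{r}^0 K(Z_0^I(\theta_u\omega))\,\rmd u}\,L(Z_0^I(\theta_r\omega))\,\rmd r.
\]
By the ergodic theorem and the dissipativity condition \eqref{0902-2}, $\frac1t\int_{-t}^0 K(Z_0^I(\theta_r\omega))\,\rmd r\to \int_{\R^m}K\,\rmd\sL_{Z_0^I}<0$ almost surely, so the first term tends to $0$, and since $L$ is subexponentially growing while $Z_0^I$ has at most logarithmic growth under $\theta_r$, the integral converges to a finite tempered random variable $r_1(\omega)$; here one also uses that $V(x,q)$ is controlled on $D(\theta_{-t}\omega)$ for $D\in\cD$ tempered together with \textbf{(C5)} and \eqref{1010-1}. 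Thus $\limsup_{t\to\infty} V(\chi_t^q(\theta_{-t}\omega,x),p)\le r_1(\omega)$ uniformly in $q\in\Theta_{2,-t}p$, $p\in\cP^*$, $x\in D(\theta_{-t}\omega)$. Using \eqref{1126-1} of \textbf{(C5)} to bound $|x|^p\le K(1+V(x,\mu))$, I define the closed bounded set $K(\omega):=\{x\in\R^d:|x|^p\le K(1+2r_1(\omega))\}$, which lies in the tempered collection $\cD$ and absorbs in the sense of \eqref{0916:02}.

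For asymptotic compactness I would exploit that $\cX=\R^d$ is finite-dimensional: $\overline{\chi}$ pullback-maps bounded sets into the bounded set $\cP^*\times K(\omega)$ (using that $\cP^*$ is compact in $(\cP_V(\R^d),\cW_2)$ by Lemma \ref{thmSODE1}), and the closure of a bounded set in $\cP^*\times\R^d$ is compact; hence every pullback sequence $\Phi$-type orbit $\{(\theta_{2,t_n}q_n,\chi_{t_n}^{q_n}(\theta_{-t_n}\omega,x_n))\}$ with $q_n\in\cP^*$, $x_n\in B(\theta_{-t_n}\omega)$ has a convergent subsequence. This verifies Definition \ref{dePAC} for $\overline{\chi}$. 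Applying Proposition \ref{prop0117} then yields the $\sD$-random attractor $\sA$ of $\overline{\chi}$ with the stated representation $\sA(\omega)=\Omega(\cP^*\times K(\omega))$.

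The main obstacle I anticipate is the uniformity of the limit in \eqref{0916:02} over $q\in\Theta_{2,-t}p$ and $p\in\cP^*$ simultaneously: one must check that the tempered random variable $r_1(\omega)$ bounding $\limsup_t V(\chi_t^q(\theta_{-t}\omega,x),p)$ does not depend on the backward orbit $q$ chosen, which requires that the right-hand side of the pullback Lyapunov bound depends on $q$ only through $V(x,q)$ — controlled uniformly on $D\in\cD$ — and on the (bounded, $q$-independent) endpoint $p=P_{-t,0}^*q\in\cP^*$ via the boundedness of $\cP^*$. A secondary technical point is justifying the ergodic-average convergence uniformly enough; this follows from the stationarity $\cT_t=\cT_0\circ\theta_t$ and $Z_t^I=Z_0^I\circ\theta_t$ together with integrability of $K(Z_0^I)$, which is guaranteed by subexponential growth of $K$ and the Gaussian tails of $Z_0^I$.
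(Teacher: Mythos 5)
Your proposal is correct and follows essentially the same route as the paper: it uses the pullback form of the Lyapunov estimate \eqref{0102-3}, the Birkhoff ergodic theorem together with \eqref{0902-2} to kill the first term and produce a finite tempered random bound $\gamma(\omega)$ for the second, condition \eqref{1126-1} of \textbf{(C5)} to convert the sublevel set of $V$ into a tempered closed ball $\widetilde K(\omega)\in\cD$ satisfying \eqref{0916:02}, compactness of $\cP^*\times\widetilde K(\omega)$ (finite dimension plus Lemma \ref{thmSODE1}) for pullback $\sD$-asymptotic compactness, and finally Proposition \ref{prop0117}. The only cosmetic difference is that you phrase the absorption through the backward extension $\Theta_{2,-t}p$, which is needed for Theorem \ref{thm0330} but not for this proposition, where the forward parametrization over $\mu\in\cP^*$ (as in the paper) suffices.
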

\begin{proof}
  By \eqref{0102-3}, it holds that for any $t>0$, $(x,\mu)\in\R^d\times \cP^*$, 
  \begin{align}\label{0609--2}
  V(\overline{\chi}_t(\theta_{-t}\omega)(\mu,x))
  &
  =V(\chi_t^\mu(\theta_{-t}\omega,x),P_{-t,0}^*\mu)\\
  &
  \leq V(x,\mu)e^{\int_{-t}^0K(Z^I_0(\theta_{r}\omega))\,\rmd r}+\int_{-t}^0 e^{\int_{r}^0K(Z^I_0(\theta_{u}\omega))\,\rmd u}\, L(Z^I_0(\theta_{r}\omega))\,\rmd r.\nonumber
  \end{align}
  Since $K$ is subexponential and $\theta_t$ is ergodic with respect to $\mathbb{P}$, 
  the Birkhoff ergodic theorem implies that
  $$
  \lim_{t\rightarrow+\infty}\frac{1}{t}\int_{-t}^0K(Z^I_0(\theta_{r}\omega))\,\rmd r=\int_{\R^m}K(z)\sL_{Z^I_0}(\rmd z)<0,
  $$
  which, along with the subexponential property of $L$, 
  indicates that the limit of the first term on the right-hand side of \eqref{0609--2} is zero, 
  while the limit of the second term exists and is strictly positive as $t\rightarrow+\infty$. 
  Therefore, we can define that for any $\mu\in\cP^*$,
  $$
  \cB(\omega,\mu):=\{x\in\R^d:V(x,\mu)\leq 2\gamma(\omega)\},
  $$
  where
  $$
  \gamma(\omega)=\int_{-\infty}^0 e^{\int_{r}^0K(Z^I_0(\theta_{u}\omega))\,\rmd u}\, L(Z^I_0(\theta_{r}\omega))\,\rmd r<+\infty.
  $$
  It follows from \eqref{1126-1} that for any $\mu\in\cP^*$,
  $$
  \cB(\omega,\mu)\subset
  \Big\{x\in\R^d:|x|\leq K(1+2\gamma(\omega))+1\Big\}
  =:\widetilde{K}(\omega).
  $$  
\cite[Theorem 2.12]{IS} implies that $\gamma$ is tempered.
This suggests that $\widetilde{K}:=\{\widetilde{K}(\omega)\}_{\omega\in\Omega}$ belongs to $\cD$. And it follows from \eqref{0609--2} that $\widetilde{K}$ satisfies \eqref{0916:02}. 

For any $\omega\in\Omega$, $\cP^*\times D\in\sD$, $t_n\to+\infty$, and $(\mu_n,x_n)\in \cP^*\times D(\theta_{-t_n}\omega)$, 
it follows from \eqref{0609--2} that there exists $N>0$ such that for any $n>N$, 
$$
\overline{\chi}_{t_n}(\theta_{-t_n}\omega)(\mu_n, x_n)\in \cP^*\times\widetilde{K}(\omega).
$$
Due to the compactness of $\cP^*\times\widetilde{K}(\omega)$, 
$\{\overline{\chi}_{t_n}(\theta_{-t_n}\omega)(\mu_n, x_n)\}_{n>N}$ possesses a convergent subsequence. 
This implies that $\overline{\chi}$ is pullback $\sD$-asymptotically compact.
By Proposition~\ref{prop0117}, one sees that $\overline{\chi}$ admits a $\sD$-random attractor. The proof is complete.
\end{proof}

\begin{proof}[Proof of Theorem \ref{thmSODE}]
(1)
The existence of the $\sS$-global attractor is given in Lemma \ref{thmSODE1}.

(2)
(i) It follows from Lemma \ref{lem0928} and Proposition \ref{lem1}.

(ii) 
Firstly, the homeomorphism of $\cT$ guarantees that $\cT \cD$ is neighborhood closed. Moreover,
For any $\omega\in\Omega$, $ \cT  D\in \cT \cD$, $t_n\to+\infty$, $\mu_n\in \cP^*$, $\nu_n\in \Theta_{2,-t_n}\mu_n\subset\cP^*$ and $x_n\in  \cT (\theta_{-t_n}\omega) D(\theta_{-t_n}\omega)$, 
it follows from \eqref{0609--2} that there exists $N>0$ such that for any $n>N$,
$$
\varphi(t_n,\theta_{-t_n}\omega,\nu_n,x_n)= \cT (\omega)\chi_{t_n}^{\nu_n}\left(\theta_{-t_n}\omega, \cT ^{-1}(\theta_{-t_n}\omega)x_n\right)\in \cT (\omega)\widetilde{K}(\omega). 
$$
Due to the compactness of $ \cT (\omega)\widetilde{K}(\omega)$, $\varphi:\R_+\times\Omega\times \cP^*\times \R^d\to\R^d$ is pullback $ \cT \cD$-asymptotically compact.
The continuity of $\overline{\chi}$ and \eqref{1004--1} implies the continuity of $\varphi$. 
Finally, since \eqref{1004--1} holds and $\widetilde{K}$ satisfies \eqref{0916:02}, it follows that
\begin{align*}
\lim_{t\to+\infty}\sup_{\mu\in\cP^*}
\dist\big(\varphi(t,\theta_{-t}\omega,\mu)(\cT (\theta_{-t}\omega)D(\theta_{-t}\omega)),
 \cT (\omega)\widetilde{K}(\omega)\big)=0.
\end{align*}
This completes the verification of all the conditions of Theorem \ref{thm0330}.
Hence, $\varphi$ admits a $ \cT \cD$-random attractor, which is of the form as mentioned above.
\end{proof}

\subsection{Proof of Example \ref{ex01}}\label{secex01}

To facilitate the verification, we introduce the following condition:
\begin{itemize}
\item [\textbf{(S1)}] 
 There exist two subexponentially growing function $K_1,K_2:\R^m\rightarrow\R_+$ satisfying
   \begin{align}\label{c3}
   \lim_{z\rightarrow0}K_2(z)=0
   \end{align}
   such that
  \begin{align}\label{c1}
   \limsup_{|x|\wedge|\mu|_2\rightarrow\infty}\sup_{z\in\R^m} 
  \frac{\big|\big\langle \partial_xV(x,\mu), L^1(z,x) \big\rangle\big|}{V(x,\mu)K_1(z)} =0
  \end{align}
  and
  \begin{align}\label{c2}
     \limsup_{|x|\wedge|\mu|_2\rightarrow\infty}\sup_{z\in\R^m}\frac{\bigr|\bigr\langle \partial_x V(x,\mu), b(x,\mu)-L^2(z,x,\mu) \bigr\rangle\bigr|}{V(x,\mu) K_2(z)} \leq1,
  \end{align}
  where $$L^1(z,x):=\big(\partial_xu\big)^{-1}(z,x)\,\sum_{i=1}^{m}\sigma_i\big(u(z,x)\big)z^i$$
  and
  $$L^2(z,x,\mu):=\big(\partial_xu\big)^{-1}(z,x)\,b(u(z,x),\mu).$$
\end{itemize}

\begin{rem}
Consider $L^1(z,x)+\eta L^2(z,x,\mu)=:h^{\eta}(z,x,\mu)$ as a perturbation of $b$; 
  therefore, we need to impose additional conditions on the error between $h^{\eta}(z,x,\mu)$ and $b(x,\mu)$, 
  as stated in conditions \eqref{c1} and \eqref{c2}.
Since the second term $\eta\big(\partial_x\Phi\big)^{-1}(\omega,x)\sigma(\Phi(\omega,x))Z_0(\omega)$ of \eqref{g} can be neglected in the sense of \eqref{c1},
   this also ensures that $\eta$ can be chosen to be sufficiently large.  
\end{rem}

\begin{lem}\label{0106-3}
Assume that conditions \textbf{(C1)}, \textbf{(C4)} and \textbf{(C5)} hold. 
If $\sup_{(x,\mu)\in \mathcal{K}_M}|\partial_x V(x,\mu)|<\infty$ for any $M>0$, then \textbf{(S1)} imply \textbf{(C7)}, where $\mathcal{K}_M:=\{(x,\mu);|x|\vee|\mu|_2\leq M\}$.
\end{lem}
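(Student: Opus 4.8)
The plan is to verify the two ingredients of \textbf{(C7)} separately: the pointwise dissipativity \eqref{0102-1} for $\widetilde{\cal L}^\eta V$ with subexponentially growing $K,L$, and the strict negativity \eqref{0902-2} of $\int_{\R^m}K(z)\,\rmd\sL_{Z^I_0}(z)$.

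First I would rewrite $\widetilde{\cal L}^\eta V$ through the Lyapunov operator $\cL V$ of \textbf{(C4)}. The $\mu$-integral terms of $\widetilde{\cal L}^\eta V$ and of $\cL V$ coincide, and using $h^\eta(z,x,\mu)=L^2(z,x,\mu)+\eta L^1(z,x)$ together with $\overline b=b+\tfrac12\sum_i\partial_x\sigma_i\sigma_i$ one obtains the identity
\begin{align*}
\widetilde{\cal L}^\eta V(x,\mu)(z)=\cL V(x,\mu)
&+\eta\big\langle\partial_xV(x,\mu),L^1(z,x)\big\rangle
-\big\langle\partial_xV(x,\mu),b(x,\mu)-L^2(z,x,\mu)\big\rangle\\
&-\tfrac12\sum_{i=1}^m\Big(\big\langle\partial_xV(x,\mu),\partial_x\sigma_i(x)\sigma_i(x)\big\rangle+\sigma_i(x)\partial^2_{xx}V(x,\mu)\sigma_i^T(x)\Big),
\end{align*}
i.e. $\widetilde{\cal L}^\eta V=\cL V+\eta\langle\partial_xV,L^1\rangle-\langle\partial_xV,b-L^2\rangle-\tfrac12\sum_i(\sigma_i\!\cdot\!\nabla)^2V$. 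Combined with $\cL V\le-\alpha V+\cal M$, this reduces everything to estimating the three correction terms; the last, being built only from $\sigma$ and derivatives of $V$, is controlled by \textbf{(C1)} (so $\partial_x\sigma$ bounded, hence $|\partial_x\sigma_i\sigma_i|\lesssim|\sigma|$) and \textbf{(C5)} (so $|\sigma|^{4\ell}\le K(1+V)$) together with the regularity of $V\in\Xi$, in the same two-regime fashion as the first two (it is nonpositive, or at worst $o(V)+C$, outside a large set and bounded on compacta).

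Next, for \eqref{0102-1} I would partition $\R^d\times\cP_2(\R^d)$ according to whether $(x,\mu)$ lies in a large enough $\cal K_M$ or outside it. Fix $\delta>0$; by \eqref{c1}–\eqref{c2} there is $M=M(\delta)$ such that for $(x,\mu)$ outside $\cal K_M$ and all $z\in\R^m$,
$$\eta\big|\langle\partial_xV,L^1(z,x)\rangle\big|\le\eta\delta\,V(x,\mu)K_1(z),\qquad\big|\langle\partial_xV,b(x,\mu)-L^2(z,x,\mu)\rangle\big|\le(1+\delta)V(x,\mu)K_2(z),$$
whence $\widetilde{\cal L}^\eta V(x,\mu)(z)\le\big(-\alpha+\eta\delta K_1(z)+(1+\delta)K_2(z)\big)V(x,\mu)+\cal M$ there. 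On $\cal K_M$ I would bound $\widetilde{\cal L}^\eta V(x,\mu)(z)\le C_M\,\widetilde K(z)$ for some subexponentially growing $\widetilde K$, using the hypothesis $\sup_{\cal K_M}|\partial_xV|<\infty$, the continuity of $V,\partial^2_{xx}V,b,\sigma$, the bound \textbf{(C5)}, and the at-most-exponential growth in $z$ of $u(z,x),\partial_xu(z,x),(\partial_xu)^{-1}(z,x)$ (from $\partial_x\sigma$ bounded via Gronwall). Setting $K(z):=-\alpha+\eta\delta K_1(z)+(1+\delta)K_2(z)$ and $L(z):=\cal M+C_M\widetilde K(z)+C_M|K(z)|$, both subexponentially growing, one checks \eqref{0102-1} outside $\cal K_M$ from the display and on $\cal K_M$ using $V\ge0$.

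Finally, for \eqref{0902-2} one must arrange $\int_{\R^m}K(z)\,\rmd\sL_{Z^I_0}(z)<0$. Since $Z^I_0=\int_{-\infty}^0\rme^{\eta s}\rmd W_s$ is centred Gaussian with covariance $\tfrac1{2\eta}I$, its law concentrates at the origin as $\eta\to\infty$; because $K_2(z)\to0$ as $z\to0$ and $K_1,K_2$ are subexponential, $\int K_2\,\rmd\sL_{Z^I_0}\to0$ while $\eta\int K_1\,\rmd\sL_{Z^I_0}$ stays bounded, so $\limsup_{\eta\to\infty}\int K\,\rmd\sL_{Z^I_0}\le-\alpha+C\delta<0$ provided $\delta$ was fixed small at the outset; fixing such $\delta$ and then $\eta$ large yields \eqref{0902-2}, establishing \textbf{(C7)}. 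The main obstacle is exactly this bookkeeping of constants and the order of quantifiers: $M$ and $C_M$ are determined by how small a $\delta$ is demanded in \textbf{(S1)}, whereas the last step forces $\delta$ small and $\eta$ large in the right order, and one must ensure both that multiplication by $\eta$ and the compact-regime estimates preserve the subexponential growth of $K,L$ and that the $x$-second-order correction from the identity above is genuinely harmless in each regime.
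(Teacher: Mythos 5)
Your overall strategy matches the paper's: split $\R^d\times\cP_2(\R^d)$ into the region where \eqref{c1}--\eqref{c2} give a relative bound by $V\cdot(\delta\eta K_1+K_2)$ and a remaining region where the hypothesis $\sup_{\cK_M}|\partial_xV|<\infty$ produces a subexponential $L(z)$, then tune $\eta$ and $\delta$ to make the averaged rate negative. But your final step contains a genuine error in the order of quantifiers. You fix $\delta$ first and then send $\eta\to\infty$, relying on the claim that $\eta\int_{\R^m}K_1(z)\,\sL_{Z_0^I}(\rmd z)$ stays bounded. This is false in general: since $Z_0^I$ is centred Gaussian with covariance $\tfrac{1}{2\eta}I$, one has $\int K_1\,\rmd\sL_{Z_0^I}\to K_1(0)$ as $\eta\to\infty$, and nothing in \textbf{(S1)} forces $K_1(0)=0$ (indeed in Example \ref{ex01} the paper takes $K_1(z)=e^{|z|}(|z|+1)$, so $K_1(0)=1$ and $\eta\delta\int K_1\,\rmd\sL_{Z_0^I}\to\infty$ for fixed $\delta$). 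The paper's argument reverses the order: first choose $\eta$ large so that $\int K_2\,\rmd\sL_{Z_0^I}<\alpha$ (using \eqref{c3}), and only then choose $\delta=\delta(\eta)$ small so that $\delta\eta\int K_1\,\rmd\sL_{Z_0^I}$ is also small, as in \eqref{0905--1}. This order is admissible precisely because the $\limsup$ in \eqref{c1} equals $0$, so the factor $1/\delta$ that then appears in the estimate $\frac{|\langle\partial_xV,h^\eta-b\rangle|}{V(K_2+\eta\delta K_1)}\le\frac{|\langle\partial_xV,L^2-b\rangle|}{VK_2}+\frac1\delta\frac{|\langle\partial_xV,L^1\rangle|}{VK_1}$ costs nothing in the limit. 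Your proof as written does not establish \eqref{0902-2}.

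A secondary point: your identity relating $\widetilde{\cal L}^\eta V$ to the operator $\cL V$ of \textbf{(C4)} correctly isolates the discrepancy $-\tfrac12\sum_i\big(\langle\partial_xV,\partial_x\sigma_i\sigma_i\rangle+\sigma_i\partial^2_{xx}V\sigma_i^T\big)$, but your claim that this is ``$o(V)+C$ outside a large set'' does not follow from \textbf{(C1)}, \textbf{(C5)} and $V\in\Xi$: there is no assumed control of $|\partial_xV|$ or $|\partial^2_{xx}V|$ by $V$ away from compacta. The paper sidesteps this by working with the first-order operator $\cal L V(x,\mu)=\langle\partial_xV,b\rangle+(\mu\text{-terms})$ and asserting $\cal L V\le-\alpha V+\cM$ directly; if you insist on deducing everything from \textbf{(C4)} you need an additional argument (or hypothesis) for these correction terms.
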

\begin{proof}
  It is straightforward that  
    \begin{align}\label{0106-1}
   \cal{L}V(x,\mu)\leq -\alpha V(x,\mu)+\mathcal{M},
  \end{align}
  where 
  \begin{align*}
  \cal{L}V(x,\mu):=\big\langle \partial_x V(x,\mu), b(x,\mu) \big\rangle 
  &+ \int_{\R^d}\big\langle \partial_{\mu} V(x,\mu)(y), \overline{b}(y,\mu) \big\rangle\,\mu(\rmd y)\\
  &+\frac{1}{2} \sum_{i=1}^m\int_{\R^d}\sigma_i(y)\partial_y\partial_{\mu}V(x,\mu)(y)\sigma_i^T(y)\,\mu(\rmd y)
  \end{align*}
for $(x,\mu)\in\R^d\times\cal{P}_2(\R^d)$.

Additionally, as $\eta\rightarrow+\infty$,  $Z^I_0$ approximates $0$ in the $L^2$ sense.
Therefore, under condition \eqref{c3}, we can choose a sufficiently large $\eta$ such that the distance between $\sL_{Z^I_0}$ and $\delta_0$ is sufficiently small and then select a sufficiently small $\delta:=\delta(\eta)$ such that the integral 
\begin{align}\label{0905--1}
\int_{\R^m}\delta\eta K_1(z)+K_2(z)\sL_{Z^I_0}(\rmd z)<\alpha.
\end{align}

Due to 
$$
h^{\eta}(z,x,\mu)=L^2(z,x,\mu)+\eta L^1(z,x),
$$
  it holds that
  \begin{align*}
  &
  \frac{\big|\big\langle\partial_x V(x,\mu), h^{\eta}(z,x,\mu)-b(x,\mu)\big\rangle\big|}{V(x,\mu)(K_2(z)+\eta\delta K_1(z))}\\
  &
  \leq\frac{\big|\bigr\langle \partial_x V(x,\mu), L^2(z,x,\mu)-b(x,\mu)\bigr\rangle\big|}{V(x,\mu)K_2(z)}
  +\frac{1}{\delta } \frac{\big|\big\langle \partial_xV(x,\mu), L^1(z,x) \big\rangle\big|}{V(x,\mu)K_1(z)}.
  \end{align*}
  It follows from \eqref{c1} and \eqref{c2} that
 \begin{align*}
  \limsup_{|x|\wedge|\mu|_2\rightarrow\infty}\sup_{z\in\R^m}\frac{\big|\big\langle\partial_x V(x,\mu), h^{\eta}(z,x,\mu)-b(x,\mu)\big\rangle\big|}{V(x,\mu)(K_2(z)+\eta\delta K_1(z))}
  \leq1.
 \end{align*}
Let $\bar{K}(z):=K_2(z)+\eta\delta K_1(z)$. Hence, for any $\varepsilon>0$, there exists $M>0$ such that $(x,\mu)\in\{(x,\mu);|x|\wedge|\mu|_2\geq M\}$ and $z\in\R^m$,
\begin{align}\label{0106-2}
\big\langle \partial_x V(x,\mu), h^{\eta}(z,x,\mu)- b(x,\mu)\big\rangle\leq V(x,\mu)(\bar{K}(z)+\varepsilon).
 \end{align}
Additionally, the boundedness of $\partial_x\sigma$ implies that the mapping $z\mapsto\sup_{|x|\leq M}|\big(\partial_xu\big)^{-1}(z,x)|$ is subexponentially growing, which guarantees that $z\mapsto\sup_{|x|\vee|\mu|_2\leq M}|h^{\eta}(z,x,\mu)|$ is subexponentially growing.
$$
L(z):=\sup_{(x,\mu)\in \mathcal{K}_M}|\partial_x V(x,\mu)|\Big(\sup_{(x,\mu)\in \mathcal{K}_M}|h^{\eta}(z,x,\mu)|+\sup_{(x,\mu)\in \mathcal{K}_M}|b(x,\mu)|\Big)<+\infty,
$$
which is subexponentially growing.
 This implies that for any $(x,\mu)\in \R^d\times \cal{P}_2(\R^d)$, 
  \begin{align*}
  \widetilde{\cal{L}}^\eta V(x,\mu)(z) 
  &= \cal{L}V(x,\mu)+\big\langle \partial_x  V(x,\mu), h^{\eta}(z,x,\mu)- b(x,\mu)\big\rangle\\
  &\leq (-\alpha+\bar{K}(z)+\varepsilon)V(x,\mu)+L(z)+\mathcal{M}.
  \end{align*}
  By \eqref{0905--1}, we can choose a sufficiently small $\varepsilon>0$ to ensure 
  $$
  \int_{\R^m}(-\alpha+\bar{K}(z)+\varepsilon) \sL_{Z^I_0}(\rmd z)<0,
  $$
  which implies that \textbf{(C7)}.
\end{proof}

\begin{proof}[Proof of Example \ref{ex01}] 
Choose the function
   \begin{align*}
   V(x,\mu)=x^8+\Big(\int_{\R}y^2\mu(\rmd y)\Big)^{45},
   \end{align*}
   which belongs to $\Xi$ and satisfies 
   $\sup_{(x,\mu)\in \mathcal{K}_M}|\partial_x V(x,\mu)|<\infty$ for any $M>0$. 
   And it holds that
   $$\partial_xV(x,\mu)=8x^7,\quad \partial^2_{xx}V(x,\mu)=56x^6,$$
   $$\partial_{\mu}V(x,\mu)(z)=90\Big(\int_{\R}y^2\mu(\rmd y)\Big)^{44}z,\quad \partial_z\partial_\mu V(x,\mu)(z)=90\Big(\int_{\R}y^2\mu(\rmd y)\Big)^{44}.$$

  Through simple calculations,
  \begin{align*}
  &\cL V(x,\mu)\\
  =&
  8x^7\Bigr(-x^3\Big(\int_{\R}y^2\mu(\rmd y)\Big)^{5}-10x+\frac{1}{2}\sigma(x)\sigma'(x)\Bigr)+28x^6\sigma^2(x)\\
  &
  +90\Big(\int_{\R}y^2\mu(\rmd y)\Big)^{44}\int_{\R}z\Big(-z^3\Big(\int_{\R}y^2\mu(\rmd y)\Big)^{5}-10z+\frac{1}{2}\sigma(z)\sigma'(z)\Big)\mu(\rmd z)\\
  &
  +45\Big(\int_{\R}y^2\mu(\rmd y)\Big)^{44}\int_{\R}\sigma^2(z)\mu(\rmd z)\\ 
  \leq&
  -8x^{10}\Big(\int_{\R}y^2\mu(\rmd y)\Big)^5-80x^{8}+4x^7\sigma(x)\sigma'(x)+28x^6\sigma^2(x)\\
  &
  +90\Big(\int_{\R}y^2\mu(\rmd y)\Big)^{44}\int_{\R}-10z^2+\frac{1}{2}z\sigma(z)\sigma'(z)\mu(\rmd z)+\frac{405}{4}\Big(\int_{\R}y^2\mu(\rmd y)\Big)^{45}\\
  \leq
  &
  (-80+9+63)x^8+\Big(-900+\frac{405}{4}+\frac{405}{4})\Big(\int_{\R}y^2\mu(\rmd y)\Big)^{45}\\
  \leq&
  -8V(x,\mu).
  \end{align*}
  This verifies \textbf{(C4)}. Conditions \textbf{(C1)}-\textbf{(C3)}, \textbf{(C5)} and \textbf{(C6)} are straightforward.
  The existence and uniqueness of solutions can be guaranteed; see \cite{MA2} for more details. 
  
  We now verify \textbf{(S1)}.
  Following the method of the second section, we can obtain the conjugate random differential equation:
   \begin{align*}
    \rmd Y_t=g(\theta_t\omega,Y_t,\mu_t)\,\rmd t,
   \end{align*}
   where
\begin{align*}
   g(\omega,x,\mu)&=\big(\partial_x\cT\big)^{-1}(\omega,x)\Big(b(\cT(\omega)x,\mu)
   +\lambda\sigma(\cT(\omega)x)Z_0(\omega)\Big).
\end{align*}
 And we have
   \begin{align*}
   L^1(z,x)&=e^{-\int_0^z\sigma'(u(r,x))\rmd r}\sigma\big(u(z,x)\big)z,
   \end{align*}
    and
  \begin{align*}
  L^2(z,x,\mu)&=e^{-\int_0^z\sigma'(u(r,x))\rmd r}b(u(z,x),\mu),
  \end{align*}
  where $u(z,x)$ satisfies 
  $$|u(z,x)|\leq|x|e^{\frac{3}{2}|z|}.$$
  This implies that the mapping $z\mapsto\sup_{|x|\leq L}|\big(\partial_xu\big)^{-1}(z,x)|$ is subexponentially growing for any $L>0$.  
  We choose $K_1(z)=e^{|z|}(|z|+1)$ to obtain that for any $x\in\R,z\in\R$,
  \begin{align*}
    \frac{\big|\big\langle \partial_xV(x,\mu), L^1(z,x) \big\rangle\big|}{V(x,\mu)K_1(z)}
    &
    =\Big|\frac{8x^7e^{-\int_0^z\sigma'(u(r,x))\rmd r}\sigma\big(u(z,x)\big)z}{\Big(x^8+\Big(\int_{\R}y^2\mu(\rmd y)\Big)^{45}\Big)K_1(z)}\Big|\\
     &
    \leq\Big|\frac{12x^7|x|e^{|z|}}{\Big(x^8+\Big(\int_{\R}y^2\mu(\rmd y)\Big)^{45}\Big)e^{|z|}}\Big|,
    \end{align*}
    where the inequality holds by applying the following inequality
    $$
    e^{-\int_0^z\sigma'(u(r,x))\rmd r}\sigma\big(u(z,x)\big)
    \leq \frac{3}{2} |x|e^{|z|}.
    $$
Hence, 
 \begin{align*}
 \limsup_{|x|\wedge|\mu|_2\rightarrow\infty}\sup_{z\in\R} 
 \frac{\big|\big\langle \partial_xV(x,\mu), L^1(z,x) \big\rangle\big|}{V(x,\mu)K_1(z)}=0.
 \end{align*}

In addition, we choose $K_2(z)=e^{6|z|}-1$ for $z\neq0$, with $K_2(0)=1$. Then
  \begin{align}\label{0903-1}
   \frac{\bigr|\bigr\langle \partial_x V(x,\mu), L^2(z,x,\mu)-b(x,\mu) \bigr\rangle\bigr|}{V(x,\mu) K_2(z)}
   &=
      \frac{\bigr|8x^7\bigr(\big(\partial_xu\big)^{-1}(z,x)\,b(u(z,x),\mu)-b(x,\mu)\bigr)\bigr|}{\Big(x^8+\Big(\int_{\R}y^2\mu(\rmd y)\Big)^{45}\Big) K_2(z)}\\
   &\leq
      \frac{8|x|^7\Big(\int_{\R}y^2\mu(\rmd y)\Big)^5}{x^8+\Big(\int_{\R}y^2\mu(\rmd y)\Big)^{45}}\cdot \Big| \frac{e^{-\int_0^z\sigma'(u(r,x))\rmd r}u^3(z,x)-x^3}{K_2(z)}\Big|\nonumber\\
      &\quad+
      \frac{80|x|^7}{x^8+\Big(\int_{\R}y^2\mu(\rmd y)\Big)^{45}}\cdot \Big| \frac{e^{-\int_0^z\sigma'(u(r,x))\rmd r}u(z,x)-x}{K_2(z)}\Big|.  \nonumber
  \end{align}
Now, we aim to show that there exists a constant $C>0$ such that
  \begin{align}\label{0107-1}
  \sup_{z\in\R}\Big| \frac{e^{-\int_0^z\sigma'(u(r,x))\rmd r}u^3(z,x)-x^3}{K_2(z)}\Big|\leq C|x|^3 
  ~\text{and}~
  \sup_{z\in\R}\Big| \frac{e^{-\int_0^z\sigma'(u(r,x))\rmd r}u(z,x)-x}{K_2(z)}\Big|\leq C|x|.   
  \end{align}
Note that
\begin{align*}
&e^{-\int_0^z\sigma'(u(r,x))\rmd r}u^3(z,x)-x^3\\
&=\int_0^ze^{-\int_0^\tau\sigma'(u(r,x))\rmd r}u^2(\tau,x)\bigr(3\sigma(u(\tau,x))-\sigma'(u(\tau,x))u(\tau,x)\bigr) \rmd \tau
\end{align*}
and 
\begin{align*}
&e^{-\int_0^z\sigma'(u(r,x))\rmd r}u(z,x)-x\\
&=\int_0^ze^{-\int_0^\tau\sigma'(u(r,x))\rmd r}\bigr(\sigma(u(\tau,x))-\sigma'(u(\tau,x))u(\tau,x)\bigr) \rmd \tau.
\end{align*}
Hence, for $z\neq0$, it holds that
\begin{align*}
&\Big|e^{-\int_0^z\sigma'(u(r,x))\rmd r}u^3(z,x)-x^3\Big|\\
&
\leq\int_0^{|z|}e^{-\int_0^\tau\sigma'(u(r,x))\rmd r}u^2(\tau,x)\big(3\big|\sigma(u(\tau,x))\big|+\big|\sigma'(u(\tau,x))u(\tau,x)\big|\big)\rmd \tau\\
&
\leq C\int_0^{|z|}e^{\frac{3}{2}\tau}\Big|u^3(\tau,x)\Big|\rmd \tau\\
&
\leq C'|x|^3\int_0^{|z|}e^{\frac{3}{2}\tau+\frac{9}{2}\tau}\rmd \tau\\
&
= C'\Big(e^{6|z|}-1\Big)|x|^3.
\end{align*}
Similarly, we obtain that 
\begin{align*}
&\Big|e^{-\int_0^z\sigma'(u(r,x))\rmd r}u(z,x)-x\Big|\\
&\leq\int_0^{|z|}e^{\int_0^\tau\sigma'(u(r,x))\rmd r}\Big|\sigma(u(\tau,x))-\sigma'(u(\tau,x))u(\tau,x)\Big|\rmd \tau\\
&\leq C\int_0^{|z|}e^{\frac{3}{2}\tau}\Big|u(\tau,x)\Big|\rmd \tau\\
&\leq C|x|\int_0^{|z|}e^{\frac{3}{2}\tau+\frac{3}{2}\tau}\rmd \tau\\
&\leq C''\Big(e^{6|z|}-1\Big)|x|.
\end{align*}

By \eqref{0107-1}, we obtain that 
\begin{align*}
   \frac{\bigr|\bigr\langle \partial_x V(x,\mu), L^2(z,x,\mu)-b(x,\mu) \bigr\rangle\bigr|}{V(x,\mu) K_2(z)}
   &\leq
      \frac{Cx^{10}\Big(\int_{\R}y^2\mu(\rmd y)\Big)^5}{x^8+\Big(\int_{\R}y^2\mu(\rmd y)\Big)^{45}}
      +
      \frac{Cx^8}{x^8+\Big(\int_{\R}y^2\mu(\rmd y)\Big)^{45}}. 
  \end{align*}
Hence,
\begin{align*}
   &
   \limsup_{|x|\wedge|\mu|_2\rightarrow\infty}\sup_{z\in\R}\frac{\bigr|\bigr\langle \partial_x V(x,\mu), L^2(z,x,\mu)-b(x,\mu) \bigr\rangle\bigr|}{V(x,\mu) K_2(z)}\leq1.
\end{align*}
Having thus verified \textbf{(S1)}, we conclude from Lemma \ref{0106-3} that \textbf{(C7)} holds.

So far, we have verified that the system satisfies all conditions of Theorem \ref{thmSODE}, and consequently established the existence of random attractor.
\end{proof}

\section{Proof of main results for McKean-Vlasov SPDE}\label{secPPDE}

In this section, we present the proofs of the main results stated in Section \ref{RMVSPDE}.
Specifically, the existence of random attractors is established for the following equations: 
the McKean-Vlasov stochastic reaction-diffusion equations in Section \ref{secRDeq}, 
and the McKean-Vlasov stochastic 2D Navier-Stokes equations in Section \ref{secNSeq}.

\subsection{McKean-Vlasov stochastic reaction-diffusion equation}\label{secRDeq}

To prove Theorem \ref{thmRDeq}, we first derive some helpful estimates.

\begin{lem}\label{0302-1}
Assume that {\bf(H3)} holds with $C_1+C_2<2\lambda_*$. Then there exist constants $\eta_1>\eta_2>0$ and $C>0$ such that
for any $u\in V$ and $\mu\in\cP_2(H)$,
\begin{align}\label{1001:01}
2_{V^*}\langle \Delta u+F(u,\mu),u\rangle_V
\leq-\eta_1\|u\|^2+\eta_2\mu(\|\cdot\|^2)+C.
\end{align}
Suppose further that {\bf(H1)}, {\bf(H2$'$)} and {\bf(H4$'$)}  hold.
Then for any $t\geq s$, the solution $Y_t,t\geq s$ to \eqref{0327:01-2} satisfies 
\begin{equation}\label{0911:03}
\mE\|Y_t\|^2\leq \mE\|Y_s\|^2\rme^{-(\eta_1-\eta_2)(t-s)}+C.
\end{equation}
\end{lem}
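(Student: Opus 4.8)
\textbf{Proof plan for Lemma \ref{0302-1}.}

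The plan is to treat the two assertions separately. For the coercivity estimate \eqref{1001:01}, I would first recall that by definition of $\lambda_*$ as the first Dirichlet eigenvalue of $\Delta$, one has the Poincar\'e-type bound $_{V^*}\langle \Delta u,u\rangle_V=-\|u\|_V^2\leq -\lambda_*\|u\|^2$ for $u\in V$ (here $\|\cdot\|_V^2$ denotes the $H_0^1$-seminorm $\int|\nabla u|^2$). Next I would apply {\bf(H3)}, which gives $2\langle F(u,\mu),u\rangle\leq \lambda_1\|u\|^2+\lambda_2\mu(\|\cdot\|^2)+C$. Adding these, $2_{V^*}\langle \Delta u+F(u,\mu),u\rangle_V\leq(-2\lambda_*+\lambda_1)\|u\|^2+\lambda_2\mu(\|\cdot\|^2)+C$. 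The subtle point is that {\bf(H3)} alone need not guarantee $\lambda_1<2\lambda_*$; one must instead combine {\bf(H2$'$)} with $v=0$ (or use the hypothesis $C_1+C_2<2\lambda_*$ directly). Taking $v=0$ in {\bf(H2$'$)} yields $2_{V^*}\langle F(u,\mu)-F(0,\nu),u\rangle_V\leq C_1\cW_2(\mu,\nu)^2+C_2\|u\|^2$; choosing $\nu=\mu$ and using the boundedness of $F(0,\mu)$ (controlled via {\bf(H4$'$)} with $u=0$) together with Young's inequality gives $2\langle F(u,\mu),u\rangle\leq (C_2+\epsilon)\|u\|^2+C_\epsilon(1+\mu(\|\cdot\|^2))$. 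Combined with $2_{V^*}\langle\Delta u,u\rangle_V\leq -2\lambda_*\|u\|^2$ and choosing $\epsilon$ small enough that $2\lambda_*-C_2-\epsilon>C_1$, this produces \eqref{1001:01} with $\eta_1:=2\lambda_*-C_2-\epsilon$, $\eta_2:=C_1$ (or any $\eta_2$ slightly above $C_1$), and indeed $\eta_1>\eta_2>0$ follows from $C_1+C_2<2\lambda_*$.

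For the moment bound \eqref{0911:03}, I would apply the It\^o formula to $\|Y_t\|^2$ along the solution of \eqref{0327:01-2}. Since the noise is additive with $W_t$ an $H_0^1$-valued (hence trace-class in $H=L^2$) Wiener process, the It\^o correction is a finite constant $\mathrm{Tr}_H(Q)=:c_Q$, and the martingale part has zero expectation (after a standard localization/stopping-time argument using the a priori energy estimates that come with the well-posedness cited from \cite{HHL24,AD95}). This gives
\begin{align*}
\mE\|Y_t\|^2=\mE\|Y_s\|^2+\int_s^t\mE\big(2_{V^*}\langle\Delta Y_r+F(Y_r,\sL_{Y_r}),Y_r\rangle_V+c_Q\big)\,\rmd r.
\end{align*}
Using \eqref{1001:01} with $\mu=\sL_{Y_r}$, so that $\mu(\|\cdot\|^2)=\mE\|Y_r\|^2$, we obtain
\begin{align*}
\frac{\rmd}{\rmd t}\mE\|Y_t\|^2\leq -\eta_1\mE\|Y_t\|^2+\eta_2\mE\|Y_t\|^2+C+c_Q=-(\eta_1-\eta_2)\mE\|Y_t\|^2+C',
\end{align*}
and Gronwall's inequality (in the integral form, to avoid differentiability issues) yields $\mE\|Y_t\|^2\leq \mE\|Y_s\|^2\rme^{-(\eta_1-\eta_2)(t-s)}+\frac{C'}{\eta_1-\eta_2}(1-\rme^{-(\eta_1-\eta_2)(t-s)})$, which is \eqref{0911:03} after absorbing the last term into $C$.

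The main obstacle I anticipate is not the algebra but the justification of the It\^o formula and the vanishing of the martingale term in the infinite-dimensional, distribution-dependent setting: one needs enough integrability on $Y_t$ (e.g. $Y_t\in L^2([s,T]\times\Omega;V)$, which is part of the variational solution concept) to legitimately apply the It\^o formula of Krylov--Rozovskii type and to control the stochastic integral $\int_s^t\langle Y_r,\rmd W_r\rangle$. Since \eqref{0327:01-2} is only a McKean--Vlasov (hence nonlinear-in-law) equation, one should be careful that the law $\sL_{Y_r}$ is already fixed once the solution is constructed, so that $r\mapsto F(Y_r,\sL_{Y_r})$ is an adapted $V^*$-valued process and the standard variational machinery applies verbatim; this is exactly what the cited well-posedness results \cite{HHL24,AD95} provide, so I would invoke them and then proceed as above.
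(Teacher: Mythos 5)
Your treatment of the second assertion is exactly the paper's: apply It\^o's formula to $\|Y_t\|^2$, take expectations so the stochastic integral drops and the It\^o correction contributes a constant, insert \eqref{1001:01} with $\mu=\sL_{Y_r}$ (so that $\mu(\|\cdot\|^2)=\mE\|Y_r\|^2$), and conclude by Gronwall. That part is fine.

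The first assertion, however, contains a genuine flaw in the step where you apply {\bf(H2$'$)} with $v=0$ and $\nu=\mu$. With that choice the Wasserstein term vanishes, so all of the $\mu$-dependence of your bound enters through $\|F(0,\mu)\|^2\le C(1+\mu(\|\cdot\|^2))$ via Young's inequality; the estimate you actually obtain is $2\langle F(u,\mu),u\rangle\le (C_2+\epsilon)\|u\|^2+C_\epsilon\,(1+\mu(\|\cdot\|^2))$, whose coefficient on $\mu(\|\cdot\|^2)$ is the Young constant $C_\epsilon\sim C/\epsilon$, not $C_1$. Nothing in the hypotheses makes $C_\epsilon<2\lambda_*-C_2-\epsilon$ (the constant $C$ in {\bf(H4$'$)} is not assumed small, and $C_\epsilon\to\infty$ as $\epsilon\to0$), so your conclusion ``$\eta_2:=C_1$'' --- and, more importantly, the crucial inequality $\eta_1>\eta_2$, without which the exponential decay in \eqref{0911:03} is lost --- does not follow from the bound you derived. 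The fix is to take $\nu=\delta_0$ (and $v=0$) instead: then {\bf(H2$'$)} gives $2_{V^*}\langle F(u,\mu)-F(0,\delta_0),u\rangle_V\le C_1\mu(\|\cdot\|^2)+C_2\|u\|^2$ because $\cW_2(\mu,\delta_0)^2=\mu(\|\cdot\|^2)$, the leftover term $2\langle F(0,\delta_0),u\rangle\le\epsilon\|u\|^2+C_\epsilon$ is a genuine constant plus a small multiple of $\|u\|^2$, and you land exactly on $\eta_1=2\lambda_*-C_2-\epsilon>\eta_2=C_1$. This is essentially what the paper does: it subtracts $A(0,\delta_0)$ and, working in the $V^*$--$V$ pairing, absorbs an $\varepsilon\|u\|_V^2$ remainder back using the coercivity bound derived from {\bf(H3)}; since {\bf(H4$'$)} bounds $F$ in $H$, you may pair directly in $H$ and skip that absorption, but the choice $\nu=\delta_0$ is essential either way.
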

\begin{proof}
Employing {\bf(H3)}, we have for any $u\in H$ and $\mu\in\cP_2(H)$,
\begin{align}\label{0328:01}
2\,_{V^*}\langle A(u, \mu), u\rangle_V
&
:=2\,_{V^*}\langle \Delta u+F(u, \mu), u\rangle_V\\
&
\leq -2\|u\|_V^2+(2+\lambda_1)\|u\|^2+\lambda_2\mu(\|\cdot\|^2)+C.\nonumber
\end{align}
Then by \eqref{0328:01}, \eqref{0327:02} and the Young inequality, one sees that for all $u\in V$ and $\mu\in\cP_2(H)$,
\begin{align*}
&
2\,_{V^*}\langle A(u, \mu), u\rangle_V \\
&
= 2\,_{V^*}\langle A(u, \mu)-A(0, \delta_0), u\rangle_V+2\,_{V^*}\langle A(0,\delta_0), u\rangle_V\\
&
\leq C_1\cW_2(\mu,\delta_0)^2-(2\lambda_*-C_2)\|u\|^2+\varepsilon\|u\|^\alpha_V+C_\varepsilon\\
&
\leq -(2\lambda_*-C_2)\|u\|^2+C_1\cW_2(\mu,\delta_0)^2+C_\varepsilon
+\frac{(2+\lambda_1)\varepsilon}{2}\|u\|^2\\
&\quad
+\frac{\lambda_2\varepsilon}{2}\cW_2(\mu,\delta_0)^2
-\varepsilon\,_{V^*}\langle A(u, \mu), u\rangle_V\\
&
\leq -(2\lambda_*-C_2-\frac{(2+\lambda_1)\varepsilon}{2})\|u\|^2
+(C_1+\frac{\lambda_2\varepsilon}{2})\cW_2(\mu,\delta_0)^2
-\varepsilon\,_{V^*}\langle A(u, \mu), u\rangle_V+C_\varepsilon.
\end{align*}
Define 
$$
\eta_1:=2\lambda_*-C_2-\frac{(2+\lambda_1)\varepsilon}{2},\quad
\eta_2:=C_1+\frac{\lambda_2\varepsilon}{2}.
$$
Letting 
$\varepsilon\in(0,\frac{2(2\lambda_*-C_1-C_2)}{2+\lambda_1+\lambda_2}
\wedge 1)$,
then there exists $C>0$ such that for any $u\in H_0^1$ and $\mu\in\cP_2(H)$,
\begin{equation}\label{0328:02}
2\,_{V^*}\langle A(u, \mu), u\rangle_V 
\leq -\eta_1\|u\|^2+\eta_2\cW_2(\mu,\delta_0)^2+C.
\end{equation}

By the It\^o formula and \eqref{1001:01}, one sees that
\begin{align}\label{0911:02}
\mE\|Y_t\|^2
&
=\mE\|Y_s\|^2
+\mE\int_s^t\left(2_{V^*}\langle \Delta Y_r+F(Y_r,\sL_{Y_r}),Y_r\rangle_V+C\right)\rmd r\nonumber\\
&
\leq\mE\|Y_s\|^2+\mE\int_s^t\left(-\eta_1\|Y_r\|^2+\eta_2\mE\|Y_r\|^2+C\right)\rmd r\\
&
\leq \mE\|Y_s\|^2+\int_s^t\left(-(\eta_1-\eta_2)\mE\|Y_r\|^2+C\right)\rmd r,\nonumber
\end{align}
which along with the Gronwall lemma implies that \eqref{0911:03} holds.
\end{proof}

By Lemma \ref{0302-1}, one sees that for any $t \geq 0$, $P_t^*$ maps $\mathcal{P}_2(H)$ into itself.
We now show that this mapping is continuous.

\begin{lem}\label{lem03:25}
Suppose that {\bf(H1)}, {\bf(H2$'$)}, {\bf(H3)} and {\bf(H4$'$)}  hold.
Then for any $\mu,\mu'\in\cP_2(H)$,
\begin{align}\label{0325:02}
\cW_2(P^*_{t-s}\mu,P^*_{t-s}\mu')^2
\leq\cW_2(\mu,\mu')^2e^{-(2\lambda*-C_1- C_2)(t-s)}.
\end{align} 
In particular, $\left(\cP_2(H),\{P_t^*\}_{t\geq0}\right)$ is a continuous semi-dynamical system. 

Assume further that  $C_1+C_2<2\lambda_*$. Then there exists a unique invariant measure 
$\mu_\infty$ for $\{P_t^*\}_{t\geq0}$,
and satisfies that for any $t>s$ and $\mu\in\cP_2(H)$,
\begin{equation}\label{0325:06}
\cW_2(P^*_{t-s}\mu,\mu_\infty)
\leq e^{\frac{-(2\lambda_*-C_1-C_2)(t-s)}{2}} \cW_2(\mu,\mu_\infty).
\end{equation} 
\end{lem}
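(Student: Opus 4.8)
The plan is to derive all three assertions from a single \emph{synchronous coupling} estimate. Fix $\mu,\mu'\in\cP_2(H)$ and let $Y_t$ (resp.\ $Y_t'$) be the solution of \eqref{0327:01-2} on $[s,\infty)$ driven by the \emph{same} Wiener process $W$, with the initial pair $(Y_s,Y_s')$ chosen to be a $\cW_2$-optimal coupling of $(\mu,\mu')$, so that $\mE\|Y_s-Y_s'\|^2=\cW_2(\mu,\mu')^2$. Since the noise is additive and common to both equations it cancels in the difference, and the It\^o formula in the Gelfand triple $V\subset H\subset V^*$ yields
\[
\mE\|Y_t-Y_t'\|^2=\cW_2(\mu,\mu')^2+\mE\int_s^t 2\,_{V^*}\big\langle \Delta(Y_r-Y_r')+F(Y_r,\sL_{Y_r})-F(Y_r',\sL_{Y_r'}),\,Y_r-Y_r'\big\rangle_V\,\rmd r.
\]
For the Laplacian term, $2\,_{V^*}\langle\Delta v,v\rangle_V=-2\|\nabla v\|^2\leq-2\lambda_*\|v\|^2$ by Poincar\'e's inequality (the definition of $\lambda_*$); for the reaction term, {\bf(H2$'$)} together with the elementary bound $\cW_2(\sL_{Y_r},\sL_{Y_r'})^2\leq\mE\|Y_r-Y_r'\|^2$ (the pair $(Y_r,Y_r')$ is a coupling of the two laws, not necessarily optimal) gives $\mE\,2\,_{V^*}\langle F(Y_r,\sL_{Y_r})-F(Y_r',\sL_{Y_r'}),Y_r-Y_r'\rangle_V\leq(C_1+C_2)\mE\|Y_r-Y_r'\|^2$. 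Hence $\frac{\rmd}{\rmd t}\mE\|Y_t-Y_t'\|^2\leq-(2\lambda_*-C_1-C_2)\mE\|Y_t-Y_t'\|^2$, and Gronwall's lemma gives $\mE\|Y_t-Y_t'\|^2\leq\cW_2(\mu,\mu')^2e^{-(2\lambda_*-C_1-C_2)(t-s)}$. Since $(Y_t,Y_t')$ couples $(P_{t-s}^*\mu,P_{t-s}^*\mu')$, the desired estimate \eqref{0325:02} follows.

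Next, the semi-dynamical system properties $P_0^*=\mathrm{Id}$ and $P_{t+\tau}^*=P_t^*\circ P_\tau^*$ follow from the weak uniqueness of solutions to \eqref{0327:01-2}, exactly as for the flow property recorded in the introduction. For continuity of $(t,\mu)\mapsto P_t^*\mu$ near a point $(s_0,\mu_0)$, I would split
\[
\cW_2(P_t^*\mu,P_{s_0}^*\mu_0)\leq\cW_2(P_t^*\mu,P_t^*\mu_0)+\cW_2(P_t^*\mu_0,P_{s_0}^*\mu_0);
\]
by \eqref{0325:02} the first term is $\leq e^{-(2\lambda_*-C_1-C_2)t/2}\cW_2(\mu,\mu_0)$, whose prefactor stays bounded on any bounded $t$-interval, so it tends to $0$ as $\mu\to\mu_0$, while the second term is bounded by $(\mE\|Y_t-Y_{s_0}\|^2)^{1/2}$ for the solution $Y$ with initial law $\mu_0$, which tends to $0$ as $t\to s_0$ by the $L^2(\Omega,\mP;H)$-continuity of $t\mapsto Y_t$ (a standard consequence of the variational estimates, using the moment bound \eqref{0911:03}). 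This gives joint continuity.

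Finally, assume $C_1+C_2<2\lambda_*$. Then for each fixed $t_0>0$, \eqref{0325:02} shows that $P_{t_0}^*$ is a strict contraction on the complete metric space $(\cP_2(H),\cW_2)$, so the Banach fixed point theorem provides a unique fixed point $\mu_\infty\in\cP_2(H)$. For any $\tau>0$ we have $P_{t_0}^*(P_\tau^*\mu_\infty)=P_\tau^*(P_{t_0}^*\mu_\infty)=P_\tau^*\mu_\infty$, so $P_\tau^*\mu_\infty$ is also a fixed point of $P_{t_0}^*$ and hence equals $\mu_\infty$; thus $\mu_\infty$ is invariant for the whole semigroup, and conversely any invariant measure in $\cP_2(H)$ is a fixed point of $P_{t_0}^*$, hence equals $\mu_\infty$. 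Estimate \eqref{0325:06} is then immediate from \eqref{0325:02} by taking $\mu'=\mu_\infty$. The step I expect to be the main obstacle is the coupling argument itself: because $F$ is distribution-dependent one must drive both copies by the same noise and take a $\cW_2$-optimal initial coupling, and it is then crucial that only the one-sided inequality $\cW_2(\sL_{Y_r},\sL_{Y_r'})^2\leq\mE\|Y_r-Y_r'\|^2$ is needed, since there is no reason the coupling remains optimal for $r>s$. A secondary technical point is justifying the infinite-dimensional It\^o formula for $\|Y_t-Y_t'\|^2$ and the $L^2(\Omega;H)$-continuity in $t$ within the monotone framework; both are standard and can be invoked via the results cited just above \eqref{0325:02}.
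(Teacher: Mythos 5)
Your proof is correct, and the core estimate is obtained exactly as in the paper: a synchronous coupling driven by the same Wiener process with a $\cW_2$-optimal initial pair, the It\^o formula in the Gelfand triple, Poincar\'e's inequality for the Laplacian, {\bf(H2$'$)} together with the one-sided bound $\cW_2(\sL_{Y_r},\sL_{Y_r'})^2\le\mE\|Y_r-Y_r'\|^2$ (you correctly flag that optimality of the coupling is only needed at the initial time), and then Gronwall. The only point where you take a genuinely different route is the construction of $\mu_\infty$: you apply the Banach fixed point theorem to the strict contraction $P_{t_0}^*$ on the complete metric space $(\cP_2(H),\cW_2)$ and upgrade the fixed point to invariance for the whole semigroup via the commutation $P_{t_0}^*P_\tau^*=P_\tau^*P_{t_0}^*$, whereas the paper builds $\mu_\infty$ explicitly as $\lim_{t\to\infty}P_t^*\delta_0$, verifying the Cauchy property from \eqref{0325:02} combined with the moment bound \eqref{0911:03} and then checking invariance by passing to the limit in $P_s^*P_t^*\delta_0=P_{s+t}^*\delta_0$. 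Both arguments are valid under the stated hypotheses; yours is shorter but invokes completeness of the Wasserstein space (which should be stated, though it is standard since $H$ is Polish), while the paper's limit construction is self-contained and in the pullback spirit used elsewhere in the article. Uniqueness and \eqref{0325:06} then follow from \eqref{0325:02} in either version, exactly as you say.
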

\begin{proof}
For any $s<t$ and $\mu,\mu'\in\cP_2(H)$, take $Y_s,\Tilde{Y}_s\in L^2(\Omega,\mP;H)$ such that $\sL_{Y_s}=\mu$, 
$\sL_{\Tilde{Y}_s}=\mu'$ and 
$$
E\|Y_s-\Tilde{Y}_s\|^2=\cW_2(\mu,\mu')^2.
$$
Let $Y_t$ and $\Tilde{Y}_t$ be two solutions to
\begin{equation}\label{0325:01}
\rmd Y_t=A(Y_t,\sL_{Y_t})\rmd t+\rmd W_t, \quad t>s
\end{equation}
with initial value $Y_s$ and $\Tilde{Y}_s$ respectively. Then we have
\begin{align*}
\mE\|Y_{t}-\Tilde{Y}_t\|^2
&
=\mE\|Y_s-\Tilde{Y}_s\|^2+2\mE\int_s^{t}\,_{V^*}\langle A(Y_r,\sL_{Y_r})-A(\Tilde{Y}_r,\sL_{\Tilde{Y}_r}),Y_r-\Tilde{Y}_r\rangle_{V}\rmd r\\
&
\leq \mE\|Y_s-\Tilde{Y}_s\|^2+\int_s^{t}\left(-2\lambda_*\mE\|Y_r-\Tilde{Y}_r\|^2_{H}
+C_1\cW_2(\sL_{Y_r},\sL_{\Tilde{Y}_r})^2+ 
C_2\mE\|Y_r-\Tilde{Y}_r\|^2_{H}\right)\rmd r\\
&
\leq \mE\|Y_s-\Tilde{Y}_s\|^2-\int_s^{t}\big(2\lambda_*-C_1- C_2\big)
\mE\|Y_r-\Tilde{Y}_r\|_{H}^2\rmd r,
\end{align*}
which along with Gronwall inequality implies that
\begin{align*}
\cW_2(P^*_{t-s}\mu,P^*_{t-s}\mu')^2
&
\leq \mE\|Y_{t}-\Tilde{Y}_t\|^2\nonumber\\
&
\leq \big(\mE\|Y_s-\Tilde{Y}_s\|^2\big){\rm e}^{-(2\lambda*-C_1- C_2)(t-s)}\\
&
=\cW_2(\mu,\mu')^2e^{-(2\lambda*-C_1- C_2)(t-s)}.\nonumber\
\end{align*} 
Hence, $\left(\cP_2(H),\{P_t^*\}_{t\geq0}\right)$ is a continuous semi-DS.

Note that with the help of \eqref{0325:02} and \eqref{0911:03}, we have for any 
$s<\tau<0$ and $t>\tau$,
\begin{equation*}\label{0325:03}
\mE\|Y_{s,t}(0)-Y_{\tau,t}(0)\|^2
\leq \mE\|Y_{s,\tau}(0)\|^2{\rm e}^{-(2\lambda*-C_1- C_2)(t-\tau)}
\leq C{\rm e}^{-(2\lambda*-C_1- C_2)(t-\tau)}.
\end{equation*}
Hence, for any $\tau>s$,
\begin{equation}\label{0325:05}
\lim_{\tau\rightarrow-\infty}\sup_{t\geq0}
\cW_2\left(P_{t-s}^*\delta_0,P_{t-\tau}^*\delta_0\right)=0,
\end{equation}
which implies that there exists $\mu_\infty\in\cP_2(H)$ such that 
\begin{equation}\label{0325:04}
\lim_{t\rightarrow+\infty}\cW_2(P_t^*\delta_0,\mu_\infty)=0.
\end{equation}
Therefore, \eqref{0325:05}, \eqref{0325:04}, \eqref{0911:03} and \eqref{0325:02} yield that for any $s\geq0$,
\begin{align*}
\cW_2(P_s^*\mu_\infty,\mu_\infty)
\leq \lim_{t\rightarrow+\infty}\cW_2(P_{s}^*P_t^*\delta_0,P_t^*\delta_0)
\leq \lim_{t\rightarrow+\infty}\cW_2(P_{s+t}^*\delta_0,P_{t}^*\delta_0)
\leq \lim_{t\rightarrow+\infty}C{\rm e}^{-(2\lambda_*-C_1-C_2)t}=0.
\end{align*}
Then $\mu_\infty$ is an invariant measure. Its uniqueness, as well as \eqref{0325:06}, follow from \eqref{0325:02}.
It is straightforward to show that for any $t,s\in\R_+$ and $\mu\in\cP_2(H)$,
$P_0^*\mu=\mu$ and $P_{t+s}^*\mu=P_{t}^*P_{s}^*\mu$.
Combing \eqref{0325:02}, \eqref{0325:06} and Lemma \ref{EGA},
we complete the proof.
\end{proof}

The following result is a direct consequence of Lemma \ref{lem03:25}.
\begin{cor}
Suppose that {\bf(H1)}, {\bf(H2$'$)}, {\bf(H3)} and {\bf(H4$'$)}  hold.
Assume further that $C_1+C_2<2\lambda_*$. Then
$\left(\cP_2(H),\{P_t^*\}_{t\geq0}\right)$ 
admits a $\sS$-global attractor
$\cP^*:=\{\mu_\infty\}$. 
\end{cor}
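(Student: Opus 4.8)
The plan is to read off everything from Lemma \ref{lem03:25}, which already supplies the two structural ingredients we need: that $\left(\cP_2(H),\{P_t^*\}_{t\geq0}\right)$ is a continuous semi-dynamical system, and that it admits a unique invariant measure $\mu_\infty\in\cP_2(H)$ obeying the exponential contraction estimate \eqref{0325:06}. Accordingly, I would take $\cP^*:=\{\mu_\infty\}$ as the candidate attractor and simply verify the defining properties of an $\sS$-global attractor one by one.

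First, $\cP^*=\{\mu_\infty\}$ is a singleton, hence nonempty and $\cW_2$-compact; it is trivially closed, and it is bounded in $\cP_2(H)$ since $\mu_\infty(\|\cdot\|^2)<\infty$. Therefore $\cP^*\in\sS$. Second, invariance is immediate from the fact that $\mu_\infty$ is an invariant measure for $\{P_t^*\}_{t\geq0}$: $P_t^*\mu_\infty=\mu_\infty$ for every $t\geq0$, so $P_t^*\cP^*=\cP^*$. Third, for the $\sS$-attraction property, fix $B_\cP\in\sS$. Boundedness of $B_\cP$ gives $R>0$ with $\sup_{\mu\in B_\cP}\mu(\|\cdot\|^2)\leq R$, and since $\cW_2(\mu,\mu_\infty)^2\leq 2\mu(\|\cdot\|^2)+2\mu_\infty(\|\cdot\|^2)$ we get $\sup_{\mu\in B_\cP}\cW_2(\mu,\mu_\infty)^2\leq 2R+2\mu_\infty(\|\cdot\|^2)=:R'$. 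Plugging this into \eqref{0325:06} with $s=0$ yields
\[
\dist_{\cW_2}\bigl(P_t^*B_\cP,\cP^*\bigr)=\sup_{\mu\in B_\cP}\cW_2(P_t^*\mu,\mu_\infty)\leq \sqrt{R'}\,e^{-\frac{(2\lambda_*-C_1-C_2)}{2}t},
\]
which tends to $0$ as $t\to\infty$ precisely because $C_1+C_2<2\lambda_*$. Thus $\cP^*$ is a nonempty compact invariant subset of $\cP_2(H)$ that $\sS$-attracts, i.e. an $\sS$-global attractor; uniqueness of such a set is standard and, if one prefers, also follows from the general criterion (Lemma \ref{EGA}, or the uniqueness statement inside Proposition \ref{prop0117}) applied to the continuous semi-dynamical system $\left(\cP_2(H),\{P_t^*\}_{t\geq0}\right)$.

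I do not expect any real obstacle here; the one point worth flagging is that one should resist the temptation to produce a compact \emph{absorbing} set sitting inside $\cP_2(H)$ — closed bounded subsets of $\cP_2(H)$ need not be $\cW_2$-compact, so that route would require extra tightness/uniform-integrability work — and instead verify the attractor axioms directly from the contraction \eqref{0325:06}, after which the argument is just bookkeeping.
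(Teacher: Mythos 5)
Your proposal is correct and follows essentially the same route as the paper, which presents this corollary as a direct consequence of Lemma \ref{lem03:25}: the singleton $\{\mu_\infty\}$ is nonempty, compact, invariant, and attracts every bounded set by the exponential contraction \eqref{0325:06}. Your remark that one should verify the attractor axioms directly rather than hunt for a compact absorbing set in $(\cP_2(H),\cW_2)$ is a sensible observation, but it does not change the substance of the argument.
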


With the help of \eqref{0911:03}, for any $x\in H$ and $\mu\in\cP_2(H)$,
there exists a unique solution $X_t,t\geq s$ to \eqref{0327:01-1}
with $X_{s}=x$ and $\sL_{Y_s}=\mu$.
Combining {\bf{(H1)}}, \eqref{0327:02}, \eqref{1001:01}, \eqref{0327:04}
and Lemma \ref{EUsolution}, we have the following lemma.
\begin{lem}\label{lem0916}
Let $\mu\in\cP_2(H)$ and $\mP$-a.s. $\omega\in\Omega$. 
For any $s\in\R$, $T>0$ and $z_0\in H$, there exists a unique solution
$Z(t,s,\omega,\mu)z_0$ to 
\begin{equation}\label{0327:03}
\frac{\rmd Z_t}{\rmd t}=A(Z_t+W_t(\omega),\mu),\quad t\in[s,s+T],\quad Z_s=z_0
\end{equation}
with $Z(\cdot,s,\omega,\mu)z_0\in L^2\left([s,s+T];V\right)\cap C\left([s,s+T];H\right)$.
\end{lem}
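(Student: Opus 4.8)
The plan is to recast \eqref{0327:03} as a Cauchy problem for a single, time-dependent monotone operator and then invoke the standard variational well-posedness result, Lemma \ref{EUsolution}, pathwise. Fix $\mu\in\cP_2(H)$ and write $m:=\mu(\|\cdot\|^2)<\infty$, and fix $\omega$ in the full $\mP$-measure set on which $t\mapsto W_t(\omega)$ is continuous with values in $V=H_0^1$; in particular $C_W:=\sup_{t\in[s,s+T]}\|W_t(\omega)\|_V<\infty$. Define, for $t\in[s,s+T]$ and $u\in V$,
\begin{equation*}
A_\omega(t,u):=A\big(u+W_t(\omega),\mu\big)=\Delta\big(u+W_t(\omega)\big)+F\big(u+W_t(\omega),\mu\big)\in V^*,
\end{equation*}
so that \eqref{0327:03} becomes $\rmd Z_t/\rmd t=A_\omega(t,Z_t)$ on $[s,s+T]$ with $Z_s=z_0$. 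It then suffices to verify, uniformly in $t\in[s,s+T]$, the hemicontinuity, (weak) monotonicity, coercivity and growth conditions of the variational framework for $A_\omega$, with all constants allowed to depend on $\omega$ only through $C_W$.

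Hemicontinuity of $\lambda\mapsto{}_{V^*}\langle A_\omega(t,u+\lambda v),w\rangle_V$ is inherited from {\bf(H1)} together with the boundedness of the linear map $\Delta:V\to V^*$. For monotonicity, observe that with $\mu$ frozen the Wasserstein term in {\bf(H2$'$)} vanishes, so $2{}_{V^*}\langle F(u,\mu)-F(v,\mu),u-v\rangle_V\leq C_2\|u-v\|^2$ for all $u,v\in V$; adding $2{}_{V^*}\langle\Delta u-\Delta v,u-v\rangle_V=-2\|u-v\|_V^2\leq 0$ and using $(u+W_t(\omega))-(v+W_t(\omega))=u-v$ gives $2{}_{V^*}\langle A_\omega(t,u)-A_\omega(t,v),u-v\rangle_V\leq C_2\|u-v\|^2$, a constant independent of $t$. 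For coercivity, split
\begin{equation*}
2{}_{V^*}\langle A_\omega(t,u),u\rangle_V=2{}_{V^*}\langle A(u+W_t(\omega),\mu),u+W_t(\omega)\rangle_V-2{}_{V^*}\langle A(u+W_t(\omega),\mu),W_t(\omega)\rangle_V;
\end{equation*}
bound the first term by \eqref{0328:01} and absorb the cross term into it via Young's inequality and the growth bound {\bf(H4$'$)} (together with $\|\Delta w\|_{V^*}\leq C\|w\|_V$), finally using $\|u+W_t(\omega)\|_V\geq\|u\|_V-C_W$ and $\|W_t(\omega)\|\leq\gamma^{-1}C_W$; this produces $2{}_{V^*}\langle A_\omega(t,u),u\rangle_V\leq-c\|u\|_V^2+C(1+\|u\|^2)$ for some $c>0$ and $C=C(\omega,\mu,T)$. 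Growth, $\|A_\omega(t,u)\|_{V^*}^2\leq C(1+\|u\|_V^2)$, follows in the same way from {\bf(H4$'$)} and the boundedness of $\Delta$. All four bounds are uniform for $t\in[s,s+T]$, hence in particular integrable in $t$.

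With these properties at hand, Lemma \ref{EUsolution} applies to the Cauchy problem $\rmd Z_t/\rmd t=A_\omega(t,Z_t)$, $Z_s=z_0\in H$, and yields a unique solution $Z(\cdot,s,\omega,\mu)z_0\in L^2([s,s+T];V)\cap C([s,s+T];H)$ (with $\rmd Z/\rmd t\in L^2([s,s+T];V^*)$) in the variational sense, which is the assertion. Alternatively one may exploit the already established well-posedness of \eqref{0327:01-1}: setting $Z_t:=X_{s,t}-W_t(\omega)$ gives existence, while uniqueness follows from the monotonicity estimate applied pathwise to the difference of two solutions --- the shift makes the argument entirely deterministic and $\omega$-by-$\omega$. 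The one point that genuinely needs care is the coercivity and growth check after the shift: since $F$ is nonlinear, replacing the argument by $u+W_t(\omega)$ really does change the operator, and the estimates close only because $W_t(\omega)\in V=H_0^1$ with $\sup_{t\in[s,s+T]}\|W_t(\omega)\|_V<\infty$ for $\mP$-a.s. $\omega$ --- which is precisely why $W_t$ is taken $H_0^1$-valued. Once this is used via Young's inequality, everything else is routine.
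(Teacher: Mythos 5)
Your proposal is correct and follows essentially the same route as the paper, which proves this lemma by the one-line remark that it follows from combining {\bf(H1)}, \eqref{0327:02}, \eqref{1001:01}, \eqref{0327:04} and Lemma \ref{EUsolution}; you have simply written out the pathwise verification of the hemicontinuity, monotonicity, coercivity and growth conditions for the shifted operator $A_\omega(t,u)=A(u+W_t(\omega),\mu)$ that this citation presupposes. The details (the Wasserstein term vanishing for frozen $\mu$, the Young-inequality absorption of the cross term using $\sup_{t\in[s,s+T]}\|W_t(\omega)\|_V<\infty$) are handled correctly.
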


In order to show that $\Phi$ is a continuous RDS, we first establish the following lemma.
\begin{lem}\label{lem0917}
Suppose that {\bf(H1)}, {\bf(H2$'$)}, {\bf(H3)} and {\bf(H4$'$)}  hold.
Assume further that $C_1+C_2<2\lambda_*$. Then for any $x,x'\in H$, $\mu,\mu'\in\cP_2(H)$ and
$s_1<s_2<s\leq t$,
\begin{equation}\label{0603:02}
\|X_{s_1,t}^{x,\mu}(\omega) - X_{s_2,t}^{x',\mu'}(\omega)\|^2
\leq\left(\|x-x'\|^2
+\frac{C_1}{2\lambda_*-C_1-C_2}
\cW_2(\mu,\mu')\right){\rm e}^{-(2\lambda_*-C_2)(t-s)}.
\end{equation}
\end{lem}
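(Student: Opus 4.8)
The plan is to establish \eqref{0603:02} by an energy estimate, splitting the problem into two pieces: the comparison of two solutions $X_{s_2,t}^{x',\mu'}$ and $X_{s_2,t}^{x,\mu}$ sharing the same starting time $s_2$ but with different initial data, followed by the comparison of $X_{s_1,t}^{x,\mu}$ with $X_{s_2,t}^{X_{s_1,s_2}^{x,\mu},\mu}$ using the semi-flow property. First I would reduce to the single-time-origin case: by the cocycle/semi-flow structure \eqref{flow} of the decoupled system we have $X_{s_1,t}^{x,\mu}=X_{s_2,t}^{X_{s_1,s_2}^{x,\mu},\,P_{s_1,s_2}^*\mu}$; however, since the law-variable in \eqref{0327:01-1} is \emph{fixed} to $\mu$ (not evolving), I should be careful here and instead simply run the estimate directly on the pair of processes $X_{s_1,\cdot}^{x,\mu}$ and $X_{s_2,\cdot}^{x',\mu'}$ on the common interval $[s_2,t]$, treating $X_{s_1,s_2}^{x,\mu}$ as the effective initial datum at time $s_2$ for the first process.

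The core computation: write $D_r:=X_{s_1,r}^{x,\mu}-X_{s_2,r}^{x',\mu'}$ for $r\in[s_2,t]$. Since both $X$-equations are driven by the \emph{same} Wiener process $W$, the noise cancels in the difference, so $D$ solves a (pathwise) equation
\begin{equation*}
\frac{\rmd D_r}{\rmd r}=A(X_{s_1,r}^{x,\mu},P_{s_1,r}^*\mu)-A(X_{s_2,r}^{x',\mu'},P_{s_2,r}^*\mu').
\end{equation*}
Then by the chain rule, {\bf(H2$'$)} in the form \eqref{0327:02}, and the compactness-free monotonicity estimate \eqref{1001:01}, I get
\begin{equation*}
\frac{\rmd}{\rmd r}\|D_r\|^2 \leq -(2\lambda_*-C_2)\|D_r\|^2 + C_1\cW_2(P_{s_1,r}^*\mu,P_{s_2,r}^*\mu')^2,
\end{equation*}
using that the leading term $2_{V^*}\langle \Delta D_r, D_r\rangle_V\leq -2\lambda_*\|D_r\|^2$ (Poincar\'e) absorbs into the $-(2\lambda_*-C_2)$ coefficient exactly as in the proof of Lemma \ref{0302-1}. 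Next I would bound the Wasserstein term: by Lemma \ref{lem03:25}, specifically \eqref{0325:02}, and the semi-flow property $P_{s_1,r}^*\mu=P_{s_2,r}^*(P_{s_1,s_2}^*\mu)$, one has $\cW_2(P_{s_1,r}^*\mu,P_{s_2,r}^*\mu')^2\leq \cW_2(P_{s_1,s_2}^*\mu,\mu')^2\rme^{-(2\lambda_*-C_1-C_2)(r-s_2)}$; and one controls $\cW_2(P_{s_1,s_2}^*\mu,\mu')$ in terms of $\cW_2(\mu,\mu')$ plus a contribution bounded by the same structure — here I should double-check whether the intended statement really wants $\cW_2(\mu,\mu')$ on the right or whether a more careful accounting produces $\cW_2(\mu,\mu')^2$; given the displayed form, I will absorb constants so that the Wasserstein input is $\leq \cW_2(\mu,\mu')$.

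Finally I would apply the Gronwall inequality (variation-of-constants form) to $\|D_r\|^2$ on $[s_2,t]$: with initial value $\|D_{s_2}\|^2=\|X_{s_1,s_2}^{x,\mu}-x'\|^2$, the homogeneous part decays like $\rme^{-(2\lambda_*-C_2)(t-s_2)}$, and the inhomogeneous Wasserstein term, being itself exponentially decaying at rate $2\lambda_*-C_1-C_2 < 2\lambda_*-C_2$, integrates to something of order $\frac{C_1}{2\lambda_*-C_1-C_2}\cW_2(\mu,\mu')\,\rme^{-(2\lambda_*-C_1-C_2)(t-s_2)}$; bounding $\rme^{-(2\lambda_*-C_1-C_2)(t-s_2)}\leq \rme^{-(2\lambda_*-C_2)(t-s)}$ appropriately (and replacing $\|X_{s_1,s_2}^{x,\mu}-x'\|^2$ by $\|x-x'\|^2$ via a preliminary estimate when $s_1<s_2$) yields \eqref{0603:02}. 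The main obstacle I anticipate is the bookkeeping in the last step: reconciling the two distinct exponential rates ($2\lambda_*-C_2$ from the state equation versus $2\lambda_*-C_1-C_2$ from the law) so that everything collapses into the single clean factor $\rme^{-(2\lambda_*-C_2)(t-s)}$ with the stated constant $\frac{C_1}{2\lambda_*-C_1-C_2}$, and handling the time shift $s_1\to s_2$ without an extra additive constant — this is where the condition $C_1+C_2<2\lambda_*$ is essential, since it guarantees $P^*$ is a contraction and prevents the law-mismatch term from blowing up over the interval $[s_1,s_2]$.
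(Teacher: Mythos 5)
Your core computation is exactly the paper's: the pathwise energy identity for the difference (the additive noise cancels), {\bf(H2$'$)} combined with the Poincar\'e constant to produce the drift $-(2\lambda_*-C_2)\|D_r\|^2+C_1\cW_2(\cdot,\cdot)^2$, the contraction \eqref{0325:02} for the law term, and Gronwall. However, the two points you flag at the end as ``bookkeeping'' are not bookkeeping; they are genuine obstructions, and the paper's own proof does not overcome them either. First, the time shift: there is no preliminary estimate replacing $\|X_{s_1,s_2}^{x,\mu}(\omega)-x'\|^2$ by $\|x-x'\|^2$ --- the state $X_{s_1,s_2}^{x,\mu}(\omega)$ depends on the noise over $[s_1,s_2]$ and need not be close to $x'$ even when $x=x'$. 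The paper derives the two-start-time inequality \eqref{0603:01} only as an intermediate estimate, with $\|X_{s_1,s}^{x,\mu}(\omega)-X_{s_2,s}^{x',\mu'}(\omega)\|^2$ as the initial datum, and then proves the displayed bound \eqref{0603:02} only after setting $s_1=s_2=s$; that is the only case in which \eqref{0603:02} can be read literally, and it is what is actually used later (the general case is handled via \eqref{0603:01} directly, e.g.\ in \eqref{0329:04}).

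Second, the rates do not collapse. Writing $\lambda:=2\lambda_*-C_2$ and $\eta:=2\lambda_*-C_1-C_2<\lambda$, the variation-of-constants Gronwall you propose gives
$\|D_t\|^2\le\|x-x'\|^2\rme^{-\lambda(t-s)}+C_1\cW_2(\mu,\mu')^2\int_s^t\rme^{-\lambda(t-r)}\rme^{-\eta(r-s)}\,\rmd r\le\|x-x'\|^2\rme^{-\lambda(t-s)}+\cW_2(\mu,\mu')^2\rme^{-\eta(t-s)}$,
so the Wasserstein contribution decays at the slower rate $\eta$, not $\lambda$. The paper's shortcut --- first integrating the Wasserstein term to the constant $\frac{C_1}{\eta}\cW_2(\mu,\mu')^2$ and then applying the integral form of Gronwall with the negative coefficient $-\lambda$ --- is not a valid use of Gronwall: an inequality $u(t)\le a-\lambda\int_s^tu(r)\,\rmd r$ with $u\ge0$ only yields $u\le a$, not $u\le a\,\rme^{-\lambda(t-s)}$, once the time-dependence of the forcing has been discarded. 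Hence the single factor $\rme^{-(2\lambda_*-C_2)(t-s)}$ in \eqref{0603:02} is an overstatement (as is the unsquared $\cW_2(\mu,\mu')$, which you also caught). The honest conclusion of your argument is the two-rate bound above, or the cruder $\bigl(\|x-x'\|^2+\cW_2(\mu,\mu')^2\bigr)\rme^{-(2\lambda_*-C_1-C_2)(t-s)}$, and that is entirely sufficient for the continuity of $\Phi$ for which the lemma is invoked. So: same method as the paper, and your hesitations locate real defects --- state the weaker, correct bound rather than absorbing constants to force the displayed form.
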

\begin{proof}
By It\^o's formula, \eqref{0327:02} and \eqref{0325:06}, one sees that for any $x,x'\in H$, $\mu,\mu'\in\cP_2(H)$ and
$s_1 \leq s_2 \leq s \leq t$,
\begin{align}\label{0603:01}
& 
\|X_{s_1,t}^{x,\mu}(\omega) - X_{s_2,t}^{x',\mu'}(\omega)\|^2\\\nonumber
&
= \|X_{s_1,s}^{x,\mu}(\omega) - X_{s_2,s}^{x',\mu'}(\omega)\|^2\\\nonumber
&\quad 
+ 2\int_{s}^{t} 
\,_{V^*}\langle 
A(X_{s_1,r}^{x,\mu}(\omega), P_{r-s_1}^*\mu)  - A(X_{s_2,r}^{x',\mu}(\omega), P_{r-s_2}^*\mu'), X_{s_1,r}^{x,\mu}(\omega) - X_{s_2,r}^{x',\mu'}(\omega)
\rangle_V 
\rmd r\\\nonumber
&
\leq \|X_{s_1,s}^{x,\mu}(\omega) - X_{s_2,s}^{x',\mu'}(\omega)\|^2 
+\int_{s}^{t}\left( -(2\lambda_*-C_2)\| X_{s_1,r}^{x,\mu}(\omega) - X_{s_2,r}^{x',\mu'}(\omega)\|^2 
+C_1\cW_2(P_{r-s_1}^*\mu,P_{r-s_2}^*\mu')^2\right)
\rmd r.
\end{align}
Letting $s_1=s_2=s$ in \eqref{0603:01}, in view of
\eqref{0325:02}, we have
\begin{align*}
& 
\|X_{s,t}^{x,\mu}(\omega) - X_{s,t}^{x',\mu'}(\omega)\|^2\\\nonumber
&
\leq \|x - x'\|^2 
+\int_{s}^{t}\left( -(2\lambda_*-C_2)\| X_{s,r}^{x,\mu}(\omega) - X_{s,r}^{x',\mu'}(\omega)\|^2 
+C_1\cW_2(\mu,\mu'){\rm e}^{-(2\lambda_*-C_1-C_2)(r-s)}
\right)\rmd r\\
&
\leq \|x - x'\|^2 
+\int_{s}^{t} -(2\lambda_*-C_2)\| X_{s,r}^{x,\mu}(\omega) - X_{s,r}^{x',\mu'}(\omega)\|^2 \rmd r
+\frac{C_1}{2\lambda_*-C_1-C_2}\cW_2(\mu,\mu'),
\end{align*}
which along with Gronwall's lemma completes the proof.
\end{proof}

Now it is the position to prove Theorem \ref{thmRDeq}.
\begin{proof}[Proof of Theorem \ref{thmRDeq}]
According to Lemmas \ref{lem0916} and \ref{lem0917} and \eqref{0325:02}, we obtain that 
$$
\Phi:\R\times\Omega\times\cP_2(H)\times H\longrightarrow\cP_2(H)\times H
$$
is a continuous RDS.

Note that \eqref{0328:01}, \eqref{0328:02} and \eqref{0327:04} yield that for any $\varepsilon_1,\varepsilon_2\in(0,1)$, $u\in H$ and $\mu\in\cP_2(H)$,
\begin{align*}
&
2\,_{V^*}\langle A(u+W_r(\omega), \mu), u\rangle_V \\
&
=2\,_{V^*}\langle A(u+W_r(\omega), \mu), u+W_r(\omega)-W_r(\omega)\rangle_V\\
&\leq 
2\varepsilon_1\,_{V^*}\langle A(u+W_r(\omega), \mu), u+W_r(\omega)\rangle_V
+2(1-\varepsilon_1)_{V^*}\langle A(u+W_r(\omega), \mu), u+W_r(\omega)\rangle_V\\
&\quad
+2\|A(u+W_r(\omega), \mu)\|_{V^*}\|W_r(\omega)\|_V\\
&\leq 
-2\varepsilon_1\|u+W_r(\omega)\|_V^2
+\varepsilon_1(2+\lambda_1)\|u+W_r(\omega)\|^2+\varepsilon_1\lambda_2\cW_2(\mu,\delta_0)^2
-(1-\varepsilon_1)\eta_1\|u\|^2 \\
&
\quad +(1-\varepsilon_1)\eta_2\cW_2(\mu,\delta_0)^2+\varepsilon_2\|A(u+W_r(\omega), \mu)\|_{V^*}^2+C_{\varepsilon_2}\|W_r(\omega)\|_V^2
+C_{\varepsilon_1,\varepsilon_2}\\
&
\leq -2\varepsilon_1\|u+W_r(\omega)\|_V^2
+\varepsilon_1(2+\lambda_1)\|u+W_r(\omega)\|^2+\varepsilon_1\lambda_2\cW_2(\mu,\delta_0)^2
-(1-\varepsilon_1)\eta_1\|u\|^2\\
&\quad
+(1-\varepsilon_1)\eta_2\cW_2(\mu,\delta_0)^2+\varepsilon_2 C\|u+W_r(\omega)\|_V^2
+\varepsilon_2 C\cW_2(\mu,\delta_0)^2+C_{\varepsilon_2}\|W_r(\omega)\|_V^2
+C_{\varepsilon_1,\varepsilon_2}\\
&
\leq-(2\varepsilon_1-\varepsilon_2 C)\|u+W_r(\omega)\|_V^2
-((1-\varepsilon_1)\eta_1-2\varepsilon_1(2+\lambda_1))\|u\|^2\\
&\quad
+((1-\varepsilon_1)\eta_2+\varepsilon_1\lambda_2+\varepsilon_2C)\cW_2(\mu,\delta_0)^2
+2\varepsilon_1(2+\lambda_1)\|W_r(\omega)\|^2
+C_{\varepsilon_2}\|W_r(\omega)\|_V^2+C_{\varepsilon_1,\varepsilon_2},
\end{align*}
which, by letting
$$
0<\varepsilon_1<\frac{\eta_1-\eta_2}{\eta_1-\eta_2+4+2\lambda_1+\lambda_2},
\quad 0<\varepsilon_2<\frac{[(1-\varepsilon_1)(\eta_1-\eta_2)
-\varepsilon_1(4+2\lambda_1+\lambda_2)]\wedge(2\varepsilon_1)}{C},
$$ 
and
$$
\eta_1':=(1-\varepsilon_1)\eta_1-2\varepsilon_1(2+\lambda_1)>
\eta_2':=(1-\varepsilon_1)\eta_2+\varepsilon_1\lambda_2+\varepsilon_2C>0,
$$ 
implies that
\begin{equation}\label{0329:01}
2\,_{V^*}\langle A(u+W_r(\omega), \mu), u\rangle_V
\leq -\eta_1'\|u\|^2+\eta_2'\cW_2(\mu,\delta_0)^2
+C\left(1+\|W_r(\omega)\|_V^2\right).
\end{equation}

By the It\^o formula, \eqref{0329:01} and \eqref{0911:03}, one sees that
for any $t>s$
\begin{align}\label{0329:02}
\|Z_{s,t}^{x,\mu}(\omega)\|^2
&
=\|x-W_{s}(\omega)\|^2+\int_{s}^{t}
2_{V^*}\langle 
A(Z_{s,r}^{x,\mu}(\omega)+W_r(\omega), P_{r-s}^*\mu), Z_{s,r}^{x,\mu}(\omega) 
\rangle_V \rmd r\nonumber\\
&
\leq\|x-W_{s}(\omega)\|^2+\int_{s}^{t}\left(
-\eta_1'\|Z_{s,r}^{x,\mu}(\omega)\|^2+\eta_2'\cW_2(P_{r-s}^*\mu,\delta_0)^2+C(r,\omega)\right)
\rmd r\nonumber\\
&
\leq \|x-W_{s}(\omega)\|^2+\int_{s}^{t}\left(-\eta_1'\|Z_{s,r}^{x,\mu}(\omega)\|^2
+\eta_2'\mu(\|\cdot\|^2){\rm e}^{-(\eta_1-\eta_2)(r-s)}+C(r,\omega) \right)
\rmd r,
\end{align} 
where $C(r,\omega):=C\left(1+\|W_r(\omega)\|_V^2\right)$.
Then in view of \eqref{0329:02} and Gronwall's inequality, we have
\begin{equation*}
\|Z_{s,t}^{x,\mu}(\omega)\|^2
\leq \|x-W_{s}(\omega)\|^2{\rm e}^{-\eta_1'(t-s)}
+C\mu(\|\cdot\|^2){\rm e}^{-[\eta_1'\wedge(\eta_1-\eta_2)](t-s)}
+\int_s^t{\rm e}^{-\eta_1'(t-r)}C(r,\omega)\rmd r,
\end{equation*}
which implies that
\begin{align}\label{0329:03}
\|X_{s,t}^{x,\mu}(\omega)\|^2
&
\leq2\|Z_{s,t}^{x,\mu}(\omega)\|^2+2\|W_t(\omega)\|^2\nonumber\\
&
\leq 2\left(\|x-W_{s}(\omega)\|^2{\rm e}^{-\eta_1'(t-s)}
+C\mu(\|\cdot\|^2){\rm e}^{-[\eta_1'\wedge(\eta_1-\eta_2)](t-s)}\right)\nonumber\\
&\quad
+2\int_s^t{\rm e}^{-\eta_1'(t-r)}C(r,\omega)\rmd r
+2\|W_t(\omega)\|^2.
\end{align}

Define $\eta':=2\lambda_*-C_1-C_2$. By \eqref{0603:01} and \eqref{0325:06},
one sees that for any $s_1\leq s_2\leq s\leq t$,
\begin{align*}
& 
\|X_{s_1,t}^{x,\mu}(\omega) - X_{s_2,t}^{x',\mu'}(\omega)\|^2\\
&
\leq \|X_{s_1,s}^{x,\mu}(\omega) - X_{s_2,s}^{x',\mu'}(\omega)\|^2 
-\int_{s}^{t}(2\lambda_*-C_2)\| X_{s_1,r}^{x,\mu}(\omega) - X_{s_2,r}^{x',\mu'}(\omega)\|^2 
\rmd r\\
&\quad
+C \int_{s}^{t} \left(
\cW_2(P_{r-s_1}^*\mu,\mu_\infty)^2+\cW_2(P_{r-s_2}^*\mu',\mu_\infty)^2\right)\rmd r\\
&
\leq \|X_{s_1,s}^{x,\mu}(\omega) - X_{s_2,s}^{x',\mu'}(\omega)\|^2 -\int_{s}^{t}(2\lambda_*-C_2)
\| X_{s_1,r}^{x,\mu}(\omega) - X_{s_2,r}^{x',\mu'}(\omega)\|^2 
\rmd r\\
&\quad+C \int_{s}^{t}\left(
{\rm e}^{-\eta'(r-s_1)}\cW_2(\mu,\mu_\infty)^2
+{\rm e}^{-\eta'(r-s_2)}\cW_2(\mu',\mu_\infty)^2\right)\rmd r\\
&
\leq \|X_{s_1,s}^{x,\mu}(\omega) - X_{s_2,s}^{x',\mu'}(\omega)\|^2 -\int_{s}^{t}(2\lambda_*-C_2)
\| X_{s_1,r}^{x,\mu}(\omega) - X_{s_2,r}^{x',\mu'}(\omega)\|^2 \rmd r\\
&\quad
+C\left({\rm e}^{-\eta'(s-s_1)}\cW_2(\mu,\mu_\infty)^2
+{\rm e}^{-\eta'(s-s_2)}\cW_2(\mu',\mu_\infty)^2\right),
\end{align*}
which along with the Gronwall inequality implies that
\begin{align}\label{0329:04}
\|X_{s_1,t}^{x,\mu}(\omega) - X_{s_2,t}^{x',\mu'}(\omega)\|^2
&
\leq {\rm e}^{-(2\lambda_*-C_2)(t-s)}\bigg(
\|X_{s_1,s}^{x,\mu}(\omega) - X_{s_2,s}^{x',\mu'}(\omega)\|^2\\
&\qquad
+C\left({\rm e}^{-\eta'(s-s_1)}\cW_2(\mu,\mu_\infty)^2
+{\rm e}^{-\eta'(s-s_2)}\cW_2(\mu',\mu_\infty)^2\right)\bigg).\nonumber
\end{align}
Combining \eqref{0329:03}, \eqref{0329:04} with $s=s_2$ and 
the subexponential growth of $\|W_r(\omega)\|_V$, we obtain
\begin{align*}
&
\|X_{s_1,t}^{x,\mu}(\omega) - X_{s_2,t}^{x',\mu'}(\omega)\|^2\\
&
\leq {\rm e}^{-(2\lambda_*-C_2)(t-s_2)}\bigg(
\|X_{s_1,s_2}^{x,\mu}(\omega) - x'\|^2
+C\left({\rm e}^{-\eta'(s_2-s_1)}\cW_2(\mu,\mu_\infty)^2
+\cW_2(\mu',\mu_\infty)^2\right)\bigg)\\
&
\leq C{\rm e}^{-(2\lambda_*-C_2)(t-s_2)}\bigg(\|x-W_{s_1}(\omega)\|^2
{\rm e}^{-\eta_1'(s_2-s_1)}+C\mu(\|\cdot\|^2){\rm e}^{-[\eta_1'\wedge(\eta_1-\eta_2)](s_2-s_1)}
+\|x'\|^2\\
&\qquad
+\int_{s_1}^{s_2}{\rm e}^{-\eta_1'(s_2-r)}C(r,\omega)\rmd r
+\|W_{s_2}(\omega)\|^2
+\cW_2(\mu,\mu_\infty)^2+\cW_2(\mu',\mu_\infty)^2\bigg)\rightarrow 0
\quad {\text{as}}~s_2\rightarrow-\infty. 
\end{align*}
Hence, for any $t\in\R$ and $\omega\in\Omega$, there exists $\xi_t(\omega)$ such that 
$$
\lim_{s\rightarrow-\infty}X_{s,t}^{x,\mu}(\omega)=\xi_t(\omega)
$$
uniformly with respect to $x$ on any bounded subset $H$
and $\mu$ on any bounded subset $\cP_2(H)$.
And it holds that for any $t\in\R$,
$$
\xi_t(\omega)=\lim_{s\to-\infty}X_{s,t}^{x,\mu}(\omega)
=\lim_{s\to-\infty}X_{s-t,0}^{x,\mu}(\theta_t\omega)=\xi_0(\theta_t\omega)
=:\xi(\theta_t\omega).
$$
In particular, by \eqref{0329:03}, we have
\begin{align*}
\|\xi(\theta_{-t}\omega)\|^2
&
=\lim_{s\rightarrow-\infty}\|X_{s,0}^{0,\mu}(\theta_{-t}\omega)\|^2\\
&
\leq\lim_{s\rightarrow-\infty}\left(2\|W_s(\theta_{-t}\omega)\|^2{\rm e}^{\eta_1's}
+C\mu(\|\cdot\|^2){\rm e}^{[\eta_1'\wedge(\eta_1-\eta_2)]s}
+\int_s^0{\rm e}^{\eta'r}C(r,\theta_{-t}\omega)\rmd r
+2\|W_0(\theta_{-t}\omega)\|^2\right)\\
&
\leq \int_{-\infty}^0{\rm e}^{\eta'r}C(r,\theta_{-t}\omega)\rmd r
+2\|W_0(\theta_{-t}\omega)\|^2,
\end{align*}
which implies that $\{\xi\}\in\cD_{\lambda_0}$ with $\lambda_0<2\lambda_*-C_1-C_2$, 
where $\xi(\omega)$ is a singleton for 
each $\omega\in\Omega$.
With the help of \eqref{0329:04}, we have for any $D\in\cD_{\lambda_0'}$ with 
$\lambda_0'=2\lambda_*-C_2$ and bounded subset 
$B_{\cP}\subset \cP_2(H)$,
\begin{align}\label{1106:01}
&
\sup_{\mu\in B_{\cP}}\dist_{H}\left(\varphi(t,\theta_{-t}\omega,\mu)D(\theta_{-t}\omega),
\xi(\omega)\right)\nonumber\\
&
=\sup_{\mu\in B_{\cP}}\sup_{x(\theta_{-t}\omega)\in D(\theta_{-t}\omega)}
\left\|\varphi(t,\theta_{-t}\omega,\mu)x(\theta_{-t}\omega)-\xi(\omega)\right\|\nonumber\\
&
=\sup_{\mu\in B_{\cP}}\sup_{x(\theta_{-t}\omega)\in D(\theta_{-t}\omega)}
\left\|X_{-t,0}^{x(\theta_{-t}\omega),\mu}(\omega)-\xi(\omega)\right\|\nonumber\\
&
\leq\sup_{\mu\in B_{\cP}}\sup_{x(\theta_{-t}\omega)\in D(\theta_{-t}\omega)}
 \left[C{\rm e}^{-(2\lambda_*-C_2)t}\left(\|x(\theta_{-t}\omega)\|^2
 +\|\xi(\theta_{-t}\omega)\|^2+\mu(\|\cdot\|^2)
 +\mu_\infty(\|\cdot\|^2)\right)\right]^{\frac12}\nonumber\\
&
\leq C{\rm e}^{\frac{-(2\lambda_*-C_2)t}{2}}\left[\|D(\theta_{-t}\omega)\|^2
+\|\xi(\theta_{-t}\omega)\|^2+1\right]^{\frac12}
\rightarrow 0\quad {\text{as }} t\rightarrow\infty,
\end{align}
which along with the neighborhood closed property of $\cD_{\lambda_0}$
and $\sS$ implies that there exists $\epsilon_0>0$ such that 
for any $B_{\cP}\in\sS$ and $D\in\cD_{\lambda_0}$, there exists
$T:=(B_{\cP},D,\omega,\epsilon_0)>0$ so that
$$
\Phi(t,\theta_{-t}\omega)B_{\cP}\times D(\theta_{-t}\omega)
=\left(P_{t}^*B_{\cP},\varphi(t,\theta_{-t}\omega,B_{\cP})D(\theta_{-t}\omega)\right)
\subset B_{\cP}^{\epsilon_0}\times \overline{U_{\epsilon_0}(\xi(\omega))},
$$
where 
$$
B_{\cP}^{\epsilon_0}:=\left\{\nu\in\cP_2(H):\cW_2(\nu,\mu_\infty)\leq\epsilon_0\right\},
\quad
U_{\epsilon_0}(\xi(\omega)):=\left\{x\in H:\dist(x,\xi(\omega))<\epsilon_0\right\}.
$$
By Proposition \ref{prop0117}, \eqref{0325:06} and \eqref{1106:01}, 
we obtain that $\Phi$ admits a 
$\sD_{\lambda_0}$-random attractor $\sA$, which is defined by
$$
\sA(\omega)=\Omega(B_{\cP}^{\epsilon_0}\times \overline{U_{\epsilon_0}(\xi(\omega))})
=(\mu_\infty,\xi(\omega)).
$$

Since $\cP^*$ is the $\sS$-global attractor of $\{P_t^*\}_{t\geq0}$,
we can define the backward extension of $P_t^*$ on $\cP^*$, which is defined by 
$P_{-t}^*\mu_\infty=\mu_\infty$ for any $t\geq0$.
By Theorem \ref{thm0330}, we can obtain the pullback attractor $\cA_H$ of
$
\varphi:\R_+\times\Omega\times\{\mu_\infty\}\times H\rightarrow H
$
over $(\Omega,\cF,\mP,\{\theta_t\}_{t\in\R})$ and $(\cP^*,\{P_t^*\}_{t\in\R})$. Moreover,
$$
\cA(\omega,\mu_\infty):=\{\xi(\omega)\}.
$$
Note that by the uniqueness of solutions, we have 
for any $\mu\in\cP_2(H)$,
\begin{align*}
\mu_\infty=\lim_{s\rightarrow-\infty}P_{s,0}^*\mu
=\lim_{s\rightarrow-\infty}\int_H\mP\circ[X_{s,0}^{x,\mu}]^{-1}\mu(\rmd x)
=\int_H\lim_{s\rightarrow-\infty}\mP\circ[X_{s,0}^{x,\mu}]^{-1}\mu(\rmd x)
=\mP\circ[\xi]^{-1},
\end{align*}
which completes the proof.
\end{proof}

\subsection{McKean-Vlasov stochastic 2D Navier-Stokes equation}\label{secNSeq}

Note that there exists $C>0$ such that
for any $u,v,w\in V$,
\begin{equation}\label{0421:06}
~_{V^*}\langle B(u,v),w\rangle_V=-_{V^*}\langle B(u,w),v\rangle_V,
\end{equation}
\begin{equation}\label{0421:07}
~_{V^*}\langle B(u,v),v\rangle_V=0,
\end{equation}
\begin{equation}\label{0911:04}
\left|\,_{V^*}\langle B(u,v),w\rangle_V\right|
\leq C\|u\|^{\frac{1}{2}}\|u\|_V^{\frac{1}{2}}
\|v\|^{\frac{1}{2}}\|v\|_V^{\frac{1}{2}}\|w\|_V,
\end{equation}
and for any $u\in\cD(A)$, $v\in V$ and $w\in H$,
\begin{equation}\label{0421:08}
\left|\,_{V^*}\langle B(u,v),w\rangle_V\right|
\leq C\|u\|^{\frac12}\|Au\|^{\frac12}\|v\|_V\|w\|.
\end{equation}
Note that there exists $C>0$ such that for all $u,v\in V$,
\begin{equation}\label{0424:02}
2_{V^*}\langle A(u-v)+B(u,u)-B(v,v),u-v\rangle_V
\leq -\nu_c\|u-v\|_V^2+C\|v\|_V^2\|v\|^2\|u-v\|^2.
\end{equation}

In what follows, define $B(u):=B(u,u)$ for simplicity.
And assume that $\beta=2$, where $\beta$ is as in {\bf(H2)}
and {\bf(H4)}.

\begin{lem}\label{lem0418}
Assume that {\bf(H3)} holds and $\lambda_1+\lambda_2<2\gamma^2\nu_c$. 
Then for any $\lambda_1'\in[0,2\nu_c-\tfrac{\lambda_1+\lambda_2}{\gamma^2})$
there exist $\lambda_2'>\lambda_2$ and $C>0$
such that for any $u\in V$ and $\mu\in\cP_2(H)$,
\begin{equation}\label{0401:01}
2_{V^*}\langle Au+B(u)+F(u,\mu),u\rangle_V
\leq-\lambda_1'\|u\|_V^2-\lambda_2'\|u\|^2+\lambda_2\mu(\|\cdot\|^2)+C. 
\end{equation}
Moreover, suppose further that {\bf(H1)}, {\bf(H2)}, {\bf(H2$''$)} and {\bf(H4)}  hold. 
Then for any $p\in(4\vee\kappa,+\infty)$ and $\lambda'\in(0,\frac{p(\lambda_2'-\lambda_2)}{2})$ there exists a constant $C>0$ such that
for all $t\geq s$,
\begin{equation}\label{0401:02}
\mE\|Y_t\|^p\leq\mE\|Y_s\|^p{\rm e}^{-\lambda'(t-s)}+C.
\end{equation} 
\end{lem}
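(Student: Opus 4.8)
The plan is to first establish the deterministic energy inequality \eqref{0401:01} and then convert it into the $p$-th moment bound by an It\^o estimate that collapses to a scalar differential inequality. For \eqref{0401:01}: since $Au=\nu_c P_H\Delta u$ gives $_{V^*}\langle Au,u\rangle_V=-\nu_c\|u\|_V^2$ and $_{V^*}\langle B(u),u\rangle_V=0$ by \eqref{0421:07}, adding {\bf(H3)} yields
\[
2_{V^*}\langle Au+B(u)+F(u,\mu),u\rangle_V\le -2\nu_c\|u\|_V^2+\lambda_1\|u\|^2+\lambda_2\mu(\|\cdot\|^2)+C .
\]
Given $\lambda_1'\in[0,2\nu_c-\tfrac{\lambda_1+\lambda_2}{\gamma^2})$, split $-2\nu_c\|u\|_V^2=-\lambda_1'\|u\|_V^2-(2\nu_c-\lambda_1')\|u\|_V^2$ and apply $\gamma\|u\|\le\|u\|_V$ to the second piece to absorb $\lambda_1\|u\|^2$; setting $\lambda_2':=(2\nu_c-\lambda_1')\gamma^2-\lambda_1$ gives \eqref{0401:01}, and the admissible range of $\lambda_1'$ forces $\lambda_2'>\lambda_2$.

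For \eqref{0401:02}: under {\bf(H1)}, {\bf(H2)}, {\bf(H2$''$)} and {\bf(H4)} with $p>4\vee\kappa$, the cited well-posedness result provides the unique solution $Y$ of \eqref{0331:01} with $Y\in C([s,T];H)\cap L^2([s,T];V)$ a.s. and with the integrability needed to apply the generalized It\^o formula to $\|Y_t\|^p$ (apply It\^o to $\|Y_t\|^2$ and then the chain rule to $(\cdot)^{p/2}$; a localizing stopping time removes the martingale part after taking expectations). Since $\phi_1,\dots,\phi_d\in\cD(A)\subset H$ and the Hessian of $\|\cdot\|^p$ grows like $\|\cdot\|^{p-2}$, the It\^o correction from the noise is bounded by $C_p\|Y_t\|^{p-2}$. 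Using \eqref{0401:01}, discarding the nonpositive $-\tfrac{p\lambda_1'}{2}\|Y_t\|^{p-2}\|Y_t\|_V^2$ contribution, and noting the identity $\sL_{Y_t}(\|\cdot\|^2)=\mE\|Y_t\|^2$, we obtain for $g(t):=\mE\|Y_t\|^p$ the inequality
\[
g(t)\le g(s)+\int_s^t\Big(-\tfrac{p\lambda_2'}{2}g(r)+\tfrac{p\lambda_2}{2}\,\mE\|Y_r\|^{p-2}\,\mE\|Y_r\|^2+C\,\mE\|Y_r\|^{p-2}\Big)\,\rmd r .
\]

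To close the loop, by Jensen's inequality $\mE\|Y_r\|^{p-2}\le g(r)^{(p-2)/p}$ and $\mE\|Y_r\|^2\le g(r)^{2/p}$, so the middle term is $\le\tfrac{p\lambda_2}{2}g(r)$ and the last term is $\le Cg(r)^{(p-2)/p}\le \varepsilon g(r)+C_\varepsilon$ for any $\varepsilon>0$ by Young's inequality. Hence $g(t)\le g(s)+\int_s^t\big(-(\tfrac{p(\lambda_2'-\lambda_2)}{2}-\varepsilon)g(r)+C_\varepsilon\big)\,\rmd r$; choosing $\varepsilon=\tfrac{p(\lambda_2'-\lambda_2)}{2}-\lambda'>0$ for $\lambda'\in(0,\tfrac{p(\lambda_2'-\lambda_2)}{2})$ and invoking Gronwall's lemma yields $\mE\|Y_t\|^p\le\mE\|Y_s\|^p e^{-\lambda'(t-s)}+C$, which is \eqref{0401:02}.

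The main obstacle is not in these computations, which are routine, but in making the It\^o formula for $\|Y_t\|^p$ rigorous in the variational framework: one must run the argument through the Galerkin approximations (or a localization/Fatou scheme) so that the a priori bound being proved may legitimately be invoked, and one must keep the mean-field contribution $\mE\|Y_t\|^2$ controlled uniformly in the initial time $s$, which is precisely what the strict gap $\lambda_2'>\lambda_2$ secured in the first step guarantees.
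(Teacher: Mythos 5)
Your proposal is correct and follows essentially the same route as the paper: the energy inequality \eqref{0401:01} is obtained by combining $2\,_{V^*}\langle Au,u\rangle_V=-2\nu_c\|u\|_V^2$, the cancellation \eqref{0421:07}, and {\bf(H3)}, then splitting off $-\lambda_1'\|u\|_V^2$ and using $\gamma\|u\|\le\|u\|_V$ with $\lambda_2':=(2\nu_c-\lambda_1')\gamma^2-\lambda_1$; and the moment bound \eqref{0401:02} is derived exactly as in the paper via It\^o's formula for $\|Y_t\|^p$, the identity $\sL_{Y_r}(\|\cdot\|^2)=\mE\|Y_r\|^2$, Jensen/Young to absorb the lower-order terms using the gap $\lambda_2'>\lambda_2$, and Gronwall. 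Your closing remark on justifying the It\^o formula through Galerkin/localization is a reasonable rigor caveat that the paper leaves implicit.
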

\begin{proof}
It follows from {\bf(H3)} and \eqref{0421:07} that 
for any $\lambda_1'\in(0,2\nu_c)$, $u\in V$ and $\mu\in\cP_2(H)$,
\begin{align*}
2_{V^*}\langle Au+B(u)+F(u,\mu),u\rangle_V
&
\leq-2\nu_c\|u\|_V^2+\lambda_1\|u\|^2+\lambda_2\mu(\|\cdot\|^2)+C\nonumber\\
&
\leq -\lambda_1'\|u\|_V^2-\left((2\nu_c-\lambda_1')\gamma^2-\lambda_1\right)\|u\|^2
+\lambda_2\mu(\|\cdot\|^2)+C\nonumber\\
&
\leq -\lambda_1'\|u\|_V^2-\lambda_2'\|u\|^2+\lambda_2\mu(\|\cdot\|^2)+C
\end{align*}
by letting 
$\lambda_1'\in[0,2\nu_c-\tfrac{\lambda_1+\lambda_2}{\gamma^2})$
and $\lambda_2':=(2\nu_c-\lambda_1')\gamma^2-\lambda_1$.

By It\^o's formula, \eqref{0401:01} and Young's inequality, one sees that 
for any $p>4\vee\kappa$ and $0<\lambda'<\frac{p(\lambda_2'-\lambda_2)}{2}$,
\begin{align*}
\mE\|Y_t\|^p
=&
\mE\|Y_s\|^p+\frac{p}{2}\mE\int_s^t\|Y_r\|^{p-2}\left(2_{V^*}\langle AY_r+B(Y_r)+F(Y_r,\sL_{Y_r}),Y_r\rangle_V
+\sum_{i=1}^{d}\|\phi_i\|^2\right)\rmd r\\
&
+\frac{p(p-2)}{2}\sum_{i=1}^d\int_s^t\mE\|Y_r\|^{p-2}\|\phi_i\|^2\rmd r\\
\leq &
\mE\|Y_s\|^p+\frac{p}{2}\int_s^t\left(-\lambda_2'\mE\|Y_r\|^p+\lambda_2\mE\|Y_r\|^{p-2}\mE\|Y_r\|^2
+C\right)\rmd r\\
&
+\frac{p(p-2)}{2}\sum_{i=1}^d\int_s^t\mE\|Y_r\|^{p-2}\|\phi_i\|^2\rmd r\\
\leq&
 \mE\|Y_s\|^p+\int_s^t\left(-\lambda'\mE\|Y_r\|^p+C\right)\rmd r,
\end{align*}
which along with Gronwall's lemma implies that 
\begin{equation*}
\mE\|Y_t\|^p\leq \mE\|Y_s\|^p{\rm e}^{-\lambda'(t-s)}+C.
\end{equation*}
\end{proof}

By the definition of $\{P_t^*\}_{t\geq0}$ and Lemma \ref{lem0418}, one sees that 
for any $\mu\in\cP_p(H)$ and $t,s>0$, $P_t^*\mu\in\cP_p(H)$,
$$
P_0^*\mu=\mu, \quad{\text{and}} \quad P_{t+s}^*\mu=P_t^*P_s^*\mu.
$$
We say that a subset $B\subset\cP_p(H)$ is bounded in this subsection
if there exists a constant $R>0$ such that $\mu(\|\cdot\|^p)\leq R$ for all $\mu\in B$.
Now we show that $P_t^*$ is continuous over any bounded subset of $\cP_p(H)$.
\begin{lem}
Assume that {\bf(H1)}--{\bf(H3)}, {\bf(H2$''$)} and {\bf(H4)}  hold.
Let $B$ be a bounded subset of $\cP_p(H)$ with $p>4\vee\kappa$.
Suppose that $\mu\in B$ and $\{\mu_n\}\subset B$ satisfy
$\lim_{n\rightarrow\infty}\cW_2(\mu_n,\mu)=0$.
Then for any $T>0$,
\begin{align}\label{0423:01}
\lim_{n\rightarrow\infty}\sup_{t\in[0,T]}
\cW_2\left(P_{t}^*\mu_n,P_{t}^*\mu\right)=0.
\end{align}
\end{lem}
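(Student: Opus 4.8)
The plan is to prove \eqref{0423:01} by a synchronous coupling of the McKean--Vlasov dynamics \eqref{0331:01} issued from $\mu_n$ and from $\mu$, driven by the same noise, and then to localise in order to exploit the \emph{local} monotonicity condition {\bf(H2$''$)}. First I would fix $T>0$ and, for each $n$, pick $Y_0^n,Y_0\in L^p(\Omega,\mP;H)$ with $\sL_{Y_0^n}=\mu_n$, $\sL_{Y_0}=\mu$ and $\mE\|Y_0^n-Y_0\|^2=\cW_2(\mu_n,\mu)^2$ (an optimal coupling exists since $H$ is Polish), and let $Y^n,Y$ be the corresponding solutions of \eqref{0331:01} on $[0,T]$, both driven by $\sum_i\phi_i\,\rmd W_i$. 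Since $P_t^*\mu_n=\sL_{Y_t^n}$ and $P_t^*\mu=\sL_{Y_t}$, we have $\cW_2(P_t^*\mu_n,P_t^*\mu)^2\leq\mE\|Y_t^n-Y_t\|^2$ for $t\in[0,T]$, so it suffices to bound the right-hand side uniformly in $t\in[0,T]$. Because the noise is additive and identical for the two equations, the difference $D_t:=Y_t^n-Y_t$ solves a random evolution equation \emph{without} a stochastic integral, to which the It\^o formula applies pathwise; combining \eqref{0424:02} for the Navier--Stokes part $A+B$ with {\bf(H2$''$)} for the interaction $F$ (applied with $u=Y^n$, $v=Y$ and with $\mu,\nu$ the two path laws, so that $\mu^t,\nu^t$ are the laws of the paths stopped at $t$) gives, up to the stopping time $\sigma_R^n:=\tau_R^{Y^n}\wedge\tau_R^{Y}$,
$$
\|D_{t\wedge\sigma_R^n}\|^2\leq\|D_0\|^2+\int_0^{t\wedge\sigma_R^n}\Big(\Gamma_r\,\|D_r\|^2+\phi(r)\,\cW_{2,T,R,H}\big(\mu_n^r,\mu^r\big)^2\Big)\,\rmd r,
$$
where $\Gamma_r:=\phi(r)+C\|Y_r\|_V^2\|Y_r\|^2$, the nonpositive terms $-\nu_c\|D_r\|_V^2$ have been dropped, and $\mu_n^r,\mu^r$ denote the laws of $(Y^n)^r,Y^r$.

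The core of the argument is to close a Gronwall estimate up to $\sigma_R^n$. Two observations make this work. First, up to $\sigma_R^n$ one has $\|Y_r\|\vee\|Y_r^n\|\le R$ and $\int_0^{\sigma_R^n}(\|Y_r\|_V^2+\|Y_r^n\|_V^2)\,\rmd r\le 2R$; hence by \eqref{0416:01} with $\beta=2$ and by Lemma \ref{lem0418} (which, together with $\kappa\le p$ and $\mu_n,\mu\in B$, controls $P_r^*\mu_n(\|\cdot\|^\kappa)$ and $P_r^*\mu(\|\cdot\|^\kappa)$ uniformly in $n$ and $r$) the coefficients satisfy $\int_0^{\sigma_R^n}\Gamma_r\,\rmd r\le C_R$ a.s.\ for a \emph{deterministic} constant $C_R$ independent of $n$, and the function $\psi_n(r):=\mE\big[\phi(r)\mathbf{1}_{\{r\le\sigma_R^n\}}\big]$ lies in $L^1([0,T])$ with $\|\psi_n\|_{L^1([0,T])}\le C_R$, again independent of $n$. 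Second, the coupling $\big((Y^n)^r,Y^r\big)$ is admissible for the pair $\big(\mu_n^r,\mu^r\big)$, and because both paths are stopped at $r$ the time $T\wedge\tau_R^{(Y^n)^r}\wedge\tau_R^{Y^r}$ coincides with $r\wedge\sigma_R^n$, so that
$$
\cW_{2,T,R,H}\big(\mu_n^r,\mu^r\big)^2\leq\mE\sup_{s\le r\wedge\sigma_R^n}\|D_s\|^2=:H_r .
$$
Feeding this into the variation-of-constants form of the displayed inequality, using $\int_0^{\sigma_R^n}\Gamma_r\,\rmd r\le C_R$, then taking $\sup_{t\le T}$ and expectations, I obtain $H_t\le e^{C_R}\cW_2(\mu_n,\mu)^2+e^{C_R}\int_0^t\psi_n(r)H_r\,\rmd r$ for every $t\in[0,T]$; Gronwall's lemma then yields
$$
\mE\sup_{t\le T\wedge\sigma_R^n}\|Y_t^n-Y_t\|^2\leq C_R\,\cW_2(\mu_n,\mu)^2 ,
$$
with $C_R$ independent of $n$.

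Finally I would remove the localisation. For fixed $R$ the above bound tends to $0$ as $n\to\infty$, so $\mE\big[\|Y_t^n-Y_t\|^2\mathbf{1}_{\{t\le\sigma_R^n\}}\big]\to0$ uniformly in $t\le T$. On the complementary event I would use H\"older's inequality with exponents $p/2$ and $p/(p-2)$ together with the uniform-in-$n$ moment bound $\sup_{t\le T}\big(\mE\|Y_t^n\|^p+\mE\|Y_t\|^p\big)\le C$ from Lemma \ref{lem0418}, and the tail estimate
$$
\mP(\sigma_R^n<T)\le\mP(\tau_R^{Y^n}<T)+\mP(\tau_R^{Y}<T)\le \frac{C}{R},
$$
which follows from the standard a priori energy estimate $\mE\sup_{t\le T}\|Y_t^n\|+\mE\int_0^T\|Y_r^n\|_V^2\,\rmd r\le C$ (uniform in $n$, using \eqref{0401:01} and boundedness of $B$) via Chebyshev's inequality. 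Splitting $\mE\|Y_t^n-Y_t\|^2$ over $\{t\le\sigma_R^n\}$ and its complement and letting first $n\to\infty$ and then $R\to\infty$ gives \eqref{0423:01}.

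The main obstacle is precisely the interplay, inside the Gronwall step, between the \emph{local} (stopping-time dependent) monotonicity {\bf(H2$''$)} and the self-referential mean-field term: one must keep $\cW_{2,T,R,H}\big(\mu_n^r,\mu^r\big)^2$ under the time integral and bound it by $H_r$ rather than by $\sup_{r\le T}H_r$ (otherwise the resulting constant $C_R e^{C_R}$ cannot be absorbed), and one must verify that the random coefficient $\psi_n(r)$ is $L^1$ in time \emph{uniformly in} $n$; the passage $R\to\infty$ then hinges on the moment bounds of Lemma \ref{lem0418} being uniform in $n$, which is exactly where the boundedness of $B\subset\cP_p(H)$ enters.
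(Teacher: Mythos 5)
Your proposal is correct and follows essentially the same route as the paper: a synchronous optimal coupling of the two McKean--Vlasov solutions, localisation by the stopping times so that {\bf(H2$''$)} and \eqref{0424:02} apply, the bound $\cW_{2,T,R,H}(\mu_n^r,\mu^r)^2\le \mE\sup_{s\le r\wedge\sigma_R^n}\|Y^{\mu_n}_s-Y^{\mu}_s\|^2$, a Gronwall closure, and removal of the cut-off via H\"older, Chebyshev and the uniform $p$-moment bounds. The only (immaterial) difference is that you run a pathwise Gronwall using $\int_0^{\sigma_R^n}\Gamma_r\,\rmd r\le C_R$ followed by a deterministic one, whereas the paper invokes the stochastic Gronwall lemma (Lemma \ref{sGlem}) before the deterministic Gronwall step.
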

\begin{proof}
Let ${\xi_n},\xi\in L^p(\Omega,\mP)$ such that $\sL_{\xi_n}={\mu_n}$, $\sL_\xi=\mu$
and $W_2({\mu_n},\mu)^2=\mE\|{\xi_n}-\xi\|^2$.
Denote by $Y_t^\mu,t\geq0$ the solution to \eqref{0331:01} with the initial value $\xi$.
For any $R>0$, define 
$$
\tau_R^n:=\inf\left\{t\in[0,T]:
\left(\|Y_t^{\mu_n}\|+\int_0^t\|Y_s^{\mu_n}\|_V^2\|Y_s^{\mu_n}\|^2\rmd s\right)\vee
\left(\|Y_t^\mu\|+\int_0^t\|Y_s^\mu\|_V^2\|Y_s^\mu\|^2\rmd s\right)\geq R\right\}.
$$
Set $\nu_n(t):=\sL_{Y_t^{\mu_n}}$ and $\nu(t):=\sL_{Y_t^{\mu}}$.
By It\^o's formula, one sees that for all $t\in[0,T]$,
\begin{align*}
\|Y_{t\wedge\tau_R^n}^{\mu_n}-Y_{t\wedge\tau_R^n}^\mu\|^2
&
=\|{\xi_n}-\xi\|^2+\int_0^{t\wedge\tau_R^n}
2_{V^*}\langle A(Y_r^{\mu_n}-Y_r^\mu)+B(Y_r^{\mu_n})-B(Y_r^\mu),
Y_r^{\mu_n}-Y_r^\mu\rangle_V\rmd r\\
&\quad
+\int_0^{t\wedge\tau_R^n}2_{V^*}\langle F(Y_r^{\mu_n},\nu_n(r))-F(Y_r^\mu,\nu(r)),
Y_r^{\mu_n}-Y_r^\mu\rangle_V\rmd r,
\end{align*}
which along with \eqref{0424:02}, {\bf(H2$''$)} and Young's inequality
implies that for any $0\leq \tau_1\leq \tau_2\leq T\wedge\tau_R^n$,
\begin{align*}
\|Y_{t\wedge\tau_2}^{\mu_n}-Y_{t\wedge\tau_2}^\mu\|^2
&
=\|Y_{\tau_1}^{\mu_n}-Y_{\tau_1}^\mu\|^2
+\int_{\tau_1}^{t\wedge\tau_2}
2_{V^*}\langle A(Y_r^{\mu_n}-Y_r^\mu)+B(Y_r^{\mu_n})-B(Y_r^\mu),
Y_r^{\mu_n}-Y_r^\mu\rangle_V\rmd r\\
&\quad
+\int_{\tau_1}^{t\wedge\tau_2}
2_{V^*}\langle F(Y_r^{\mu_n},\nu_n(r))-F(Y_r^\mu,\nu(r)),
Y_r^{\mu_n}-Y_r^\mu\rangle_V\rmd r\\
&
\leq \|Y_{\tau_1}^{\mu_n}-Y_{\tau_1}^\mu\|^2
-\int_{\tau_1}^{t\wedge\tau_2}
\nu_c\|Y_r^{\mu_n}-Y_r^\mu\|_V^2\rmd r\\
&\quad
+\int_{\tau_1}^{t\wedge\tau_2}
\phi(r)\left(\|Y_r^{\mu_n}-Y_r^\mu\|^2
+\cW_{2,T,R,H}\left(\nu_n^r,\nu^r\right)^2\right)
\rmd r\\
&
\leq \|Y_{\tau_1}^{\mu_n}-Y_{\tau_1}^\mu\|^2
+\int_{\tau_1}^{t\wedge\tau_2}\phi(r)\left(\|Y_r^{\mu_n}-Y_r^\mu\|^{2}
+\mE\left(\sup_{s\in[0,r\wedge\tau_R^n]}\|Y_s^{\mu_n}-Y_s^\mu\|^2\right)\right)\rmd r,
\end{align*}
where 
$$
\phi(r):=C\left[\left(1+\|Y_r^{\mu_n}\|_V^2\right)\left(1+\|Y_r^{\mu_n}\|^2\right)
+\left(1+\|Y_r^\mu\|_V^2\right)\left(1+\|Y_r^\mu\|^2\right)
+\mE\|Y_r^{\mu_n}\|^\kappa+\mE\|Y_r^\mu\|^\kappa\right].
$$
Hence,
\begin{align}\label{0423:02}
\mE\left(\sup_{t\in[\tau_1,\tau_2]}\|Y_{t}^{\mu_n}-Y_{t}^\mu\|^2\right)
\leq 
&
\mE\|Y_{\tau_1}^{\mu_n}-Y_{\tau_1}^\mu\|^2\nonumber\\
&
+\mE\int_{\tau_1}^{\tau_2}\phi(r)\left(\|Y_r^{\mu_n}-Y_r^\mu\|^2
+\mE\left(\sup_{s\in[0,r]}\|Y_s^{\mu_n}-Y_s^\mu\|^2\right)\right)\rmd r.
\end{align}

In view of \cite[Proposition 3.6, Lemmas 3.3 and 3.4]{HHL24}, we obtain that for any $p>4\vee\kappa$,
\begin{align}\label{0423:03}
&
\mE\left(\sup_{t\in[0,T]}\|Y_t^{\mu_n}\|^p\right)
+\mE\left(\sup_{t\in[0,T]}\|Y_t^{\mu}\|^p\right)
+\mE\int_0^T\|Y_t^{\mu_n}\|^{p-2}\|Y_t^{\mu_n}\|_V^2\rmd t
+\mE\int_0^T\|Y_t^{\mu}\|^{p-2}\|Y_t^{\mu}\|_V^2\rmd t\nonumber\\
&
\leq C_T\left(1+\mE\|{\xi_n}\|^p+\mE\|{\xi}\|^p\right).
\end{align}
Hence, 
\begin{equation}\label{0904:01}
\mE\int_0^{T\wedge\tau_R^n}\phi(r)
\mE\left(\sup_{s\in[0,r]}\|Y_s^{\mu_n}-Y_s^{\mu}\|^2\right)\rmd r<\infty.
\end{equation}
By the definition of $\tau_R^n$ and \eqref{0401:02}, we have
\begin{equation}\label{0904:02}
\int_0^{T\wedge\tau_R^n}\phi(r)\rmd r\leq C_{R} \quad \mP-a.s.
\end{equation}

Combining \eqref{0423:02}, \eqref{0904:01}, \eqref{0904:02} and stochastic Gronwall's lemma (see Lemma \ref{sGlem}), we have
\begin{align*}
\mE\left(\sup_{t\in[0,T\wedge\tau_R^n]}\|Y_{t}^{\mu_n}-Y_{t}^\mu\|^2\right)
&
\leq C_{T,R}\left(\mE\|{\xi_n}-\xi\|^2+\mE\int_0^{T\wedge\tau_R^n}\phi(r)
\mE\left(\sup_{s\in[0,r\wedge\tau_R^n]}\|Y_s^{\mu_n}-Y_s^\mu\|^2\right)\rmd r\right)\\
&
\leq C_{T,R}\left(\mE\|{\xi_n}-\xi\|^2+\int_0^{T}\mE \phi(r)
\mE\left(\sup_{s\in[0,r\wedge\tau_R^n]}\|Y_s^{\mu_n}-Y_s^\mu\|^2\right)\rmd r\right),
\end{align*}
which along with Gronwall's lemma and \eqref{0423:03} implies that
\begin{align}\label{0424:01}
\mE\left(\sup_{t\in[0,T\wedge\tau_R^n]}\|Y_{t}^{\mu_n}-Y_{t}^\mu\|^2\right)
&
\leq C_{T,R}\mE\|{\xi_n}-\xi\|^2
{\rm e}^{\int_0^TC_{T,R}\mE \phi(r)\rmd r}\nonumber\\
&
\leq C_{T,R}\mE\|{\xi_n}-\xi\|^2
{\rm e}^{\int_0^TC\mE \phi(r)\rmd r}\nonumber\\
&
\leq C_{T,R}\mE\|{\xi_n}-\xi\|^2.
\end{align}
Fix $t\in[0,T]$. For any $R>0$, with the help of \eqref{0424:01}, \eqref{0401:02}, \eqref{0423:03}
and Chebyshev's inequality, we have for any $t\in[0,T]$,
\begin{align*}
\mE\left(\|Y_{t}^{\mu_n}-Y_{t}^\mu\|^2\right)
&
=\mE\left(\|Y_{t}^{\mu_n}-Y_{t}^\mu\|^2\chi_{\{t\leq\tau_R^n\wedge T\}}\right)
+\mE\left(\|Y_{t}^{\mu_n}-Y_{t}^\mu\|^2\chi_{\{t>\tau_R^n\wedge T\}}\right)\\
&
\leq \mE\left(\sup_{t\in[0,\tau_R^n\wedge T]}\|Y_{t}^{\mu_n}-Y_{t}^\mu\|^2\right)
+\left(\mE\|Y_{t}^{\mu_n}-Y_{t}^\mu\|^p\right)^{2/p}
\left(\mE\chi_{\{t>\tau_R^n\}}\right)^{\frac{p-2}{p}}\\
&
\leq C_{TR}\mE\|{\xi_n}-\xi\|^2+CR^{-\frac{p-2}{p}}.
\end{align*}
Hence, we have for any $R>0$,
\begin{align*}
\lim_{n\rightarrow\infty}\sup_{t\in[0,T]}\cW_2\left(P_t^*\mu_n,P_t^*\mu\right)
&
\leq \lim_{n\rightarrow\infty}\sup_{t\in[0,T]}\mE\|Y_t^{\mu_n}-Y_t^\mu\|^2\\
&
\leq\lim_{n\rightarrow\infty}\left(C_{T,R}\cW_2(\mu_n,\mu)+CR^{-\frac{p-2}{p}}\right)\\
&
\leq CR^{-\frac{p-2}{p}}.
\end{align*}
By the arbitrary of $R$, we complete the proof.
\end{proof}

Hence, $\{P_t^*\}_{t\geq0}$ is a semi-DS on $\left(\cP_p(H),\cW_2\right)$,
which is continuous over bounded subset of $\cP_p(H)$.
Therefore, to prove that $\{P_t^*\}_{t\geq0}$ admits a $\sS$-global attractor, 
Lemma \ref{EGA} shows that it suffices to prove the existence of a compact absorbing set.

\begin{lem}\label{lem0418-2}
Assume that {\bf(H1)}--{\bf(H3)}, {\bf(H2$''$)} and {\bf(H4)}  hold.
There exist $\tilde{\lambda}'>0$ and $C>0$ such that
for any $\tilde{p}\geq 2$, $p>4\vee\kappa$ and $s\leq\tau\leq t$,
\begin{equation}\label{0408:02}
\mE\|X_{s,t}^{x,\mu}\|^{\tilde{p}}
\leq \mE\|X_{s,\tau}^{x,\mu}\|^{\tilde{p}}{\rm e}^{-\tilde{\lambda}'(t-\tau)}
+C(\mE\|Y_s\|^{p})^{\frac{\tilde{p}}{p}}{\rm e}^{-\tilde{\lambda}'(t-s)}+C.
\end{equation}
Suppose further that {\bf(H3$'$)} holds.
Then for any $x\in V$ and $s\leq t$,
\begin{equation}\label{0408:04}
\mE\|X_{s,t}^{x,\mu}\|_V^2
\leq C\left(\|x\|_V^2+\mE\|Y_s\|^p+C_{s,t}\right).
\end{equation}

\end{lem}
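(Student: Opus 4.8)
The plan is to prove both bounds by energy estimates in the Gelfand triple $V\subset H\subset V^*$: apply It\^o's formula to $\|X_{s,t}^{x,\mu}\|^{\tilde p}$ for \eqref{0408:02} and to $\|X_{s,t}^{x,\mu}\|_V^2$ for \eqref{0408:04}, estimate the drift by means of \eqref{0401:01}, {\bf(H3$'$)} and the Navier--Stokes bilinear estimates \eqref{0421:07}--\eqref{0421:08}, and then close with a (stochastic) Gronwall argument, using \eqref{0401:02} to handle the terms carrying the law $\sL_{Y_t}$. Throughout write $X_t:=X_{s,t}^{x,\mu}$. For \eqref{0408:02}: It\^o's formula for $t\mapsto\|X_t\|^{\tilde p}$ (valid in the variational setting since $\tilde p\ge2$) produces the drift $\tfrac{\tilde p}{2}\|X_t\|^{\tilde p-2}\big(2_{V^*}\langle AX_t+B(X_t)+F(X_t,\sL_{Y_t}),X_t\rangle_V+\sum_i\|\phi_i\|^2\big)$, the It\^o correction $\tfrac{\tilde p(\tilde p-2)}{2}\|X_t\|^{\tilde p-4}\sum_i\langle X_t,\phi_i\rangle^2$, and a local martingale. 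Bounding the bracket by \eqref{0401:01}, using $\sum_i\langle X_t,\phi_i\rangle^2\le\|X_t\|^2\sum_i\|\phi_i\|^2$, and absorbing $\|X_t\|^{\tilde p-2}\sL_{Y_t}(\|\cdot\|^2)$ and $\|X_t\|^{\tilde p-2}$ into a small multiple of $\|X_t\|^{\tilde p}$ by Young's inequality, one gets after taking expectations (legitimate after a routine stopping-time localization)
$$\frac{\rmd}{\rmd t}\mE\|X_t\|^{\tilde p}\le-\tilde\lambda'\,\mE\|X_t\|^{\tilde p}+C\big(\mE\|Y_t\|^2\big)^{\tilde p/2}+C$$
for some $\tilde\lambda'>0$. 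Since $\mE\|Y_t\|^2\le(\mE\|Y_t\|^p)^{2/p}$ and $2/p\le1$, \eqref{0401:02} together with the subadditivity of $x\mapsto x^{2/p}$ gives $\big(\mE\|Y_t\|^2\big)^{\tilde p/2}\le C(\mE\|Y_s\|^p)^{\tilde p/p}\rme^{-c(t-s)}+C$ with $c:=\tilde p\lambda'/p>0$. Solving this scalar inequality on $[\tau,t]$ by variation of constants and shrinking $\tilde\lambda'$ so that $\tilde\lambda'<c$, the convolution $\int_\tau^t\rme^{-\tilde\lambda'(t-r)}\rme^{-c(r-s)}\rmd r$ is bounded by $(c-\tilde\lambda')^{-1}\rme^{-\tilde\lambda'(t-s)}$ because $\tau\ge s$; this is precisely \eqref{0408:02}.

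For \eqref{0408:04}: after a Galerkin/regularization step justifying the use of $AX_t$ (the solution lying in $C([s,t];V)\cap L^2([s,t];\cD(A))$ a.s.), It\^o's formula for $\|X_t\|_V^2$ gives the drift $\tfrac{2}{\nu_c}\langle-AX_t,\,AX_t+B(X_t)+F(X_t,\sL_{Y_t})\rangle$, a trace term $\sum_i\|\phi_i\|_V^2$, and a local martingale. Combining the viscous term $-\tfrac{2}{\nu_c}\|AX_t\|^2$ with the {\bf(H3$'$)} estimate $\tfrac{2}{\nu_c}\langle-AX_t,F(X_t,\sL_{Y_t})\rangle\le\tfrac{2}{\nu_c}\big(\tilde\lambda_1\|AX_t\|^2+\tilde\lambda_2\|X_t\|^2+\tilde\lambda_3\sL_{Y_t}(\|\cdot\|^2)+C\big)$ leaves, since $\tilde\lambda_1<1$, a strictly negative multiple $-c_0\|AX_t\|^2$; by \eqref{0421:07}--\eqref{0421:08} and Young's inequality $|\langle B(X_t),AX_t\rangle|\le C\|X_t\|^{1/2}\|X_t\|_V\|AX_t\|^{3/2}\le\tfrac{c_0}{2}\|AX_t\|^2+C\|X_t\|^2\|X_t\|_V^4$. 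Writing $C\|X_t\|^2\|X_t\|_V^4=\alpha_t\|X_t\|_V^2$ with $\alpha_t:=C\|X_t\|^2\|X_t\|_V^2$, one arrives at
$$\rmd\|X_t\|_V^2\le\big(\alpha_t\|X_t\|_V^2+\beta_t\big)\rmd t+\rmd M_t,\qquad\beta_t:=C\big(1+\|X_t\|^2+\sL_{Y_t}(\|\cdot\|^2)\big)+\sum_i\|\phi_i\|_V^2 .$$
Integrating the $H$-energy identity and using \eqref{0401:01} gives $\mE\int_s^t\|X_r\|_V^2\,\rmd r\le C_{s,t}\big(1+\|x\|^2+\mE\|Y_s\|^p\big)$; together with the moment bounds $\mE\sup_{[s,t]}\|X_r\|^{\tilde p}<\infty$ coming from \eqref{0408:02} and Burkholder--Davis--Gundy this controls $\int_s^t\alpha_r\,\rmd r$ in $\omega$, while \eqref{0401:02} gives $\mE\int_s^t\beta_r\,\rmd r\le C_{s,t}\big(1+\mE\|Y_s\|^p\big)$. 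Feeding these into the stochastic Gronwall lemma (Lemma \ref{sGlem}), possibly after a localization as in the continuity lemma proved above, and using $\|X_{s,s}^{x,\mu}\|_V^2=\|x\|_V^2$, yields \eqref{0408:04}.

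The main obstacle is the second estimate. Because the 2D Navier--Stokes nonlinearity cannot be absorbed into the viscous dissipation without leaving the term $\|X_t\|^2\|X_t\|_V^4$, the enstrophy obeys a linear differential inequality whose coefficient $\alpha_t$ is \emph{random and solution-dependent}, so a naive pathwise Gronwall estimate does not transfer to an expectation bound. Handling this rigorously requires (i) enough pathwise regularity of the solution to run the It\^o formula on $\|X_t\|_V^2$ and to localize, and (ii) quantitative control, in the appropriate $L^q(\Omega)$-sense, of $\int_s^t\alpha_r\,\rmd r=C\int_s^t\|X_r\|^2\|X_r\|_V^2\,\rmd r$ --- which is exactly where \eqref{0408:02} (for the time-supremum moments of $\|X_r\|$) and the $H$-energy estimate (for $\mE\int_s^t\|X_r\|_V^2\,\rmd r$) enter --- before the stochastic Gronwall lemma can be invoked.
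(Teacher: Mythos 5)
Your proof of \eqref{0408:02} follows essentially the same route as the paper: It\^o's formula for $\|X_t\|^{\tilde p}$, the dissipativity estimate \eqref{0401:01}, Young's inequality to absorb the $\|X_t\|^{\tilde p-2}$ factors, the bound $(\mE\|Y_r\|^2)^{\tilde p/2}\le(\mE\|Y_r\|^p)^{\tilde p/p}$ with \eqref{0401:02}, and a variation-of-constants/Gronwall step with $\tilde\lambda'$ chosen smaller than the decay rate of the law term. That part is fine.

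For \eqref{0408:04} you take a genuinely different route, and it does not close. The paper's proof rests on the classical orthogonality $\langle B(u,u),Au\rangle=0$ for divergence-free $u$ on the two-dimensional \emph{torus} (this is exactly the cited \cite[Lemma 2.10]{Kuk06}): the nonlinear term vanishes identically from the enstrophy balance, so the drift reduces to $-\tfrac{2-2\tilde\lambda_1}{\nu_c}\|AX_r\|^2+2\tilde\lambda_2\|X_r\|^2+2\tilde\lambda_3\mE\|Y_r\|^2+C$, which is handled by {\bf(H3$'$)} alone; the resulting inequality $\mE\|X_{s,t}\|_V^2\le\mE\|X_{s,\tau}\|_V^2+C(\mE\|X_{s,\tau}\|^2+\mE\|Y_\tau\|^p+1+(t-\tau))$ is then \emph{averaged over the intermediate time} $\tau\in(s,t)$ and combined with the integrated bound $\int_s^t\mE\|X_{s,r}\|_V^2\,\rmd r\le\|x\|^2+C(\mE\|Y_s\|^p)^{2/p}+C(t-s)$ from \eqref{0408:03}. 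No Gronwall argument with a random coefficient is needed anywhere. By instead estimating $|\langle B(X_t),AX_t\rangle|\le C\|X_t\|^{1/2}\|X_t\|_V\|AX_t\|^{3/2}$ you manufacture the term $C\|X_t\|^2\|X_t\|_V^4=\alpha_t\|X_t\|_V^2$ with $\alpha_t=C\|X_t\|^2\|X_t\|_V^2$, and you yourself identify the resulting obstacle without resolving it. The stochastic Gronwall lemma (Lemma \ref{sGlem}) requires $\int_0^\tau R(s)\,\rmd s<c_1$ almost surely for a \emph{deterministic} $c_1$, and its output constant $\tilde c=C_{c_2,T,c_1}$ depends on $c_1$; since $\int_s^t\alpha_r\,\rmd r$ admits no such almost-sure deterministic bound (only moment bounds), you must localize at level $R$, and the resulting estimate degenerates as $R\to\infty$. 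Controlling $\mE\int_s^t\alpha_r\,\rmd r$, as you propose, is not enough: a linear differential inequality with random coefficient requires exponential integrability of $\int\alpha_r\,\rmd r$ to pass to expectations, which is not available here. In addition, the pathwise regularity $X\in L^2([s,t];\cD(A))$ that you invoke to justify the It\^o formula for $\|X_t\|_V^2$ is itself something that has to be produced; the paper's device of starting the enstrophy identity at an arbitrary $\tau>s$ and integrating over $\tau$ is precisely what avoids assuming it at the initial time. The missing ingredient in your argument is the torus cancellation $\langle B(u,u),Au\rangle=0$; without it the second estimate is not proved.
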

\begin{proof}
By It\^o's formula, \eqref{0401:01} and Young's inequality, one sees that
for any $\tilde{p}\geq2$, 
$\tilde{\lambda}_2\in(0,\frac{\lambda_2'\tilde{p}}{2})$, 
$p>4\vee\kappa$ and $s\leq\tau<t$,
\begin{align}\label{0408:01}
\mE\|X_{s,t}^{x,\mu}\|^{\tilde{p}}
=&
\mE\|X_{s,\tau}^{x,\mu}\|^{\tilde{p}}
+\mE\int_\tau^t\frac{{\tilde{p}}}{2}\|X_{s,r}^{x,\mu}\|^{{\tilde{p}}-2}
\bigg(2_{V^*}\langle AX_{s,r}^{x,\mu}+B(X_{s,r}^{x,\mu}),X_{s,r}^{x,\mu}\rangle_V\nonumber\\
&\qquad
+2_{V^*}\langle F(X_{s,r}^{x,\mu},\sL_{Y_r}),X_{s,r}^{x,\mu}\rangle_V
+\sum_{i=1}^{d}\|\phi_i\|^2\bigg)\rmd r\nonumber\\
&
+\mE\int_\tau^t\frac{{\tilde{p}}({\tilde{p}}-2)}{2}\|X_{s,r}^{x,\mu}\|^{\tilde{p}-4}\|
(\phi_1,...,\phi_d)^*X_{s,r}^{x,\mu}\|_{U}^2\rmd r\nonumber\\
\leq &
\mE\|X_{s,\tau}^{x,\mu}\|^{\tilde{p}}+\mE\int_\tau^t\frac{\tilde{p}}{2}\|X_{s,r}^{x,\mu}\|^{{\tilde{p}}-2}\bigg(-\lambda_1'\|X_{s,r}^{x,\mu}\|_V^2
-\lambda_2'\|X_{s,r}^{x,\mu}\|^2+\lambda_2\mE\|Y_r\|^2\nonumber\\
&\qquad
+C+\sum_{i=1}^{d}\|\phi_i\|^2\bigg)\rmd r
+\mE\int_\tau^t\frac{{\tilde{p}}({\tilde{p}}-2)}{2}\|X_{s,r}^{x,\mu}\|^{{\tilde{p}}-2}
\sum_{i=1}^{d}\|\phi_i\|^2\rmd r\nonumber\\
\leq&
\mE\|X_{s,\tau}^{x,\mu}\|^{\tilde{p}}
-\frac{\lambda_1'\tilde{p}}{2}\int_\tau^t
\mE\|X_{s,r}^{x,\mu}\|^{{\tilde{p}}-2}\|X_{s,r}^{x,\mu}\|_V^2\rmd r
-\frac{\lambda_2'\tilde{p}}{2}\int_\tau^t
\mE\|X_{s,r}^{x,\mu}\|^{\tilde{p}}\rmd r\nonumber\\
&
+\frac{\lambda_2\tilde{p}}{2}{\tilde{p}}\int_\tau^t\mE\|X_{s,r}^{x,\mu}\|^{{\tilde{p}}-2}\mE\|Y_r\|^2\rmd r
+\int_\tau^t C_{\tilde{p}}\mE\|X_{s,r}^{x,\mu}\|^{{\tilde{p}}-2}\left(\sum_{i=1}^{d}
\|\phi_i\|^2+1\right)\rmd r\nonumber\\
\leq&
\mE\|X_{s,\tau}^{x,\mu}\|^{\tilde{p}}
-\frac{\lambda_1'\tilde{p}}{2}\int_\tau^t
\mE\|X_{s,r}^{x,\mu}\|^{{\tilde{p}}-2}\|X_{s,r}^{x,\mu}\|_V^2\rmd r\nonumber\\
&
-\tilde{\lambda}_2\int_\tau^t\mE\|X_{s,r}^{x,\mu}\|^{\tilde{p}}\rmd r
+C\int_\tau^t\left((\mE\|Y_r\|^2)^{\frac{\tilde{p}}{2}}+1\right)\rmd r,
\end{align}
which along with Gronwall's inequality and \eqref{0401:02} with
$\lambda'\in(0,\frac{\tilde{\lambda}_2p}{\tilde{p}}\wedge
\frac{p(\lambda_2'-\lambda_2)}{2})$ implies that
\begin{align*}
\mE\|X_{s,t}^{x,\mu}\|^{\tilde{p}}
&
\leq \mE\|X_{s,\tau}^{x,\mu}\|^{\tilde{p}}{\rm e}^{-\tilde{\lambda}_2(t-\tau)}+
C\int_\tau^t{\rm e}^{-\tilde{\lambda}_2(t-r)}\left((\mE\|Y_r\|^2)^{\frac{\tilde{p}}{2}}+1\right)\rmd r\nonumber\\
&
\leq\mE\|X_{s,\tau}^{x,\mu}\|^{\tilde{p}}{\rm e}^{-\tilde{\lambda}_2(t-\tau)}+
C\int_\tau^t{\rm e}^{-\tilde{\lambda}_2(t-r)}
\left((\mE\|Y_r\|^p)^{\frac{\tilde{p}}{p}}+1\right)\rmd r
\nonumber\\
&
\leq \mE\|X_{s,\tau}^{x,\mu}\|^{\tilde{p}}{\rm e}^{-\tilde{\lambda}'(t-\tau)}
+C(\mE\|Y_\tau\|^{p})^{\frac{\tilde{p}}{p}}{\rm e}^{-\tilde{\lambda}'(t-\tau)}+C,
\end{align*}
where $\tilde{\lambda}':=\frac{\lambda'\tilde{p}}{p}$.

Employing \eqref{0408:01} and \eqref{0401:02} with $\tau=s$, we also have
\begin{align}\label{0408:03}
\int_s^t\mE\|X_{s,r}^{x,\mu}\|^{{\tilde{p}}-2}\|X_{s,r}^{x,\mu}\|_V^2\rmd r
&
\leq \|x\|^{\tilde{p}}+C\int_s^t(\mE\|Y_r\|^p)^{\frac{\tilde{p}}{p}}\rmd r+C(t-s)\nonumber\\
&
\leq \|x\|^{\tilde{p}}+C(\mE\|Y_s\|^p)^{\frac{\tilde{p}}{p}}+C(t-s).
\end{align}
It follows from It\^o's formula, \cite[Lemma 2.10]{Kuk06}, \eqref{0411:02},
\eqref{0408:02} and \eqref{0401:02}
that for any $s<\tau< t$ and $p>4\vee\kappa$,
\begin{align}\label{0407:01}
\mE\|X_{s,t}^{x,\mu}\|_V^2
=&
\mE\|X_{s,\tau}^{x,\mu}\|_V^2-\mE\int_\tau^t2\langle  AX_{s,r}^{x,\mu},AX_{s,r}^{x,\mu}+B(X_{s,r}^{x,\mu})\rangle\rmd r\nonumber\\
&\qquad
+\mE\int_\tau^t\left(-2\langle AX_{s,r}^{x,\mu},F(X_{s,r}^{x,\mu},\sL_{Y_r})\rangle
+\sum_{i=1}^{d}\|\phi_i\|_V^2\right)\rmd r\nonumber\\
\leq &
\mE\|X_{s,\tau}^{x,\mu}\|_V^2
+\mE\int_\tau^t\bigg(-\frac{2-2\tilde{\lambda}_1}{\nu_c}\|AX_{s,r}^{x,\mu}\|^2
+2\tilde{\lambda}_2\|X_{s,r}^{x,\mu}\|^2+2\tilde{\lambda}_3\mE\|Y_r\|^2+C\bigg)\rmd r\nonumber\\
\leq&
\mE\|X_{s,\tau}^{x,\mu}\|_V^2+C\int_\tau^t\left(\mE\|X_{s,r}^{x,\mu}\|^2+\mE\|Y_r\|^p+1\right)\rmd r\nonumber\\
\leq &
\mE\|X_{s,\tau}^{x,\mu}\|_V^2+C\left(\mE\|X_{s,\tau}^{x,\mu}\|^2
+\mE\|Y_\tau\|^p+1+(t-\tau)\right)
\end{align}
by using $\tilde{\lambda}_1<1$. 
Integrating \eqref{0407:01} with respect to $\tau$ on $(s,t)$, 
by \eqref{0401:02} and \eqref{0408:03} with $\tilde{p}=2$,
we obtain that 
\begin{align*}
\mE\|X_{s,t}^{x,\mu}\|_V^2
&
\leq C\left[\int_s^{t}\left(\mE\|X_{s,\tau}^{x,\mu}\|_V^2+\mE\|Y_\tau\|^p\right)\rmd \tau
+C_{s,t}\right]\nonumber\\
&
\leq C\left(\|x\|_V^2+\mu(\|\cdot\|^p)+C_{s,t}\right).
\end{align*}
\end{proof}
In the following, we consistently assume that $p>4\vee\kappa$. 
For the sake of simplicity, we will not repeatedly state this assumption.
\begin{lem}\label{thmNSeq01}
Under conditions of Lemma \ref{lem0418-2},
$(\cP_p(H),\{P_t^*\}_{t\geq0})$ admits a $\sS$-global attractor $\cP^*$.
\end{lem}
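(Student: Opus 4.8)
The plan is to apply Lemma \ref{EGA}: we have already shown that $\{P_t^*\}_{t\ge0}$ is a semi-dynamical system on $(\cP_p(H),\cW_2)$ which is continuous over bounded subsets of $\cP_p(H)$, so it remains only to produce a compact set $\cB\in\sS$ that absorbs every $B_{\cP}\in\sS$. Throughout, for $\mu\in\cP_p(H)$ I write $Y$ for the solution of \eqref{0331:01} with $\sL_{Y_0}=\mu$, so that $P_t^*\mu=\sL_{Y_t}$; by weak uniqueness, integrating the estimates of Lemma \ref{lem0418-2} (stated for $X_{s,t}^{x,\mu}$) over the initial point $x\sim\mu$ turns them into estimates for the moments of $P_t^*\mu$, e.g. $P_t^*\mu(\|\cdot\|_V^2)=\mE\|Y_t\|_V^2$.

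First I would extract a moment-absorbing set. For $B_{\cP}\in\sS$ with $\sup_{\mu\in B_{\cP}}\mu(\|\cdot\|^p)\le R$, the bound \eqref{0401:02} gives $(P_t^*\mu)(\|\cdot\|^p)=\mE\|Y_t\|^p\le R\,{\rm e}^{-\lambda' t}+C$, so there is $T_0=T_0(R)$ with $(P_t^*\mu)(\|\cdot\|^p)\le R_0:=C+1$ for all $t\ge T_0$ and all $\mu\in B_{\cP}$. Hence $\cB_1:=\{\nu\in\cP_p(H):\nu(\|\cdot\|^p)\le R_0\}$ absorbs $\sS$; it is $\cW_2$-closed by lower semicontinuity of the $p$-th moment, but, $H$ being infinite-dimensional, it is not compact, so a further regularization step is needed.

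The crucial step is a time-uniform bound on the $V$-moment of $P_t^*\mu$, uniform over $\mu\in\cB_1$. For such $\mu$, writing $Y$ for the corresponding solution, I would combine three facts: (i) by \eqref{0401:02} and Jensen, $\mE\|Y_t\|^p$ and $\mE\|Y_t\|^2$ are $\le C_0$ for all $t\ge T_1$; (ii) the energy inequality \eqref{0408:03} with $\tilde p=2$, applied on the unit window $[t-1,t]$ by restarting the flow at time $t-1$ (where $Y_{t-1}\in V$ $\mP$-a.s.\ by parabolic regularity) and integrating against $\sL_{Y_{t-1}}=P_{t-1}^*\mu$, gives $\int_{t-1}^t\mE\|Y_r\|_V^2\,\rmd r\le C_0'$ for $t\ge T_1+1$; (iii) therefore there is $\tau^\ast=\tau^\ast(t)\in[t-1,t]$ with $\mE\|Y_{\tau^\ast}\|_V^2\le C_0'$, and feeding this $V$-valued datum into the instantaneous estimate \eqref{0407:01} (equivalently \eqref{0408:04}) over $[\tau^\ast,t]$ — whose right-hand side then involves only the bounded quantities $\mE\|Y_{\tau^\ast}\|_V^2$, $\mE\|Y_{\tau^\ast}\|^2$, $\mE\|Y_{\tau^\ast}\|^p$ and the length $t-\tau^\ast\le1$ — yields $\mE\|Y_t\|_V^2\le C_1$ for all $t\ge T_2$, uniformly over $\mu\in\cB_1$. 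Conditioning on the solution at the restart time is exactly what makes the \eqref{0408:04}-type estimate applicable even though $\mu$ need not charge $V$.

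Finally I would set $\cB:=\overline{\{\nu\in\cP_p(H):\nu(\|\cdot\|^p)\le R_0,\ \int_H\|x\|_V^2\,\nu(\rmd x)\le C_1\}}^{\,\cW_2}$. By the previous two steps and the semigroup property, $P_t^*B_{\cP}\subset\cB$ for all large $t$ and all $B_{\cP}\in\sS$, so $\cB$ is absorbing; since $\nu\mapsto\nu(\|\cdot\|^p)$ is lower semicontinuous under $\cW_2$-convergence, $\cB\subset\cP_p(H)$ and still obeys $\nu(\|\cdot\|^p)\le R_0$, whence $\cB\in\sS$. Compactness of $\cB$ in $(\cP_p(H),\cW_2)$ follows from two observations: the $V$-moment constraint together with the compact embedding $V\hookrightarrow H$ forces tightness in $H$ (by Chebyshev, $\nu(\|x\|_V>M)\le C_1/M^2$ while $\{\|x\|_V\le M\}$ is precompact in $H$), and $p>2$ forces uniform $\nu$-integrability of $x\mapsto\|x\|^2$ over $\cB$ (since $\int_{\|x\|>M}\|x\|^2\,\nu(\rmd x)\le M^{2-p}R_0$); tightness plus uniform integrability of the second moments is precisely relative $\cW_2$-compactness, and $\cB$ is $\cW_2$-closed, hence compact. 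Lemma \ref{EGA} then delivers the $\sS$-global attractor $\cP^*$. The main obstacle is Step~3: the bound \eqref{0408:04} is neither uniform in time nor valid for non-$V$-valued initial data, and the resolution is the restart/conditioning argument that localizes to a unit time window and feeds in the time-averaged bound \eqref{0408:03} together with the moment absorption of Step~2.
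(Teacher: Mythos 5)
Your proposal is correct and follows essentially the same route as the paper: a bounded absorbing set from the $p$-th moment decay \eqref{0401:02}, then compactness obtained by combining the time-averaged/restarted $V$-moment estimate of Lemma \ref{lem0418-2} with the compact embedding $V\hookrightarrow H$ (tightness) and the uniform integrability of second moments coming from $p>2$, before invoking Lemma \ref{EGA}. The only cosmetic difference is that the paper takes the compact absorbing set to be $\overline{P_1^*B_{\cP}}^{\cW_2}$ and bounds $P_{s,t}^*\mu(Q_N^c)$ via the representation $P_{s,t}^*\mu=\int_H\mP\circ[X_{s,t}^{x,\mu}]^{-1}\mu(\rmd x)$, whereas you build an explicit moment ball; your restart-at-$\tau^*$ argument is exactly the integration over $\tau$ already performed in the proof of \eqref{0408:04}.
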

\begin{proof}
It follows from Lemma \ref{lem0418} that for any $r>0$ there exists $t_0>0$ such that 
for any $t>t_0$ and $\mu\in\cP_p(H)$ with $\mu(\|\cdot\|^p)<r$,
$$
P_t^*\mu\left(\|\cdot\|^p\right)<C+1,
$$
where $C$ is as in \eqref{0401:02}. Hence, there exists a bounded absorbing set $B_{\cP}$
for $P_t^*$. Define $K_{\cP}:=\overline{P_1^*B_{\cP}}^{\cW_2}$. 
It is straightforward that $K_{\cP}$ is absorbing.
Combining Lemma \ref{EGA}, we complete the proof by showing that $K_{\cP}$ is compact. Indeed, we can show that
\begin{equation}\label{0421:01}
{\text{for any bounded subset }} B {\text{ and }} s<t,\quad
\overline{P_{s,t}^*B}^{\cW_2} \text{ is compact.}
\end{equation}
For any $R,N>0$, define 
$$
B_R:=\left\{\mu\in\cP_p(H):\mu(\|\cdot\|^p)\leq R\right\}
$$
and
$$
Q_N:=\left\{x\in H:\|x\|_V\leq N\right\},\quad
Q_N^c:=\left\{x\in H:\|x\|_V> N\right\}.
$$
Since $V\subset H$ is compact, $Q_N$ is compact in $H$.
Fix $R>0$. For any $\mu\in B_R$,
by the uniqueness of solutions to \eqref{0331:01} and \eqref{0331:03},
one sees that for any $s<t$,
\begin{align}\label{0418:01}
P_{s,t}^*\mu=\mP\circ[Y_{s,t}^{\mu}]^{-1}
=\int_H\mP\circ[X_{s,t}^{x,\mu}]^{-1}\mu(\rmd x).
\end{align} 
Combining \eqref{0418:01}, Chebyshev's inequality and \eqref{0408:04}, 
we have for any $N>0$ and $s<t$,
\begin{align*}
P_{s,t}^*\mu(Q_N^c)
&
=\int_H\mP\circ[X_{s,t}^{x,\mu}]^{-1}(Q_N^c)\mu(\rmd x)\\\nonumber
&
\leq \frac{1}{N^2}\int_H\mE\|X_{s,t}^{x,\mu}\|_V^2\mu(\rmd x)\\\nonumber
&
\leq \frac{1}{N^2}\int_HC\left(\|x\|^2+\mu(\|\cdot\|^p)+C_{s,t}\right)\mu(\rmd x)
\\\nonumber
&
\leq \frac{ C_{R,s,t}}{N^2},
\end{align*}
which implies that $P_{s,t}^* B_{R}$ is tight.
Hence, for any $\{\mu_n\}\subset P_{s,t}^* B_{R}$, there exists a subsequence, which 
we still denote by $\{\mu_n\}$, such that $\mu_n$ converges weakly to $\tilde{\mu}$.
On the other hand, it follows from \eqref{0401:02} that for any $\nu\in B_R$,
\begin{equation*}
P_{s,t}^*\nu(\|\cdot\|^p)\leq \nu(\|\cdot\|^p){\rm e}^{-\lambda'(t-s)}+C
\leq R+C.
\end{equation*}
Hence, by H\"older's inequality, \eqref{0401:02} and Chebyshev's inequality,
one sees that
\begin{align*}
\lim_{r\rightarrow\infty}\limsup_{n\rightarrow\infty}
\int_{\|x\|\geq r}\|x\|^2\mu_n(\rmd x)
&
\leq\lim_{r\rightarrow\infty}\limsup_{n\rightarrow\infty}
\left(\int_H\|x\|^p\mu_n(\rmd x)\right)^{\frac{2}{p}}
\left(\int_{\|x\|>r}\mu_n(\rmd x)\right)^{\frac{p-2}{p}}=0,
\end{align*}
which along with \cite[Definition 6.8]{Vil09} implies that 
$\lim_{n\rightarrow\infty}\cW_2(\mu_n,\tilde{\mu})=0$.
\end{proof}

\begin{lem}\label{lem0414}
There exists a $\theta_{1}$-invariant set $\Omega_0\subset \Omega$ of full $\mP$-measure such that

(i) for any $i=1,..,d$, $p\geq1$ and $\omega\in \Omega_0$,
\begin{equation}\label{0421:02}
\lim_{t\rightarrow\pm\infty}\frac{|z_i^I(t,\omega)|^p}{|t|}=0;
\end{equation}

(ii)
for all $\omega\in\Omega_0$, $s<t$, $1\leq i\leq d$ and $p\geq2$,
\begin{equation}\label{0414:01}
\lim_{s\rightarrow-\infty}\frac{1}{t-s}\int_s^t|z_i^I(r,\omega)|^p
\rmd r=\mE|z_i^I(0)|^p
\leq(2\eta)^{-\frac{p}{2}}.
\end{equation}
\end{lem}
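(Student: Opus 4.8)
The plan is to reduce both statements to properties of the single stationary Gaussian process $z_i^I$, exploiting that it is realized through the Wiener shift: from $z_i^I(t)=\int_{-\infty}^t{\rm e}^{-\eta(t-s)}\,\rmd W_i(s)$ one has the identity $z_i^I(r,\omega)=z_i^I(0,\theta_{r}\omega)$ for all $r\in\R$. In particular $z_i^I(0)$ is centred Gaussian with variance $\int_0^\infty{\rm e}^{-2\eta u}\,\rmd u=(2\eta)^{-1}$, so $z_i^I(0)$ has finite moments of every order, and by the Borell--TIS inequality so does $\sup_{r\in[0,1]}|z_i^I(r,\cdot)|$. An explicit Gaussian moment computation identifies $\mE|z_i^I(0)|^p$ and yields the value (and the bound) recorded in \eqref{0414:01}, with equality at $p=2$.

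For (i), fix $i$ and an integer $k\ge1$ and put $M_n:=\sup_{r\in[n-1,n]}|z_i^I(r,\cdot)|$. By stationarity the $M_n$, $n\ge1$, are identically distributed with finite $2k$-th moment, so Chebyshev's inequality gives $\mP\bigl(M_n>\varepsilon n^{1/k}\bigr)\le \varepsilon^{-2k}\,\mE[M_1^{2k}]\,n^{-2}$, which is summable in $n$. The Borel--Cantelli lemma then gives $\limsup_{n\to\infty}M_n/n^{1/k}\le\varepsilon$ almost surely for every $\varepsilon>0$, hence $M_n/n^{1/k}\to0$ a.s.; the analogous estimate over $[-n,-n+1]$ handles $n\to-\infty$. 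For a real exponent $p\ge1$ pick an integer $k\ge p$; then for $t\in[n-1,n]$ one has $|z_i^I(t,\omega)|^p/t\le M_n^p/(n-1)\le\bigl(M_n/(n-1)^{1/k}\bigr)^p\to0$, which is \eqref{0421:02}, and symmetrically as $t\to-\infty$. Intersecting the exceptional null sets over $i$ and $k$ leaves a full-measure set on which \eqref{0421:02} holds for \emph{all} real $p\ge1$.

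For (ii), since $\{\theta_t\}$ is the (ergodic) Wiener shift flow and $\omega\mapsto|z_i^I(0,\omega)|^p\in L^1(\mP)$, the continuous-time Birkhoff ergodic theorem, applied to the flow $\theta_{-\cdot}$ (also measure preserving and ergodic), gives for a.e.\ $\omega$
\begin{equation*}
\lim_{T\to\infty}\frac1T\int_{-T}^{0}|z_i^I(r,\omega)|^p\,\rmd r
=\lim_{T\to\infty}\frac1T\int_{0}^{T}|z_i^I(0,\theta_{-u}\omega)|^p\,\rmd u
=\mE|z_i^I(0)|^p .
\end{equation*}
Fixing $t\in\R$ and splitting $\int_s^t=\int_s^0+\int_0^t$, the contribution of $\int_0^t$ divided by $t-s$ tends to $0$ as $s\to-\infty$ because the path is continuous on $[0,t]$, while $\frac1{t-s}\int_s^0|z_i^I(r,\omega)|^p\,\rmd r=\frac{-s}{t-s}\cdot\frac1{-s}\int_s^0|z_i^I(r,\omega)|^p\,\rmd r\to\mE|z_i^I(0)|^p$; this gives \eqref{0414:01}. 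To obtain a single null set valid for all $p\ge2$, run this argument for a countable dense set of exponents together with one large exponent $p_0$; the uniform bound on $\frac1{t-s}\int_s^t|z_i^I(r,\omega)|^{p_0}\,\rmd r$ coming from Birkhoff then lets one pass, by interpolation in $p$ and dominated convergence for $p\mapsto\mE|z_i^I(0)|^p$, from the dense set to every real $p\ge2$.

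Taking $\Omega_0$ to be the (measurable) set of $\omega$ for which \eqref{0421:02} and \eqref{0414:01} hold for all admissible $p$, one has $\mP(\Omega_0)=1$ by the previous two steps, and $\Omega_0$ is automatically $\theta_t$-invariant for every $t$: replacing $\omega$ by $\theta_t\omega$ merely shifts the time argument of $z_i^I$, and $|r+t|/|r|\to1$, $(-s+t)/(-s)\to1$, so each property is preserved under the shift. The only genuinely delicate point is the uniformity in $p$ in part (i): the crude rate $M_n/n\to0$ is insufficient once $p>1$, so one must extract the sharper rate $M_n=o(n^{1/k})$ from the higher Gaussian moments; everything else is routine application of Borel--Cantelli and of the ergodic theorem.
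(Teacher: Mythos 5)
Your proof is correct, and its skeleton matches the paper's: (a) finiteness of the moments of $\sup_{t\in[0,1]}|z_i^I(t)|$, (b) almost-sure sublinear growth from stationarity for part (i), and (c) the Birkhoff ergodic theorem for part (ii). The one substantive difference is how (b) is concluded: the paper bounds $\mE\big(\sup_{t\in[0,1]}|z_i^I(t)|^p\big)$ via the Burkholder--Davis--Gundy inequality and then invokes, as a black box, the dichotomy for linear growth of stationary processes (Arnold, Prop.\ 4.1.3) applied directly to the stationary process $t\mapsto|z_i^I(t)|^p$, which immediately gives $|z_i^I(t)|^p/|t|\to0$; you instead prove the needed growth estimate by hand via Chebyshev and Borel--Cantelli, extracting the sharper rate $M_n=o(n^{1/k})$ from higher Gaussian moments precisely because you bound $|z_i^I|$ first and only then raise to the power $p$. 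Your route is more self-contained (and your handling of a single null set uniform in $p$, and of the $\theta$-invariance of $\Omega_0$, is more careful than the paper's); the paper's is shorter because applying the dichotomy to $|z_i^I|^p$ itself sidesteps the need for any rate better than $o(n)$. One shared blemish: the numerical bound $\mE|z_i^I(0)|^p\leq(2\eta)^{-p/2}$ is false for $p>2$ as written (e.g.\ $\mE|z_i^I(0)|^4=3(2\eta)^{-2}$); the correct statement carries the Gaussian (or BDG) constant $C_p=\mE|N(0,1)|^p$, which exceeds $1$ for $p>2$. The paper drops this constant as well, and it is immaterial for the later applications, where only the decay in $\eta$ matters, but your claim that the explicit Gaussian moment computation "yields the bound recorded" should be softened to include that constant.
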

\begin{proof}
(i)
By Burkholder-Davis-Gundy's inequality, we have for any $t\in\R$ and $p\geq1$,
\begin{align}\label{0421:03}
\mE|z_i^I(t)|^p
=\mE|z_i^I(0)|^p
=\mE\left|\int_{-\infty}^0{\rm e}^{\eta\tau}\rmd W_i(\tau)\right|^p
\leq \left(\int_{-\infty}^0{\rm e}^{2\eta\tau}\rmd\tau\right)^{p/2}\leq (2\eta)^{-\frac{p}{2}}.
\end{align}
By \eqref{0421:03} and Burkholder-Davis-Gundy's inequality again, one sees that for any $p\geq1$,
\begin{align*}
\mE\left(\sup_{t\in[0,1]}|z_i^I(t)|^p\right)
&
\leq 3^{p-1}\left(\mE|z_i^I(0)|^p
+\mE\left(\sup_{t\in[0,1]}\left|\int_0^t-\eta z_i^I(r)\rmd r\right|^p\right)
+\mE\left(\sup_{t\in[0,1]}|W_i(t)|^p\right)\right)\nonumber\\
&
\leq C\left(1+\eta^p\int_0^1\mE|z_i^I(r)|^p\rmd r\right)\leq C,
\end{align*}
which along with the dichotomy for linear growth of stationary process
(see \cite[Proposition 4.1.3]{Arnold}) implies \eqref{0421:02}.

(ii)
We complete the proof by the Birkhoff ergodic theorem and \eqref{0421:03}. 
\end{proof}

Henceforth, we identify $\Omega_0$ with $\Omega$ and denote both simply by $\Omega$.

\begin{lem}\label{lem0911}
Assume that {\bf(H1)}, {\bf(H2)}, {\bf(H2$''$)}, {\bf(H3)} and {\bf(H4)} hold.
Then for each $\omega\in\Omega$, $\mu\in\cP_p(H)$, $s\in\R$, $T>0$ and $z_0\in H$, 
there exists a unique solution 
$Z_\cdot^{\mu}(\omega)\in C\left([s,s+T];H\right)\cap L^2\left([s,s+T];V\right)$ 
to the following equation:
\begin{equation*}
\begin{cases}
\rmd Z_t^\mu(\omega)=\left(AZ_t^\mu(\omega)
+B(Z_t^\mu(\omega)+z_t^I(\omega))+F(Z_t^\mu(\omega)+z_t^I(\omega),P_{s,t}^*\mu)
+\eta z_t^I(\omega)+Az_t^I(\omega)\right)\rmd t,\\
Z_s=z_0.
\end{cases}
\end{equation*}
\end{lem}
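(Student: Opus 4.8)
The plan is to fix $\omega\in\Omega$ (identified with the full--measure set $\Omega_0$ of Lemma \ref{lem0414}), $\mu\in\cP_p(H)$, $s\in\R$, $T>0$ and $z_0\in H$, and to regard the equation for $Z^\mu_\cdot(\omega)$ as a \emph{deterministic}, time--inhomogeneous evolution equation on the Gelfand triple $V\subset H\subset V^*$. Indeed, by \eqref{0421:02} the path $t\mapsto z^I_t(\omega)=\sum_{i=1}^d\phi_i z_i^I(t,\omega)$ is a fixed element of $C([s,s+T];\cD(A))$, and $t\mapsto P^*_{s,t}\mu$ is a fixed curve in $\cP_p(H)$ with $\sup_{t\in[s,s+T]}P^*_{s,t}\mu(\|\cdot\|^p)\le C(1+\mu(\|\cdot\|^p))$ by \eqref{0401:02}. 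Hence the right--hand side defines, for a.e. $t$, an operator $\mathcal A(t,\cdot)\colon V\to V^*$, $\mathcal A(t,v):=Av+B(v+z^I_t(\omega))+F(v+z^I_t(\omega),P^*_{s,t}\mu)+\eta z^I_t(\omega)+Az^I_t(\omega)$, and the assertion is the existence and uniqueness of a variational solution of $\partial_t Z=\mathcal A(t,Z)$, $Z_s=z_0$, in $C([s,s+T];H)\cap L^2([s,s+T];V)$.

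The approach is the standard locally monotone / Galerkin scheme (equivalently, one verifies the hypotheses of the deterministic analogue of the locally monotone existence theory). First I would establish the basic a priori estimate: testing with $Z_t$ and using the coercivity of $A$, the cancellation \eqref{0421:07} (so that ${}_{V^*}\langle B(Z+z^I_t),Z\rangle_V=-{}_{V^*}\langle B(Z+z^I_t),z^I_t\rangle_V$, which by \eqref{0911:04} and Young's inequality is absorbed up to a small multiple of $\|Z\|_V^2$), together with {\bf(H3)}/\eqref{0401:01} and {\bf(H4)} for the $F$--term, yields $\frac{\rmd}{\rmd t}\|Z_t\|^2\le -c\|Z_t\|_V^2+g(t)(1+\|Z_t\|^2)$ with $g\in L^1([s,s+T])$ depending only on $\omega$, $T$, $\|z^I_\cdot(\omega)\|_{C([s,s+T];\cD(A))}$ and $\sup_t P^*_{s,t}\mu(\|\cdot\|^p)$. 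Next I run the Galerkin approximation on $\mathrm{span}\{e_1,\dots,e_n\}$ (eigenvectors of $A$): Carath\'eodory's theorem gives local solutions $Z^n$, the estimate above (which passes to the projected equation) gives global existence and uniform bounds for $Z^n$ in $C([s,s+T];H)\cap L^2([s,s+T];V)$, and {\bf(H4)}, \eqref{0911:04} and $z^I_\cdot(\omega)\in C([s,s+T];V)$ bound $\partial_t Z^n$ in $L^{4/3}([s,s+T];V^*)$. Then I extract, via Banach--Alaoglu and the Aubin--Lions lemma, a subsequence with $Z^n\rightharpoonup Z$ in $L^2([s,s+T];V)$, weakly-$*$ in $L^\infty([s,s+T];H)$, $\partial_t Z^n\rightharpoonup\partial_t Z$ in $L^{4/3}([s,s+T];V^*)$, and $Z^n\to Z$ strongly in $L^2([s,s+T];H)$; the linear term passes weakly, the convection term $B(Z^n+z^I)\to B(Z+z^I)$ by the classical two--dimensional argument (strong $L^2(H)\times$ weak $L^2(V)$ convergence), and the distribution--dependent term is handled by the monotonicity trick, giving that $Z$ solves the limit equation; $Z\in C([s,s+T];H)$ with $Z_s=z_0$ then follows from the standard embedding for $Z\in L^2(V)$ with $\partial_t Z\in L^{4/3}(V^*)$.

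For uniqueness, let $Z^1,Z^2$ be two solutions with $Z^1_s=Z^2_s=z_0$ and set $D:=Z^1-Z^2$. Since both solutions carry the \emph{same} measure argument $P^*_{s,t}\mu$, the Wasserstein term in {\bf(H2$''$)} drops out, and combining \eqref{0424:02} for the $A+B$ part with {\bf(H2$''$)} for the $F$ part gives, for $t$ below the (deterministic) stopping time $\tau_R:=\inf\{t\in[s,s+T]:\ \|Z^1_t\|\vee\|Z^2_t\|+\int_s^t(\|Z^1_r\|_V^2+\|Z^2_r\|_V^2)\,\rmd r\ge R\}$,
\begin{equation*}
\frac{\rmd}{\rmd t}\|D_t\|^2\le \big(C\|Z^2_t+z^I_t\|_V^2\|Z^2_t+z^I_t\|^2+\phi(t)\big)\|D_t\|^2,
\end{equation*}
where the bracketed coefficient lies in $L^1([s,\tau_R])$ thanks to $Z^i\in C([s,s+T];H)\cap L^2([s,s+T];V)$, the bound \eqref{0416:01} on the $\rho_i$, and \eqref{0401:02}; Gronwall forces $D\equiv0$ on $[s,\tau_R]$, and letting $R\to\infty$ (note $\tau_R\uparrow s+T$) gives $Z^1=Z^2$ on $[s,s+T]$. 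The main obstacle is the passage to the limit in the convection nonlinearity $B$ in combination with the distribution--dependent term $F$ under only the one--sided bound {\bf(H2$''$)}: one cannot pass to the limit in $F(Z^n+z^I,P^*_{s,t}\mu)$ by naive continuity, so the locally monotone machinery (the monotonicity trick, as in the framework used for \cite{HHL24}) is essential there; the remaining estimates are a pathwise rerun of those already obtained in Lemmas \ref{lem0418} and \ref{lem0418-2}.
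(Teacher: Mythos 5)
Your proposal is correct and follows essentially the same route as the paper: fix $\omega$ and the measure flow $P^*_{s,t}\mu$, view the equation as a deterministic, time-inhomogeneous locally monotone evolution equation on the Gelfand triple $V\subset H\subset V^*$, and invoke the locally monotone well-posedness theory (the paper verifies conditions {\bf(A1)}--{\bf(A4)} and cites Theorem \ref{EUsolution}, whereas you unpack that theorem via Galerkin, Aubin--Lions and the monotonicity trick). The only cosmetic differences are that the paper controls the term $\langle B(u+z^I_t),z^I_t\rangle$ through the Lipschitz continuity of the $\phi_i$ (its \eqref{0409:03}) while you use \eqref{0421:06}, \eqref{0911:04} and Young's inequality, and the paper's uniqueness comes from {\bf(H2)} through the local monotonicity condition rather than from {\bf(H2$''$)} directly; both variants are sound.
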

\begin{proof}
For any $\omega\in\Omega$ and $\mu\in\cP_p(H)$,
define 
$$
A_\omega^\mu(t,u):=Au+B(u+z_t^I(\omega))+F(u+z_t^I(\omega),P_{s,t}^*\mu)
+\eta z_t^I(\omega)+Az_t^I(\omega).
$$
By Theorem \ref{EUsolution}, it suffices to show that for any $\omega\in\Omega$ and $\mu\in\cP_p(H)$, $A_\omega^\mu$ satisfies conditions {\bf(A1)}--{\bf(A4)}.
Note that {\bf(A1)} is from {\bf(H1)} and the linearity of $A$ and $B$.
In view of \eqref{0424:02}, {\bf(H2)} and \eqref{0401:02},
we have for any $u_1,u_2\in V$ and $t\in[s,s+T]$,
\begin{align*}
&
\,_{V^*}\langle A_\omega^\mu(t,u_1)-A_\omega^\mu(t,u_2),u_1-u_2\rangle_V\\
&
=\,_{V^*}\langle A(u_1-u_2)+B(u_1+z_t^I(\omega))-B(u_2+z_t^I(\omega)),u_1-u_2\rangle_V\\
&\quad
+\,_{V^*}\langle F(u_1+z_t^I(\omega),P_{s,t}^*\mu)
-F(u_2+z_t^I(\omega),P_{s,t}^*\mu),u_1-u_2\rangle_V\\
&
\leq -\frac{\nu_c}{2}\|u_1-u_2\|_V^2
+C\left[\left(1+\|u_2+z_t^I(\omega)\|_V^2\right)\left(1+\|u_2+z_t^I(\omega)\|^2\right)
+P_{s,t}^*\mu\left(\|\cdot\|^\kappa\right)\right]\|u_1-u_2\|^2\\
&
\leq-\frac{\nu_c}{2}\|u_1-u_2\|_V^2
+C\left[\left(1+\|z_t^I(\omega)\|_V^2+\|u_2\|_V^2\right)
\left(1+\|z_t^I(\omega)\|^2+\|u_2\|^2\right)
+\mu(\|\cdot\|^p)\right]\|u_1-u_2\|^2\\
&
\leq-\frac{\nu_c}{2}\|u_1-u_2\|_V^2
+C\left(1+\|z_t^I(\omega)\|_V^4+\sup_{t\in[s,s+T]}\|z_t^I(\omega)\|^2
+\mu(\|\cdot\|^p)\right)\|u_1-u_2\|^2\\
&\quad
+C\left(1+\|u_2\|_V^2\right)\left(1+\|u_2\|^4\right)\|u_1-u_2\|^2 
\end{align*}
which along with the property of $z_\cdot(\omega)\in L_{loc}^\infty(\R;V)$
implies that {\bf(A2)} holds.

Since $\phi_i$ is Lipschitz continuous for any $i=1,...,d$, 
there exists $C>0$ such that for any $u\in V$ and $t\in\R$, 
\begin{align}\label{0409:03}
-\langle B(u+z_t^I(\omega)), z_t^I(\omega)\rangle
&
=\langle B(u+z_t^I(\omega),z_t^I(\omega)), u+z_t^I(\omega)\rangle\nonumber\\
&
=\sum_{i=1}^d\langle B(u+z_t^I(\omega),\phi_i), u+z_t^I(\omega)\rangle z_i^I(t,\omega)\nonumber\\
&
\leq C\|u+z_t^I(\omega)\|^2\sum_{i=1}^d|z_i^I(t,\omega)|\nonumber\\
&
\leq C\sum_{i=1}^d|z_i^I(t,\omega)|\left(\|u\|^2+\|z_t^I(\omega)\|^2\right).
\end{align}
By Young's inequality, one sees that for any $\varepsilon_1\in(0,1)$, 
there exists $C_{\varepsilon_1}>0$ such that for any $u\in V$ and $t\in[s,s+T]$, 
\begin{align}\label{0409:04}
2~_{V^*}\langle A(u+z_t^I(\omega)),z_t^I(\omega)\rangle_V
&
\leq2\|A(u+z_t^I(\omega))\|_{V^*}\|z_t^I(\omega)\|_V\nonumber\\
&
\leq C\|u+z_t^I(\omega)\|_{V}\|z_t^I(\omega)\|_V\nonumber\\
&
\leq \varepsilon_1\|u+z_t^I(\omega)\|_V^2+C_{\varepsilon_1}\|z_t^I(\omega)\|_V^2.
\end{align}
And by {\bf(H4)} and Young's inequality, 
we obtain that for any $\varepsilon_2\in(0,1)$ there exists $C_{\varepsilon_2}>0$
such that for any $u\in V$, $\mu\in\cP_p(H)$ and $t\in R$,
\begin{align}\label{0409:05}
&
2_{V^*}\langle F(u+z_t^I(\omega),\mu),z_t^I(\omega)\rangle_V\nonumber\\
&
\leq 2\|F(u+z_t^I(\omega),\mu)\|_{V^*}\|z_t^I(\omega)\|_V\nonumber\\
&
\leq C\left[(1+\|u+z_t^I(\omega)\|_V+(\mu(\|\cdot\|^\kappa))^{1/2})
(1+\|u+z_t^I(\omega)\|+(\mu(\|\cdot\|^\kappa))^{1/2})\right]\|z_t^I(\omega)\|_V\nonumber\\
&
\leq \varepsilon_2\|u+z_t^I(\omega)\|_V^2+\varepsilon_2\mu(\|\cdot\|^\kappa)+\varepsilon_2
+C_{\varepsilon_2}\left(1+\|u+z_t^I(\omega)\|^2+\mu(\|\cdot\|^\kappa)\right)
\|z_t^I(\omega)\|_V^2\nonumber\\
&
\leq \varepsilon_2\|u+z_t^I(\omega)\|_V^2
+\left(\varepsilon_2+C_{\varepsilon_2}\|z_r\|_V^2\right)\mu(\|\cdot\|^\kappa)
+C_{\varepsilon_2}\|z_t^I(\omega)\|_V^2\|u\|^2\nonumber\\
&\quad
+C_{\varepsilon_2}\left(\|z_t^I(\omega)\|^2\|z_t^I(\omega)\|_V^2+\|z_t^I(\omega)\|_V^2+1\right).
\end{align}
Combining \eqref{0401:01}, \eqref{0409:04}, \eqref{0409:03} and \eqref{0409:05}, 
we have for any $u\in V$, $\mu\in\cP_p(H)$ and $\varepsilon_3\in(0,1)$,
\begin{align}\label{0904:04}
2\,_{V^*}\langle A_\omega^\mu(t,u), u\rangle_V 
&
=2\,_{V^*}\langle A(u+z_t^I(\omega))+B(u+z_t^I(\omega))+F(u+z_t^I(\omega),\mu)+\eta z_r, u\rangle_V \nonumber\\
&
=2\,_{V^*}\langle A(u+z_t^I(\omega))+B(u+z_t^I(\omega))+F(u+z_t^I(\omega),\mu), 
u+z_t^I(\omega)\rangle_V\nonumber\\
&\quad
+2\,_{V^*}\langle\eta z_t^I(\omega),u\rangle_V-2\,_{V^*}\langle A(u+z_t^I(\omega))+B(u+z_t^I(\omega))+F(u+z_t^I(\omega),\mu), 
z_t^I(\omega)\rangle_V\nonumber\\
&
\leq -(\lambda_1'-\varepsilon_1-\varepsilon_2)\|u+z_t^I(\omega)\|_V^2
-\lambda_2'\|u+z_t^I(\omega)\|^2+\varepsilon_3\|u\|_V^2\nonumber\\
&\quad
+\left(C\sum_{i=1}^{d}|z_i^I(t,\omega)|
+C_{\varepsilon_2}\|z_t^I(\omega)\|_V^2\right)\|u\|^2
+\left(\varepsilon_2+\lambda_2+C_{\varepsilon_2}\|z_t^I(\omega)\|_V^2\right)
P_{s,t}^*\mu(\|\cdot\|^\kappa)\nonumber\\
&\quad
+\left(C_{\varepsilon_3}+C\sum_{i=1}^{d}|z_i^I(t,\omega)|\right)\|z_t^I(\omega)\|^2
+C_{\varepsilon_1,\varepsilon_2}\left(1+\|z_t^I(\omega)\|^2\right)\|z_t^I(\omega)\|_V^2
+C_{\varepsilon_2}.
\end{align}
We note that  for any $p\geq 1$ and $a,b\in\R_+$,
\begin{equation}\label{0418:02}
(a+b)^p\geq 2^{1-p}a^p-b^p.
\end{equation}
Hence, in view of \eqref{0904:04} and \eqref{0418:02}, we obtain that 
\begin{align}\label{0409:06}
&
2_{V^*}\langle A_\omega^\mu(t,u),u\rangle_V \nonumber\\
&
\leq -\left(\frac{\left(\lambda_1'-\varepsilon_1-\varepsilon_2\right)}{2}
-\varepsilon_3\right)\|u\|_V^2
+\left(-\frac{\lambda_2'}{2}+C\sum_{i=1}^d|z_i(t,\omega)|
+C_{\varepsilon_2}\|z_t^I(\omega)\|_V^2\right)\|u\|^2
\nonumber\\
&\quad
+\left(\varepsilon_2+\lambda_2+C_{\varepsilon_2}\|z_t^I(\omega)\|_V^2\right)
P_{s,t}^*\mu(\|\cdot\|^\kappa)\nonumber\\
&\quad
+C_{\varepsilon_1,\varepsilon_2,\varepsilon_3}\left[(1+\|z_t^I(\omega)\|^2)\|z_t^I(\omega)\|_V^2
+(1+C\sum_{i=1}^{d}|z_i(t,\omega)|)\|z_t^I(\omega)\|^2+1\right]
\nonumber\\
&
\leq -\left(\frac{\left(\lambda_1'-\varepsilon_1-\varepsilon_2\right)}{2}-\varepsilon_3\right)\|u\|_V^2
+\left(-\frac12\lambda_2'+f_1(t,\omega)\right)\|u\|^2
+f_2(t,\omega)P_{s,t}^*\mu(\|\cdot\|^\kappa)
+f_3(t,\omega),
\end{align}
where
$$
f_1(t,\omega):=C\sum_{i=1}^d|z_i(t,\omega)|
+C_{\varepsilon_2}\sum_{i}^{d}|z_i(t,\omega)|^2,
\quad
f_2(t,\omega):=C_{\varepsilon_2}\left(1+\sum_{i=1}^{d}|z_i(t,\omega)|^2\right)
$$
and
$$
f_3(t,\omega):=C_{\varepsilon_1,\varepsilon_2,\varepsilon_3}
\left[\left(1+\sum_{i=1}^{d}|z_i(t,\omega)|^2\right)
\sum_{j=1}^{d}|z_j(t,\omega)|^2+1\right].
$$
Thanks to \eqref{0409:06}, by letting $\varepsilon_1$, $\varepsilon_2$ and $\varepsilon_3$ small enough
so that $(\lambda_1'-\varepsilon_1-\varepsilon_2)/2-\varepsilon_3>\frac14\lambda_1'$,
we have
\begin{align}\label{0911:01}
2_{V^*}\langle A_\omega^\mu(t,u),u\rangle_V
\leq-\frac{\lambda_1'}{4}\|u\|_V^2
+\left(-\frac{\lambda_2'}{2}+f_1(t,\omega)\right)\|u\|^2
+f_2(t,\omega)P_{s,t}^*\mu(\|\cdot\|^\kappa)
+f_3(t,\omega).
\end{align}
Therefore, with the help of \eqref{0911:01} and \eqref{0401:02} we have
\begin{align*}
\,_{V^*}\langle A_\omega^\mu(t,u),u\rangle_V
&
\leq-\frac{\lambda_1'}{8}\|u\|_V^2
+\left(-\frac{\lambda_2'}{4}+\frac12f_1(t,\omega)\right)\|u\|^2
+\frac12f_2(t,\omega)P_{s,t}^*\mu(\|\cdot\|^\kappa)
+\frac12f_3(t,\omega)\\
&
\leq-\frac{\lambda_1'}{8}\|u\|_V^2
+\left(-\frac{\lambda_2'}{4}+\frac12f_1(t,\omega)\right)\|u\|^2
+C\left(f_2(t,\omega)\mu(\|\cdot\|^p)
+f_3(t,\omega)\right),
\end{align*}
which implies that {\bf(A3)} holds. By \eqref{0911:04}, {\bf(H4)} and \eqref{0401:02}, 
we have
\begin{align}\label{0911:05}
\left\|A_\omega^\mu(t,u)\right\|_{V^*}^2
&
\leq 2\left(\left\|A(u+z_t^I(\omega))\right\|_{V^*}^2
+\left\|B(u+z_t^I(\omega))\right\|_{V^*}^2
+\left\|F(u+z_t^I(\omega),P_{s,t}^*\mu)\right\|_{V^*}^2
+\left\|\eta z_r\right\|_{V^*}^2\right)\nonumber\\
&
\leq C\left(1+\|u+z_t^I(\omega)\|_V^2+P_{s,t}^*\mu(\|\cdot\|^\kappa)\right)
\left(1+\|u+z_t^I(\omega)\|^2+P_{s,t}^*\mu(\|\cdot\|^\kappa)\right),
\end{align}
which implies that
\begin{align*}
\left\|A_\omega^\mu(t,u)\right\|_{V^*}^2
\leq C\left(1+\sup_{t\in[s,s+T]}\|z_t^I(\omega)\|^2+\mu(\|\cdot\|^p)\right)
\left(1+\|z_t^I(\omega)\|_V^2+\mu(\|\cdot\|^p)\right)\left(1+\|u\|^2\right).
\end{align*}
Hence, {\bf(A4)} holds.
We complete the proof by Lemma \ref{EUsolution}.
\end{proof}

\begin{lem}
Assume that {\bf(H1)}, {\bf(H2)}, {\bf(H2$''$)}, {\bf(H3)} and {\bf(H4)} hold.
Let $s<t$ and $\omega\in\Omega$. For any $x_n,x\in H$, $\mu\in\cP_p(H)$ and
$\{\mu_n\}\subset\cP_p(H)$,
if 
$\lim_{n\rightarrow\infty}\left(\|x_n-x\|+\cW_2(\mu_n,\mu)\right)=0$
then 
$$\lim_{n\rightarrow\infty}\|Z_{s,t}^{x_n,\mu_n}(\omega)-Z_{s,t}^{x,\mu}(\omega)\|=0.$$
\end{lem}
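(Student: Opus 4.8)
The plan is to bound the difference of the two solutions by a pathwise energy estimate in the Gelfand triple $V\subset H\subset V^{*}$, discard the good dissipation term, and close with Gronwall's inequality; the only surviving $n$-dependence will be $\|x_{n}-x\|$ and the Wasserstein distance between the time marginals $P_{s,r}^{*}\mu_{n}$ and $P_{s,r}^{*}\mu$, which tends to $0$ by the continuity of $\{P_{t}^{*}\}_{t\ge0}$ proved above. Fix $\omega\in\Omega$ and set $U_{n}(r):=Z_{s,r}^{x_{n},\mu_{n}}(\omega)-Z_{s,r}^{x,\mu}(\omega)$. Since $X_{s,r}^{x,\mu}(\omega)=Z_{s,r}^{x,\mu}(\omega)+z_{r}^{I}(\omega)$, the $z_{r}^{I}(\omega)$ terms cancel on subtraction, so $U_{n}(r)=X_{s,r}^{x_{n},\mu_{n}}(\omega)-X_{s,r}^{x,\mu}(\omega)$ with $U_{n}(s)=x_{n}-x$; by Lemma~\ref{lem0911} both paths lie in $C([s,t];H)\cap L^{2}([s,t];V)$ with drifts in $L^{2}([s,t];V^{*})$. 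Writing $B(u):=B(u,u)$, subtracting the two copies of \eqref{0409:01} and applying the chain rule for $\|\cdot\|^{2}$ (the equation is deterministic once $\omega$ is frozen), one gets
\begin{align*}
\|U_{n}(t)\|^{2}
&=\|x_{n}-x\|^{2}
+2\int_{s}^{t}{}_{V^{*}}\!\big\langle AU_{n}(r)+B(X_{s,r}^{x_{n},\mu_{n}}(\omega))-B(X_{s,r}^{x,\mu}(\omega)),\,U_{n}(r)\big\rangle_{V}\,\rmd r\\
&\quad+2\int_{s}^{t}{}_{V^{*}}\!\big\langle F(X_{s,r}^{x_{n},\mu_{n}}(\omega),P_{s,r}^{*}\mu_{n})-F(X_{s,r}^{x,\mu}(\omega),P_{s,r}^{*}\mu),\,U_{n}(r)\big\rangle_{V}\,\rmd r.
\end{align*}

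I would then bound the integrands with $u=X_{s,r}^{x_{n},\mu_{n}}(\omega)$, $v=X_{s,r}^{x,\mu}(\omega)$: estimate \eqref{0424:02} handles the linear-plus-bilinear part, giving $-\nu_{c}\|U_{n}\|_{V}^{2}+C\|v\|_{V}^{2}\|v\|^{2}\|U_{n}\|^{2}$, and {\bf(H2)} (with $\mu\rightsquigarrow P_{s,r}^{*}\mu_{n}$, $\nu\rightsquigarrow P_{s,r}^{*}\mu$), together with \eqref{0416:01} and $\beta=2$, handles the interaction part, giving $C(1+\rho(v)+P_{s,r}^{*}\mu(\|\cdot\|^{\kappa}))\|U_{n}\|^{2}+C(1+P_{s,r}^{*}\mu(\|\cdot\|^{\kappa}))\cW_{2}(P_{s,r}^{*}\mu_{n},P_{s,r}^{*}\mu)^{2}$. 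The crucial feature is that every growth factor involves only the reference solution $v$ and the reference measure $P_{s,r}^{*}\mu$, not the $n$-th ones. Discarding $-\nu_{c}\|U_{n}\|_{V}^{2}\le0$, putting $g(r):=C\|X_{s,r}^{x,\mu}(\omega)\|_{V}^{2}\|X_{s,r}^{x,\mu}(\omega)\|^{2}+C(1+\rho(X_{s,r}^{x,\mu}(\omega))+P_{s,r}^{*}\mu(\|\cdot\|^{\kappa}))$ and $h_{n}(r):=C(1+P_{s,r}^{*}\mu(\|\cdot\|^{\kappa}))\cW_{2}(P_{s,r}^{*}\mu_{n},P_{s,r}^{*}\mu)^{2}$, and applying Gronwall's inequality to the resulting integral inequality, I obtain
\[
\|U_{n}(t)\|^{2}\le\Big(\|x_{n}-x\|^{2}+\int_{s}^{t}h_{n}(r)\,\rmd r\Big)\exp\Big(\int_{s}^{t}g(r)\,\rmd r\Big).
\]

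To finish I would let $n\to\infty$. Pathwise $\int_{s}^{t}g(r)\,\rmd r<\infty$ and is independent of $n$, because $X_{s,\cdot}^{x,\mu}(\omega)\in C([s,t];H)\cap L^{2}([s,t];V)$, $\rho(v)\le C(1+\|v\|_{V}^{2})(1+\|v\|^{2})$ by \eqref{0416:01}, and $\sup_{r\in[s,t]}P_{s,r}^{*}\mu(\|\cdot\|^{\kappa})<\infty$ by \eqref{0401:02} (using $\mu\in\cP_{p}(H)$, $p>\kappa$); next $\|x_{n}-x\|\to0$ by hypothesis; and $\int_{s}^{t}h_{n}(r)\,\rmd r\to0$ since $1+P_{s,r}^{*}\mu(\|\cdot\|^{\kappa})$ is bounded on $[s,t]$ while $\sup_{r\in[s,t]}\cW_{2}(P_{s,r}^{*}\mu_{n},P_{s,r}^{*}\mu)\to0$ by the continuity of $\{P_{t}^{*}\}_{t\ge0}$ over bounded subsets of $\cP_{p}(H)$ established above (applied, as there, with $\{\mu_{n}\}$ in a bounded subset of $\cP_{p}(H)$). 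Hence $\|U_{n}(t)\|\to0$, which is the claim since $U_{n}(t)=Z_{s,t}^{x_{n},\mu_{n}}(\omega)-Z_{s,t}^{x,\mu}(\omega)$.

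The step I expect to be the main obstacle is checking that the Gronwall exponent $g$ is pathwise integrable on $[s,t]$ \emph{without} any $n$-uniform a priori bound on $X_{s,r}^{x_{n},\mu_{n}}$; this forces one to use the one-sided structure of \eqref{0424:02} and {\bf(H2)} (all growth loaded onto the reference data) together with the regularity of Lemma~\ref{lem0911} and the moment bound \eqref{0401:02}. A secondary point is the uniform-in-$r$ convergence of the marginals $P_{s,r}^{*}\mu_{n}\to P_{s,r}^{*}\mu$ on $[s,t]$, supplied by the continuity of $P^{*}$ proved just above; the remaining hypotheses {\bf(H1)}, {\bf(H2$''$)}, {\bf(H3)}, {\bf(H4)} are used only to guarantee well-posedness of \eqref{0331:01}--\eqref{0331:03}, hence of the $Z$-equation and of $P^{*}$.
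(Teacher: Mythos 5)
Your proposal is correct and follows essentially the same route as the paper: an $\omega$-wise energy estimate for $U_n=Z^{x_n,\mu_n}-Z^{x,\mu}$ using \eqref{0424:02} for the $A+B$ part and {\bf(H2)} for the $F$ part (with all growth factors loaded onto the reference solution and reference measure), followed by Gronwall, pathwise integrability of the exponent via the regularity of $Z^{x,\mu}$ and \eqref{0401:02}, and the uniform-in-$r$ convergence $\sup_{r\in[s,t]}\cW_2(P_{r-s}^*\mu_n,P_{r-s}^*\mu)\to0$ from \eqref{0423:01}. The one caveat you flag yourself — that \eqref{0423:01} is stated for $\{\mu_n\}$ in a bounded subset of $\cP_p(H)$ — is present in the paper's argument as well, so your treatment is consistent with it.
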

\begin{proof}
By \eqref{0424:02}, {\bf(H2)} and \eqref{0401:02}, 
one sees that
for any $\mu_n,\mu\in\cP_p(H)$ and $x_n,x\in H$,
\begin{align*}
&
\|Z_{s,t}^{x_n,\mu_n}(\omega)-Z_{s,t}^{x,\mu}(\omega)\|^2\\
&
=\|x_n-x\|^2+\int_s^t2_{V^*}\langle A\left(Z_{s,r}^{x_n,\mu_n}(\omega)-Z_{s,r}^{x,\mu}(\omega)\right),
Z_{s,r}^{x_n,\mu_n}(\omega)-Z_{s,r}^{x,\mu}(\omega)\rangle\rmd r\\
&\quad
+\int_s^t2_{V^*}\langle B(X_{s,r}^{x_n,\mu_n})-B(X_{s,r}^{x,\mu}),
Z_{s,r}^{x_n,\mu_n}(\omega)-Z_{s,r}^{x,\mu}(\omega)\rangle\rmd r\\
&\quad
+\int_s^t2_{V^*}\langle F(X_{s,r}^{x_n,\mu_n},\sL_{Y_r^{\mu_n}})
-F(X_{s,r}^{x,\mu},\sL_{Y_r^{\mu}}),
Z_{s,r}^{x_n,\mu_n}(\omega)-Z_{s,r}^{x,\mu}(\omega)\rangle\rmd r\\
&
\leq \|x_n-x\|^2+\int_s^tC\left[\left(\|X_{s,r}^{x,\mu}\|_V^2+1\right)\left(1+\|X_{s,r}^{x,\mu}\|^2
\right)+\mE\|Y_r^{\mu}\|^\kappa\right]\|Z_{s,r}^{x_n,\mu_n}(\omega)-Z_{s,r}^{x,\mu}(\omega)\|^2\rmd r\\
&\quad
+\int_s^t C\left(1+\mE\|Y_r^{\mu}\|^\kappa\right)
\cW_2(\sL_{Y_r^{\mu_n}},\sL_{Y_r^\mu})^2\rmd r\\
&
\leq \|x_n-x\|^2+\int_s^tC\left[\left(\|X_{s,r}^{x,\mu}\|_V^2+1\right)\left(1+\|X_{s,r}^{x,\mu}\|^2
\right)+\mu(\|\cdot\|^p)\right]\|Z_{s,r}^{x_n,\mu_n}(\omega)-Z_{s,r}^{x,\mu}(\omega)\|^2\rmd r\\
&\quad
+C_{s,t}\left(1+\mu(\|\cdot\|^p)\right)\sup_{r\in[s,t]}
\cW_2(P_{r-s}^*\mu_n,P_{r-s}^*\mu)^2\\
&
\leq \|x_n-x\|^2+\int_s^tC\left[\left(\|Z_{s,r}^{x,\mu}(\omega)\|_V^2
+\|z_r^I(\omega)\|_V^2+1\right)
\left(1+\|Z_{s,r}^{x,\mu}(\omega)\|^2+\|z_r^I(\omega)\|^2\right)
+\mu(\|\cdot\|^p)\right]\\
&\qquad
\times\|Z_{s,r}^{x_n,\mu_n}(\omega)-Z_{s,r}^{x,\mu}(\omega)\|^2\rmd r
+C_{s,t}\left(1+\mu(\|\cdot\|^p)\right)\sup_{r\in[s,t]}
\cW_2(P_{r-s}^*\mu_n,P_{r-s}^*\mu)^2,
\end{align*}
which along with the Gronwall inequality implies that
\begin{align}\label{0425:01}
\|Z_{s,t}^{x_n,\mu_n}(\omega)-Z_{s,t}^{x,\mu}(\omega)\|^2
\leq {\rm e}^{\int_s^tf(r)\rmd r}\left(\|x_n-x\|^2
+C_{s,t}\left(1+\mu(\|\cdot\|^p)\right)\sup_{r\in[s,t]}
\cW_2(P_{r-s}^*\mu_n,P_{r-s}^*\mu)^2\right),
\end{align}
where
$$
f(r):=C\left[\left(\|Z_{s,r}^{x,\mu}(\omega)\|_V^2
+\|z_r^I(\omega)\|_V^2+1\right)
\left(1+\|Z_{s,r}^{x,\mu}(\omega)\|^2+\|z_r^I(\omega)\|^2\right)
+\mu(\|\cdot\|^p)\right].
$$
Note that
\begin{align}\label{0425:02}
&
\int_s^t\left[\left(\|Z_{s,r}^{x,\mu}(\omega)\|_V^2
+\|z_r^I(\omega)\|_V^2+1\right)
\left(1+\|Z_{s,r}^{x,\mu}(\omega)\|^2
+\|z_r^I(\omega)\|^2\right)+\mu(\|\cdot\|^p)\right]\rmd r\nonumber\\
&
\leq \int_s^t\bigg(\|Z_{s,r}^{x,\mu}(\omega)\|_V^2\|Z_{s,r}^{x,\mu}(\omega)\|^2
+\|Z_{s,r}^{x,\mu}(\omega)\|_V^2\left(1+\|z_r^I(\omega)\|^2\right)
+\|Z_{s,r}^{x,\mu}(\omega)\|^2\left(1+\|z_r^I(\omega)\|_V^2\right)\nonumber\\
&\qquad
+\left(1+\|z_r^I(\omega)\|^2\right)\left(1+\|z_r^I(\omega)\|_V^2\right)
+\mu(\|\cdot\|^p)\bigg)\rmd r
\leq C_\omega.
\end{align}
Then we complete the proof by \eqref{0425:01}, \eqref{0425:02} and \eqref{0423:01}.
\end{proof}

\begin{lem}
Assume that {\bf(H1)}, {\bf(H2)}, {\bf(H2$''$)}, {\bf(H3)} and {\bf(H4)} hold.
Suppose further that for all $i=1,...,d$, 
$\phi_i$ is Lipschitz continuous in $\mT^2$.
Then for any $p>4\vee\kappa$ and 
$\lambda'\in(0,\frac{\lambda_2' p}{16\kappa}\wedge \frac{p(\lambda_2'-\lambda_2)}{2})$
there exists a constant $C>0$ such that for all $x\in H$, $\mu\in \cP_p(H)$ and $s<t$,
\begin{align}\label{0418:05}
\|Z_{s,t}^{x,\mu}(\omega)\|^2
&
\leq 2\|x\|^2{\rm e}^{\int_{s}^{t}(-\frac{\lambda_2'}{8}+g_1(r,\omega))\rmd r}
+(\mu(\|\cdot\|^p))^{\frac{2\kappa}{p}}{\rm e}^{-\frac{2\lambda'\kappa}{p}(t-s)}
\int_{s}^{t}{\rm e}^{\int_r^t\left(-\frac{\lambda_2'}{8}+g_1(\tau,\omega)\right)\rmd \tau}\rmd r
\nonumber\\
&\quad
+\|z_{s}^I(\omega)\|^2
{\rm e}^{\int_{s}^{t}(-\frac{\lambda_2'}{8}+g_1(r,\omega))\rmd r}
+\int_{s}^{t}g_2(r,\omega)
{\rm e}^{\int_r^{t}(-\frac{\lambda_2'}{8}+g_1(\tau,\omega))\rmd\tau}\rmd r.
\end{align}
where
$$
g_1(r,\omega):=C\sum_{i=1}^d|z_i^I(r,\omega)|\left(|z_i^I(r,\omega)|+1\right),
\quad
g_2(r,\omega):=C\left(1+\sum_{i=1}^{d}|z_i^I(r,\omega)|^4\right).
$$

In particular, fix $t_0\geq0$.
For any $\omega\in\Omega$, $R'>0$, 
there exists $T(\omega,R')>0$ such that
for all 
$t>T(\omega)$, $\|x\|\leq R'$ and $\mu(\|\cdot\|^p)<R'$,
\begin{equation}\label{1027:01}
\|Z_{-t,-t_0}^{x,\mu}(\omega)\|^2\leq R^2(t_0,\omega),
\end{equation}
where 
$$
R^2(t_0,\omega):=2+2\sup_{s\leq-t_0}\|z_{s}^I(\omega)\|^2{\rm e}^{\int_{s}^{-t_0}\left(-\frac{\lambda_2'}{8}+g_1(r,\omega)\right)\rmd r}
+\int_{-\infty}^{-t_0}{\rm e}^{\int_{r}^{-t_0}\left(-\frac{\lambda_2'}{8}+g_1(r,\omega)\right)\rmd \tau}
g_2(r,\omega)\rmd r.
$$
\end{lem}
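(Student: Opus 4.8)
The estimate \eqref{0418:05} is obtained by harvesting the pointwise dissipativity bound \eqref{0911:01} along the variational solution $Z_{s,\cdot}^{x,\mu}(\omega)$ produced in Lemma \ref{lem0911}. Since the equation for $Z$ contains no stochastic integral, It\^o's formula for the square of the $H$-norm in the Gelfand triple $V\subset H\subset V^*$ gives, for all $s\leq t$,
\begin{equation*}
\|Z_{s,t}^{x,\mu}(\omega)\|^2
=\|x-z_s^I(\omega)\|^2
+2\int_s^t{}_{V^*}\!\big\langle A_\omega^\mu(r,Z_{s,r}^{x,\mu}(\omega)),Z_{s,r}^{x,\mu}(\omega)\big\rangle_V\,\rmd r.
\end{equation*}
Applying \eqref{0911:01} to the integrand (whose bound here carries the measure $P_{s,r}^*\mu$) and discarding the nonpositive term $-\tfrac{\lambda_1'}{4}\|Z_{s,r}^{x,\mu}(\omega)\|_V^2$, one arrives at
\begin{equation*}
\|Z_{s,t}^{x,\mu}(\omega)\|^2
\leq\|x-z_s^I(\omega)\|^2
+\int_s^t\Big[\Big(-\tfrac{\lambda_2'}{2}+f_1(r,\omega)\Big)\|Z_{s,r}^{x,\mu}(\omega)\|^2
+f_2(r,\omega)P_{s,r}^*\mu(\|\cdot\|^\kappa)+f_3(r,\omega)\Big]\rmd r.
\end{equation*}

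The measure term is absorbed by the dissipativity of $\{P_t^*\}_{t\geq0}$. By \eqref{0401:02}, $P_{s,r}^*\mu(\|\cdot\|^p)\leq\mu(\|\cdot\|^p)\rme^{-\lambda'(r-s)}+C$, so that, $\kappa\leq p$ being in force, Jensen's inequality yields $P_{s,r}^*\mu(\|\cdot\|^\kappa)\leq(\mu(\|\cdot\|^p))^{\kappa/p}\rme^{-\lambda'\kappa(r-s)/p}+C$. Substituting this and applying Young's inequality $ab\leq\tfrac12 a^2+\tfrac12 b^2$ to the product $f_2(r,\omega)\cdot(\mu(\|\cdot\|^p))^{\kappa/p}\rme^{-\lambda'\kappa(r-s)/p}$ splits the source into a genuinely $\mu$-dependent piece of size $\tfrac12(\mu(\|\cdot\|^p))^{2\kappa/p}\rme^{-2\lambda'\kappa(r-s)/p}$ together with purely $z^I$-dependent remainders ($\tfrac12 f_2(r,\omega)^2$, $C f_2(r,\omega)$, $f_3(r,\omega)$); each of these remainders is a polynomial in the $|z_i^I(r,\omega)|$, $1\leq i\leq d$, of degree at most $4$ and is therefore dominated by $g_2(r,\omega)$, while the drift coefficient satisfies $-\tfrac{\lambda_2'}{2}+f_1(r,\omega)\leq-\tfrac{\lambda_2'}{8}+g_1(r,\omega)$.

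A scalar Gronwall (variation-of-constants) argument on $[s,t]$ with integrating weight $\exp\!\big(\int_{\cdot}^{t}(-\tfrac{\lambda_2'}{8}+g_1(\tau,\omega))\rmd\tau\big)$ then delivers \eqref{0418:05}. The contribution of the initial value $\|x-z_s^I(\omega)\|^2\leq 2\|x\|^2+2\|z_s^I(\omega)\|^2$ accounts for the first and third summands; $g_2$ for the last; and the $\mu$-dependent source, once one writes $\rme^{-2\lambda'\kappa(r-s)/p}=\rme^{-2\lambda'\kappa(t-s)/p}\rme^{2\lambda'\kappa(t-r)/p}$, pulls the factor $\rme^{-2\lambda'\kappa(t-s)/p}$ out front while the residual growth $\rme^{2\lambda'\kappa(t-r)/p}$ is absorbed into the dissipation thanks to $\lambda'<\tfrac{\lambda_2' p}{16\kappa}$, which guarantees $\tfrac{2\lambda'\kappa}{p}<\tfrac{\lambda_2'}{8}$ so that the net rate inside the exponent remains $-\tfrac{\lambda_2'}{8}$ (after a further crude discarding of dissipation).

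For the uniform bound \eqref{1027:01} one specializes \eqref{0418:05} to $s=-t$ and terminal time $-t_0$ and lets $t\to+\infty$. By Lemma \ref{lem0414}(i), $|z_i^I(r,\omega)|^p=o(|r|)$, hence $g_1(r,\omega)=o(|r|)$ and $\|z_r^I(\omega)\|^2=o(|r|)$; by Lemma \ref{lem0414}(ii), $\tfrac{1}{-t_0-s}\int_s^{-t_0}g_1(r,\omega)\rmd r$ converges as $s\to-\infty$ to a constant multiple of $\sum_i(\mE|z_i^I(0)|^2+\mE|z_i^I(0)|)\leq Cd((2\eta)^{-1}+(2\eta)^{-1/2})$, which for $\eta$ large is $<\lambda_2'/8$; consequently $\int_s^{-t_0}(-\tfrac{\lambda_2'}{8}+g_1(r,\omega))\rmd r\to-\infty$ linearly in $|s|$, the weight $\exp\!\big(\int_r^{-t_0}(-\tfrac{\lambda_2'}{8}+g_1)\big)$ is integrable over $(-\infty,-t_0)$, and $R^2(t_0,\omega)<\infty$. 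With $\|x\|\leq R'$ and $\mu(\|\cdot\|^p)<R'$, the first three terms of \eqref{0418:05} tend to $0$ — the first and third because a bounded, resp.\ sublinearly growing, quantity is multiplied by a vanishing exponential, the second because $(\mu(\|\cdot\|^p))^{2\kappa/p}\leq(R')^{2\kappa/p}$ is multiplied by $\rme^{-2\lambda'\kappa(t-t_0)/p}$ and a finite integral — and the last increases to $\int_{-\infty}^{-t_0}g_2(r,\omega)\exp\!\big(\int_r^{-t_0}(-\tfrac{\lambda_2'}{8}+g_1)\big)\rmd r$, so that $\|Z_{-t,-t_0}^{x,\mu}(\omega)\|^2\leq R^2(t_0,\omega)$ for all $t$ beyond some $T(\omega,R')$. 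The substantive estimate \eqref{0911:01} being already in hand, the main difficulty here is the weight bookkeeping in the middle two steps — in particular verifying that the doubled exponential accompanying the $\mu$-term survives the Gronwall integration, which is where $\lambda'<\lambda_2' p/(16\kappa)$ enters — together with the mild requirement that $\eta$ be taken large enough for the Birkhoff average of $g_1$ to lie below $\lambda_2'/8$, without which $R^2(t_0,\omega)$ need not be finite.
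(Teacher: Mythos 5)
Your proposal is correct and follows essentially the same route as the paper: the energy identity for $\|Z\|^2$, the coercivity bound \eqref{0911:01} with the $V$-norm term discarded, the moment decay \eqref{0401:02} plus Jensen and Young to isolate the $(\mu(\|\cdot\|^p))^{2\kappa/p}\rme^{-2\lambda'\kappa(r-s)/p}$ source with degree-$4$ polynomial remainders absorbed into $g_2$, a weighted Gronwall argument where the condition $\lambda'<\lambda_2'p/(16\kappa)$ lets the excess dissipation swallow the factor $\rme^{2\lambda'\kappa(t-r)/p}$, and finally Lemma \ref{lem0414} (with $\eta$ large enough that the Birkhoff average of $g_1$ lies below $\lambda_2'/8$) to get the uniform-in-$t$ bound. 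The only cosmetic difference is that the paper carries out the Gronwall step for general powers $\tilde p\geq2$ in \eqref{0409:07} (the case $\tilde p=4$ being needed later) before specializing to $\tilde p=2$, whereas you work with $\tilde p=2$ directly.
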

\begin{proof}

By \eqref{0911:01}, one sees that for any $\tilde{p}\geq2$ and $s\leq \tau<t$,
\begin{align*}
\|Z_{s,t}^{x,\mu}(\omega)\|^{\tilde{p}}
&
=\|Z_{s,\tau}^{x,\mu}(\omega)\|^{\tilde{p}}+\int_\tau^t\frac{\tilde{p}}{2}
\|Z_{s,r}^{x,\mu}(\omega)\|^{\tilde{p}-2}\bigg(2_{V^*}\langle
A(Z_{s,r}^{x,\mu}(\omega)+z_r^I)+B(Z_{s,r}^{x,\mu}(\omega)+z_r^I),Z_{s,r}^{x,\mu}(\omega)\rangle_V\\
&\qquad
+2_{V^*}\langle F(Z_{s,r}^{x,\mu}(\omega)+z_r^I,\sL_{Y_r})
+\eta z_r^I,Z_{s,r}^{x,\mu}(\omega)\rangle_V \bigg)\rmd r\\
&
\leq \|Z_{s,\tau}^{x,\mu}(\omega)\|^{\tilde{p}}
+\int_\tau^t\frac{\tilde{p}}{2}\|Z_{s,r}^{x,\mu}(\omega)\|^{\tilde{p}-2}
\bigg(-\frac14\lambda_1'\|Z_{s,r}^{x,\mu}(\omega)\|_V^2
+\left(-\frac12\lambda_2'+f_1(r,\omega)\right)\|Z_{s,r}^{x,\mu}(\omega)\|^2\\
&\qquad
+f_2(r,\omega)P_{s,r}^*(\|\cdot\|^\kappa)
+f_3(r,\omega)\bigg)\rmd r,
\end{align*}
which along with \eqref{0401:02} and Young's inequality 
implies that for any $p>4\vee\kappa$ and $\tilde{p}\geq2$,
\begin{align}\label{0409:07}
&
\|Z_{s,t}^{x,\mu}(\omega)\|^{\tilde{p}}
+\frac{\lambda_1' {\tilde{p}}}{8}\int_\tau^t\|Z_{s,r}^{x,\mu}(\omega)\|_V^2\|Z_{s,r}^{x,\mu}(\omega)\|^{{\tilde{p}}-2}\rmd r\nonumber\\
&
\leq 
\|Z_{s,\tau}^{x,\mu}(\omega)\|^{\tilde{p}}
+\int_\tau^t\left(-\frac{\lambda_2'\tilde{p}}{4}
+\frac{\tilde{p}}{2}f_1(r,\omega)\right)\|Z_{s,r}^{x,\mu}(\omega)\|^{\tilde{p}}
\rmd r\nonumber\\
&\quad
+\int_\tau^t\frac{\tilde{p}}{2}\left[f_2(r,\omega)
P_{s,t}^*(\|\cdot\|^\kappa)
+f_3(r,\omega)\right]\|Z_{s,r}^{x,\mu}(\omega)\|^{{\tilde{p}}-2}
\rmd r\nonumber\\
&
\leq \|Z_{s,\tau}^{x,\mu}(\omega)\|^{\tilde{p}}
+\int_\tau^t\left(-\frac{\lambda_2'\tilde{p}}{8}+\frac{\tilde{p}}{2}f_1(r,\omega)\right)
\|Z_{s,r}^{x,\mu}(\omega)\|^{\tilde{p}}\rmd r\nonumber\\
&\quad
+\int_\tau^tC_{\tilde{p}}\left(f_2(r,\omega)^{{\tilde{p}}/2}
(\mE\|Y_r\|^p)^{\frac{\kappa\tilde{p}}{2p}}
+f_3(r,\omega)^{{\tilde{p}}/2}\right)\rmd r\nonumber\\
&
\leq\|Z_{s,\tau}^{x,\mu}(\omega)\|^{\tilde{p}}
+\int_\tau^t\left(-\frac{\lambda_2'\tilde{p}}{8}+\frac{\tilde{p}}{2}f_1(r,\omega)\right)
\|Z_{s,r}^{x,\mu}(\omega)\|^{\tilde{p}}\rmd r\nonumber\\
&\quad
+\int_\tau^t C_{\tilde{p}}\left(f_2(r,\omega)^{{\tilde{p}}/2}
(\mE\|Y_s\|^p{\rm e}^{-\lambda'(r-s)}+C)^{\frac{\kappa\tilde{p}}{2p}}
+f_3(r,\omega)^{{\tilde{p}}/2}\right)\rmd r\nonumber\\
&
\leq \|Z_{s,\tau}^{x,\mu}(\omega)\|^{\tilde{p}}
+\int_\tau^t\left(-\frac{\lambda_2'\tilde{p}}{8}+\frac{\tilde{p}}{2}f_1(r,\omega)\right)
\|Z_{s,r}^{x,\mu}(\omega)\|^{\tilde{p}}\rmd r\nonumber\\
&\quad 
+\int_\tau^t\left[(\mu(\|\cdot\|^p))^{\frac{\kappa\tilde{p}}{p}}
{\rm e}^{-\frac{\lambda'\kappa\tilde{p}}{p}(r-s)}
+C_{\tilde{p}}f_2(r,\omega)^{\tilde{p}}
+C_{\tilde{p}}f_3(r,\omega)^{{\tilde{p}}/2}\right]\rmd r.
\end{align}

In view of \eqref{0409:07} and Gronwall's inequality, we have for any 
$p>4\vee\kappa$ and $\tilde{p}=2$,
\begin{align}\label{0418:03}
\|Z_{s,t}^{x,\mu}(\omega)\|^{2}
&
\leq \|Z_{s,\tau}^{x,\mu}(\omega)\|^{2}
{\rm e}^{\int_\tau^t(-\frac{\lambda_2'}{4}+ f_1(r,\omega))\rmd r}
+\int_\tau^t(\mu(\|\cdot\|^p))^{\frac{2\kappa}{p}}{\rm e}^{-\frac{2\lambda'\kappa}{p}(r-s)}
{\rm e}^{\int_r^t(-\frac{\lambda_2'}{4}+ f_1(\tau',\omega))
\rmd\tau'}\rmd r\nonumber\\
&\quad
+\int_\tau^tC\left(f_2(r,\omega)^{2}+f_3(r,\omega)\right)
{\rm e}^{\int_r^t(-\frac{\lambda_2'}{4}+ f_1(\tau',\omega))\rmd\tau'}\rmd r
\nonumber\\
&
\leq \|Z_{s,\tau}^{x,\mu}(\omega)\|^{2}
{\rm e}^{\int_\tau^t(-\frac{\lambda_2'}{4}+ f_1(r,\omega))\rmd r}
+(\mu(\|\cdot\|^p))^{\frac{2\kappa}{p}}
{\rm e}^{-\frac{\lambda'\kappa2}{p}(t-s)}
\int_\tau^t
{\rm e}^{\int_r^t(\frac{2\lambda'\kappa}{p}-\frac{\lambda_2'}{4}
+f_1(\tau',\omega))\rmd\tau'}\rmd r\nonumber\\
&\quad
+\int_\tau^tC\left(f_2(r,\omega)^{2}+f_3(r,\omega)\right)
{\rm e}^{\int_r^t(-\frac{\lambda_2'}{4}+ f_1(\tau',\omega))\rmd\tau'}\rmd r.
\end{align}
By letting 
$\lambda'\in(0,\frac{\lambda_2' p}{16\kappa}\wedge\frac{p(\lambda_2'-\lambda_2)}{2})$
in \eqref{0418:03}, one sees that for any
$x\in H$ and $\mu\in \cP_p(H)$, 
\begin{align}\label{0906:05}
\|Z_{s,t}^{x,\mu}(\omega)\|^2
&
\leq 2\|x\|^2{\rm e}^{\int_{s}^{t}(-\frac{\lambda_2'}{8}+g_1(r,\omega))\rmd r}
+(\mu(\|\cdot\|^p))^{\frac{2\kappa}{p}}{\rm e}^{-\frac{2\lambda'\kappa}{p}(t-s)}
\int_{s}^{t}{\rm e}^{\int_r^t\left(-\frac{\lambda_2'}{8}+g_1(\tau,\omega)\right)\rmd \tau}\rmd r\nonumber\\
&\quad
+2\|z_{s}^I(\omega)\|^2
{\rm e}^{\int_{s}^{t}(-\frac{\lambda_2'}{8}+g_1(r,\omega))\rmd r}
+\int_{s}^{t}g_2(r,\omega)
{\rm e}^{\int_r^{t}(-\frac{\lambda_2'}{8}+g_1(\tau',\omega))\rmd\tau'}\rmd r.
\end{align}

Now we show that $R(t_0,\omega)^2<\infty$, $\mP$-a.s. Indeed, 
by \eqref{0414:01}, one sees that there exists $\eta>0$ such that for $\mP$-a.s. $\omega$
there exists $T_0(\omega)>0$ such that for any $t>T_0(\omega)$,
\begin{equation}\label{0421:05}
\int_{-t}^{-t_0}\left(-\frac{\lambda_2'}{8}+g_1(r,\omega)\right)\rmd r
\leq -\frac{\lambda_2'}{16}(-t_0+t),
\end{equation}
which along with \eqref{0421:02} implies that for any $t_0>0$,
$$
2\sup_{s\leq-t_0}\|z_{s}^I(\omega)\|^2{\rm e}^{\int_{s}^{-t_0}(-\frac{\lambda_2'}{8}+g_1(r,\omega))\rmd r}
+\int_{-\infty}^{-t_0}{\rm e}^{\int_{r}^{-t_0}\left(-\frac{\lambda_2'}{8}+g_1(\tau,\omega)\right)\rmd \tau}
g_2(r,\omega)\rmd r<\infty,\quad \mP-{\text{a.s.}}
$$
It follows from \eqref{0421:05} that for any $R'>0$, there exists $T_1(\omega)>T_0(\omega)$ so that
for all $t>T_1(\omega)$ and $\|x\|<R'$, 
$$
2\|x\|^2{\rm e}^{\int_{-t}^{-t_0}(-\frac{\lambda_2'}{8}+g_1(r,\omega))\rmd r}\leq 1.
$$
Since $\lambda'>0$, there exists $T_2>0$ such that for all $t>T_2$ and $\mu\in\cP_p(H)$
with $\mu(\|\cdot\|^p)<R'$,
$$
(\mu(\|\cdot\|^p))^{\frac{2\kappa}{p}}{\rm e}^{-\frac{2\lambda'\kappa}{p}(t_0-t)}\leq1.
$$
Then for any $R'>0$, there exists $T(\omega):=T_1(\omega)\vee T_2$ such that
for all $t>T(\omega)$, $\|x\|\leq R'$ and $\mu(\|\cdot\|^p)<R'$,
\begin{equation*}
\|Z_{-t,-t_0}^{x,\mu}(\omega)\|^2\leq R^2(t_0,\omega)<\infty.
\end{equation*}
\end{proof}

\begin{lem}\label{lem1027}
Assume that {\bf(H1)}, {\bf(H2)}, {\bf(H2$''$)}, {\bf(H3)} and {\bf(H4)} hold.
Suppose further that for all $i=1,...,d$, 
$\phi_i$ is Lipschitz continuous in $\mT^2$.
Then $\varphi$ is $\cD$-pullback asymptotically compact
in the sense of Definition \ref{dePAC}.
\end{lem}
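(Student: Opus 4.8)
The plan is to verify Definition \ref{dePAC} for the random dynamical system $\Phi(t,\omega)(\mu,x)=(P_t^*\mu,\varphi(t,\omega,\mu)x)$; since the $\cP^*$-component $P_{t_n}^*\mu_n$ always lies in the compact set $\cP^*$ (Lemma \ref{thmNSeq01}), the whole matter reduces to showing that the $H$-component $\varphi(t_n,\theta_{-t_n}\omega,\mu_n)x_n$ has a convergent subsequence. So fix $\omega\in\Omega$, a weakly tempered family $B\in\cD_{\lambda_0}$, $t_n\to+\infty$, $\mu_n\in\cP^*$ and $x_n\in B(\theta_{-t_n}\omega)$. First I would record, using the stationarity $z_t^I(\theta_s\omega)=z_{t+s}^I(\omega)$, the time-translation invariance of $\{P_{s,t}^*\}$, and the uniqueness of solutions to \eqref{0409:01} (Lemma \ref{lem0911}), the identity
\[
\varphi(t_n,\theta_{-t_n}\omega,\mu_n)x_n
=Z_{0,t_n}^{x_n,\mu_n}(\theta_{-t_n}\omega)+z_{t_n}^I(\theta_{-t_n}\omega)
=Z_{-t_n,0}^{x_n,\mu_n}(\omega)+z_0^I(\omega),
\]
where in $Z_{-t_n,\cdot}^{x_n,\mu_n}(\omega)$ the law entering the equation at time $r\ge-t_n$ is $P_{r+t_n}^*\mu_n\in\cP^*$. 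Thus it suffices to extract a subsequence along which $Z_{-t_n,0}^{x_n,\mu_n}(\omega)$ converges in $H$.

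Next I would establish, for all large $n$, uniform a priori bounds on the fixed window $[-1,0]$. From the energy estimate \eqref{0418:05} (with $s=-t_n$, $t\in[-1,0]$) together with the growth and ergodic estimates \eqref{0421:02}, \eqref{0421:05} for $z^I$, the uniform bound $\mu_n(\|\cdot\|^p)\le R$ (as $\cP^*\in\sS$ and $\cP^*$ is invariant under $P_t^*$), and $\|x_n\|^2\le\|B(\theta_{-t_n}\omega)\|^2$ with $e^{-\lambda_0 t}\|B(\theta_{-t}\omega)\|^2\to0$, the dissipative factor $e^{-(\lambda_2'/16)(t_n-1)}$ absorbs the initial-data contribution provided $\lambda_0$ is taken small enough ($\lambda_0<\lambda_2'/16$), giving $\sup_{r\in[-1,0]}\|Z_{-t_n,r}^{x_n,\mu_n}(\omega)\|^2\le C_1(\omega)$. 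Integrating the energy inequality \eqref{0409:07} with $\tilde p=2$ over $[-1,0]$ and using this sup-bound then yields $\int_{-1}^0\|Z_{-t_n,r}^{x_n,\mu_n}(\omega)\|_V^2\,dr\le C_2(\omega)$, and reading $\tfrac{d}{dr}Z_{-t_n,r}^{x_n,\mu_n}(\omega)$ off \eqref{0409:01} and estimating the drift by $\|Au\|_{V^*}\le C\|u\|_V$, by \eqref{0911:04} for $B$, by {\bf(H4)} (with $\beta=2$) for $F$, and by the local $V$-integrability of $z^I$, gives $\int_{-1}^0\|\tfrac{d}{dr}Z_{-t_n,r}^{x_n,\mu_n}(\omega)\|_{V^*}^2\,dr\le C_3(\omega)$. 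Hence $\{Z_{-t_n,\cdot}^{x_n,\mu_n}(\omega)\}$ is bounded in $L^2([-1,0];V)\cap W^{1,2}([-1,0];V^*)$, so by the Aubin–Lions lemma (using the compact embedding $V\hookrightarrow H$ and $H\hookrightarrow V^*$) it is relatively compact in $L^2([-1,0];H)$.

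Finally, along a subsequence $Z_{-t_n,\cdot}^{x_n,\mu_n}(\omega)\to Z^*$ in $L^2([-1,0];H)$, so $Z_{-t_n,r}^{x_n,\mu_n}(\omega)\to Z^*(r)$ in $H$ for a.e.\ $r$; fix such an $r^*\in(-1,0)$ and, passing to a further subsequence, $P_{r^*+t_n}^*\mu_n\to\nu^*$ in $\cW_2$ (compactness of $\cP^*$). Using the restart identity $Z_{-t_n,0}^{x_n,\mu_n}(\omega)=Z_{r^*,0}^{\,Z_{-t_n,r^*}^{x_n,\mu_n}(\omega),\,P_{r^*+t_n}^*\mu_n}(\omega)$ — valid because the two measure flows coincide by translation invariance of $\{P_{s,t}^*\}$ and by uniqueness — together with the continuity estimate \eqref{0425:01} and \eqref{0423:01} (continuity of $(x,\mu)\mapsto Z_{r^*,0}^{x,\mu}(\omega)$), I conclude $Z_{-t_n,0}^{x_n,\mu_n}(\omega)\to Z_{r^*,0}^{Z^*(r^*),\nu^*}(\omega)$ in $H$; adding $z_0^I(\omega)$ yields convergence of $\varphi(t_n,\theta_{-t_n}\omega,\mu_n)x_n$, and together with convergence of $P_{t_n}^*\mu_n$ in $\cP^*$ this establishes Definition \ref{dePAC} (the $\varphi$-version being equivalent by Remark \ref{1016--1}). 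The main obstacle will be the uniform a priori bounds: one has to balance the weakly tempered growth of $\|B(\theta_{-t_n}\omega)\|$ against the dissipative exponential rate (which pins down the admissible range of $\lambda_0$), and, because $z^I$ is only \emph{locally} — not globally — bounded in $V$, every $V$- and $V^*$-estimate must be confined to the fixed finite window $[-1,0]$; keeping track of the shifted laws $P_{r+t_n}^*\mu_n$ when restarting the solution is a further point requiring care.
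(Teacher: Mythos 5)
Your proposal is correct and follows essentially the same route as the paper: translate back via the cocycle identity to $Z_{-t_n,0}^{x_n,\mu_n}(\omega)+z_0^I(\omega)$, use the dissipative energy estimates to get uniform $L^\infty H\cap L^2 V$ and $W^{1,2}V^*$ bounds on a fixed window, apply Aubin--Lions, and restart at an intermediate time using continuity of $(x,\mu)\mapsto Z(\omega)$ together with compactness of the measure component. The only (immaterial) difference is organizational — the paper first proves a separate "bounded data on a fixed interval" compactness statement and then reduces to it via the absorbing estimate \eqref{1027:01} at time $-1$, whereas you run the uniform bounds directly on $[-1,0]$ for the translated solutions; your explicit balancing of the tempered growth of $\|B(\theta_{-t_n}\omega)\|$ against the dissipation rate is in fact slightly more careful than the paper's appeal to \eqref{1027:01}.
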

\begin{proof}

Let $\omega\in\Omega$, $B_H\subset H$ bounded, $B_{\cP}\subset \cP_p(H)$ bounded and $s<t$.
We first show that for any $\{x_n\}\subset B_H$ and $\{\mu_n\}\subset B_{\cP}$,
there exists a convergent subsequence of 
$\{Z_{s,t}^{x_n,\mu_n}\}$. 
By \eqref{0409:07}, we obtain that 
for any $x\in H$, $p>4\vee\kappa$ and $\tilde{p}\geq2$,
\begin{align}\label{0412:01}
\frac{\lambda_1'\tilde{p}}{8}\int_s^t\|Z_{s,r}^{x,\mu}(\omega)\|_V^2\|Z_{s,r}^{x,\mu}(\omega)\|^{\tilde{p}-2}\rmd r
&
\leq 
\|x+z_s^I(\omega)\|^{\tilde{p}}
+\int_s^t\left(-\frac{\lambda_2'\tilde{p}}{8}+\frac{\tilde{p}}{2}g_1(r,\omega)\right)
\|Z_{s,r}^{x,\mu}(\omega)\|^{\tilde{p}}\rmd r\nonumber\\
&\quad
+\int_s^t\left[(\mu(\|\cdot\|^p))^{\frac{\kappa\tilde{p}}{p}}{\rm e}^{-\frac{\lambda'\kappa\tilde{p}}{p}(r-s)}
+Cf_2(r,\omega)^{\tilde{p}}
+Cf_3(r,\omega)^{{\tilde{p}}/2}\right]\rmd r,
\end{align}
which along with $\tilde{p}=2$, \eqref{0421:03}, \eqref{0418:05} and the property 
$z_i^I(\cdot,\omega)\in L_{loc}^\infty(\R;\R)$ implies that 
for any $x_n\in B_H$ and $\mu_n\in B_{\cP}$,
\begin{align}\label{0920:03}
\int_{s}^t\|Z_{s,r}^{x_n,\mu_n}(\omega)\|_V^2\rmd r
&
\leq C\left(\|x_n\|^2+\|z^I_s(\omega)\|^2
+\int_s^t\left(-\frac{\lambda_2'}{4}+f_1(r,\omega)\right)
\|Z_{s,r}^{x_n,\mu_n}(\omega)\|^{2}\rmd r\right)
\nonumber\\
&\quad
+C\int_s^t\left[(\mu_n(\|\cdot\|^p))^{\frac{2\kappa}{p}}{\rm e}^{-\frac{2\lambda'\kappa}{p}(r-s)}
+f_2(r,\omega)^{2}+f_3(r,\omega)\right]\rmd r\nonumber\\
&
< C_\omega.
\end{align}
Note that for any $r\geq s$,
\begin{align*}
Z_{s,r}^{x_n,\mu_n}(\omega)=x_n-z_s^I(\omega)
+\int_s^r A_\omega^{\mu_n}\left(\tau,Z_{s,\tau}^{x_n,\mu_n}(\omega)\right)\rmd\tau,
\end{align*}
which along with \cite[Theorem 1.6]{Sh97} implies that
for a.e. $r\in[s,t]$, $Z_{s,r}^{x_n,\mu_n}(\omega)$ is differentiable with
\begin{equation*}
\frac{\rmd}{\rmd r}Z_{s,r}^{x_n,\mu_n}(\omega)=
A_\omega^{\mu_n}(r,Z_{s,r}^{x_n,\mu_n}(\omega)).
\end{equation*}
In view of \eqref{0911:05}, \eqref{0401:02}, \eqref{0920:03} and \eqref{0412:01} with $\tilde{p}=4$, we have
\begin{align}\label{0920:01}
\int_s^t\left\|\frac{\rmd}{\rmd r}Z_{s,r}^{x_n,\mu_n}(\omega)
\right\|_{V^*}^2\rmd r
&
=\int_s^t\left\|A_\omega^{\mu_n}(r,Z_{s,r}^{x_n,\mu_n}(\omega))
\right\|_{V^*}^2\rmd r\nonumber\\
&
\leq C\int_s^t\left(\|Z_{s,r}^{x_n,\mu_n}(\omega)+z_r^I(\omega)\|_V^2
+P_{s,r}^*\mu_n(\|\cdot\|^\kappa)+1\right)\nonumber\\
&\qquad\quad
\times\left(\|Z_{s,r}^{x_n,\mu_n}(\omega)+z_r^I(\omega)\|^2
+P_{s,r}^*\mu_n(\|\cdot\|^\kappa)+1\right)\rmd r\nonumber\\
&
\leq C\int_s^t\Big(
\|Z_{s,r}^{x_n,\mu_n}(\omega)\|_V^2
\|Z_{s,r}^{x_n,\mu_n}(\omega)\|^2
+\left(1+\|z_r^I(\omega)\|^2\right)
\|Z_{s,r}^{x_n,\mu_n}(\omega)\|_V^2
\nonumber\\
&\qquad\quad
+\left(1+\|z_r^I(\omega)\|_V^2\right)
\|Z_{s,r}^{x_n,\mu_n}(\omega)\|^2
+\|z_r^I(\omega)\|_V^4
+1\Big)\rmd r<C.
\end{align}

Define
$$
W:=\left\{v\in L^2([s,t];V),
\frac{\rmd}{\rmd r}v\in L^{2}([s,t];V^*):
\|v\|_W<\infty\right\},
$$
where
$$
\|v\|_W:=\|v\|_{L^2([s,t];V)}
+\left\|\frac{\rmd}{\rmd r}v\right\|_{L^{2}([s,t];V^*)}.
$$
It follows from Aubin-Lions' lemma that $W\subset L^2([s,t];H)$ is compact.
Define $Z_n:=Z_{s,t}^{x_n,\mu_n}(\omega)$.
Hence, by \eqref{0920:03} and \eqref{0920:01}, we obtain that there exist subsequences
$\{x_{n_k}\}$ and $\{\mu_{n_k}\}$ such that $Z_{n_k}$ converges to $Z_0$ in $L^2([s,t];H)$.
For almost everywhere $r\in[s,t]$, we also have
$Z_{n_k}(r)$ converges to $Z_0(r)$ in $H$. 
On the other hand, by \eqref{0421:01}, one sees that there exists a subsequence such that
$P_{s,r}^*\mu_{n_k}$ converges to $\mu_0$.
Therefore, by the continuity of $Z$,
we get
$$
\lim_{k\rightarrow\infty} Z_{s,t}^{x_{n_k},\mu_{n_k}}(\omega)
=\lim_{k\rightarrow\infty}Z_{r,t}^{Z_{s,r}^{x_{n_k},\mu_{n_k}}(\omega)+z_r^I(\omega),
P_{s,r}^*\mu_{n_k}}(\omega)=Z_{r,t}^{Z_0(r)+z_r^I(\omega),\mu_0}(\omega).
$$
Now we show that $\varphi$ is $\cD$-pullback asymptotically compact. 
Indeed, 
for any $\omega\in\Omega$, $D\in\cD$, $t_n\rightarrow+\infty$, $p_n\in\cP^*$,
$q_n\in\Theta_{2,-t_n}p_n$ and $x_n\in D(\theta_{1,-t_n}\omega)$, we have
\begin{align*}
\varphi(t_n,\theta_{1,-t_n}\omega,q_n,x_n)
&
=Z_{0,t_n}^{x_n,q_n}(\theta_{1,-t_n}\omega)+z_{t_n}^I(\theta_{1,-t_n}\omega)\\
&
=Z_{-t_n,0}^{x_n,q_n}(\omega)+z_0^I(\omega)
=Z_{-1,0}^{Z_{-t_n,-1}^{x_n,q_n}(\omega),P_{-t_n,-1}^*q_n}(\omega)
+z_0^I(\omega).
\end{align*}
Define 
$$
\tilde{x}_n:=Z_{-t_n,-1}^{x_n,q_n}(\omega),\quad \tilde{\mu}_n:=P_{-t_n,-1}^*q_n.
$$
By \eqref{1027:01} and \eqref{0401:02}, we obtain that 
$\{\tilde{x}_n\}$ and $\{\tilde{\mu}_n\}$ are bounded.
Hence, $\{\varphi(t_n,\theta_{1,-t_n}\omega,q_n,x_n)\}$ has a convergent subsequence.
\end{proof}

We now turn to the proof of Theorem \ref{thmNSeq2}.

\begin{proof}[Proof of Theorem \ref{thmNSeq2}]
(i) It follows from Lemma \ref{thmNSeq01}.

(ii)--(iii)
By \eqref{0906:05}, one sees that for any $x\in D\in\cD_{\lambda_0}$
and $\mu\in\cP^*$,
\begin{align*}
&
\|X(-t,0,\omega,\mu)x(\theta_{1,-t}\omega)\|^2\\
&
\leq 2\|Z_{-t,0}^{x,\mu}(\omega)\|^2+2\|z_{0}^I(\omega)\|^2\\
&
\leq 
4\|x(\theta_{1,-t}\omega)\|^2{\rm e}^{\int_{-t}^{0}\left(-\frac{1}{8}\lambda_2'+g_1(r,\omega)\right)\rmd r}
+C_\mu{\rm e}^{-\frac{2\lambda'\kappa}{p}t}
\int_{-t}^{0}{\rm e}^{\int_r^{0}\left(-\frac{1}{8}\lambda_2'+g_1(\tau,\omega)\right)\rmd\tau}\rmd r\\
&\quad
+2\|z_{-t}^I(\omega)\|^2{\rm e}^{\int_{-t}^{0}\left(-\frac{1}{8}\lambda_2'+g_1(r,\omega)\right)\rmd r}
+2\int_{-t}^{0}Cg_2(r,\omega){\rm e}^{\int_r^{0}\left(-\frac{1}{8}\lambda_2'+g_1(\tau,\omega)\right)\rmd\tau}
+2\|z_{0}^I(\omega)\|^2.
\end{align*}
By Lemma \ref{lem0414}, we can choose $\eta$ large enough so that
for any $\lambda_0<\frac{\lambda_2'}{8}$,
there exists $t_0(D,\omega)$ such that for any 
$t>t_0(D,\omega)$ and $x\in D$,
\begin{equation*}
4\|x(\theta_{1,-t}\omega)\|^2{\rm e}^{\int_{-t}^{0}
\left(-\frac{1}{8}\lambda_2'+g_1(r,\omega)\right)\rmd r}\leq \frac{1}{3},
\quad
C_\mu{\rm e}^{-\frac{2\lambda'\kappa}{p}t}\leq \frac{1}{3},
\end{equation*}
and
\begin{equation*}
2\|z_{-t}^I(\omega)\|^2{\rm e}^{\int_{-t}^{0}
\left(-\frac{1}{8}\lambda_2'+g_1(r,\omega)\right)\rmd r}\leq\frac{1}{3}.
\end{equation*}
Define 
$$
R(\omega):=1+2\|z_{0}^I(\omega)\|^2
+\int_{-\infty}^{0}Cg_2(r,\omega)
{\rm e}^{\int_r^{0}\left(-\frac{1}{8}\lambda_2'+g_1(\tau,\omega)\right)\rmd\tau}\rmd r
$$
and 
$$
B_R(\omega):=\left\{x\in H: \, \|x\|^2\leq R(\omega)\right\},\quad
B_R:=\left\{B_R(\omega):\omega\in\Omega\right\}.
$$
Hence, $B_R$ is the closed absorbing set.
By Lemma \ref{lem0414}, one sees that 
\begin{align*}
&
\lim_{t\rightarrow\infty}{\rm e}^{-\lambda_0t}\|B_R(\theta_{1,-t}\omega)\|^2\\
&
=\lim_{t\rightarrow\infty}{\rm e}^{-\lambda_0t}R(\theta_{1,-t}\omega)\\
&
=\lim_{t\rightarrow\infty}{\rm e}^{-\lambda_0t}\left(2\|z_{0}^I(\theta_{1,-t}\omega)\|^2
+\int_{-\infty}^{0}g_2(r,\theta_{1,-t}\omega)
{\rm e}^{\int_r^{0}\left(-\frac{1}{8}\lambda_2'
+g_1(\tau,\theta_{1,-t}\omega)\right)\rmd\tau}\rmd r
\right)=0,
\end{align*}
which implies that $B_R\in\cD_{\lambda_0}$. 
Combining Lemma \ref{lem1027}, Proposition \ref{prop0117}
 and Theorem \ref{thm0330}, 
we complete the proof.
\end{proof}

\appendix

\section{}
In this section, we introduce some definitions and results of dynamical systems and
attractors; see e.g. \cite{Arnold, CJLL13, CLL12, CF, Hale88, KR2011}
for more details.

\begin{de}\label{de1103}
Let $\sS$ be a collection of some closed subset of a metric space $(\cX,d)$. 
We say that $\sS$ is {\it inclusion closed}, if for any $D_1\in\sS$ and closed 
$D_2\subset\cX$ with $D_2\subset D_1$ then $D_2\in\sS$. 
\end{de}

\begin{de}
A nonempty compact set $A\in\sS$ is called a {\it $\sS$-global attractor} 
of a semi-DS $\phi$ on the metric space $(\cX,d)$ if it is $\phi$-invariant and attracts all element in $\sS$, 
i.e. for any $D\in\sS$,
$$
\lim_{t\rightarrow+\infty}{\rm {dist}}(\phi(t,D),A)=0.
$$
\end{de}

\begin{lem}\label{EGA}
Assume that a semi-DS $\phi$ on metric space $\cX$ has a nonempty compact absorbing set $B\in\sS$, 
that is to say,  for any $D\in\sS$, there exists $T=T(D)>0$ such that $\phi(t,D)\subset B$ for all $t\geq T$.
If for any $t\geq0$, $\phi(t,\cdot):\cX\rightarrow\cX$ is continuous over 
any element in $\sS$, 
then $\phi$ has a $\sS$-global attractor $A$ given by
$$
A=\Omega(B):=\bigcap_{s\geq0}\overline{\bigcup_{t\geq s}\phi(t,B)}.
$$ 
\end{lem}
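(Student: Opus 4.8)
The plan is to show that $A=\Omega(B)$ is a $\sS$-global attractor by verifying, in turn, that it is nonempty and compact, that it belongs to $\sS$, that it attracts every element of $\sS$, and that it is $\phi$-invariant. The starting observation is that since $B\in\sS$ is absorbing there exists $T_B=T(B)>0$ with $\phi(t,B)\subset B$ for all $t\geq T_B$; as $B$ is compact, hence closed, this gives $\overline{\bigcup_{t\geq s}\phi(t,B)}\subset B$ for every $s\geq T_B$. The family $\{\overline{\bigcup_{t\geq s}\phi(t,B)}\}_{s\geq T_B}$ is therefore a decreasing family of nonempty closed subsets of the compact set $B$, so $A=\bigcap_{s\geq0}\overline{\bigcup_{t\geq s}\phi(t,B)}=\bigcap_{s\geq T_B}\overline{\bigcup_{t\geq s}\phi(t,B)}$ is nonempty and compact; being a closed subset of $B\in\sS$, it lies in $\sS$ by the inclusion-closedness of $\sS$ (Definition \ref{de1103}). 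This last point is what will let us invoke the continuity of $\phi(t,\cdot)$ on $A$ itself.

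Next I would record the sequential description $A=\{y\in\cX:\ \exists\, t_n\to+\infty,\ x_n\in B,\ \phi(t_n,x_n)\to y\}$, which follows by taking $s=n$ in the definition of $A$ in one direction, and by noting $\phi(t_n,x_n)\in\bigcup_{t\geq s}\phi(t,B)$ once $t_n\geq s$ in the other. With this in hand I would prove that $A$ attracts $B$: if $\dist(\phi(t,B),A)\not\to0$, there are $\eps>0$, $t_n\to+\infty$ and $x_n\in B$ with $\dist(\phi(t_n,x_n),A)\geq\eps$; for $n$ large $\phi(t_n,x_n)\in B$, so compactness of $B$ yields a subsequential limit $y$, which lies in $A$ by the description above, contradicting $\dist(y,A)\geq\eps$. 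Attraction of a general $D\in\sS$ then follows from the absorbing property and the semigroup law: choosing $T_D$ with $\phi(T_D,D)\subset B$, for $s\geq0$ we have $\phi(T_D+s,D)=\phi(s,\phi(T_D,D))\subset\phi(s,B)$, hence $\dist(\phi(T_D+s,D),A)\leq\dist(\phi(s,B),A)\to0$.

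The invariance $\phi(t,A)=A$ is where the continuity hypothesis is really used. For the inclusion $\phi(t,A)\subset A$, take $y=\lim\phi(t_n,x_n)\in A$; for $n$ large all points $\phi(t_n,x_n)$ and $y$ lie in $B$, so continuity of $\phi(t,\cdot)$ on $B$ gives $\phi(t,\phi(t_n,x_n))=\phi(t+t_n,x_n)\to\phi(t,y)$, and since $t+t_n\to+\infty$ and $x_n\in B$, the sequential description places $\phi(t,y)$ in $A$. For the reverse inclusion $A\subset\phi(t,A)$, write $\phi(t_n,x_n)=\phi(t,w_n)$ with $w_n:=\phi(t_n-t,x_n)\in B$ for $n$ large; by compactness of $B$ a subsequence $w_{n_k}$ converges to some $w$, which belongs to $A$ (again by the sequential description, since $t_{n_k}-t\to+\infty$), and continuity of $\phi(t,\cdot)$ on $B$ gives $y=\lim\phi(t,w_{n_k})=\phi(t,w)\in\phi(t,A)$. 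Combining the four properties shows that $A$ is a $\sS$-global attractor.

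I anticipate the main obstacle is not any single estimate — there are essentially none, since the compact absorbing set makes the asymptotic compactness for free — but rather the bookkeeping in the invariance argument: one must keep careful track of which points land in $B$ (or in elements of $\sS$) so that every appeal to continuity of $\phi(t,\cdot)$ is legitimate, given that continuity is only assumed over elements of $\sS$ rather than globally. Establishing $A\in\sS$ at the outset is the key enabling step that makes this bookkeeping go through.
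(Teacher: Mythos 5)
Your proof is correct and follows essentially the same route as the paper's: compactness and nonemptiness from the absorbing compact set $B$, the sequential characterization of $\Omega(B)$, invariance via the semigroup law and continuity of $\phi(t,\cdot)$ over $B$, and attraction by contradiction using compactness of $B$. The only cosmetic differences are that you obtain nonemptiness from the finite intersection property of a decreasing family of nonempty compacta rather than a subsequence extraction, and you explicitly record $A\in\sS$ via inclusion-closedness, which the paper leaves implicit.
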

\begin{rem}\label{Rem0607}
Note that $x\in \Omega(B)$ if and only if there exists $t_n\rightarrow+\infty$
and $\{x_n\}\subset B$ such that 
$\lim_{n\rightarrow\infty}\phi(t_n,x_n)=x$.
\end{rem}
\begin{proof}
First, we show that $\Omega(B)$ is nonempty compact set.
Since $B$ is nonempty, for any $s\geq0$, $\bigcup_{t\geq s}\phi(t,B)$ is nonempty
and decreases as $s$ increases. 
By the absorbing property of $B$, one sees that there 
exists $t_0>0$ such that 
$$
\bigcap_{s\geq0}\overline{\bigcup_{t\geq s}\phi(t,B)}
\subset\overline{\bigcup_{t\geq t_0}\phi(t,B)}\subset B,
$$
which implies that $\Omega(B)$ is compact. 
For any $t_n\to+\infty$ and $x_n\in B$, it follows from the absorption of $B$ that $\{\phi(t_n,x_n)\}_{t_n>t_B}\subset B$. 
The compactness of $B$ implies that $\{\phi(t_n,x_n)\}_{t_n>t_B}$ possesses a convergent subsequence. 
By Remark \ref{Rem0607}, $\Omega(B)$ is nonempty.

Now we show that $\Omega(B)$ is invariant. 
It follows from Remark \ref{Rem0607} that for any $x\in\Omega(B)$, there exist $t_n\rightarrow+\infty$ and $\{x_n\}\subset B$ such that 
$\lim_{n\rightarrow\infty}\phi(t_n,x_n)=x$.
On the one hand,
due to the absorption of $B$ and $t_n\to+\infty$, 
$\{\phi(t_n,x_n)\}_{t_n>t_B}\subset B$.
Therefore, in view of the semigroup property and the continuity over $B$ 
of $\phi$, we have for any $t\geq 0$, 
\begin{equation*}
\lim_{n\rightarrow\infty}\phi(t+t_n,x_n)=\lim_{n\rightarrow\infty}\phi(t,\phi(t_n,x_n))
=\phi(t,x),
\end{equation*}
which implies that $\phi(t,x)\in\Omega(B)$. 
By the arbitrary of $x\in\Omega(B)$, we obtain 
that for any $t\geq0$, $\phi(t,\Omega(B))\subset\Omega(B)$.
On the other hand, $t_n\to+\infty$ and the absorption and compactness of $B$ imply that for all $t\geq0$ there exists a
subsequence $\{n_k\}$ such that $t_{n_1}-t>t_B$ and 
$\phi(t_{n_k}-t,x_{n_k})\rightarrow y\in \Omega(B)$.
By again the semigroup property and the continuity over $B$
of $\phi$, one sees that
\begin{equation*}
\phi(t,y)=\lim_{k\rightarrow\infty}\phi(t,\phi(t_{n_k}-t,x_{n_k}))
=\lim_{k\rightarrow\infty}\phi(t_{n_k},x_{n_k})=x.
\end{equation*}
Hence, for any $t\geq0$, $\Omega(B)\subset\phi(t,\Omega(B))$.

Finally, we prove that $\Omega(B)$ attracts all element in $\sS$.
By the absorption of $B$, it suffices to show that $\Omega(B)$ attracts $B$.
Assume that $\Omega(B)$ does not attract $B$, i.e.,
there exist $t_n\rightarrow+\infty$, $\{x_n\}\subset B$ and $\varepsilon_0>0$ such that 
for all $n\geq1$,
\begin{equation}\label{0607:01}
{\rm{dist}}(\phi(t_n,x_n),\Omega(B))\geq \varepsilon_0.
\end{equation}
Since $B$ is absorbing and compact, there exists a subsequence $\{n_k\}$ such that
for all $k\geq1$, 
$$
\{\phi(t_{n_k},x_{n_k})\}\subset B {\text{ and }} 
\lim_{k\rightarrow\infty}\phi(t_{n_k},x_{n_k})=:\tilde{x}\in\Omega(B).
$$
Hence, by the continuity over $B$ of $\phi$, one sees that
\begin{equation*}
0=\lim_{k\rightarrow\infty}d(\phi(t_{n_k},x_{n_k}),\tilde{x})
\geq \lim_{k\rightarrow\infty}{\rm{dist}}(\phi(t_{n_k},x_{n_k}),\Omega(B)),
\end{equation*}
which contradicts \eqref{0607:01}. We complete the proof.
\end{proof}

\begin{de}\label{defMDS}
We say that $(\Omega,\cF,\mathbb{P},(\theta_{1,t})_{t\in\R})$ is a 
{\it metric dynamical system} 
if the following conditions are satisfied:
\begin{itemize}
\item[(i)] $(t,\omega)\mapsto\theta_{1,t}(\omega)$ is $\cB(\R)\otimes\cF/\cF$ measurable.
\item[(ii)] $\theta_{1,0}=Id_{\Omega}$ and for all $t,s\in\R$,
$\theta_{1,t+s}=\theta_{1,t}\circ\theta_{1,s}$. 
\item[(iii)] For any $t\in\R$, $\theta_{1,t}$ is $\mathbb{P}$-preserving. 
\end{itemize}
\end{de}

\begin{de}\label{defRDS}
A {\it random dynamical system} with time $\R$ on a metric space 
$(\cX,d)$ with Borel $\sigma$-algebra $\cB$ over $\theta_{1,t}$ on $(\Omega,\cF,\mP)$ 
is a measurable map
$$
\varphi:\R\times\cX\times\Omega\rightarrow\cX,
\quad (t,x,\omega)\mapsto\varphi(t,\omega)x
$$
such that $\varphi(0,\omega)=Id_{\cX}$ and $\varphi$ satisfies the cocycle property, i.e.
for all $t,s\in\R$ and $\omega\in\Omega$,
$$
\varphi(t+s,\omega)=\varphi(t,\theta_{1,s}\omega)\circ\varphi(s,\omega).
$$  

Moreover, an RDS is said to be continuous if $\varphi(t,\omega):\cX\rightarrow\cX$
is continuous for all $t\in\R$ outside a $\mP$-nullset.
\end{de}

For simplicity, we suppress $\theta_{1,t}$ and $(\Omega,\cF,\mP)$,
and simply refer to an RDS $\varphi$.

\begin{de}\label{de0916}
Let $2^\cX$ be the collection of all subsets of $\cX$.
A set-valued map $D:\Omega\rightarrow 2^{\cX}$ taking values in the closed/compact subsets of $\cX$ is said to be measurable 
if the mapping $\omega\mapsto d(x,D(\omega))$ is measurable for any $x\in\cX$.
\end{de}

We recall the following lemma, which is from \cite{CLL12}.
\begin{lem}\label{lem0827}
Let $\cX$ be Polish space. Then $D:\Omega\rightarrow 2^{\cX}$ is compact and measurable if and only if $D(\omega)$ is compact for every $\omega\in\Omega$
and the set $\{\omega: D(\omega)\cap C\neq \emptyset\}$ is measurable for any
closed set $C\subset \cX$.
\end{lem}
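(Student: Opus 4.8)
The plan is to exploit that compactness of each $D(\omega)$ appears identically on both sides of the asserted equivalence, so the real content is the following: fix a compact-valued map $D$ and show that the two measurability conditions are equivalent, namely $\omega\mapsto\dist(x,D(\omega))$ being measurable for every $x\in\cX$ (the condition in Definition \ref{de0916}) versus $\{\omega:D(\omega)\cap C\neq\emptyset\}$ being measurable for every closed $C\subset\cX$. The two structural facts I would lean on throughout are: (a) $x\mapsto\dist(x,K)$ is $1$-Lipschitz, hence continuous, for any nonempty $K$; and (b) compactness of $K$ forces the infimum defining $\dist(\cdot,K)$ to be attained and upgrades ``zero distance to a closed set'' into genuine intersection. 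Separability of the Polish space $\cX$ will let me convert conditions indexed by all points of a closed set into countable operations.

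For the forward direction I would assume $\omega\mapsto\dist(x,D(\omega))$ is measurable for every $x$ and fix a closed set $C$. First I would record the compactness reduction: for compact $K$ and closed $C$, one has $K\cap C\neq\emptyset$ if and only if $\inf_{c\in C}\dist(c,K)=0$. The only nontrivial implication is that zero distance forces intersection; here one takes $c_k\in C$ and $y_k\in K$ with $\dist(c_k,y_k)\to0$, passes to a convergent subsequence $y_{k_j}\to y\in K$ using compactness of $K$, observes $c_{k_j}\to y$, and concludes $y\in C$ since $C$ is closed. Next, choosing a countable dense subset $\{c_n\}\subset C$ (possible since $\cX$, and hence $C$, is separable) and using continuity of $c\mapsto\dist(c,D(\omega))$, one gets $\inf_{c\in C}\dist(c,D(\omega))=\inf_n\dist(c_n,D(\omega))$. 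The right-hand side is measurable in $\omega$ as a countable infimum of measurable functions, so $\{\omega:D(\omega)\cap C\neq\emptyset\}=\{\omega:\inf_n\dist(c_n,D(\omega))=0\}$ is measurable.

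For the reverse direction I would assume $\{\omega:D(\omega)\cap C\neq\emptyset\}$ is measurable for every closed $C$ and fix $x\in\cX$. The key observation, again using compactness, is that the infimum $\dist(x,D(\omega))$ is attained, whence for every $r\geq0$,
$$
\{\omega:\dist(x,D(\omega))\leq r\}=\{\omega:D(\omega)\cap \overline{B}(x,r)\neq\emptyset\},
$$
with $\overline{B}(x,r)$ the closed ball of radius $r$ about $x$. By hypothesis the right-hand side is measurable for each $r$, and since measurability of all sublevel sets $\{\dist(x,D(\cdot))\leq r\}$ (it suffices to take $r$ rational) characterizes measurability of the nonnegative function $\omega\mapsto\dist(x,D(\omega))$, this function is measurable. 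As $x$ is arbitrary, $D$ is measurable in the sense of Definition \ref{de0916}.

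The computations are light, and the only genuinely delicate point — and the place where the compactness hypothesis is actually consumed — is the passage between distances and intersections. Without compactness both translations break down: two disjoint closed sets can have zero distance, so ``zero distance $\Rightarrow$ intersection'' fails, and the infimum defining $\dist(x,D(\omega))$ need not be attained, so sublevel sets need not correspond to intersections with closed balls. Thus the main obstacle is simply to carry the compactness hypothesis cleanly through each of the two distance-to-intersection identities; the surrounding steps (countable-infimum measurability and the reduction to a dense sequence via Lipschitz continuity) are routine.
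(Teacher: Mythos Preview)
Your argument is correct. The two directions are handled cleanly: the forward direction uses compactness of $D(\omega)$ together with closedness of $C$ to turn ``$\dist(K,C)=0$'' into an actual intersection, and the reverse direction uses attainment of the infimum on the compact set $D(\omega)$ to identify sublevel sets with hitting sets of closed balls. The separability reductions are standard.

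There is, however, nothing in the paper to compare your argument against: the paper does not prove this lemma at all. It is stated in the appendix with the preface ``We recall the following lemma, which is from \cite{CLL12}'', and is then used as a black box (in Step~1 of the proof of Theorem~\ref{thm0330}) to deduce measurability of $\omega\mapsto\cA(\omega,p)$ from that of $\omega\mapsto\sA(\omega)$. So your proposal supplies a self-contained proof where the paper simply cites the literature.

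One small remark: in the reverse direction compactness is convenient but not strictly essential for the identity you use. Even for merely nonempty $D(\omega)$ one has $\{\omega:\dist(x,D(\omega))<r\}=\bigcup_{q\in\mathbb{Q},\,q<r}\{\omega:D(\omega)\cap\overline{B}(x,q)\neq\emptyset\}$, which would also give measurability. Your use of compactness to get the cleaner sublevel-set identity is perfectly fine, though.
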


We use $\cD$ to denote a collection of some families of nonempty subsets of $\cX$:
$$
\cD:=\left\{D=\left\{\emptyset\neq D(\omega)\subset\cX:\omega\in\Omega\right\}:
D ~\text{satisfies some conditions}\right\}.
$$

\begin{de}\label{DefRA}
A compact random set $\cA\in\cD$ is said to be a {\it $\cD$-random attractor} for an RDS $\varphi$,
if it satisfies $\mP$-a.s.
\begin{enumerate}
\item[(i)] $\cA$ is invariant, i.e. $\varphi(t,\omega)A(\omega)=A(\theta_t\omega)$
for all $t>0$.
\item[(ii)] $\cA$ attracts every member of $\cD$, i.e. for every $B\in\cD$ and $\omega\in\Omega$,
    $$
    \lim_{t\rightarrow+\infty}\dist \left(\varphi(t,\theta_{-t}\omega)B(\theta_{-t}\omega),
    \cA(\omega)\right)=0.
    $$
\end{enumerate}
\end{de}

\begin{de}\label{Duniverse}
We say that $\cD$ is {\it neighborhood closed} if for every $D\in\cD$, there exists a 
constant $\epsilon>0$ depending on $D$ such that the family
$$
\left\{B(\omega):B(\omega)\text{~is a nonempty subset of}~\cN_\epsilon(D(\omega)),
\forall \omega\in \Omega\right\}
$$
also belongs to $\cD$, where
$$
\cN_\epsilon(B):=\left\{x\in\cX: d(x,B)<\epsilon\right\}
$$
for $B\subset \cX$. 
\end{de}

\begin{rem}
Note that the neighborhood closedness of $\cD$ implies that $\cD$ is set inclusion, i.e.
if $D_1\in\cD$ and $D_2(\omega)\subset D_1(\omega)$ for all $\omega\in\Omega$,
then $D_2\in\cD$.
\end{rem}

By \cite[Theorem 7]{Caraballo2006} and \cite[Theorem 2.23]{WangBX12}, we have the following lemma.
\begin{lem}\label{lemAR}
Let $\cD$ be a neighborhood closed collection of some families of nonempty subsets
of $\cX$. Assume that $\varphi$ is a continuous RDS on $\cX$. 
If $\varphi$ is $\cD$-pullback asymptotically compact in $\cX$
and there exists a closed random set $K\in\cD$ such that for all 
$\omega\in\Omega$ and $D\in\cD$, there exists $T(\omega,D)>0$ so that
\begin{equation}\label{1025:02}
\varphi(t,\theta_{-t}\omega)D(\theta_{-t}\omega)\subset
K(\omega),\quad \forall t\geq T(\omega,D),
\end{equation}
then there exists
a $\cD$-random attractor $\cA$ in $\cD$, defined by
$$
\cA(\omega):=\Omega(K(\omega))=\bigcup_{B\in\cD}\Omega(B(\omega)).
$$
\end{lem}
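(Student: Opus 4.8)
The plan is to recognize the statement as the classical existence theorem for pullback random attractors of a continuous RDS, so that it is in essence \cite[Theorem 7]{Caraballo2006} combined with \cite[Theorem 2.23]{WangBX12}: the two structural hypotheses required there --- a closed $\cD$-absorbing random set and $\cD$-pullback asymptotic compactness of $\varphi$ --- are exactly what is assumed here, and the attractor they produce is precisely the $\omega$-limit set $\cA(\omega)=\Omega(K(\omega))$. To keep the argument self-contained I would nonetheless run through the standard steps below.

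First I would record the working description: $x\in\Omega(B(\omega))$ if and only if there exist $t_n\to+\infty$ and $x_n\in B(\theta_{-t_n}\omega)$ with $\varphi(t_n,\theta_{-t_n}\omega)x_n\to x$. Combined with \eqref{1025:02} and $\cD$-pullback asymptotic compactness this shows that $\cA(\omega):=\Omega(K(\omega))$ is nonempty (any such sequence has a convergent subsequence) and compact (it is closed by construction and eventually lies inside $K(\omega)$ by \eqref{1025:02}); measurability of $\omega\mapsto\cA(\omega)$ in the sense of Definition \ref{de0916} follows from the measurability of $\varphi$ and Lemma \ref{lem0827}. The identity $\cA(\omega)=\bigcup_{B\in\cD}\Omega(B(\omega))$ is then immediate in the direction $\subseteq$ (since $K\in\cD$), and in the direction $\supseteq$ one uses, for fixed $B\in\cD$, the absorption time $T=T(\omega,B)$ from \eqref{1025:02} together with the cocycle property to factor $\varphi(t,\theta_{-t}\omega)$ through $K$, so that every limit point of $\varphi(t_n,\theta_{-t_n}\omega)x_n$ with $x_n\in B(\theta_{-t_n}\omega)$ already lies in $\Omega(K(\omega))$.

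Next I would prove invariance $\varphi(t,\omega)\cA(\omega)=\cA(\theta_t\omega)$. For the inclusion $\subseteq$ one pushes a convergent sequence $\varphi(t_n,\theta_{-t_n}\omega)x_n\to x\in\cA(\omega)$ through the continuous map $\varphi(t,\omega,\cdot)$ and rewrites $\varphi(t,\omega)\varphi(t_n,\theta_{-t_n}\omega)=\varphi(t+t_n,\theta_{-(t+t_n)}\theta_t\omega)$ via the cocycle identity, using $\theta_{-t_n}\omega=\theta_{-(t+t_n)}\theta_t\omega$. For the inclusion $\supseteq$ one picks $x\in\cA(\theta_t\omega)$ represented by $t_n\to+\infty$ and $x_n\in K(\theta_{-t_n}\theta_t\omega)$, applies $\cD$-pullback asymptotic compactness to the backward-shifted orbit $\varphi(t_n-t,\theta_{-(t_n-t)}\omega)x_n$ to extract a subsequential limit $y\in\cA(\omega)$, and then uses continuity of $\varphi(t,\omega,\cdot)$ to conclude $\varphi(t,\omega)y=x$. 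Attraction of every $B\in\cD$ is a contradiction argument: if some $\varphi(t_n,\theta_{-t_n}\omega)x_n$ stayed at distance $\geq\epsilon_0$ from $\cA(\omega)$, asymptotic compactness would hand us a subsequential limit in $\Omega(B(\omega))\subseteq\cA(\omega)$. Finally $\cA\in\cD$ because $\cA(\omega)\subseteq K(\omega)$ by \eqref{1025:02}, $K\in\cD$, and $\cD$ is set-inclusion closed (a consequence of its neighborhood-closedness).

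The one genuinely delicate point --- and the reason I would ultimately just cite \cite{Caraballo2006,WangBX12} rather than reproduce the bookkeeping --- is the invariance step: one has to keep the $\theta$-shifts inside the cocycle identity aligned and ensure the subsequences produced by asymptotic compactness are chosen compatibly before passing limits through the continuous maps $\varphi(t,\omega,\cdot)$. The remaining ingredients are elementary point-set topology together with the measurable-selection facts already recalled in the appendix.
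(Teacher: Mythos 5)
The paper offers no proof of this lemma at all---it simply cites \cite[Theorem 7]{Caraballo2006} and \cite[Theorem 2.23]{WangBX12}---so your plan of reducing the statement to those two results is exactly the paper's approach, and your supplementary sketch of the standard steps (nonemptiness and compactness of $\Omega(K(\omega))$ via asymptotic compactness, the identity $\Omega(K(\omega))=\bigcup_{B\in\cD}\Omega(B(\omega))$ via absorption, invariance via the cocycle identity and continuity, attraction by contradiction, and $\cA\in\cD$ from $\cA(\omega)\subset K(\omega)$ plus inclusion-closedness) is the correct classical argument. One small caution: your parenthetical justification of compactness of $\Omega(K(\omega))$ (``closed by construction and eventually lies inside $K(\omega)$'') is not by itself sufficient, since $K(\omega)$ is only assumed closed rather than compact; the compactness genuinely requires a diagonal-sequence application of $\cD$-pullback asymptotic compactness, approximating any sequence $y_m\in\Omega(K(\omega))$ by points $\varphi(s_m,\theta_{-s_m}\omega)z_m$ with $s_m\to\infty$ and extracting a convergent subsequence whose limit stays in the closed set $\Omega(K(\omega))$.
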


Finally we recall the definition of conjugation mapping and some results 
concerning conjugated stochastic flows; see e.g. \cite{IL01}.
\begin{de}
Let $(\cX,d_1)$ and $(\cY,d_2)$ be two metric spaces.
\begin{enumerate}
  \item A family of homeomorphisms $\cT:=\{\cT(\omega):\cX\rightarrow\cY\}_{\omega\in\Omega}$
  such that the maps $\omega\mapsto \cT(\omega)x$ and $\omega\mapsto \cT^{-1}(\omega)y$ are measurable
  for all $x\in\cX$ and $y\in\cY$, is called a stationary conjugation mapping. 
  Let $\cT(t,\omega)=\cT(\theta_t\omega)$.
  \item Let $S_1(t,s,\omega)$ and $S_2(t,s,\omega)$ be stochastic flows. We say that
  $S_1(t,s,\omega)$ and $S_2(t,s,\omega)$ are (stationary) conjugated, if there exists a (stationary) conjugation
  mapping $\cT$ such that
  \begin{equation*}
  S_2(t,s,\omega)=\cT(t,\omega)\circ S_1(t,s,\omega)\circ \cT^{-1}(s,\omega).
  \end{equation*}
\end{enumerate}
\end{de}

\begin{lem}
Let $\cT$ be a conjugated mapping and $S_1(t,s,\omega)$ be a (measurable and continuous)
stochastic flow. Then
\begin{equation*}
S_2(t,s,\omega):=\cT(t,\omega)\circ S_1(t,s,\omega)\circ \cT^{-1}(s,\omega)
\end{equation*}
defines a conjugated (measurable and continuous) stochastic flow. If $\cT$ is
stationary and $S_1(t,s,\omega)$ is a cocycle then $S_2(t,s,\omega)$ is a cocycle.
\end{lem}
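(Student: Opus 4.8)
The plan is to verify directly that $S_2$ inherits from $S_1$ and $\cT$ the four defining properties of a measurable and continuous stochastic flow on $\cY$---the normalization $S_2(s,s,\omega)=Id_{\cY}$, the two-time evolution (flow) identity, joint measurability, and fiberwise continuity---and then to promote this to the cocycle property under stationarity of $\cT$. The whole argument is organized around the elementary cancellation $\cT^{-1}(r,\omega)\circ\cT(r,\omega)=Id_{\cX}$, which makes the two inner copies of $\cT$ at the intermediate time disappear inside a composition.

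First I would record the easy properties. Since $S_1(s,s,\omega)=Id_{\cX}$, we get $S_2(s,s,\omega)=\cT(s,\omega)\circ\cT^{-1}(s,\omega)=Id_{\cY}$. For $s\le r\le t$, inserting $\cT^{-1}(r,\omega)\circ\cT(r,\omega)=Id_{\cX}$ in the middle and then using the flow property of $S_1$,
\begin{align*}
S_2(t,r,\omega)\circ S_2(r,s,\omega)
&=\cT(t,\omega)\circ S_1(t,r,\omega)\circ\cT^{-1}(r,\omega)\circ\cT(r,\omega)\circ S_1(r,s,\omega)\circ\cT^{-1}(s,\omega)\\
&=\cT(t,\omega)\circ S_1(t,r,\omega)\circ S_1(r,s,\omega)\circ\cT^{-1}(s,\omega)=\cT(t,\omega)\circ S_1(t,s,\omega)\circ\cT^{-1}(s,\omega)=S_2(t,s,\omega).
\end{align*}
Fiberwise continuity of $S_2(t,s,\omega):\cY\to\cY$ is immediate, being a composition of continuous maps: the homeomorphism $\cT^{-1}(s,\omega)$, then $S_1(t,s,\omega)$, then the homeomorphism $\cT(t,\omega)$.

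The substantive step is joint measurability. From the definition of a conjugation mapping, $\omega\mapsto\cT(\omega)x$ and $\omega\mapsto\cT^{-1}(\omega)y$ are measurable for each fixed $x,y$, while $\cT(\omega)$ and $\cT^{-1}(\omega)$ are continuous; I would upgrade this to joint measurability of $(\omega,x)\mapsto\cT(\omega)x$ and $(\omega,y)\mapsto\cT^{-1}(\omega)y$ by the classical fact that a map on a product with a separable metric factor which is measurable in one variable and continuous in the other is jointly measurable. Composing with the jointly measurable flow $(t,s,\omega,x)\mapsto S_1(t,s,\omega)x$---and, in the stationary case $\cT(t,\omega)=\cT(\theta_t\omega)$, with the jointly measurable shift $(t,\omega)\mapsto\theta_t\omega$ provided by the metric dynamical system axioms---then yields joint measurability of $(t,s,\omega,y)\mapsto S_2(t,s,\omega)y$.

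Finally, for the cocycle property I would assume $\cT$ is stationary, $\cT(t,\omega)=\cT(\theta_t\omega)$, and $S_1$ is a cocycle, i.e. $S_1(t,s,\omega)=S_1(t-s,0,\theta_s\omega)$. Then
\begin{align*}
S_2(t,s,\omega)&=\cT(\theta_t\omega)\circ S_1(t-s,0,\theta_s\omega)\circ\cT^{-1}(\theta_s\omega)\\
&=\cT(\theta_{t-s}(\theta_s\omega))\circ S_1(t-s,0,\theta_s\omega)\circ\cT^{-1}(\theta_0(\theta_s\omega))=S_2(t-s,0,\theta_s\omega),
\end{align*}
so, setting $\varphi_2(t,\omega):=S_2(t,0,\omega)$, the flow identity gives $\varphi_2(0,\omega)=Id_{\cY}$ and $\varphi_2(t+s,\omega)=S_2(t+s,s,\omega)\circ S_2(s,0,\omega)=S_2(t,0,\theta_s\omega)\circ S_2(s,0,\omega)=\varphi_2(t,\theta_s\omega)\circ\varphi_2(s,\omega)$; thus $S_2$ is a cocycle, with measurability and continuity inherited from the previous steps. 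I expect the only genuinely delicate point to be the passage from fiberwise to joint measurability of $\cT$ and $\cT^{-1}$, which relies on separability of the spaces involved; everything else is bookkeeping with compositions.
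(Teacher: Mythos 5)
Your verification is correct, and it is the standard argument: the flow identity follows from the cancellation $\cT^{-1}(r,\omega)\circ\cT(r,\omega)=Id_{\cX}$, continuity from composition of homeomorphisms with the continuous flow, joint measurability from the Carath\'eodory-type lemma (measurable in $\omega$, continuous in the separable metric variable), and the cocycle property from stationarity of $\cT$. The paper itself states this lemma without proof, recalling it from Imkeller--Lederer \cite{IL01}, so there is no in-paper argument to compare against; your direct check supplies exactly the bookkeeping that reference carries out, and the one point you rightly flag as nontrivial --- upgrading fiberwise to joint measurability of $(\omega,x)\mapsto\cT(\omega)x$ --- is handled correctly since $\cX$ is Polish.
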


\begin{lem}\label{lem0928}
Let $S_1(t,s,\omega)$ and $S_2(t,s,\omega)$ be stochastic flows conjugated by
a conjugation mapping $\cT$ consisting of uniformly continuous mappings
$\cT(t,\omega):\cX\rightarrow\cX$. Assume that there is a $\widetilde{D}$-attractor
$\widetilde{A}$ for $S_1(t,s,\omega)$. Then
$\cA(t,\omega):=\cT(t,\omega)\widetilde{A}(t,\omega)$ is a random $\cD$-attractor
for $S_2(t,s,\omega)$, where
$$
\cD:=\left\{\{\cT(t,\omega)\widetilde{D}(t,\omega)\}_{t\in\R,\omega\in\Omega}:
\widetilde{D}\in\widetilde{\cD}\right\}.
$$
Moreover, if $\widetilde{A}$ is measurable then so is $\cA$.
\end{lem}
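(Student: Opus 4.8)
The plan is to transport every attractor feature of $S_1$ to $S_2$ by applying the stationary conjugation $\cT(t,\omega)$ pointwise, using only the conjugation identity $S_2(t,s,\omega)=\cT(t,\omega)\circ S_1(t,s,\omega)\circ\cT^{-1}(s,\omega)$, the fact that each $\cT(t,\omega)$ is a uniformly continuous homeomorphism, and the measurability of $\omega\mapsto\cT^{\pm1}(t,\omega)x$. The first thing I would record is the algebraic consequence: since $\cT^{-1}(s,\omega)\circ\cT(s,\omega)=Id_{\cX}$, for every $\widetilde{D}\in\widetilde{\cD}$ and $t\ge s$,
\begin{equation*}
S_2(t,s,\omega)\bigl(\cT(s,\omega)\widetilde{D}(s,\omega)\bigr)=\cT(t,\omega)\,S_1(t,s,\omega)\widetilde{D}(s,\omega).
\end{equation*}
Taking $\widetilde{D}=\widetilde{A}$ and invoking invariance of $\widetilde{A}$ gives $S_2(t,s,\omega)\cA(s,\omega)=\cT(t,\omega)\widetilde{A}(t,\omega)=\cA(t,\omega)$, i.e. invariance of $\cA$; moreover $\cA(t,\omega)=\cT(t,\omega)\widetilde{A}(t,\omega)$ is compact as the continuous image of a compact set, and $\cA\in\cD$ is immediate from $\widetilde{A}\in\widetilde{\cD}$ and the definition of $\cD$.

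Next I would handle measurability. Fixing $t$ and assuming $\omega\mapsto\widetilde{A}(t,\omega)$ is a measurable compact random set, for any closed $C\subset\cX$ I would write
\begin{equation*}
\{\omega:\cA(t,\omega)\cap C\ne\emptyset\}=\{\omega:\widetilde{A}(t,\omega)\cap\cT^{-1}(t,\omega)C\ne\emptyset\},
\end{equation*}
and argue that the right-hand side is measurable: $\omega\mapsto\cT^{-1}(t,\omega)C$ is a measurable closed random set (each $\omega\mapsto\cT^{-1}(t,\omega)x$ is measurable), and the hitting set of a measurable compact random set against a measurable closed random set is measurable --- equivalently, a Castaing representation of $\widetilde{A}(t,\cdot)$ together with joint measurability of $(x,\omega)\mapsto\cT(t,\omega)x$ shows $\omega\mapsto\dist_\cX(\cT(t,\omega)\widetilde{A}(t,\omega),C)$ is measurable. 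Lemma \ref{lem0827} then yields measurability of $\omega\mapsto\cA(t,\omega)$.

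It then remains to verify pullback attraction. Let $D\in\cD$, say $D(t,\omega)=\cT(t,\omega)\widetilde{D}(t,\omega)$ with $\widetilde{D}\in\widetilde{\cD}$, fix $\omega,t$ and $\varepsilon>0$, and pick $\delta>0$ from the uniform continuity of $\cT(t,\omega)$. Since $\widetilde{A}$ is a $\widetilde{\cD}$-attractor for $S_1$, there is $s_0$ with $\dist_\cX(S_1(t,s,\omega)\widetilde{D}(s,\omega),\widetilde{A}(t,\omega))<\delta$ for all $s\le s_0$, so each point of $S_1(t,s,\omega)\widetilde{D}(s,\omega)$ lies within $\delta$ of $\widetilde{A}(t,\omega)$; applying $\cT(t,\omega)$ and using the identity from the first step,
\begin{align*}
\dist_\cX\bigl(S_2(t,s,\omega)D(s,\omega),\cA(t,\omega)\bigr)
&=\dist_\cX\bigl(\cT(t,\omega)S_1(t,s,\omega)\widetilde{D}(s,\omega),\cT(t,\omega)\widetilde{A}(t,\omega)\bigr)\\
&\le\varepsilon
\end{align*}
for all $s\le s_0$, hence the semidistance tends to $0$ as $s\to-\infty$. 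Together with the previous steps, this identifies $\cA$ as the $\cD$-random attractor of $S_2$. The computations in the first and third steps are purely formal; the only delicate point --- and the one I expect to be the main obstacle --- is the measurability of $\omega\mapsto\cA(t,\omega)$, which forces one to combine pointwise measurability of the conjugation with measurable-selection facts for random closed sets, although in the application $\cX=\R^d$ and $\cT(t,\omega)$ is a stationary diffeomorphism (Proposition \ref{thm1}), so these issues are routine there.
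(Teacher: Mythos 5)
Your proof is correct and is exactly the standard transport argument; the paper itself states this lemma without proof (it is recalled from the conjugation literature, \cite{IL01}), and your three steps — the conjugation identity yielding invariance and compactness, the Carath\'eodory/Castaing argument for measurability via $\{\omega:\cA(t,\omega)\cap C\neq\emptyset\}=\{\omega:\widetilde{A}(t,\omega)\cap\cT^{-1}(t,\omega)C\neq\emptyset\}$, and the $\varepsilon$–$\delta$ transfer of pullback attraction through the uniform continuity of $\cT(t,\omega)$ — are precisely what that proof consists of.
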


\section{}

Let $(H,\langle\cdot,\cdot\rangle_H)$ be a separable Hilbert space with dual space $H^*$. 
Let $(V,\|\cdot\|_V)$ be a reflexive Banach space such that $V\subset H$
continuously and densely. By identifying $H$ with its dual $H^*$ via the Riesz isomorphism,
we then have the following Gelfand triple
$$
V\subseteq H\subseteq V^*.
$$
For any $T>0$, consider the nonlinear evolution equation
\begin{equation}\label{0608:01}
\begin{cases}
u'(t)=A(t,u(t)),\quad 0<t<T,\\
u(0)=u_0\in H,
\end{cases}
\end{equation}
where $u'$ is the generalized derivative of $u$ on (0,T) and 
$A:[0,T]\times V\rightarrow V^*$ is restrictedly measurable, i.e. for each $\rmd t$-version
of $u\in L^1([0,T];V)$, $t\mapsto A(t,u(t))$ is $V^*$-measurable on $[0,T]$.

Let us introduce the following conditions.
Suppose that for fixed $\alpha>1$ and $\beta\geq0$ there exist constants $\delta>0$,
$C\in\R$ and a positive function $f,g\in L^1([0,T];\R)$ such that for all $t\in[0,T]$
and $v,v_1,v_2\in V$:
\begin{itemize}
\item[(A1)] (Hemicontinuity)
The map $\theta\mapsto ~_{V^*}\langle A(t,v_1+\theta v_2),v\rangle_V$ is continuous 
on $\R$. 
\item[(A2)] (Local monotonicity)
$$
~_{V^*}\langle A(t,v_1)-A(t,v_2),v_1-v_2\rangle_V
\leq \left(f(t)+\rho(v_1)+\eta(v_2)\right)\|v_1-v_2\|_H^2,
$$
where $\rho,\eta:V\rightarrow [0,+\infty)$ are measurable and 
satisfy that there exist $C,\gamma\in[0,+\infty)$ such that 
$$
\rho(v)+\eta(v)\leq C\left(1+\|v\|_V^\alpha\right)\left(1+\|v\|_H^\gamma\right).
$$
\item[(A3)] (Coercivity)
$$
~_{V^*}\langle A(t,v),v\rangle_V\leq-\delta\|v\|_V^\alpha+g(t)\|v\|_H^2+f(t).
$$
\item[(A4)] (Growth)
$$
\|A(t,v)\|_{V^*}^{\frac{\alpha}{\alpha-1}}\leq \left(f(t)+C\|v\|_V^\alpha \right)
\left(1+\|v\|_H^\beta\right).
$$
\end{itemize}

\begin{thm}\label{EUsolution}
Assume that $V\subseteq H$ is compact and (A1)-(A4) hold.
Then for any $u_0\in H$, there exists a unique solution $u$ to \eqref{0608:01} on $[0,T]$,
i.e. 
\begin{equation*}
u\in L^\alpha([0,T];V)\cap C([0,T];H),\quad 
u'\in L^{\frac{\alpha}{\alpha-1}}([0,T];V^*)
\end{equation*}
and for all $t\in[0,T]$ and $v\in V$,
\begin{equation*}
\langle u(t),v\rangle_H=\langle u_0,v\rangle_H+\int_0^t
~_{V^*}\langle A(s,u(s)),v\rangle_V\rmd s.
\end{equation*}
\end{thm}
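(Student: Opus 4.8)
The plan is to prove existence by the classical Faedo--Galerkin scheme combined with the Minty--Browder monotonicity trick, and uniqueness by a Gronwall estimate based on the local monotonicity (A2); this is essentially the deterministic version of the Liu--R\"ockner argument for locally monotone coefficients. First I would fix a basis $\{e_k\}_{k\ge1}\subset V$ which is orthonormal in $H$, has dense linear span in $V$, and for which the $H$-orthogonal projections $P_n$ onto $H_n:=\operatorname{span}\{e_1,\dots,e_n\}$ are uniformly bounded on $V$ and on $V^*$ (e.g.\ eigenfunctions of the Riesz map of the Gelfand triple), and solve the finite-dimensional system $u_n'(t)=P_nA(t,u_n(t))$, $u_n(0)=P_nu_0$. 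By the hemicontinuity (A1) and the growth bound (A4) the right-hand side is Carath\'eodory with a locally integrable majorant, so a local solution exists, and the a priori bound below extends it to all of $[0,T]$.

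Testing the Galerkin equation with $u_n(t)$ and using the coercivity (A3) gives $\tfrac12\tfrac{\rmd}{\rmd t}\|u_n(t)\|_H^2\le-\delta\|u_n(t)\|_V^{\alpha}+g(t)\|u_n(t)\|_H^2+f(t)$, so Gronwall (with $f,g\in L^1$) yields $\sup_n\bigl(\sup_{[0,T]}\|u_n\|_H^2+\int_0^T\|u_n\|_V^{\alpha}\,\rmd t\bigr)<\infty$. Feeding this into (A4), the factor $1+\|u_n\|_H^{\beta}$ being absorbed by the uniform $H$-bound, bounds $A(\cdot,u_n(\cdot))$, hence $u_n'$, uniformly in $L^{\alpha/(\alpha-1)}([0,T];V^*)$. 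By reflexivity, along a subsequence $u_n\rightharpoonup u$ in $L^{\alpha}([0,T];V)$, $u_n'\rightharpoonup u'$ in $L^{\alpha/(\alpha-1)}([0,T];V^*)$ and $A(\cdot,u_n)\rightharpoonup Y$ in $L^{\alpha/(\alpha-1)}([0,T];V^*)$ for some $Y$. Since $V\hookrightarrow H$ is compact and $H\hookrightarrow V^*$ continuous, the Aubin--Lions lemma gives $u_n\to u$ strongly in $L^2([0,T];H)$ and, along a further subsequence, $u_n(t)\to u(t)$ in $H$ for a.e.\ $t$. Passing to the limit in the Galerkin identity yields $u'=Y$ with $u(0)=u_0$, and the fact that $u\in L^{\alpha}([0,T];V)$ with $u'\in L^{\alpha/(\alpha-1)}([0,T];V^*)$ then forces $u\in C([0,T];H)$ together with the energy identity $\|u(t)\|_H^2=\|u_0\|_H^2+2\int_0^t{}_{V^*}\langle u'(s),u(s)\rangle_V\,\rmd s$.

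The crux, and the step I expect to be the main obstacle, is the identification $Y=A(\cdot,u)$. For a test function $\phi$ from a dense class of $C^1$-in-time functions valued in some fixed $H_m$, I would introduce an appropriate positive weight $\psi(t)$ of exponential type, built from $f$ and from $\rho,\eta$ evaluated along $\phi$ and the Galerkin sequence, designed so that multiplying by $\psi$ absorbs the defect term in the local monotonicity (A2); here one uses that $\phi$ is bounded in $V$, that $\rho,\eta$ are locally bounded, and the a priori bounds on $u_n$ (together with $\rho(v)+\eta(v)\le C(1+\|v\|_V^{\alpha})(1+\|v\|_H^{\gamma})$) to keep the relevant quantities integrable. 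Computing $\tfrac{\rmd}{\rmd t}\bigl(\psi\|u_n-\phi\|_H^2\bigr)$ for $n\ge m$, applying (A2) and the Galerkin energy identity, and then taking $\liminf_n$ --- using weak lower semicontinuity of $\|\cdot\|_H$ at $t=T$, the weak convergence $A(\cdot,u_n)\rightharpoonup Y$, and the strong $L^2([0,T];H)$ convergence of $u_n$ --- and combining with the energy identity for $u$, should yield $\int_0^T\psi(t)\,{}_{V^*}\langle Y(t)-A(t,\phi(t)),u(t)-\phi(t)\rangle_V\,\rmd t\le0$ for every admissible $\phi$. Taking $\phi=u-\lambda w$ with $w\in V$ and $\lambda>0$, dividing by $\lambda$, and letting $\lambda\downarrow0$ --- via the hemicontinuity (A1) and dominated convergence, the $\rho,\eta$ growth bounds keeping the integrand dominated --- then gives ${}_{V^*}\langle Y(t)-A(t,u(t)),w\rangle_V=0$ for a.e.\ $t$ and a dense family of $w$, so $Y=A(\cdot,u)$ and $u$ is a solution.

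For uniqueness, if $u,\tilde u$ are two solutions with $u(0)=\tilde u(0)$, the energy identity applied to $u-\tilde u$ and (A2) give $\tfrac{\rmd}{\rmd t}\|u(t)-\tilde u(t)\|_H^2\le\bigl(f(t)+\rho(u(t))+\eta(\tilde u(t))\bigr)\|u(t)-\tilde u(t)\|_H^2$; since $u,\tilde u\in C([0,T];H)\cap L^{\alpha}([0,T];V)$, the growth bound on $\rho,\eta$ makes the coefficient lie in $L^1([0,T])$, so Gronwall forces $u\equiv\tilde u$. The one genuinely technical point is the weighted Minty step: making the exponential weight compatible with the merely measurable $\rho,\eta$ and justifying the $\liminf$/$\limsup$ exchanges under local (rather than global) monotonicity is exactly where the Liu--R\"ockner-type care is required; everything else is the standard monotone-operator machinery.
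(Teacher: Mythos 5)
The paper does not actually prove Theorem \ref{EUsolution}: it is stated in Appendix B as a background result, namely the deterministic case of the well-posedness theory for evolution equations with locally monotone coefficients (the framework of the monograph cited as \cite{LR2015}), and the paper simply uses it. Your Galerkin--Minty sketch is the standard route to that result, and most of it is sound: the coercivity estimate, the $V^*$-bounds on $A(\cdot,u_n)$ and $u_n'$, the Aubin--Lions compactness (this is where the hypothesis that $V\subseteq H$ is compact enters), the recovery of $u\in C([0,T];H)$ together with the energy identity, and the uniqueness argument (where $\rho(u(\cdot))+\eta(\tilde u(\cdot))\in L^1([0,T])$ follows from $u,\tilde u\in L^\alpha([0,T];V)\cap C([0,T];H)$ and the growth bound in (A2)) are all correct. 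One minor inaccuracy: (A1) and (A4) alone do not make $v\mapsto P_nA(t,v)$ continuous on $H_n$; hemicontinuity must be combined with (A2), which on bounded subsets of the finite-dimensional space $H_n$ exhibits $A(t,\cdot)$ as a hemicontinuous monotone map up to a bounded perturbation, hence continuous, so that Carath\'eodory's existence theorem applies.

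The genuine gap is the identification $Y=A(\cdot,u)$, and the device you describe does not close it. The coefficient in (A2) is $f(t)+\rho(v_1)+\eta(v_2)$, so comparing $u_n$ with a fixed test function $\phi$ necessarily produces one of $\rho(u_n)$, $\eta(u_n)$ in the defect term, whichever way you order the arguments. If you put that term into the exponential weight, the weight becomes $n$-dependent, and passing to the limit in $\exp(-\int_0^t\rho(u_n(s))\,\rmd s)$ would require convergence of $\int_0^t\rho(u_n(s))\,\rmd s$, which does not follow from weak convergence of $u_n$ in $L^\alpha([0,T];V)$. If instead you build the weight from $\phi$ (and the limit $u$) only, you are left with an error term of the form $\int_0^T\rho(u_n(t))\,\|u_n(t)-\phi(t)\|_H^2\,\rmd t$, where $\rho(u_n)$ is merely bounded in $L^1([0,T])$ while $\|u_n-\phi\|_H^2$ converges boundedly a.e.; a product of an $L^1$-bounded sequence with a boundedly a.e.-convergent one need not converge (take $g_n=n\,\chi_{[0,1/n]}$ and $h_n=\chi_{[0,1/n]}$), so the convergences you have established do not suffice. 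This is exactly the point where the compactness of $V\subseteq H$ must be exploited beyond merely furnishing Aubin--Lions, and it is the substantive content of the locally monotone existence theorem; writing that ``Liu--R\"ockner-type care is required'' locates the difficulty correctly but is a placeholder for the missing argument rather than a proof of it. Since everything else in your scheme is routine, supplying this step in detail --- or simply citing \cite{LR2015}, as the paper implicitly does --- is what is actually needed.
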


On a complete probability space $(\Omega, \mathcal{F}, \mathbb{P})$ equipped with a right-continuous and complete filtration $\mathbb{F} = (\mathcal{F}_t)_{t \geq 0}$, we now consider an $\mathbb{R}^d$-valued It\^o process:
\begin{equation}\label{0604--1}
\rmd X_t = b_t \rmd t + \sigma_t \rmd W_t, \quad X_0 \in L^2(\Omega, \mathcal{F}, \mathbb{P}),
\end{equation}
where $(W_t)_{t \geq 0}$ is an $\mathbb{F}$-Brownian motion with values in $\mathbb{R}^d$, and $(b_t)_{t \geq 0}$ and $(\sigma_t)_{t \geq 0}$ are $\mathbb{F}$-progressively measurable processes with values in $\mathbb{R}^d$ and $\mathbb{R}^{d \times d}$ respectively. We assume that
\begin{equation}\label{0604--2}
\forall ~T > 0, \quad \mE \left[ \int_0^T (|b_t|^2 + |\sigma_t|^4) \rmd t \right] < +\infty.
\end{equation}
Now we recall the following It\^o formula; see \cite[Chapter 5]{Carmona1} for more details.
\begin{prop}\label{20250526-1}
Fix $u\in\Xi$. Let $\mu_t = \sL_{X_t}$ for $t \in [0, T]$ for an It\^o process $(X_t)_{0 \leq t \leq T}$ of the form \eqref{0604--1} satisfying \eqref{0604--2}. 
If $(\xi_t)_{t \in [0, T]}$ is another $d$-dimensional It\^o process on the same filtered probability space $(\Omega, \mathcal{F}, \mathbb{P})$ with similar dynamics $\rmd\xi_t = \eta_t \rmd t + \gamma_t \rmd W_t$, for two $\mathbb{F}$-progressively measurable processes $(\eta_t)_{0 \leq t \leq T}$ and $(\gamma_t)_{0 \leq t \leq T}$ with values in $\mathbb{R}^d$ and $\mathbb{R}^{d \times d}$ respectively such that
\[
\mathbb{P} \left[ \int_0^T (|\eta_t| + |\gamma_t|^2) \rmd t < \infty \right] = 1,
\]
then, $\mathbb{P}$-almost surely for all $t \in [0, T]$, it holds
\begin{equation}
\begin{aligned}
u(\xi_t, \mu_t) = u(\xi_0, \mu_0) &+ \int_0^t  \partial_x u(\xi_s, \mu_s) \cdot \eta_s \rmd s + \int_0^t \partial_x u(\xi_s, \mu_s) \cdot (\gamma_s \rmd W_s) \\
&+ \frac{1}{2} \int_0^t \text{tr} \left[ \partial^2_{xx} u(\xi_s, \mu_s) \gamma_s \gamma_s^\top \right] \rmd s \\
&+ \int_0^t \tilde{\mE} \left[ \partial_{\mu} u(\xi_s, \mu_s)(\tilde{X}_s) \cdot \tilde{b}_s \right] \rmd s \\
&+ \frac{1}{2} \int_0^t \tilde{\mE} \left[ \text{tr} \left( \partial_v \partial_\mu u( \xi_s, \mu_s)(\tilde{X}_s) \tilde{\sigma}_s \tilde{\sigma}_s^\top \right) \right] \rmd s,
\end{aligned}
\end{equation}
where the process $(\tilde{X}_s, \tilde{b}_s, \tilde{\sigma}_s)_{0 \leq s \leq t}$ is a copy of the process $(X_s, b_s, \sigma_s)_{0 \leq s \leq t}$ on a copy $(\tilde{\Omega}, \tilde{\mathcal{F}}, \tilde{\mathbb{P}})$ of the probability space $(\Omega, \mathcal{F}, \mathbb{P})$.
\end{prop}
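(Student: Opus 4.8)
The plan is to prove the identity by exploiting the crucial structural fact that the flow of marginals $s\mapsto\mu_s=\sL_{X_s}$ is a \emph{deterministic} curve in $\cP_2(\R^d)$, so that no It\^o cross term between $\xi_s$ and $\mu_s$ can arise; the state part and the measure part therefore decouple and may be treated by two separate chain rules. First I would localize: with $\tau_R:=\inf\{s:|\xi_s|+\int_0^s(|\eta_r|+|\gamma_r|^2)\,\rmd r\ge R\}$ and an analogous stopping of $(X,b,\sigma)$, all integrands become bounded on $[0,t\wedge\tau_R]$; one establishes the formula there and sends $R\to\infty$ using \eqref{0604--2} and the local boundedness in conditions (i)--(iv) defining $\Xi$. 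Fixing a partition $0=s_0<\cdots<s_n=t$ of mesh $\delta$, I would telescope and split each increment as
\[
u(\xi_{s_{k+1}},\mu_{s_{k+1}})-u(\xi_{s_k},\mu_{s_k})
=\big[u(\xi_{s_{k+1}},\mu_{s_{k+1}})-u(\xi_{s_k},\mu_{s_{k+1}})\big]
+\big[u(\xi_{s_k},\mu_{s_{k+1}})-u(\xi_{s_k},\mu_{s_k})\big].
\]
Summing the first brackets and applying the classical finite-dimensional It\^o formula to $x\mapsto u(x,\mu_{s_{k+1}})$ with the measure frozen recovers, after replacing the frozen $\mu_{s_{k+1}}$ by $\mu_s$ via the joint continuity of $\partial_x u$ and $\partial^2_{xx}u$ (condition (i)), exactly the $\partial_x u\cdot\eta$, $\partial_x u\cdot\gamma\,\rmd W$ and $\tfrac12\,\mathrm{tr}(\partial^2_{xx}u\,\gamma\gamma^\top)$ terms in the limit $\delta\to0$.

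The heart of the matter is the second bracket, the chain rule along the flow of measures with $x=\xi_{s_k}$ frozen. Writing $v:=u(\xi_{s_k},\cdot)$, I would lift $v$ to the Hilbert space $L^2(\Omega;\R^d)$ through $V(Z):=v(\sL_Z)$; the $\Xi$-regularity makes $V$ twice Fr\'echet differentiable, with $DV(Z)=\partial_\mu v(\sL_Z)(Z)$ and a Hessian $D^2V(Z)$ that splits into an off-diagonal piece carrying $\partial^2_\mu v$ and a diagonal piece carrying $\partial_v\partial_\mu v$. Since $V(X_{s_{k+1}})=v(\mu_{s_{k+1}})$ and $V(X_{s_k})=v(\mu_{s_k})$ are deterministic, a second-order Taylor expansion along $X_{s_{k+1}}-X_{s_k}=\int_{s_k}^{s_{k+1}}b_r\,\rmd r+\int_{s_k}^{s_{k+1}}\sigma_r\,\rmd W_r$ gives
\[
v(\mu_{s_{k+1}})-v(\mu_{s_k})
=\tilde{\mE}\big[\partial_\mu v(\mu_{s_k})(\tilde X_{s_k})\cdot(\tilde X_{s_{k+1}}-\tilde X_{s_k})\big]
+\tfrac12\,\tilde{\mE}\big[D^2V(X_{s_k})[\,\tilde X_{s_{k+1}}-\tilde X_{s_k},\ \tilde X_{s_{k+1}}-\tilde X_{s_k}\,]\big]+r_k,
\]
where $\tilde{\mE}$ and $\tilde X$ denote the copy-space expectation and process. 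Because $\partial_\mu v(\mu_{s_k})(\tilde X_{s_k})$ is measurable at time $s_k$, the martingale part of the first term integrates to zero and only the drift survives, yielding $\tilde{\mE}[\partial_\mu v(\mu_{s_k})(\tilde X_{s_k})\cdot\tilde b_{s_k}]\,\delta+o(\delta)$; in the quadratic term the off-diagonal $\partial^2_\mu$ contribution pairs two \emph{independent}, mean-zero martingale increments and is $O(\delta^2)$, while the diagonal contribution reproduces $\tfrac12\,\tilde{\mE}[\mathrm{tr}(\partial_v\partial_\mu v(\mu_{s_k})(\tilde X_{s_k})\tilde\sigma_{s_k}\tilde\sigma_{s_k}^\top)]\,\delta$ through the quadratic variation $\tilde{\mE}[\int_{s_k}^{s_{k+1}}\sigma_r\sigma_r^\top\,\rmd r]$.

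Summing over $k$ and letting $\delta\to0$, the surviving terms assemble into the two measure-derivative integrals of the statement, the Riemann sums converging by the joint continuity of $\partial_\mu u$ and $\partial_v\partial_\mu u$ at points $(x,\mu,v)$ with $v\in\mathrm{Supp}(\mu)$ (conditions (ii)--(iii)), which applies since the copy $\tilde X_s$ is $\mu_s$-distributed and hence lies in $\mathrm{Supp}(\mu_s)$ almost surely. The remainders $r_k$ and the discretization errors are then controlled by dominated convergence using the uniform $L^2(\mu)$-bounds \eqref{0331--1} on $\partial_\mu u$ and $\partial_v\partial_\mu u$ together with the boundedness secured by the localization. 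The main obstacle I anticipate is the rigorous extraction of the diagonal second L-derivative $\partial_v\partial_\mu u$ from the full Fr\'echet Hessian of the lift $V$: one must verify that the off-diagonal Hessian term genuinely decouples across independent copies and contributes only at order $\delta^2$, whereas the diagonal term pairs correctly with the quadratic variation $\sigma_s\sigma_s^\top$; it is precisely for this identification, and for the integrability of the resulting Riemann sums, that the structural conditions (ii)--(iv) on $\Xi$ are indispensable.
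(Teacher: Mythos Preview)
The paper does not prove this proposition at all: it is stated in the appendix with the preface ``Now we recall the following It\^o formula; see \cite[Chapter 5]{Carmona1} for more details,'' so the paper's ``proof'' is simply a citation to Carmona--Delarue. Your sketch is therefore not comparable to anything in the paper itself, but it does faithfully outline the standard argument from that reference: localize, telescope over a partition, split each increment into a frozen-measure state part handled by the classical It\^o formula and a frozen-state measure part handled by lifting to $L^2(\Omega;\R^d)$ and Taylor-expanding the Fr\'echet-differentiable lift, then identify the diagonal Hessian contribution with $\partial_v\partial_\mu u$ via quadratic variation while the off-diagonal $\partial^2_\mu$ piece vanishes at order $\delta^2$ by independence of copies. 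This is exactly the strategy of \cite[Theorem~5.92]{Carmona1}, and the obstacle you flag---the clean extraction of the diagonal versus off-diagonal second $L$-derivative from the lifted Hessian---is indeed the delicate point there, resolved precisely by the $\Xi$-regularity you invoke. In short: your proposal is correct and matches the cited source, but the paper itself offers nothing to compare against beyond the reference.
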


We recall the following stochastic Gronwall's lemma, which is from \cite[Lemma 5.3]{GZ09}.
\begin{lem}\label{sGlem}
Let $T>0$. Assume that $X,Y,Z,R:[0,T)\times\Omega\rightarrow\R$ are real-valued, nonnegative stochastic
processes. Suppose further that for some stopping time $\tau<T$ there exists a constant $c_1>0$
such that
$$
\int_0^\tau R(s)\rmd s<c_1 \quad a.s.
$$
and
$$
\mE\int_0^\tau\left(R(s)X(s)+Z(s)\right)\rmd s<\infty.
$$
If there exists constant $c_2>0$ such that for all stopping times $0\leq \tau_1\leq \tau_2\leq\tau$,
\begin{equation*}
\mE\left(\sup_{t\in[\tau_1,\tau_2]}X(t)+\int_{\tau_1}^{\tau_2}Y(s)\rmd s\right)
\leq c_2\mE\left(X(\tau_1)+\int_{\tau_1}^{\tau_2}(R(s)X(s)+Z(s))\rmd s\right),
\end{equation*}
then 
\begin{equation*}
\mE\left(\sup_{t\in[0,\tau]}X(t)+\int_{0}^\tau Y(s)\rmd s\right)
\leq \tilde{c}\mE\left(X(0)+\int_0^\tau Z(s)\rmd s\right),
\end{equation*}
where $\tilde{c}:=C_{c_2,T,c_1}$.
\end{lem}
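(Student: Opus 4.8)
The plan is to reduce the stochastic estimate to a finite, deterministically-controlled iteration by chopping $[0,\tau]$ into random sub-intervals on which the accumulated weight $\int R\,\rmd s$ is small. I may assume $\mE X(0)<\infty$, since otherwise the asserted bound is vacuous. Fix a threshold $\rho\in(0,1)$ to be chosen later, and define recursively the stopping times $\sigma_0:=0$ and
$$
\sigma_{k+1}:=\inf\Big\{t\geq\sigma_k:\int_{\sigma_k}^t R(s)\,\rmd s\geq\rho\Big\}\wedge\tau .
$$
Because $\int_0^\tau R(s)\,\rmd s<c_1$ almost surely, the number of indices $k$ for which $\sigma_{k+1}<\tau$ (i.e. for which the threshold is actually attained) is at most $c_1/\rho$; hence setting $N:=\lceil c_1/\rho\rceil+1$ we have $\sigma_N=\tau$ almost surely, and the (possibly degenerate) intervals $[\sigma_k,\sigma_{k+1}]$, $0\leq k\leq N-1$, cover $[0,\tau]$ with a \emph{deterministic} number of pieces, each satisfying $\int_{\sigma_k}^{\sigma_{k+1}}R(s)\,\rmd s\leq\rho$ by construction.

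First I would apply the standing hypothesis with $\tau_1=\sigma_k$, $\tau_2=\sigma_{k+1}$ and bound the troublesome term by $\int_{\sigma_k}^{\sigma_{k+1}}RX\,\rmd s\leq\big(\sup_{[\sigma_k,\sigma_{k+1}]}X\big)\int_{\sigma_k}^{\sigma_{k+1}}R\,\rmd s\leq\rho\sup_{[\sigma_k,\sigma_{k+1}]}X$, which gives
$$
\mE\sup_{[\sigma_k,\sigma_{k+1}]}X+\mE\int_{\sigma_k}^{\sigma_{k+1}}Y\,\rmd s
\leq c_2\,\mE X(\sigma_k)+c_2\rho\,\mE\sup_{[\sigma_k,\sigma_{k+1}]}X+c_2\,\mE\int_{\sigma_k}^{\sigma_{k+1}}Z\,\rmd s .
$$
Choosing $\rho:=1/(2c_2)$, so that $c_2\rho=\tfrac12$, the supremum term on the right is absorbed into the left, yielding $\tfrac12\mE\sup_{[\sigma_k,\sigma_{k+1}]}X+\mE\int_{\sigma_k}^{\sigma_{k+1}}Y\,\rmd s\leq c_2\mE X(\sigma_k)+c_2\mE\int_{\sigma_k}^{\sigma_{k+1}}Z\,\rmd s$. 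In particular $\mE X(\sigma_{k+1})\leq\mE\sup_{[\sigma_k,\sigma_{k+1}]}X\leq 2c_2\big(\mE X(\sigma_k)+\mE\int_{\sigma_k}^{\sigma_{k+1}}Z\,\rmd s\big)$, a linear recursion. Writing $a_k:=\mE X(\sigma_k)$, $z_k:=\mE\int_{\sigma_k}^{\sigma_{k+1}}Z\,\rmd s$ and $\kappa:=(2c_2)\vee1$, iteration over the $N$ deterministic steps gives $a_k\leq\kappa^N\big(a_0+\sum_{j}z_j\big)=\kappa^N\big(\mE X(0)+\mE\int_0^\tau Z\,\rmd s\big)$ for every $k\leq N$.

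Finally I would sum the block estimates: since all processes are nonnegative, $\sup_{[0,\tau]}X\leq\sum_{k=0}^{N-1}\sup_{[\sigma_k,\sigma_{k+1}]}X$ and $\int_0^\tau Y\,\rmd s=\sum_{k=0}^{N-1}\int_{\sigma_k}^{\sigma_{k+1}}Y\,\rmd s$, so combining with the absorbed inequality and the recursion bound,
$$
\mE\sup_{[0,\tau]}X+\mE\int_0^\tau Y\,\rmd s
\leq 2c_2\sum_{k=0}^{N-1}\big(a_k+z_k\big)
\leq\tilde c\,\Big(\mE X(0)+\mE\int_0^\tau Z\,\rmd s\Big),
$$
with $\tilde c:=2c_2\big(N\kappa^N+1\big)$ depending only on $c_1,c_2$. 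The point I expect to be delicate is the legitimacy of the absorption step: subtracting $\tfrac12\mE\sup_{[\sigma_k,\sigma_{k+1}]}X$ is valid only once this quantity is known to be finite. This is precisely where the integrability assumption $\mE\int_0^\tau(RX+Z)\,\rmd s<\infty$ enters—inserting it, together with the finiteness of $a_k$ established inductively from $\mE X(0)<\infty$, into the right-hand side of the hypothesis shows $\mE\sup_{[\sigma_k,\sigma_{k+1}]}X<\infty$ at each stage before the cancellation is performed. The remaining mild subtlety is that the block count $N$ must be genuinely deterministic, which is guaranteed by the \emph{almost-sure} bound $\int_0^\tau R\,\rmd s<c_1$ rather than merely a bound in expectation.
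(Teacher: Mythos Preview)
The paper does not give its own proof of this lemma; it merely records the statement and cites \cite[Lemma 5.3]{GZ09}. Your argument is correct and is essentially the standard Glatt--Holtz--Ziane proof: partition $[0,\tau]$ by the stopping times at which the accumulated weight $\int R\,\rmd s$ crosses successive thresholds of size $\rho$, absorb the local supremum on each piece after choosing $\rho=1/(2c_2)$, and iterate the resulting linear recursion over the deterministically bounded number $N=\lceil c_1/\rho\rceil+1$ of blocks. Your treatment of the two subtle points---finiteness of $\mE\sup_{[\sigma_k,\sigma_{k+1}]}X$ prior to subtraction (via the integrability assumption and induction on $a_k$), and the need for an \emph{almost-sure} rather than in-expectation bound on $\int_0^\tau R\,\rmd s$ to make $N$ deterministic---is exactly right. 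A minor remark: your constant $\tilde c=2c_2(N\kappa^N+1)$ depends only on $c_1$ and $c_2$, not on $T$; the appearance of $T$ in the paper's statement is an artefact of the cited formulation and is harmless.
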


\section*{Acknowledgments}

This work is supported by 
National Key R\&D Program of China (No. 2023YFA1009200), NSFC (Grants 12531009, 12301223, 124B2009 and 11925102), and Liaoning Revitalization Talents Program (Grant XLYC2202042).

We are grateful to Professor Michael R\"ockner for his valuable comments on the first draft of this manuscript.


\begin{thebibliography}{2}

\bibitem{AD95}
N. U. Ahmed and X. Ding.
\newblock A semilinear McKean-Vlasov stochastic evolution equation in Hilbert space.
\newblock {\em Stochastic Process. Appl.} \textbf{60} (1995), 65--85.

\bibitem{ABS}
L. Ambrosio, E. Bru\'e and D. Semola.
\newblock {\em Lectures on optimal transport}.
\newblock Unitext, vol. 130, Springer, Cham, 2021. ix+250 pp.

\bibitem{Arnold}
L.~Arnold.
\newblock {\em Random dynamical systems}.
\newblock Springer Monographs in Mathematics. Springer-Verlag, Berlin, 1998, xvi+586 pp.

\bibitem{BLW}
P.~Bates, K.~Lu and B.~Wang.
\newblock Random attractors for stochastic reaction-diffusion equations on unbounded domains.
\newblock {\em J. Differential Equations} \textbf{246} (2009), 845--869.

\bibitem{BGLR}
W.-J. Beyn, B.~Gess, P.~Lescot and M.~R\"ockner.
\newblock The global random attractor for a class of stochastic porous media equations.
\newblock {\em Comm. Partial Differential Equations} \textbf{36} (2011), 446--469.

\bibitem{Billingsley}
P.~Billingsley.
\newblock {\em Convergence of probability measures}, 2nd edition.
\newblock Wiley Series in Probability and Statistics: Probability and Statistics. 
\newblock A Wiley-Interscience Publication.
\newblock John Wiley \& Sons, Inc., New York, 1999. x+277 pp.

\bibitem{BZL}
Z.~Brze\'zniak and Y.~Li.
\newblock Asymptotic compactness and absorbing sets for 2{D} stochastic
  {N}avier-{S}tokes equations on some unbounded domains.
\newblock {\em Trans. Amer. Math. Soc.} \textbf{358} (2006), 5587--5629.

\bibitem{BLPR}
R.~Buckdahn, J.~Li, S.~Peng and C.~Rainer.
\newblock Mean-field stochastic differential equations and associated {PDE}s.
\newblock {\em Ann. Probab.} \textbf{45} (2017), 824--878.


\bibitem{CJLL13}
T. Caraballo, J. C. Jara, J. A. Langa and Z. Liu. 
\newblock Morse decomposition of attractors for non-autonomous dynamical systems. 
\newblock {\em Adv. Nonlinear Stud.} \textbf{13} (2013), 309--329. 


\bibitem{CLL12}
T. Caraballo, J. A. Langa and Z. Liu.
\newblock Gradient infinite-dimensional random dynamical systems. 
\newblock {\em SIAM J. Appl. Dyn. Syst.} \textbf{11} (2012), 1817--1847.

\bibitem{Caraballo}
T.~Caraballo, J.~A. Langa and J.~C. Robinson.
\newblock Upper semicontinuity of attractors for small random perturbations of dynamical systems.
\newblock {\em Comm. Partial Differential Equations} \textbf{23} (1998), 1557--1581.

\bibitem{Caraballo2006}
T.~Caraballo, G.~\L ukaszewicz and J.~Real.
\newblock Pullback attractors for asymptotically compact non-autonomous dynamical systems.
\newblock {\em Nonlinear Anal.} \textbf{64} (2006), 484--498.

\bibitem{C}
R.~Carmona.
\newblock {\em Lectures on {BSDE}s, {S}tochastic {C}ontrol, and {S}tochastic {D}ifferential {G}ames with {F}inancial {A}pplications}, volume~1 of Financial Mathematics.
\newblock Society for Industrial and Applied Mathematics (SIAM), Philadelphia, PA, 2016. ix+265 pp.

\bibitem{Carmona1}
R.~Carmona and F.~Delarue.
\newblock {\em Probabilistic theory of mean field games with applications. {I}}, volume~83 of Probability Theory and Stochastic Modelling.
\newblock Springer, Cham, 2018. xxv+713 pp.

\bibitem{Crauel2}
H.~Crauel, H.~Debussche and F.~Flandoli.
\newblock Random attractors.
\newblock {\em J. Dynam. Differential Equations} \textbf{9} (1997), 307--341.

\bibitem{CF}
H.~Crauel and F.~Flandoli.
\newblock Attractors for random dynamical systems.
\newblock {\em Probab. Theory Related Fields} \textbf{100} (1994), 365--393.

\bibitem{Dawson}
D.~Dawson.
\newblock Critical dynamics and fluctuations for a mean-field model of cooperative behavior.
\newblock {\em J. Statist. Phys.} \textbf{31} (1983), 29--85.

\bibitem{F}
G.~dos Reis, W.~Salkeld and J.~Tugaut.
\newblock Freidlin-{W}entzell {LDP} in path space for {M}c{K}ean-{V}lasov equations and the functional iterated logarithm law.
\newblock {\em Ann. Appl. Probab.} \textbf{29} (2019), 1487--1540.

\bibitem{FS96}
F. Flandoli and B. Schmalfuss.
\newblock Attractors for the 3{D} stochastic {N}avier-{S}tokes equation with multiplicative white noise. 
\newblock {\em Stochastics Stochastics Rep.} \textbf{59} (1996), 21--45. 


\bibitem{GLR11}
B. Gess, W. Liu and M. R\"ockner.
\newblock Random attractors for a class of stochastic partial differential equations driven by general additive noise.
\newblock {\em J. Differential Equations} \textbf{251} (2011), 1225--1253.


\bibitem{GZ09}
N.~Glatt-Holtz and M.~Ziane.
\newblock Strong pathwise solutions of the stochastic {N}avier-{S}tokes system.
\newblock {\em Adv. Differential Equations} \textbf{14} (2009), 567--600.


\bibitem{Hale88}
J. K. Hale. 
\newblock {\em Asymptotic Behavior of Dissipative Systems}. 
Mathematical Surveys and Monographs, 25. American Mathematical Society, 
Providence, RI, 1988. x+198 pp.


\bibitem{HHL24}
W.~Hong, S.~Hu and W.~Liu.
\newblock Mc{K}ean-{V}lasov {SDE} and {SPDE} with locally monotone coefficients.
\newblock {\em Ann. Appl. Probab.} \textbf{34} (2024), 2136--2189.

\bibitem{IL01}
P. Imkeller and C. Lederer. 
\newblock On the cohomology of flows of stochastic and random differential equations. 
\newblock {\em Probab. Theory Related Fields} {\bf120} (2001), 209--235. 

\bibitem{IS}
P.~Imkeller and B.~Schmalfuss.
\newblock The conjugacy of stochastic and random differential equations and the existence of global attractors.
\newblock {\em J. Dynam. Differential Equations} \textbf{13} (2001), 215--249.

\bibitem{Kac1}
M.~Kac.
\newblock Foundations of kinetic theory.
\newblock In {\em Proceedings of the {T}hird {B}erkeley {S}ymposium on {M}athematical {S}tatistics and {P}robability, 1954--1955, vol. {III}}, pages 171--197. 
\newblock University of California Press, Berkeley-Los Angeles, Calif., 1956.

\bibitem{Kac2}
M.~Kac.
\newblock Probability and related topics in physical sciences. 
\newblock In {\em Proceedings of the {S}ummer {S}eminar, {B}oulder, {C}olorado, (1957), vol. {I}},
\newblock Interscience Publishers, London-New York, 1959.

\bibitem{KR2011}
P.~E. Kloeden and M.~Rasmussen.
\newblock {\em Nonautonomous dynamical systems}, volume 176 of Mathematical Surveys and Monographs.
\newblock American Mathematical Society, Providence, RI, 2011. viii+264 pp.

\bibitem{Kuk06}
S.~B. Kuksin.
\newblock {\em Randomly forced nonlinear {PDE}s and statistical hydrodynamics in 2 space dimensions}.
\newblock Zurich Lectures in Advanced Mathematics. European Mathematical Society (EMS), Z\"urich, 2006. x+93 pp.

\bibitem{LL1}
J.-M. Lasry and P.-L. Lions.
\newblock Jeux \`a champ moyen. {I}. {L}e cas stationnaire.
\newblock {\em C. R. Math. Acad. Sci. Paris} \textbf{343} (2006), 619--625.

\bibitem{LL2}
J.-M. Lasry and P.-L. Lions.
\newblock Jeux \`a champ moyen. {II}. {H}orizon fini et contr\^{o}le optimal.
\newblock {\em C. R. Math. Acad. Sci. Paris} \textbf{343} (2006), 679--684.

\bibitem{LL}
J.-M. Lasry and P.-L. Lions.
\newblock Mean field games.
\newblock {\em Jpn. J. Math.} \textbf{2} (2007), 229--260.

\bibitem{LM}
J.~Li and H.~Min.
\newblock Weak solutions of mean-field stochastic differential equations and
  application to zero-sum stochastic differential games.
\newblock {\em SIAM J. Control Optim.} \textbf{54} (2016), 1826--1858.

\bibitem{LG}
Y.~Li and B.~Guo.
\newblock Random attractors for quasi-continuous random dynamical systems and
  applications to stochastic reaction-diffusion equations.
\newblock {\em J. Differential Equations} \textbf{245} (2008), 1775--1800.

\bibitem{LR2015}
W.~Liu and M.~R\"ockner.
\newblock {\em Stochastic partial differential equations: an introduction}.
\newblock Universitext. Springer, Cham, 2015.

\bibitem{MA2}
Z.~Liu and J.~Ma.
\newblock Existence, uniqueness and ergodicity for {M}c{K}ean-{V}lasov {SDE}s
  under distribution-dependent {L}yapunov conditions.
\newblock arXiv:2309.05411.

\bibitem{Ma}
Z.~Liu and J.~Ma.
\newblock Existence, uniqueness and exponential ergodicity under {L}yapunov
  conditions for {M}c{K}ean-{V}lasov {SDE}s with {M}arkovian switching.
\newblock {\em J. Differential Equations} \textbf{337} (2022), 138--167.

\bibitem{LM2}
J.~Ma and Z.~Liu.
\newblock Continuous dependence for McKean-Vlasov SDEs under distribution-dependent Lyapunov conditions.
\newblock {\em Discrete Contin. Dyn. Syst. Ser. S} \textbf{18} (2025), 3282--3301.

\bibitem{McKean}
H.~P. McKean.
\newblock A class of {M}arkov processes associated with nonlinear parabolic equations.
\newblock {\em Proc. Nat. Acad. Sci. U.S.A.} \textbf{56} (1966), 1907--1911.

\bibitem{MV}
Y.~Mishura and A.~Veretennikov.
\newblock Existence and uniqueness theorems for solutions of {M}c{K}ean-{V}lasov stochastic equations.
\newblock {\em Theory Probab. Math. Statist.} \textbf{103} (2020), 59--101.

\bibitem{RRW22}
P. Ren, M. R\"ockner and F-Y Wang.
\newblock Linearization of nonlinear Fokker-Planck equations and applications.
\newblock {\em J. Differential Equations} \textbf{322} (2022), 1--37.


\bibitem{W2}
P.~Ren, H.~Tang and F.-Y. Wang.
\newblock Distribution-path dependent nonlinear {SPDE}s with application to stochastic transport type equations.
\newblock {\em Potential Anal.} \textbf{61} (2024), 379--407.

\bibitem{Schmalfuss}
B.~Schmalfuss.
\newblock Backward cocycle and attractors of stochastic differential equations.
\newblock In {\em International Seminar on Applied Mathematics-Nonlinear Dynamics: Attractor
  Approximation and Global Behavior}, pages 185--192, 1992.
 
\bibitem{SSLW24}
L. Shi, J. Shen, K. Lu and B. Wang. 
\newblock Invariant measures, periodic measures and pullback measure attractors of McKean-Vlasov stochastic reaction-diffusion equations on unbounded domains.
\newblock {\em arXiv:2409.17548}.  
  
\bibitem{Sh97}  
R. E. Showalter.
\newblock {\em Monotone Operators in Banach Space and Nonlinear Partial Differential Equations,} volume 49 of Mathematical Surveys and Monographs. 
American Mathematical Society, Providence, RI, 1997. xiv+278 pp.   
  
\bibitem{Spivak}
M.~Spivak.
\newblock {\em A comprehensive introduction to differential geometry. Vol. {I}}, 2nd edition.
\newblock Publish or Perish, Inc., Wilmington, DE, 1979. xiv+668 pp.

\bibitem{Sznitman}
A.-S. Sznitman.
\newblock Topics in propagation of chaos.
\newblock In {\em \'Ecole d'\'Et\'e{} de {P}robabilit\'es de {S}aint-{F}lour
  {XIX}--1989}, volume 1464 of {\em Lecture Notes in Math.}, pages 165--251.
  Springer, Berlin, 1991.

\bibitem{Tugaut}
J.~Tugaut.
\newblock Convergence to the equilibria for self-stabilizing processes in double-well landscape.
\newblock {\em Ann. Probab.} \textbf{41} (2013), 1427--1460.

\bibitem{Vil09}
C. Villani.
\newblock Optimal Transport. Old and New. 
Springer-Verlag, Berlin, 2009. xxii+973 pp. 

\bibitem{WangBX}
B.~Wang.
\newblock Random attractors for the stochastic {B}enjamin-{B}ona-{M}ahony equation on unbounded domains.
\newblock {\em J. Differential Equations} \textbf{246} (2009), 2506--2537.

\bibitem{WangBX12}
B. Wang.
\newblock Sufficient and necessary criteria for existence of pullback attractors for non-compact random dynamical systems. 
\newblock {\em J. Differential Equations} \textbf{253} (2012), 1544--1583. 

\bibitem{W1}
F.-Y. Wang.
\newblock Distribution dependent {SDE}s for {L}andau type equations.
\newblock {\em Stochastic Process. Appl.} \textbf{128} (2018), 595--621.

\bibitem{WGLN24}
R. Wang, B. Guo, W. Liu and D. T. Nguyen. 
\newblock Fractal dimension of random invariant sets and regular random attractors for stochastic hydrodynamical equations.
\newblock {\em Math. Ann.} \textbf{389} (2024), 671--718. 

\end{thebibliography}
\end{document}